\def\red{\color{red}}
\def\oz{{\omega}}
\def\gfz{\genfrac{}{}{0pt}{}}
\def\rr{{\mathbb R}}
\def\rn{{\mathbb{R}^n}}
\def\zz{{\mathbb Z}}
\def\cc{{\mathbb C}}
\def\BB{{\mathbb B}}
\def\nn{{\mathbb N}}
\def\cf{{\mathcal F}}
\def\cg{{\mathcal G}}
\def\cm{{\mathcal M}}
\def\cp{{\mathcal P}}
\def\RR{{\mathcal R}}
\def\SS{{\mathcal S}}
\def\EE{{\mathcal E}}
\def\uu{{\mathcal U}}
\def\vv{{\mathcal V}}
\def\MM{{\mathscr M}}
\def\dis{\displaystyle}
\def\f{\frac}
\def\lf{\left}
\def\ls{\lesssim}
\def\pa{\partial}
\def\wt{\widetilde}
\def\com{\complement}
\def\gfz{\genfrac{}{}{0pt}{}}
\def\dist{\mathop\mathrm{\,dist\,}}
\def\supp{\mathop\mathrm{\,supp\,}}
\def\mEz{(\dot{E}^{\vec{\alpha}^{(0)},\vec{p}^{(0)}}
_{\vec{q}^{(0)}}(\rr^{n}),l^{\infty})}
\def\mEo{(\dot{E}^{\vec{\alpha}^{(1)},\vec{p}^{(1)}}
_{\vec{q}^{(1)}}(\rr^{n}),l^{\infty})}
\def\mE{(\dot{E}^{\vec{\alpha},\vec{p}}_{\vec{q}}
(\rr^{n}),l^{\infty})}
\def\dmE{(\dot{E}^{-\vec{\alpha},\vec{p}'}_{\vec{q}'}
(\rr^{n}),l^{1})}
\def\nEz{\dot{E}^{\vec{\beta}^{(0)},\vec{s}^{(0)}}
_{\vec{t}^{(0)}}(\rn)}
\def\nEo{\dot{E}^{\vec{\beta}^{(1)},\vec{s}^{(1)}}
_{\vec{t}^{(1)}}(\rn)}
\def\nE{\dot{E}^{\vec{\beta},\vec{s}}_{\vec{t}}(\rn)}
\def\one{\mathbf{1}}
\def\lq{L^{\vec{q}}}
\def\lp{L^{\vec{p}}}
\def\ohz{\dot{K}^{\alpha,p}_{q}(\rn)}
\def\iihz{\dot{E}^{\vec{\alpha},\vec{p}}_{\vec{q}}}
\def\riihz{\dot{E}^{\vec{\alpha}/r,r\vec{p}}_{r\vec{q}}}
\def\diihz{\dot{E}^{-\vec{\alpha},\vec{p}'}
_{\vec{q}'}}
\def\ky{\dot{K}^{\alpha_{1},p_{1}}_{q_{1}}}
\def\rky{\dot{K}^{\alpha_{1}/r,rp_{1}}_{rq_{1}}}
\def\dky{\dot{K}^{-\alpha_{1},p_{1}'}_{q_{1}'}}
\def\ke{\dot{K}^{\alpha_{2},p_{2}}_{q_{2}}}
\def\ki{\dot{K}^{\alpha_{i},p_{i}}_{q_{i}}}
\def\kn{\dot{K}^{\alpha_{n},p_{n}}_{q_{n}}}
\def\nuiihz{\dot{E}^{\vec{\alpha}_{n-1},\vec{p}
_{n-1}}_{\vec{q}_{n-1}}(\rr^{n-1})}
\newtheorem{theorem}{Theorem}[section]
\newtheorem{lemma}[theorem]{Lemma}
\newtheorem{corollary}[theorem]{Corollary}
\newtheorem{proposition}[theorem]{Proposition}
\theoremstyle{definition}
\newtheorem{remark}[theorem]{Remark}
\newtheorem{definition}[theorem]{Definition}
\renewcommand{\appendix}{\par
\setcounter{section}{0}%
\setcounter{subsection}{0}%
\setcounter{subsubsection}{0}%
\gdef\thesection{\@Alph\c@section}%
\gdef\thesubsection{\@Alph\c@section.\@arabic\c@subsection}%
\gdef\theHsection{\@Alph\c@section.}%
\gdef\theHsubsection{\@Alph\c@section.\@arabic\c@subsection}%
\csname appendixmore\endcsname
}
\numberwithin{equation}{section}
\begin{document}

\arraycolsep=1pt

\title{\bf\Large Mixed-Norm Herz Spaces
and Their Applications in Related Hardy Spaces
\footnotetext{\hspace{-0.35cm} 2020 {\it
Mathematics Subject Classification}. Primary 42B35;
Secondary 42B25, 42B30, 46E30.
\endgraf {\it Key words and phrases.}
Mixed-norm Herz spaces, Riesz--Thorin interpolation,
ball quasi-Banach function space, maximal function,
hardy space.
\endgraf This project is supported
by the National Natural Science Foundation of China
(Grant Nos.\ 11971058 and 12071197) and
the National Key Research and Development Program of
China (Grant No.\ 2020YFA0712900).}}
\author{Yirui Zhao, Dachun Yang\footnote{Corresponding author,
E-mail: \texttt{dcyang@bnu.edu.cn}/{\red April 26,
2022}/Final version.}\ \
and Yangyang Zhang}
\date{}
\maketitle

\vspace{-0.8cm}

\begin{center}
\begin{minipage}{13cm}
{\small {\bf Abstract}\quad
In this article, the authors introduce a
class of mixed-norm Herz spaces,
$\dot{E}^{\vec{\alpha},\vec{p}}_{\vec{q}}(\mathbb{R}^{n})$,
which is a natural generalization of mixed Lebesgue spaces and
some special cases of which naturally appear in the study of the
summability of Fourier transforms on mixed-norm
Lebesgue spaces.
The authors also give their dual spaces and
obtain the Riesz--Thorin interpolation theorem
on $\dot{E}^{\vec{\alpha},\vec{p}}_{\vec{q}}(\mathbb{R}^{n})$.
Applying these Riesz--Thorin interpolation
theorem and using some ideas from the extrapolation theorem, the
authors establish both the boundedness of
the Hardy--Littlewood maximal operator and
the Fefferman--Stein vector-valued maximal
inequality on $\dot{E}^{\vec{\alpha},\vec{p}}_{\vec{q}}(\mathbb{R}^{n})$.
As applications, the authors develop various real-variable theory of Hardy spaces
associated with $\dot{E}^{\vec{\alpha},\vec{p}}_{\vec{q}}(\mathbb{R}^{n})$ by
using the existing results of Hardy spaces associated with ball
quasi-Banach function spaces. These results strongly depend on the duality
of $\dot{E}^{\vec{\alpha},\vec{p}}_{\vec{q}}(\mathbb{R}^{n})$
and the non-trivial constructions
of auxiliary functions in the Riesz--Thorin
interpolation theorem.
}
\end{minipage}
\end{center}

\vspace{0.2cm}

\tableofcontents

\vspace{0.2cm}

\section{Introduction\label{s0}}

In 1968, the classical Herz space was originally
introduced by Herz
\cite{Hz} to study the Bernstein theorem on
absolutely convergent Fourier transforms, while the
study on Herz spaces can be traced back to the work of Beurling
\cite{casca} in 1964. Indeed, a special Herz space $A^{p}(\rn)$,
with $p\in(1,\infty)$, which is also called Beurling algebra, was
originally introduced by Beurling \cite{casca} to study some convolution
algebras. In 1985, Baernstein and Sawyer \cite{bs} generalized these Herz
spaces and presented lots of applications related to
the classical Hardy spaces. Later, in 1987, to study the Wiener third Tauberian
theorem on $\rn$, Feichtinger \cite{hgf} introduced another
norm of $A^{p}(\rn)$, which is obviously equivalent to the norm defined	
by Beurling \cite{casca}.  From then on, Herz
spaces have been widely explored in the fields
of harmonic analysis and partial differential equations. Indeed, Herz-type spaces
prove useful in the study related to partial differential equations.
For instance, the variable Herz spaces proved to be the key tools in
the study of the regularity of solutions to elliptic equations by
Scapellato \cite{s19}, while Drihem \cite{d22,d22-2} investigated semilinear
parabolic equations with the initial data in Herz spaces
or Herz-type Triebel--Lizorkin spaces. Also, the Fourier--Herz space
is one of the most suitable spaces to study the global stability
for fractional Navier--Stokes equations;
see, for instance, \cite{cs18,c18,lyhh21,nz18}.
Recently, Herz spaces prove crucial in the study of Hardy spaces
associated with ball quasi-Banach function spaces in Sawano et al. \cite{SHYY}.
Indeed, Sawano et al.
\cite{SHYY} introduced the ball quasi-Banach function space
$X$ and the associated Hardy space $H_{X}(\rn)$ via the grand
maximal function. Using a certain inhomogeneous Herz space,
Sawano et al. \cite{SHYY} ingeniously overcame the difficulty
appearing in the proof of the convergence of the atomic
decomposition of $H_{X}(\rn)$. For more progress on Herz spaces,
we refer the reader to \cite{ly96,ly96-2,HY,lyy,yk,yh} for classical Herz spaces,
to \cite{ss,wx,yx} for variable Herz spaces, and also
to \cite{Ho1,Ho2,mrz2,Is,ly95,ly95-2,ly95-3,ly97-2} for
more applications of Herz spaces.

As a good substitute of Herz spaces, Herz-type Hardy spaces
have been found plenty of applications in many branches of mathematics,
such as harmonic analysis and partial differential equations,
which have been systematically studied and developed so far; see, for instance,
\cite{gly,mrz,xy1,xy2,ds16,dh19}. In 1989, the Hardy spaces associated with the
Beurling algebras on the real line were first introduced by Chen and Lau \cite{cl89}.
Later, Grac\'ia-Cuerva \cite{g89} in 1989 generalized the results of Chen and Lau \cite{cl89}
to the higher-dimensional case,  and  Grac\'ia-Cuerva and Herrero \cite{gh94} in 1994
further established various maximal function, atomic, and Littlewood--Paley function
characterizations of these Herz--Hardy spaces. On the other hand,
since 1990, Lu and Yang made a series of studies on the Hardy spaces associated with
the Beurling algebras or with Herz spaces (see, for instance, the monograph \cite{HZ}
and its references). Particularly, in 1992, both the Littlewood--Paley
function and the $\phi$-transform characterizations of the Herz--Hardy
space $HK_{2}(\rn)$ were investigated by Lu and Yang \cite{ly92} and then,
in 1995, they obtained the atomic and the molecular characterizations of
the Herz--Hardy space with more general indices in \cite{ly95-5}. Furthermore,
in 1997, Lu and Yang \cite{ly97} established various maximal function
characterizations of weighted Herz--Hardy spaces.
Meanwhile, Herz--Hardy spaces play an important role in the study of
oscillatory singular integrals. In particular, Lu and Yang \cite{ly95-4}
established the boundedness of some oscillatory singular integrals from
Herz--Hardy spaces to Herz spaces. Since Pan \cite{p91}
pointed out that oscillatory singular integrals may not be
bounded from the classical Hardy space $H^{1}(\rn)$ to the
Lebesgue space $L^{1}(\rn)$, the results of \cite{ly95-4}
showed that the Herz--Hardy space is indeed a proper substitution
of $H^{1}(\rn)$ in the study of oscillatory singular integrals.
We refer the reader to \cite{ly95,hwy96,hy98,dh19,lyh} for various Herz--Hardy spaces
and to \cite{ds16,mrz} for the Herz--Hardy spaces of variable
smoothness and integrability.
For more progress on Herz-type spaces,
we refer the reader to \cite{S18,x1,x2,x3,dh17,d13,d19} for
Herz-type Besov spaces and to \cite{ly97scm,X1,X2,X3,dd19,d18} for Herz-type
Triebel--Lizorkin spaces.

Recall that the mixed Lebesgue space $\lp(\rn)$ with
$\vec{p}\in(0,\infty]^{n}$, as a natural generalization
of the classical Lebesgue space $L^{p}(\rn)$ with $p\in (0,\infty]$,  was investigated
by Benedek and Panzone \cite{BP} in 1961. Actually, the
origin of these mixed Lebesgue spaces can be traced back to
H\"{o}rmander \cite{i-1} in 1960 on the
estimates for translation invariant operators.
In \cite{BP}, Benedek and Panzone also established
the Riesz--Thorin interpolation theorem on mixed
Lebesgue spaces. Later, Bagby \cite{bag}
obtained the boundedness of the partial Hardy--Littlewood maximal
operators on mixed Lebesgue spaces via the related Riesz--Thorin
interpolation theorem.
Inspired by the work
of Benedek and Panzone \cite{BP} on mixed Lebesgue
spaces,
numerous other function spaces with mixed norms sprang
up. For instance, Banach function spaces with mixed norms
was considered by Blozinski \cite{Blo}; anisotropic mixed-norm
Hardy spaces and other function spaces were studied by Cleanthous et al.
\cite{CG-2,cgn19,cgn19-2}; mixed norm $\alpha$-modulation spaces
were investigated by Cleanthous and Georgiadis \cite{CG-1}.
We refer the reader to \cite{cgn17,gjn17,jn16,Ho,mxM,nt,sw, Ho21,noss21}
for more study about mixed-norm spaces. Very recently, Weisz
\cite{w8} proved that
the $\theta$-means of some functions converge to themselves at
all their Lebesgue points if and only if the Fourier transform
of $\theta$ belongs to a suitable Herz space, where $\theta\in
L^{1}(\rn)$. Later, Huang et al. \cite{mhz} generalized this result to mixed-norm
Lebesgue spaces via introducing some suitable mixed-norm Herz
spaces $\dot{E}_{\vec{q}}^{*}(\rn)$ with $\vec{q}\in [1,\infty]^{n}$ (see Remark \ref{E*} below).
This means that those mixed-norm Herz spaces $\dot{E}_{\vec{q}}^{*}(\rn)$ are the best choice in
the study of the summability of Fourier transforms on mixed-norm
Lebesgue spaces. We refer the reader to \cite{fw06,fw08,w08,w08-2,w4,w14}
for applications of Herz spaces in the summability of both
Fourier transforms and series, to \cite{sw17, w16,w17,w09,sdh}
for applications of Herz spaces in other convergence problems related to
integral operators, Lebesgue points, and Gabor expansions
and, particularly, to the monograph \cite{w8} of Weisz for a detailed study on
these subjects. However, to the best of our knowledge, \emph{no other
properties of these mixed-norm Herz
spaces and their applications in related
Hardy spaces} are known so far. The main target of this article
is to explore important properties of these mixed-norm Herz
spaces and their applications in related
Hardy spaces.

Let $\vec{\alpha},$ $\vec{\beta}\in\rn$, $\vec{p},$ $\vec{q},$
$\vec{s},$ $\vec{t}\in(0,\infty]^{n}$, and $r\in (0,\infty]$.
In this article, we introduce the mixed-norm Herz spaces
$\iihz(\rn)$ (see Definition \ref{mhz} below) and
to explore their applications in related Hardy spaces.
These mixed-norm Herz spaces are naturally generalizations
of both the mixed Lebesgue spaces and the aforementioned mixed-norm Herz spaces
recently studied by Huang et al. \cite{mhz}.
Particularly, both the dual and the associate spaces of the mixed-norm Herz spaces
are obtained (see Theorem \ref{dii} and Corollary \ref{Eass}
below). One of our main results is the boundedness of
the Hardy--Littlewood maximal operators  on $\iihz(\rn)$.
To achieve it, we need first to investigate the Riesz--Thorin interpolation
theorem associated with the linear operators mapping $\nE$ continuously
into $(\iihz(\rn),l^{\infty})$ (see Theorem \ref{threeL}
below). Observe that the proof of the Riesz--Thorin
interpolation theorem in \cite{BP} strongly depends on
the density of simple functions in mixed Lebesgue
spaces and the construction of the related auxiliary functions.
However, the approach used in \cite{BP} for mixed Lebesgue
spaces is no longer feasible for $\iihz(\rn)$ because
simple functions may even not belong to mixed-norm Herz spaces
under consideration
and hence the auxiliary functions in \cite{BP} are also inapplicable
to mixed-norm Herz spaces. To overcome these obstacles, we use some special simple
functions supported in $A_{m}$ for some $m\in\nn$ (see Lemma
\ref{Am} below for the definition of the subset $A_{m}$ of $\rn$) and,
based on these special simple functions and
the norm structure of mixed-norm Herz spaces, we construct
a suitable auxiliary function satisfying the well-known three
lines theorem [see \eqref{aux} below]. The construction
of this auxiliary function is quite non-trivial and technical.

Moreover, we realize that Hardy spaces associated with
mixed-norm Herz spaces fall into the framework of Hardy spaces
associated with ball quasi-Banach function spaces which were first
introduced by Sawano et al. \cite{SHYY}.
Recall that Sawano et al. \cite{SHYY} established
various real-variable characterizations of the Hardy space
$H_{X}(\rn)$ associated with the ball quasi-Banach function
space $X$, respectively, in terms of atoms,
molecules, and Lusin area functions.
Recently, the real-variable theory of function
spaces associated with ball quasi-Banach function spaces is well
developed; see, for instance, \cite{is17,ins19,zywc,YYY,ft-bqb,Abs,rm-wyy,SHYY,wk-1}
for Hardy spaces associated with ball quasi-Banach function spaces,
\cite{Ho4} for Erd\'elyi--Kober fractional integral operators on ball
Banach function spaces, and \cite{Ho3,sdh,S18} for other function spaces
associated with ball quasi-Banach function spaces. Applying
the existing results of $H_{X}(\rn)$, we establish a
complete real-variable
theory of Hardy spaces associated with mixed-norm Herz spaces.

To the best of our knowledge, since mixed-norm Herz spaces
own finer structure than mixed Lebesgue spaces, more
applications of mixed-norm Herz spaces in harmonic analysis and
partial differential equations are expectable.

The remainder of this article is organized as follows.

Section \ref{s2} is devoted to studying fundamental properties of
mixed-norm Herz spaces. We begin with introducing mixed-norm Herz spaces
$\iihz(\rn)$. In Subsection \ref{s2.1}, we establish both the dual and
the associate spaces of $\iihz(\rn)$.  In Subsection \ref{s2.2}, we
show that mixed-norm Herz spaces are ball
quasi-Banach function spaces (see Proposition \ref{bqb-2} below)
and, via using some ideas from the extrapolation
theorem, we investigate the Fefferman--Stein vector-valued
maximal inequality associated with the maximal operator
$M_{n}$ for the $n$-th variable
(see Definition \ref{itmax} below for its definition)
on the ball quasi-Banach function space $X$ under the
assumption that the Hardy--Littlewood maximal $M_{n}$ is
bounded on the associate space $(X^{1/p})'$ of $X^{1/p}$, where $p\in(1,\infty)$
(see Lemma \ref{ax-Fs} below).

The purpose of Section \ref{s3} is to build the Riesz--Thorin
interpolation theorem on mixed-norm Herz spaces. To achieve this, we first introduce
a special mixed-norm space $(\iihz(\rn),l^{r})$ which is called the
$l^{r}$-Herz type mixed-norm space (see Definition \ref{EL} below),
and study its associate space. Then we construct
two special functions based on simple
functions supported in $A_{m}$ [see \eqref{Fzpsi} and
\eqref{Gzphi} below] and investigate their basic
properties. Using these properties and the well-known
three lines theorem, we then obtain the Riesz--Thorin interpolation
theorem associated with the linear operators mapping $\nE$ continuously into
$(\iihz(\rn),l^{\infty})$.

In Section \ref{s4}, we establish the boundedness of the classical
Hardy--Littlewood maximal operator on the mixed-norm Herz space $\iihz(\rn)$
(see Corollary \ref{HL-2} below). To this end, using both the associate spaces of
$l^{\infty}$-Herz type mixed-norm spaces and the
Riesz--Thorin interpolation theorem built in Section \ref{s3},
we first obtain the boundedness of the maximal
operator $M_{n}$ on $H(\rn)$ [the class of some special simple functions;
see \eqref{Hsim} below]. Via the density of
$H(\rn)$ in $\iihz(\rn)$ (see Proposition \ref{E-dense}
below), we then extend this boundedness to the whole $\iihz(\rn)$.

In Section \ref{s5}, we apply all the results obtained in
Sections \ref{s2}, \ref{s3}, and \ref{s4} to mixed-norm Herz--Hardy spaces $H\iihz(\rn)$
(see Definition \ref{hd-2} below) and obtain their
Littlewood--Paley
characterizations, respectively, in terms of the Lusin
area function,
the Littlewood--Paley $g$-function, and the
Littlewood--Paley
$g_{\lambda}^{*}$-function in Subsection \ref{s5.1}, and the
boundedness
of Calder\'{o}n--Zygmund operators on mixed-norm Herz--Hardy spaces
in Subsection \ref{s5.2}. Moreover, some other real-variable
characterizations are mentioned in Subsection \ref{s5.3}
without to giving the details to limit the length of this article.

Finally, we make some conventions on notation. Let
$\nn:=\{1,2,\ldots\},$ $\zz_{+}:=\nn \cup
\{0\}$, $\zz_{+}^n:=(\zz_{+})^n$, and $\zz$ denote the
set of all integers. We denote
by $C$ a
\emph{positive constant}
which is
independent of the main parameters, but may vary
from line to
line.  We also use $C_{(\alpha,\beta,\ldots)}$ to denote a
positive constant depending on the indicated parameters $\alpha$, $\beta,$\,$\ldots$\,.
The notation $f\ls g$ means $f\leq  Cg$
and, if $f\ls g\ls f$, we then write $f\sim g$. For
any $q\in
[1,\infty]$, we denote by $q'$ its \emph{conjugate
index}, namely,
$1/q+1/q'=1$. Moreover, for any
$\vec{q}:=(q_{1},\ldots,q_{n})\in
[1,\infty]^{n}$, we denote by $\vec{q}':=(q_{1}',\ldots,q_{n}')$
its conjugate vector, namely,
$1/\vec{q}+1/\vec{q}'=1$ which means that $1/q_{i}+1/q_{i}'=1$ for any
$i\in\{1,\ldots,n \}$. For any $x\in\rn$,
we denote by $|x|$ the $n$-dimensional
\emph{Euclidean metric} of $x$.
For any $s\in\rr$, we denote by $\lceil s \rceil$
the \emph{smallest integer not less than} $s$.
For any set
$E\subset \rn$, we denote by $E^{\com}$ the set
$\rn\setminus E$,
by $\one_{E}$ its \emph{characteristic function},
by $\emptyset$
the empty set. In addition, the \emph{space}
$\MM(\rn)$ denotes the set of all measurable
functions on $\rn$. A \emph{simple function} on $\rn$ is a
finite summation $f=\sum_{k=1}^{N}a_{k}\one_{E_{k}}$,
where $N\in\nn$ and, for any $k\in\{1,\ldots,N\}$, $E_{k}\subset \rn$ is a measurable set
of finite measure, and  $a_{k}$ is a complex constant.
We denote by $\supp (f)$ the set
$\{x\in\rn:\ f(x)\neq 0\}$
and by $Q(x,r)$ the \emph{cube} with
center $x\in\rn$, edge length $r$, and all edges parallel
to the coordinate axes.
A \emph{ball} $B(x,r)$ in $\rn$ centered at
$x\in\rn$ with the
radius $r$ is defined by setting
\begin{equation*}
B(x,r):=\{y\in\rn:\ |x-y|<r\}.
\end{equation*}
Moreover, we let
\begin{equation}\label{Eqball}
\BB:=\lf\{B(x,r): \ x\in\rn~\text{and}~r
\in(0,\infty)\right\}.
\end{equation}	
We also use $\mathbf{0}$ to denote the origin of $\rn$ and
$\epsilon\to 0^{+}$ that $\epsilon\in(0,\infty)$ and
$\epsilon \to 0$.

\section{Fundamental Properties of Mixed-Norm Herz Space\label{s2}}

In this section, we first introduce a new class of
mixed-norm Herz spaces and
investigate their fundamental
properties. We obtain both the dual
spaces and the associate spaces of mixed-norm Herz spaces
in Subsection \ref{s2.1}, and then we show that mixed-norm Herz
spaces are ball quasi-Banach function spaces in
Subsection \ref{s2.2}. Moreover, we investigate the
Fefferman--Stein vector-valued maximal inequality
of the maximal operator $M_{n}$ on the ball quasi-
Banach function space.  Let us begin with the
concept of
mixed Lebesgue spaces on $\rn$
from \cite{BP}. Recall that $\MM(\rn)$ denotes the set of all
measurable functions on $\rn$.
\begin{definition}\label{mix-L}
Let $\vec{q}=(q_{1},\ldots,q_{n})\in (0,
\infty]^{n}$.
The \emph{mixed Lebesgue space} $\lq(\rn)$ is
defined to be the set of all the functions
$f\in \MM(\rn)$
such that
\begin{equation*}
\|f\|_{\lq(\rn)}:=\left\{\int_{\rr}
\cdots\left[\int_{\rr}|f(x_{1},\ldots,x_{n})|
^{q_{1}}
dx_{1}\right]^{\frac{q_{2}}{q_{1}}}\cdots dx_{n}
\right\}^
{\frac{1}{q_{n}}}<\infty
\end{equation*}
with the usual modifications made when $q_{i}=
\infty$
for some $i\in \{1,\ldots,n\}$.
\end{definition}

Let us also recall the concept of classical
Herz spaces on $\rn$;
see \cite[Definition 1.1.1]{HZ}.

\begin{definition}	\label{chz}
Let $\ p,\ q\in(0,\infty]$ and $\alpha\in\rr$.
The \emph{Herz space}
$\ohz$ is defined to be the set of all the
functions $f\in \MM(\rn)$
such that
\begin{equation*}
\|f\|_{\ohz}:=\left[\sum_{k\in \zz}2^{kp\alpha}
\|f\one_{B(\mathbf{0},2^{k})\setminus B(\mathbf{0},
2^{k-1})}
\|^{p}_{L^{q}(\rn)}\right]^{\frac{1}{p}}<\infty
\end{equation*}
with the usual modifications made when
$p=\infty$ or $q=\infty$.
\end{definition}
Now, we introduce a new class of mixed-norm Herz
spaces on $\rn$.
\begin{definition}\label{mhz}
Let $\vec{p}:=(p_{1},\ldots,p_{n}),$ $\vec{q}:=(q_{1},\ldots,q_{n})
\in(0,\infty]^{n}$ and $\vec{\alpha}:=
(\alpha_{1},\ldots,
\alpha_{n})\in\rn$.
The \emph{mixed-norm Herz space} $\iihz(\rn)$ is
defined to be the
set of all the functions
$f\in \MM(\rn)$ such
that
\begin{align*}
\|f\|_{\iihz(\rn)}:\,=&\,\left\{\sum_{k_{n} \in
\zz}2^{k_{n}
p_{n}\alpha_{n}}
\left[\int_{R_{k_{n}}}\cdots\left\{\sum_{k_{1}
\in \zz}
2^{k_{1}p_{1}\alpha_{1}}\right.\right.\right.\\
&\,\left.\left.\left.\times\left[\int_{R_{k_{1}}}|f(x_{1},
\ldots,x_{n})|
^{q_{1}}\,dx_{1} \right]^{\f{p_{1}}{q_{1}}}
\right\}^{\f{q_{2}}
{p_{1}}}\cdots
\,dx_{n}\right]^{\f{p_{n}}{q_{n}}}\right\}
^{\f{1}{pn}}\\
=&:\,\left\|\cdots\|f\|_{\ky(\rr)}\cdots\right\|_{\kn(\rr)}
<\infty
\end{align*}
with the usual modifications made when $p_{i}
=\infty$ or
$q_{j}=\infty$ for some $i,$ $j\in\{1,\ldots,n \}$,
where
we denote by $\|\cdots\|f\|_{\ky(\rr)}
\cdots\|_{\kn(\rr)}$ the
norm obtained after taking successively
the $\ky(\rr)$-norm
to $x_{1}$, the $\ke(\rr)$-norm to $x_{2}$,
$\ldots$, and
the $\kn(\rr)$-norm to $x_{n}$.
Here and thereafter, for any $i\in \{1,
\ldots,n\}$ and
$k_{i}\in\zz$, let
\begin{equation*}
R_{k_{i}}:=(-2^{k_{i}},2^{k_{i}})\setminus
(-2^{k_{i}-1},2^
{k_{i}-1}).
\end{equation*}	
\end{definition}
\begin{remark}\label{E*}
Note that, in Definition \ref{mhz}, when
$\vec{p}=\vec{q}:=
(p,\ldots,p)\in (0,\infty]^{n}$ and
$\vec{\alpha}:=\textbf{0}$, the mixed-norm Herz spaces $\iihz(\rn)$ coincide with
Lebesgue spaces
$L^{p}(\rn)$. Moreover, when
$p_{i}=q_{i}$ for any
$i\in\{1,\ldots,n\}$, and $\vec{\alpha}:=\textbf{0}$,
the mixed-norm Herz
spaces $\iihz(\rn)$ coincide with the mixed
Lebesgue
spaces $L^{\vec{q}}(\rn)$ defined in Definition
\ref{mix-L}.
Besides, when $\vec{p}:=(1,\ldots,1),$ $ \vec{q}
\in[1,\infty]^{n}$,
and $\vec{\alpha}:=\f{1}{\vec{q}'}$, the mixed-norm Herz space
$\iihz(\rn)$ is
just the  Herz space $\dot{E}_{\vec{q}}^{*}(\rn)$
in \cite[Definition 3.8]{mhz}.
\end{remark}

The concept of the $r$-convexification of $\iihz(\rn)$
is defined as follows.
\begin{definition}\label{cvx-2}
Let $\vec{p}$, $\vec{q}\in(0,
\infty]^{n},$ $\vec{\alpha}\in\rn$, and $r\in(0,\infty)$. The
\emph{r-convexification}
$[\iihz(\rn)]^{r}$ of $\iihz(\rn)$ is defined by
setting
$$[\iihz(\rn)]^{r}:=\left\{f\in  \MM(\rn): \
|f|^{r} \in \iihz(\rn)\right\},$$
equipped
with the quasi-norm
$$\|f\|_{[\iihz(\rn)]^{r}}:=
\left\|\,|f|^{r}\right\|_{\iihz(\rn)}^{1/r}.$$
\end{definition}
The mixed-norm Herz spaces $\iihz(\rn)$ enjoy the following
property
on convexification.
\begin{lemma}\label{hzcox-2}
Let $\vec{p}$, $\vec{q}\in(0,
\infty]^{n},\
\vec{\alpha}\in\rn$, and $r\in (0,\infty)$. Then
$f\in
[\iihz(\rn)]^{r}$ if and only
if $f\in\riihz(\rn)$. Moreover, for any $f\in \MM(\rn)$,
\begin{equation*}
\|f\|_{[\iihz(\rn)]^{r}}=\|f\|_{\riihz(\rn)}.
\end{equation*}
\end{lemma}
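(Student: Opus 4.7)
The plan is a direct computation from Definitions \ref{mhz} and \ref{cvx-2}. Both quantities $\|f\|_{[\iihz(\rn)]^{r}}$ and $\|f\|_{\riihz(\rn)}$ unfold into iterated sum--integral expressions with the same structure, and the key observation is that the parameter substitution $(\vec{p},\vec{q},\vec{\alpha})\mapsto (r\vec{p},r\vec{q},\vec{\alpha}/r)$ in the definition of $\riihz(\rn)$ produces only cancellations of $r$ in every internal ratio and in every weight factor, so no new factor of $r$ appears except at the innermost integrand and at the outermost power.

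First I would invoke Definition \ref{cvx-2} to write $\|f\|_{[\iihz(\rn)]^{r}}=\||f|^{r}\|_{\iihz(\rn)}^{1/r}$ and expand the right-hand side using Definition \ref{mhz} applied to $|f|^{r}$. Then the innermost integral becomes $[\int_{R_{k_{1}}}|f(x_{1},\ldots,x_{n})|^{rq_{1}}\,dx_{1}]^{p_{1}/q_{1}}$, while the outer sums and integrals carry precisely the exponents $p_{i}\alpha_{i}$, $p_{i}/q_{i}$, $q_{i+1}/p_{i}$, and finally $1/p_{n}$, and the overall $1/r$-th power sits outside everything.

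Next I would expand $\|f\|_{\riihz(\rn)}$ directly from Definition \ref{mhz} with the parameters $(\vec{\alpha}/r,r\vec{p},r\vec{q})$. One has the simplifications $(rp_{i})/(rq_{i})=p_{i}/q_{i}$ and $(rq_{i+1})/(rp_{i})=q_{i+1}/p_{i}$ for every admissible $i$, and each weight satisfies $(rp_{i})(\alpha_{i}/r)=p_{i}\alpha_{i}$. The only surviving effect of the substitution is that the innermost integrand carries the exponent $rq_{1}$ and that the outermost exponent becomes $1/(rp_{n})=\frac{1}{r}\cdot\frac{1}{p_{n}}$, which is exactly the form obtained in the previous paragraph. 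Hence the two norms coincide, and membership in one space is equivalent to membership in the other.

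The main obstacle is purely the bookkeeping of the iterated norm structure rather than any genuine analytic step. The case in which some $p_{i}$ or $q_{j}$ equals $\infty$ is handled in the same way, after replacing the corresponding sum or integral by an essential supremum and using the identity $(\sup g)^{1/r}=\sup g^{1/r}$ for nonnegative $g$ together with the fact that $t\mapsto t^{1/r}$ is continuous and strictly increasing on $[0,\infty)$.
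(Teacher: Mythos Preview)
Your proposal is correct and follows essentially the same approach as the paper: both arguments unfold Definitions \ref{mhz} and \ref{cvx-2} and track how the exponents interact under the substitution $(\vec{\alpha},\vec{p},\vec{q})\mapsto(\vec{\alpha}/r,r\vec{p},r\vec{q})$. The only stylistic difference is that the paper proceeds one Herz layer at a time (first reducing $\||f|^{r}\one_{R_{k_{1}}}\|_{L^{q_{1}}}$ to $\|f\one_{R_{k_{1}}}\|_{L^{rq_{1}}}^{r}$, then $\||f|^{r}\|_{\ky}$ to $\|f\|_{\rky}^{r}$, etc., with explicit case splits for $q_{i}=\infty$ and $p_{i}=\infty$), whereas you unfold the entire iterated expression at once and observe the global cancellations $(rp_{i})/(rq_{i})=p_{i}/q_{i}$, $(rq_{i+1})/(rp_{i})=q_{i+1}/p_{i}$, $(rp_{i})(\alpha_{i}/r)=p_{i}\alpha_{i}$.
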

\begin{proof}
Let $\vec{p}:=(p_{1},\ldots,p_{n})$, $\vec{q}:=(q_{1},\ldots,q_{n})\in(0,
\infty]^{n},$ $
\vec{\alpha}:=(\alpha_{1},\ldots,\alpha_{n})\in\rn$, and $r\in (0,\infty)$.
For any given $r\in(0,\infty)$,
we first
consider the following two cases on $q_{1}$. If
$q_{1}\in(0,\infty)$,
then, we easily conclude that, for any $f\in\MM(\rn)$
and $x_{2},\ldots,x_{n}\in\rr$,
\begin{equation}\label{ncq}
\left\||f(\cdot,x_{2},\ldots,x_{n})|^{r}\one_{R_{k_{1}}}
(\cdot)\right\|_{L^{q_{1}}(\rr)}=\|f(\cdot,x_{2},\ldots,x_{n})
\one_{R_{k_{1}}}
(\cdot)\|_{L^{rq_{1}}(\rr)}^{r}.
\end{equation}
If $q_{1}=\infty$, then,
for any $f\in\MM(\rn)$ and $x_{2},\ldots,x_{n}\in\rr$,
$$\left\||f(\cdot,x_{2},\ldots,x_{n})|^{r}\one_{R_{k_{1}}}
(\cdot)\right\|_{L^{\infty}(\rr)}= \|f(\cdot,x_{2},\ldots,x_{n})
\one_{R_{k_{1}}}
(\cdot)\|_{L^{\infty}(\rr)}^{r}.$$
From this, we further
deduce that,
for any given $q_{1}\in(0,\infty]$, \eqref{ncq}
holds true. Then, for any given $r\in(0,\infty)$ and $q_{1}\in (0,\infty]$,
we consider the following two cases on $p_{1}$.
If $p_{1}\in (0,\infty)$, then, by Definition \ref{chz},
we conclude
that, for any $f\in\MM(\rn)$ and $x_{2},\ldots,x_{n}\in\rr$,
\begin{align*}
\left\||f(\cdot,x_{2},\ldots,x_{n})|^{r}\right\|_{\ky(\rr)}&=
\left\{\sum_{k_{1}
\in\zz}2^{k_{1}p_{1}\alpha_{1}}\left\||f(\cdot,x_{2},
\ldots,x_{n})|^{r}
\one_{R_{k_{1}}}(\cdot)\right\|_{L^{q_{1}}(\rr)}^{p_{1}}
\right\}^{\f{1}{p_{1}}}\\
&=\left\{\sum_{k_{1}\in\zz}2^{k_{1}rp_{1}
\f{\alpha_{1}}{r}}
\left\|f(\cdot,x_{2},\ldots,x_{n})\one_{R_{k_{1}}}(\cdot)\right\|
_{L^{rq_{1}}(\rr)}
^{rp_{1}} \right\}^{\f{r}{rp_{1}}}\\
&=\|f(\cdot,x_{2},\ldots,x_{n})\|^{r}_{\rky(\rr)}.
\end{align*}
If $p_{1}=\infty$, then, for any $x_{2},\ldots,x_{n}
\in\rr$, we have
\begin{align*}
\left\||f(\cdot,x_{2},\ldots,x_{n})|^{r}\right\|_{\ky(\rr)}&=
\sup_{k_{1}\in\zz} \left\{2^{k_{1}\alpha}\|f(\cdot,
x_{2},\ldots,x_{n})\|
^{r}_{L^{rq_{1}}(\rr)}
\right\}\\
&= \left[\sup_{k_{1}\in\zz}\left\{2^{k_{1}\f{\alpha}{r}}
\|f(\cdot,x_{2},\ldots,x_{n})\|_{L^{rq_{1}}(\rr)}
\right\}\right]^{r}\\
&=\|f(\cdot,x_{2},\ldots,x_{n})\|^{r}_{\rky(\rr)}.
\end{align*}
By this, we conclude that, for any given
$p_{1}$, $q_{1}\in (0,\infty]$ and $r\in(0,\infty)$, and
for any $f\in\MM(\rn)$ and $x_{2},\ldots,x_{n}\in\rr$,
$$\left\||f(\cdot,x_{2},\ldots,x_{n})
|^{r}\right\|_{\ky(\rr)}=
\|f(\cdot,x_{2},\ldots,x_{n})\|_{\rky(\rr)}^{r}.$$
Using Definition \ref{mhz},
similarly to the above estimation, we further obtain, for any $f\in\MM(\rn)$,
$$\left\||f|^{r}\right\|_{\iihz(\rn)}=\|f\|^{r}_{\riihz(\rn)}.$$
This finishes the proof of Lemma \ref{hzcox-2}.
\end{proof}
The following lemma is a direct consequence of the dominated
convergence theorem.
\begin{lemma}\label{donn}
Let $\vec{p}$, $\vec{q}\in(0,\infty)^{n}$
and $\vec{\alpha}\in\rn$. For any given $G\in\iihz(\rn)$,
any $\{f_{m} \}
_{m\in\nn}\subset \MM(\rn)$, and $f\in \MM(\rn)$, if
$|f_{m}|\leq |G|$
and $f_{m}\rightarrow f$ almost everywhere on $\rn$ as $m\to\infty$, then
\begin{equation}
\lim_{m\to \infty}\|f_{m}\|_{\iihz(\rn)}=\|f\|_{\iihz(\rn)}.
\end{equation}
\end{lemma}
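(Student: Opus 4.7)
The plan is to apply the dominated convergence theorem (DCT) iteratively through the $n$ nested layers of the mixed norm $\|\cdot\|_{\iihz(\rn)}$, alternating between the classical DCT on each annulus $R_{k_j}$ and the DCT for counting measure governing the weighted $\zz$-sum over $k_j$. Write $\vec{p}:=(p_{1},\ldots,p_{n})$, $\vec{q}:=(q_{1},\ldots,q_{n})$, and $\vec{\alpha}:=(\alpha_{1},\ldots,\alpha_{n})$. Since $|f_m|\le|G|$ and $f_m\to f$ almost everywhere on $\rn$, the pointwise limit also satisfies $|f|\le|G|$ almost everywhere, so in particular $f\in\iihz(\rn)$ and both sides of the desired identity are finite.

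At the innermost layer, fix $(x_{2},\ldots,x_{n})\in\rr^{n-1}$ in the full-measure set on which $\|G(\cdot,x_{2},\ldots,x_{n})\|_{\ky(\rr)}<\infty$ and on which $f_m(\cdot,x_{2},\ldots,x_{n})\to f(\cdot,x_{2},\ldots,x_{n})$ almost everywhere. For each $k_{1}\in\zz$, applying the classical DCT on $R_{k_1}$ against the dominant $|G(\cdot,x_{2},\ldots,x_{n})|^{q_{1}}$ gives
\begin{equation*}
\int_{R_{k_{1}}}|f_m(x_{1},x_{2},\ldots,x_{n})|^{q_{1}}\,dx_{1}\longrightarrow\int_{R_{k_{1}}}|f(x_{1},x_{2},\ldots,x_{n})|^{q_{1}}\,dx_{1}.
\end{equation*}
Raising to the $p_1/q_1$-th power, weighting by $2^{k_{1}p_{1}\alpha_{1}}$, and summing over $k_{1}\in\zz$, each summand is dominated by the corresponding $G$-summand, whose total equals $\|G(\cdot,x_{2},\ldots,x_{n})\|^{p_1}_{\ky(\rr)}<\infty$; DCT on counting measure then produces $\|f_m(\cdot,x_{2},\ldots,x_{n})\|_{\ky(\rr)}\to\|f(\cdot,x_{2},\ldots,x_{n})\|_{\ky(\rr)}$, with common pointwise dominant $\|G(\cdot,x_{2},\ldots,x_{n})\|_{\ky(\rr)}$.

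I would then iterate this two-step procedure for $j=2,3,\ldots,n$, at each stage first invoking the classical DCT on $x_j\in R_{k_j}$ against the $q_j$-th power of the $(j-1)$-th partial norm of $|G|$, and then invoking DCT on counting measure in $k_j\in\zz$ against the weighted $\zz$-sum of the same partial norm. Because the recursive definition of $\|\cdot\|_{\iihz(\rn)}$ together with Tonelli-type reasoning guarantees that the partial norms of $|G|$ are finite almost everywhere on the remaining variables and lie in the appropriate $L^{q_{j+1}}(R_{k_{j+1}})$ and $l^{p_{j+1}}$ spaces appearing in the next layer, each application of DCT is legitimate; a straightforward induction on $j$ then delivers $\|f_m\|_{\iihz(\rn)}\to\|f\|_{\iihz(\rn)}$. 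The only non-trivial point in this argument is the bookkeeping that the partial-norm dominants inherited from $G$ propagate correctly through the mixed-norm structure, but this is precisely what the hypothesis $G\in\iihz(\rn)$ encodes when the iterated norm is unpacked; once this integrability chain is observed, no further obstacle is expected.
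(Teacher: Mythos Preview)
Your proposal is correct and follows essentially the same approach as the paper: iterate the dominated convergence theorem through the $n$ layers of the mixed-norm, alternating between the classical DCT on each $R_{k_j}$ and the DCT for counting measure on the weighted sum over $k_j\in\zz$, using at each stage the corresponding partial norm of $|G|$ as dominant. The paper makes explicit the Fubini-type slicing argument (that a.e.\ convergence on $\rn$ yields a.e.\ convergence on almost every slice) and the fact that $G\in\iihz(\rn)$ forces a.e.\ finiteness of the partial norms, which you subsume under ``Tonelli-type reasoning'' and ``the hypothesis $G\in\iihz(\rn)$ encodes when the iterated norm is unpacked''; these are exactly the bookkeeping points, and your identification of them is accurate.
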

\begin{proof}
Let $\vec{p}:=(p_{1},\ldots,p_{n})$, $\vec{q}:=(q_{1},\ldots,q_{n})\in(0,
\infty)^{n}$ and $
\vec{\alpha}:=(\alpha_{1},\ldots,\alpha_{n})\in\rn$. Since $f_{m}\to f$ almost
everywhere on $\rn$, we deduce that
\begin{equation*}
\left|\left\{x\in\rn:\ f_{m}(x)\nrightarrow f(x)\ \text{as}\ m\to \infty
\right\} \right|=0.
\end{equation*}
By this and \cite[p.\,69,\ Exercise 49]{folland}, we conclude that,
for almost every given
$x_{2},\ldots,x_{n}\in\rr$,
\begin{equation*}
\left|\left\{x_{1}\in\rr:\ f_{m}(x)\nrightarrow f(x)\ \text{as}\ m\to \infty
\right\} \right|=0
\end{equation*}
and hence
$f_{m}(\cdot,x_{2},\ldots,x_{n})\rightarrow
f(\cdot,x_{2},\ldots,x_{n})$ almost everywhere on $\rr$ as $m\to \infty$.
Moreover, by $G\in\iihz(\rn)$ and Definition \ref{mhz}, we conclude that,
for any $k_{n}\in\zz$,
\begin{equation*}
	\left\|\left\|G \right\|_{\dot{E}_{\vec{q}_{n-1}}^{\vec{\alpha}_{n-1},
	\vec{p}_{n-1}}(\rr^{n-1})} \one_{R_{k_{n}}}\right\|_{L^{q_{n}}(\rr)}<\infty
\end{equation*}
and hence, for almost every $x_{n}\in\rr$, $\|G(\cdot,x_{n})\|_{\dot{E}_{\vec{q}_{n-1}}^{\vec{\alpha}_{n-1},
\vec{p}_{n-1}}(\rr^{n-1})}<\infty$. Similarly to this estimation, we obtain, for
any $k_{1}\in\zz$ and almost every $x_{2},\ldots,x_{n}\in\rr$,
$$\|G(\cdot,x_{2},\ldots,x_{n})\one_{R_{k_{1}}}(\cdot)\|_{L^{q_{1}}(\rr)}<\infty.$$
From this, $|f_{m}
\one_{R_{k_{1}}}|\leq |G\one_{R_{k_{1}}}|$ for
any $m\in\nn$ and $k_{1}\in\zz$, and the dominated convergence theorem,
it follows that, for any $k_{1}\in\zz$ and
almost every $x_{2},\ldots,x_{n}\in\rr$,
\begin{equation}\label{cll2}
\lim_{m\to \infty}\left\|f_{m}(\cdot,x_{2},\ldots,x_{n})
\one_{R_{k_{1}}}(\cdot)
\right\|_{L^{q_{1}}(\rr)}
=\left\|f(\cdot,x_{2},\ldots,x_{n})\one_{R_{k_{1}}}(\cdot)
\right\|_{L^{q_{1}}(\rr)}.
\end{equation}
Note that, for any $k_{1}\in\zz$ and $x_{2},\ldots,x_{n}\in\rr$,
$$2^{k_{1}p_{1}\alpha_{1}}
\left\|f_{m}(\cdot,x_{2},\ldots,x_{n})
\one_{R_{k_{1}}}(\cdot)\right\|^{p_{1}}_{L^{q_{1}}(\rr)}\leq 2^{k_{1}p_{1}\alpha_{1}}\left\|G(\cdot,x_{2},\ldots,x_{n})
\one_{R_{k_{1}}}(\cdot)\right\|
^{p_{1}}_{L^{q_{1}}(\rr)}.$$
By this, \eqref{cll2}, and
the dominated
convergence theorem, we find that, for almost every
$x_{2},\ldots,x_{n}\in\rr$,
\begin{equation*}
\lim_{m\to \infty}\|f_{m}(\cdot,x_{2},\ldots,x_{n})
\|_{\ky(\rr)}
=\|f(\cdot,x_{2},\ldots,x_{n})\|_{\ky(\rr)}.
\end{equation*}
Similarly to this, from Definition \ref{mhz}, it
follows that
\begin{equation*}
\lim_{m\to \infty}\|f_{m}\|_{\iihz(\rn)}=\|f\|
_{\iihz(\rn)},
\end{equation*}
which completes the proof of Lemma \ref{donn}.
\end{proof}
As an immediate consequence of Lemma \ref{donn},
we have the following conclusion.
\begin{proposition}\label{ab-2}
Let $\vec{p}$, $\vec{q}\in(0,
\infty)^{n}$
and $\vec{\alpha}\in\rn$. Then, for any
$f\in\iihz(\rn)$ and any sequence
$\{E_{j} \}_{j\in\nn}$ of measurable sets of
$\rn$ satisfying
that $\one_{E_{j}}\rightarrow 0$
almost everywhere as $j\rightarrow \infty$,
$\|f\one_{E_{j}}\|
_{\iihz(\rn)}\rightarrow 0$ as $j\rightarrow
\infty$.
\end{proposition}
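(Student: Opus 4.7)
The plan is to deduce this directly from Lemma \ref{donn} by a standard dominating-function argument. Specifically, for each $j\in\nn$ I set $f_j:=f\one_{E_j}$ and take $G:=f$. Then clearly $|f_j|\le |G|$ pointwise on $\rn$, and $G\in\iihz(\rn)$ by hypothesis.

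Next I would verify that $f_j\to 0$ almost everywhere on $\rn$ as $j\to\infty$. This follows immediately from the assumption $\one_{E_j}\to 0$ a.e.: on the full-measure set where $\one_{E_j}(x)\to 0$, we have $f_j(x)=f(x)\one_{E_j}(x)\to 0$ as $j\to\infty$.

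With these two facts in hand, Lemma \ref{donn} (whose hypotheses $\vec{p},\vec{q}\in(0,\infty)^{n}$ match the hypotheses of the present proposition) applies to the sequence $\{f_j\}_{j\in\nn}$ with limit function $0$ and dominating function $G=f$. It yields
\begin{equation*}
\lim_{j\to\infty}\|f\one_{E_j}\|_{\iihz(\rn)}
=\lim_{j\to\infty}\|f_j\|_{\iihz(\rn)}
=\|0\|_{\iihz(\rn)}=0,
\end{equation*}
which is the desired conclusion.

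There is essentially no obstacle here, since all the genuine work — the iterated application of the classical dominated convergence theorem to recover almost-everywhere convergence on each slice and to pass the limit through the nested $L^{q_i}$-norms and $\ell^{p_i}$-sums in the definition of $\iihz(\rn)$ — has already been carried out in the proof of Lemma \ref{donn}. The only point worth flagging is that the finiteness assumption $\vec{p},\vec{q}\in(0,\infty)^{n}$ is essential: it is what permits the dominated convergence theorem in the outer sums and integrals, and without it the corresponding $L^\infty$ or $\ell^\infty$ factors would generally fail to have the desired absolutely-continuous-norm property.
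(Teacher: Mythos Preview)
Your proof is correct and follows essentially the same route as the paper: set $g_j:=f\one_{E_j}$, observe $|g_j|\le|f|\in\iihz(\rn)$ and $g_j\to 0$ a.e., and apply Lemma~\ref{donn}. Your added remark on why $\vec{p},\vec{q}\in(0,\infty)^n$ is needed is a nice clarification beyond what the paper states.
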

\begin{proof}
Let all the symbols be as in the present
proposition.
For any $f\in\iihz(\rn)$ and any sequence
$\{ E_{j}\}
_{j\in\nn}$ of
measurable sets of $\rn$ satisfying that
$\one_{E_{j}}
\rightarrow 0$
almost everywhere as $j\rightarrow \infty$,
let $g_{j}(x):
=f(x)\one_{E_{j}}(x)$
for any $x\in\rn$ and for any given $j\in\nn$. Then, for any $j\in\nn$,
$|g_{j}|
\leq |f|\in\iihz(\rn)$
and $g_{j}\rightarrow 0$ almost everywhere as $j\to \infty$.
By this
and Lemma \ref{donn}, we conclude that, as $j\to \infty$,
\begin{equation*}
\|f\one_{E_{j}}\|_{\iihz(\rn)}=\|g_{j}\|_
{\iihz(\rn)}
\rightarrow 0,
\end{equation*}
which completes the proof of Proposition
\ref{ab-2}.
\end{proof}

\subsection{Dual Spaces of Mixed-Norm Herz Spaces}\label{s2.1}

We begin with the definition of dual spaces of Banach spaces.

\begin{definition}
Let $X$ be a Banach space equipped with the norm
$\|\cdot\|$. The vector
space of all continuous linear functionals
on $X$ is called the
\emph{dual space} of $X$, which is denoted
by $X^{*}$.
\end{definition}

Now, we establish the dual space of $\iihz(\rn)$.
\begin{theorem}\label{dii}
Let $\vec{p}$, $\vec{q}\in
[1,\infty)^{n}$ and
$\vec{\alpha}\in\rn$. Then the dual space
of $\iihz(\rn)$,
denoted by
$[\iihz(\rn)]^{*},$ is $\diihz(\rn)$ in the
following sense:
\begin{enumerate}
\item[$\mathrm{(i)}$] Let $g\in \diihz(\rn)$.
Then the
linear functional
\begin{equation}\label{fff}
J_{g}:f\rightarrow J_{g}(f):=\int_{\rn}
f(x)g(x)\,dx
\end{equation}
is bounded on $\iihz(\rn)$.
\item[$\mathrm{(ii)}$] Conversely,
any continuous linear functional on
$\iihz(\rn)$ arises as
in \eqref{fff}
with a unique $g\in\diihz(\rn)$.
\end{enumerate}
Moreover, $\|g\|_{\diihz(\rn)}=\|J_{g}\|_
{[\iihz(\rn)]^{*}}$.
\end{theorem}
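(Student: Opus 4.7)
The plan is to exploit the iterated structure of $\iihz(\rn)$: its norm is obtained by applying $n$ one-dimensional Herz norms successively, and on each one-dimensional dyadic annulus the norm collapses to a weighted $L^{q_i}$-norm. Everything reduces to an iterated H\"older pairing (for part (i)) and a piecewise application of the Benedek--Panzone mixed Lebesgue duality on the product annuli $\vec R_{\vec k} := R_{k_1}\times\cdots\times R_{k_n}$ (for part (ii)), patched by disjointness of these annuli.

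For part (i), fix $g\in\diihz(\rn)$ and $f\in\iihz(\rn)$. In the variable $x_1$, apply H\"older's inequality in $L^{q_1}/L^{q_1'}$ on each annulus $R_{k_1}$ and then H\"older in $\ell^{p_1}/\ell^{p_1'}$ on the dyadic sum in $k_1$ to obtain
\begin{equation*}
\int_{\rr}|f(x_1,\ldots,x_n)\,g(x_1,\ldots,x_n)|\,dx_1
\leq \|f(\cdot,x_2,\ldots,x_n)\|_{\ky(\rr)}\,
     \|g(\cdot,x_2,\ldots,x_n)\|_{\dky(\rr)}.
\end{equation*}
Iterating this pairing successively in $x_2,\ldots,x_n$ yields
\begin{equation*}
|J_g(f)|\leq \int_{\rn}|fg|\,dx
\leq \|f\|_{\iihz(\rn)}\,\|g\|_{\diihz(\rn)},
\end{equation*}
so $J_g\in[\iihz(\rn)]^{*}$ and $\|J_g\|_{[\iihz(\rn)]^{*}}\leq \|g\|_{\diihz(\rn)}$.

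For part (ii), given $T\in[\iihz(\rn)]^{*}$, observe that any $f$ supported in $\vec R_{\vec k}$ satisfies $\|f\|_{\iihz(\rn)}=\bigl(\prod_{i=1}^{n}2^{k_i\alpha_i}\bigr)\|f\|_{\lq(\vec R_{\vec k})}$, since the iterated dyadic sums collapse to a single term. Hence the restriction of $T$ to such $f$ is a bounded linear functional on $\lq(\vec R_{\vec k})$, and the Benedek--Panzone duality theorem produces a unique $g_{\vec k}\in L^{\vec q'}(\vec R_{\vec k})$ representing it. Since the annuli $\{\vec R_{\vec k}\}_{\vec k\in\zz^{n}}$ are pairwise disjoint, the pieces patch to a global locally integrable function $g:=\sum_{\vec k\in\zz^{n}}g_{\vec k}\one_{\vec R_{\vec k}}$, giving \eqref{fff} for every $f$ supported in a finite union of $\vec R_{\vec k}$. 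The density of such finitely supported functions in $\iihz(\rn)$ (a consequence of Proposition \ref{ab-2} combined with Lemma \ref{donn}, using $\vec p,\vec q\in[1,\infty)^{n}$) extends the representation to the whole space, and uniqueness of $g$ follows from the nondegeneracy of the pairing on characteristic functions of $\vec R_{\vec k}$.

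It remains to prove $\|g\|_{\diihz(\rn)}\leq \|T\|_{[\iihz(\rn)]^{*}}$, which, together with part (i), yields the norm equality. For any finite $S\subset\zz^{n}$ I would construct, on each $\vec R_{\vec k}$ with $\vec k\in S$, a H\"older extremizer $h_{\vec k}\in L^{\vec q}(\vec R_{\vec k})$ for the pairing with $g_{\vec k}$, and combine these with dyadic weights chosen to saturate, layer by layer, the $\ell^{p_i'}/\ell^{p_i}$ H\"older pairings in the definition of $\|g\|_{\diihz(\rn)}$. Testing $T$ against the resulting function and passing to the limit $S\nearrow\zz^{n}$ via Proposition \ref{ab-2} delivers the desired bound. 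The principal obstacle is precisely this extremizer construction: one must coordinate the $L^{q_i}/L^{q_i'}$ annular layers with the $\ell^{p_i}/\ell^{p_i'}$ dyadic-sum layers across all $n$ variables simultaneously. I would handle it by an inductive construction on the number of variables that mirrors the iterated H\"older bound of part (i), so that equality is attained at each successive layer up to an $\varepsilon$ loss that disappears on passing to the supremum.
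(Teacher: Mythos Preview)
Your part (i) is exactly Lemma \ref{mhr}, and your route to the \emph{existence} of the representing function $g$ in part (ii) is a legitimate alternative to the paper's: you invoke Benedek--Panzone duality on each product annulus $\vec R_{\vec k}=R_{k_1}\times\cdots\times R_{k_n}$, where indeed the mixed Herz norm collapses to a weighted $L^{\vec q}$ norm, whereas the paper instead builds a signed measure $\nu(E):=J(\one_E)$ on each $A_m$, checks absolute continuity, and applies Radon--Nikodym. Both produce the same locally integrable $g$; your version is a tidy shortcut.

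The gap is in the norm bound $\|g\|_{\diihz(\rn)}\le\|T\|$. Your plan reads as ``take the $L^{\vec q}/L^{\vec q'}$ extremizer $h_{\vec k}$ on each $\vec R_{\vec k}$, then attach scalar dyadic weights $\lambda_{\vec k}$ to saturate the $\ell^{p_i}/\ell^{p_i'}$ layers.'' This cannot succeed as stated, because the mixed Herz norm of $g$ is \emph{not} a function of the product-annulus norms $\{\|g\|_{L^{\vec q'}(\vec R_{\vec k})}\}_{\vec k}$: the inner $\ell^{p_1'}$-sum over $k_1$ sits \emph{inside} the $L^{q_2'}$-integral in $x_2$, and these do not commute unless $p_1'=q_2'$. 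Concretely, for $n=2$ the dual norm involves $\int_{R_{k_2}}\bigl(\sum_{k_1}2^{-k_1p_1'\alpha_1}\|g(\cdot,x_2)\one_{R_{k_1}}\|_{L^{q_1'}}^{p_1'}\bigr)^{q_2'/p_1'}dx_2$, so the $\ell^{p_1'}$-extremizing weight for the $k_1$-sum must depend on $x_2$, not merely on $\vec k$. A test function of the form $\sum_{\vec k}\lambda_{\vec k}h_{\vec k}$ with scalar $\lambda_{\vec k}$ and with $h_{\vec k}$ the Benedek--Panzone extremizer on $\vec R_{\vec k}$ therefore cannot realise the full dual norm. Moreover, the $L^{\vec q}$ extremizer $h_{\vec k}$ itself carries the wrong intermediate exponent: on $\vec R_{\vec k}$ it contains the factor $\|g(\cdot,x_2)\one_{R_{k_1}}\|_{L^{q_1'}}^{q_2'-q_1'}$, whereas the Herz extremizer needs $\|g(\cdot,x_2)\one_{R_{k_1}}\|_{L^{q_1'}}^{p_1'-q_1'}$ at that stage, multiplied by a further factor $\|g(\cdot,x_2)\|_{\dky}^{q_2'-p_1'}$ coming from the next layer.

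What actually works---and what the paper carries out in full in Lemma \ref{assnd} and then packages as Lemma \ref{fland}---is the fully nested construction your last sentence hints at: one alternates, from the outside in, an $\ell^{p_n'}$ weight in $k_n$, then an $L^{q_n'}$ extremizing factor in $x_n$ (namely $[E_{n-1}(\ell)(x_n)]^{q_n'-1}$), then an $\ell^{p_{n-1}'}$ weight in $k_{n-1}$ that depends on $x_n$, then an $L^{q_{n-1}'}$ factor in $x_{n-1}$ depending on $x_n$, and so on. The resulting $\widehat g_{1,\epsilon}(\ell)$ has, on each $\vec R_{\vec k}$, a shape genuinely different from the Benedek--Panzone extremizer. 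Your ``inductive construction on the number of variables'' is exactly the right idea, but it has to replace, not sit on top of, the product-annulus extremizers.
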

To show Theorem \ref{dii}, we need some preliminary
lemmas and propositions.
The following lemma is a direct consequence of
the H\"{o}lder inequality.
\begin{lemma}\label{mhr}
Let $\vec{p}$,
$\vec{q}\in[1,\infty]^{n}$ and
$\vec{\alpha}\in\rn$.
If $f,\ g\in\MM(\rn)$, then
\begin{equation}
\int_{\rn}|f(x)g(x)|\,dx\leq \|f\|_{\iihz(\rn)}
\|g\|
_{\diihz(\rn)}.
\end{equation}
\end{lemma}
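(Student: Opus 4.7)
The plan is to prove Lemma \ref{mhr} by iterated H\"older, proceeding one coordinate at a time from $x_1$ through $x_n$ and combining, at each stage, H\"older's inequality in $L^{q_i}$ on each dyadic shell $R_{k_i}$ with H\"older's inequality for $\ell^{p_i}$-sequences indexed by $k_i\in\zz$. This is a natural marriage of the one-variable Herz-space H\"older inequality with the mixed-norm H\"older inequality for $\lq(\rn)$ from \cite{BP}, and it mirrors the iterative structure of the norm in Definition \ref{mhz} exactly.

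First I would record the one-variable Herz-space H\"older inequality: for any $p,q\in[1,\infty]$, $\alpha\in\rr$, and $f,g\in\MM(\rr)$,
\begin{equation*}
\int_{\rr}|f(x)g(x)|\,dx\leq \|f\|_{\dot{K}^{\alpha,p}_{q}(\rr)}\|g\|_{\dot{K}^{-\alpha,p'}_{q'}(\rr)}.
\end{equation*}
The proof is standard: decompose $\rr=\bigcup_{k\in\zz}R_{k}$ (up to a null set), apply the classical H\"older inequality with exponents $q$ and $q'$ on each $R_{k}$, insert the trivial factor $2^{k\alpha}\cdot 2^{-k\alpha}=1$, and close with H\"older for $\ell^{p}$ and $\ell^{p'}$ over $k\in\zz$, with the usual sup-modifications at the endpoints $p=\infty$ or $q=\infty$. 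If either norm on the right is infinite the inequality is trivial, so I may assume both are finite.

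I would then iterate this bound through the coordinates via Fubini--Tonelli, in exactly the order in which the norms are taken in Definition \ref{mhz}. Applying the one-variable inequality to the innermost $x_1$-integral with parameters $(\alpha_1,p_1,q_1)$ and $(-\alpha_1,p_1',q_1')$ replaces $\int_{\rr}|fg|\,dx_1$ by the product $\|f(\cdot,x_2,\ldots,x_n)\|_{\ky(\rr)}\|g(\cdot,x_2,\ldots,x_n)\|_{\dky(\rr)}$. Applying the one-variable inequality again to the $x_2$-integration of this product with parameters $(\alpha_2,p_2,q_2)$ and $(-\alpha_2,p_2',q_2')$, and continuing in this way through $x_3,\ldots,x_n$, yields
\begin{equation*}
\int_{\rn}|f(x)g(x)|\,dx\leq \|f\|_{\iihz(\rn)}\|g\|_{\diinhz(\rn)},
\end{equation*}
which is the claim. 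The only care needed is bookkeeping: verifying at each coordinate $i$ that the H\"older pairings $(q_i,q_i')$ and $(p_i,p_i')$ are exactly those forced by $1/\vec{q}+1/\vec{q}'=1$ and $1/\vec{p}+1/\vec{p}'=1$, and that the endpoint cases $p_i=\infty$ or $q_i=\infty$ are handled with the customary sup-modifications. I do not expect a substantive technical obstruction, since the summations and integrations separate cleanly by Fubini--Tonelli and the iterated Herz norm is defined in precisely the order used by the iteration.
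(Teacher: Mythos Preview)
Your proposal is correct and takes essentially the same approach as the paper: decompose the $x_1$-integral over the shells $R_{k_1}$, apply H\"older in $L^{q_1}$ on each shell, insert $2^{k_1\alpha_1}\cdot 2^{-k_1\alpha_1}$, apply H\"older in $\ell^{p_1}$ over $k_1\in\zz$ to produce the product $\|f(\cdot,x_2,\ldots,x_n)\|_{\ky(\rr)}\|g(\cdot,x_2,\ldots,x_n)\|_{\dky(\rr)}$, and then iterate through $x_2,\ldots,x_n$ via Tonelli. The only cosmetic difference is that you package the first four steps as a standalone one-variable Herz H\"older inequality before iterating, whereas the paper writes them out inline for the first coordinate and then says ``repeating this process''.
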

\begin{proof}
Let $\vec{p}:=(p_{1},\ldots,p_{n})$,
$\vec{q}:=
(q_{1},\ldots,q_{n})\in[1,\infty]^{n}$ and
$\vec{\alpha}:=(\alpha_{1},\ldots,\alpha_{n})\in\rn$.
From the Tonelli theorem, the H\"{o}lder inequality,
and Definition
\ref{mhz}, it follows that
\begin{align*}
&\int_{\rn}|f(x)g(x)|\,dx\\
&\quad =
\int_{\rr}\cdots\int_{\rr}\sum_{k_{1}\in\zz}\int_{R_{k_{1}}}
|f(x_{1},\ldots,x_{n})
g(x_{1},\ldots,x_{n})|\,dx_{1}dx_{2}\cdots dx_{n}\\
&\quad \leq
\int_{\rr}\cdots\int_{\rr}\sum_{k_{1}\in\zz}2^{k_{1}\alpha_{1}}
\|f(\cdot,x_{2},\ldots,x_{n})
\one_{R_{k_{1}}}(\cdot)\|_{L^{q_{1}}(\rr)}\\
&\qquad\times 2^{-k_{1}
\alpha_{1}} \|g(\cdot,x_{2},\ldots,x_{n})\one_{R_{k_{1}}}(\cdot)
\|_{L^{q_{1}'}(\rr)}\,dx_{2}\cdots dx_{n}\\
&\quad\leq \int_{\rr}\cdots\int_{\rr}\|f (\cdot,x_{2},
\ldots,x_{n})\|
_{\ky(\rr)}\|g(\cdot,x_{2},\ldots,x_{n})\|
_{\dky(\rr)}\,dx_{2}\cdots dx_{n}.
\end{align*}
Repeating this  process, we further obtain
\begin{equation*}
\int_{\rn}|f(x)g(x)|\,dx\leq \|f\|_{\iihz(\rn)}\|g\|_{\diihz(\rn)},
\end{equation*}
which then completes the proof of Lemma \ref{mhr}.
\end{proof}
\begin{proposition}\label{E-lat}
Let $\vec{p}$, $\vec{q}\in(0,
\infty]^{n}$,
$\vec{\alpha}\in\rn$, and $f$, $g\in \MM(\rn)$. If
$|g|\leq |f|$
almost everywhere on $\rn$, then
$$\|g\|_{\iihz(\rn)}\leq
\|f\|_{\iihz(\rn)}.$$
\end{proposition}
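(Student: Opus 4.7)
The plan is to unwind the definition of $\|\cdot\|_{\iihz(\rn)}$ coordinate by coordinate and invoke the classical monotonicity property of Lebesgue norms at each stage. Since $|g|\leq|f|$ almost everywhere on $\rn$, Fubini/Tonelli-type slicing (as used already in the proof of Lemma \ref{donn} via \cite[p.\,69, Exercise 49]{folland}) gives, for almost every $(x_2,\ldots,x_n)\in\rr^{n-1}$, that $|g(\cdot,x_2,\ldots,x_n)|\leq|f(\cdot,x_2,\ldots,x_n)|$ almost everywhere on $\rr$. Hence for every $k_1\in\zz$,
\begin{equation*}
\bigl\|g(\cdot,x_2,\ldots,x_n)\one_{R_{k_1}}(\cdot)\bigr\|_{L^{q_1}(\rr)}
\leq
\bigl\|f(\cdot,x_2,\ldots,x_n)\one_{R_{k_1}}(\cdot)\bigr\|_{L^{q_1}(\rr)},
\end{equation*}
which is immediate from the monotonicity of the Lebesgue $L^{q_1}$-norm (with the usual modification when $q_1=\infty$).

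Next, multiplying by the non-negative weights $2^{k_1 p_1\alpha_1}$ and taking $\ell^{p_1}$-sums over $k_1\in\zz$ (with supremum in $k_1$ when $p_1=\infty$) preserves the inequality term-by-term, so Definition \ref{chz} yields
\begin{equation*}
\bigl\|g(\cdot,x_2,\ldots,x_n)\bigr\|_{\ky(\rr)}
\leq
\bigl\|f(\cdot,x_2,\ldots,x_n)\bigr\|_{\ky(\rr)}
\end{equation*}
for almost every $(x_2,\ldots,x_n)\in\rr^{n-1}$. I then iterate exactly the same two elementary observations (monotonicity of $L^{q_j}$ on each dyadic annulus $R_{k_j}$, followed by monotonicity of the weighted $\ell^{p_j}$-sum) successively in the variables $x_2,\ldots,x_n$. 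Invoking Definition \ref{mhz} of the iterated norm $\|\cdots\|f\|_{\ky(\rr)}\cdots\|_{\kn(\rr)}$ after $n$ such steps gives the desired inequality $\|g\|_{\iihz(\rn)}\leq\|f\|_{\iihz(\rn)}$.

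There is essentially no obstacle here: the only point requiring a little care is the handling of the exceptional null sets produced when passing from an almost-everywhere inequality on $\rn$ to an almost-everywhere inequality on the one-dimensional slices, which is exactly the Fubini-type argument already invoked in Lemma \ref{donn}. The iteration itself is purely mechanical, and the cases $p_j=\infty$ or $q_j=\infty$ are handled by the standard modification (sup in place of sum, essential sup in place of integral), both of which preserve monotonicity.
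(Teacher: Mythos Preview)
Your proof is correct and follows essentially the same approach as the paper: the paper also uses the Fubini-type slicing argument from \cite[p.\,69, Exercise 49]{folland} to pass from an almost-everywhere inequality on $\rn$ to one on slices, then invokes the monotonicity of the classical Herz norm in each variable and iterates. The only cosmetic difference is that the paper writes out the case $n=2$ and then says ``similarly,'' whereas you state the iteration for general $n$ explicitly.
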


\begin{proof}
Let all the symbols be as in the present proposition.
Without loss of generality, we may only consider the case $n:=2$.
Let $\vec{p}:=(p_{1},p_{2}),$ $\vec{q}:=(q_{1},q_{2})$, $\vec{\alpha}
:=(\alpha_{1},\alpha_{2})$, and
$$E:=\left\{(x_{1},x_{2})\in\rr^{2}:\ |f(x_{1},x_{2})|
<|g(x_{1},x_{2})|\right\}.$$
If $|g|\leq |f|$ almost everywhere on $\rr^{2}$,
then $|E|=0$.
By this and \cite[p.\,69, Exercise 49]{folland},
we find that,
for almost every $x_{2}\in\rr$, $|E^{x_{2}}|=0$,
where
$$E^{x_{2}}:
=\left\{x_{1}\in\rr:\ |f(x_{1},x_{2})|<|g(x_{1},x_{2})|\right\}.$$
Thus,
for almost every $x_{2}\in\rr$, $|g(\cdot,x_{2})|
\leq |f(\cdot,x_{2})|$
almost everywhere on $\rr$. Using Definition \ref{chz},
we easily obtain, for almost every $x_{2}\in\rr$,
$$\|g(\cdot,x_{2})\|_{\ky(\rr)}\leq \|f(\cdot,x_{2})\|_
{\ky(\rr)}.$$
Similarly to this estimation, we further have
$$\|g\|_{\iihz(\rr^{2})}
\leq \|f\|_{\iihz(\rr^{2})}.$$
This finishes the proof of Proposition \ref{E-lat}.
\end{proof}
The following lemma shows that all bounded functions supported in the set $A_{m}$ belong to $\iihz(\rn)$, which plays
an important role in the proof of the duality and the Riesz--Thorin
interpolation theorem below.
\begin{lemma}\label{Am}
Let $\vec{p},$ $\vec{q}\in (0,\infty]^{n}$ and
$\vec{\alpha}\in\rn$.
For any $m\in\nn$, let $I_{m}:=(-2^{m},-2^{-m}]\cup[2^{-m},2^{m})$
and $A_{m}:=I_{m}^{n}$. Then, for any $m\in\nn$, $\one_{A_{m}}
\in \iihz(\rn)$.
\end{lemma}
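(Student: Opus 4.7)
The plan is to exploit the product structure $\mathbf{1}_{A_m}(x_1,\ldots,x_n)=\prod_{i=1}^{n}\mathbf{1}_{I_m}(x_i)$ and the fact that $I_m$ is supported in only finitely many dyadic annuli, so that each of the iterated Herz (quasi-)norms in Definition \ref{mhz} collapses to a finite sum of bounded terms. Concretely, I would first verify the one-dimensional statement: for every $i\in\{1,\ldots,n\}$ and every $m\in\nn$, one has $\mathbf{1}_{I_m}\in\dot{K}^{\alpha_i,p_i}_{q_i}(\rr)$ with an explicit upper bound depending only on $m,\alpha_i,p_i,q_i$.

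For the one-dimensional step, I would observe that $I_m\cap R_{k_i}=\emptyset$ unless $k_i\in\{-m+1,\ldots,m\}$, since $I_m\subset[-2^m,2^m]\setminus(-2^{-m},2^{-m})$ and $R_{k_i}$ is the dyadic annulus of scale $2^{k_i}$. For admissible $k_i$, the elementary estimate
\begin{equation*}
\left\|\mathbf{1}_{I_m}\mathbf{1}_{R_{k_i}}\right\|_{L^{q_i}(\rr)}\le |R_{k_i}|^{1/q_i}=2^{k_i/q_i}
\end{equation*}
(with the obvious modification when $q_i=\infty$) holds. Hence the defining series for $\|\mathbf{1}_{I_m}\|_{\dot{K}^{\alpha_i,p_i}_{q_i}(\rr)}$ reduces to a finite sum over at most $2m$ indices, each term dominated by $2^{k_i p_i(\alpha_i+1/q_i)}$, and so is finite (with the usual sup-modification if $p_i=\infty$). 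Call this bound $C_{m,i}$.

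Next I would iterate. For fixed $x_2,\ldots,x_n\in\rr$, the function $\mathbf{1}_{A_m}(\cdot,x_2,\ldots,x_n)$ equals $\mathbf{1}_{I_m}(\cdot)\prod_{j=2}^{n}\mathbf{1}_{I_m}(x_j)$. Therefore its $\dot{K}^{\alpha_1,p_1}_{q_1}(\rr)$-norm equals $C_{m,1}\prod_{j=2}^{n}\mathbf{1}_{I_m}(x_j)$ times a constant. Applying the one-dimensional bound successively in the variables $x_2,x_3,\ldots,x_n$ (each time the current norm is a constant multiple of $\prod_{j\ge i+1}\mathbf{1}_{I_m}(x_j)$, so the next Herz norm is finite by the same one-dimensional argument), we arrive at
\begin{equation*}
\|\mathbf{1}_{A_m}\|_{\iihz(\rn)}\le \prod_{i=1}^{n}C_{m,i}<\infty.
\end{equation*}

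The step that needs care is bookkeeping, not substance: keeping track of the iterated norms in the presence of possibly infinite $p_i$ or $q_i$ requires consistently applying the sup-modification in Definitions \ref{mix-L} and \ref{chz}, and verifying that the inequality $|I_m\cap R_{k_i}|\le 2^{k_i}$ (or the corresponding $L^\infty$-bound $\le 1$) yields the same finiteness conclusion in those boundary cases. Once that verification is done, the estimate propagates through all $n$ iterations without change, and the lemma follows.
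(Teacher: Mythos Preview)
Your proposal is correct and follows essentially the same approach as the paper: reduce to the one-dimensional estimate $\|\mathbf{1}_{I_m}\|_{\dot K^{\alpha_i,p_i}_{q_i}(\rr)}<\infty$ by noting that only the indices $k_i\in\{-m+1,\ldots,m\}$ contribute, and then use the product structure $\mathbf{1}_{A_m}=\prod_i\mathbf{1}_{I_m}(x_i)$ to conclude $\|\mathbf{1}_{A_m}\|_{\iihz(\rn)}=\prod_{i=1}^n\|\mathbf{1}_{I_m}\|_{\dot K^{\alpha_i,p_i}_{q_i}(\rr)}<\infty$. The only cosmetic difference is that the paper records equalities (since in fact $R_{k_i}\subset I_m$ for the admissible $k_i$, so $\mathbf{1}_{I_m}\mathbf{1}_{R_{k_i}}=\mathbf{1}_{R_{k_i}}$) and states the product identity directly, whereas you use inequalities and spell out the iteration; both are fine.
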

\begin{proof}
Let $\vec{p}:=(p_{1},\ldots,p_{n})$,
$\vec{q}:=
(q_{1},\ldots,q_{n})\in(0,\infty]^{n}$ and
$\vec{\alpha}:=(\alpha_{1},\ldots,\alpha_{n})\in\rn$.
For any given $i\in\{1,\ldots,n \}$, we first consider the
following two cases
on $q_{i}$. If $q_{i}\in(0,\infty)$, then
$\|\one_{R_{k_{i}}}\|_{L^{q_{i}}(\rr)}=2^{k_{i}\f{1}{q_{i}}}$.
If $q_{i}=\infty$, then $\|\one_{R_{k_{i}}}\|_{L^{\infty}(\rr)}=1$.
Thus, for any $q_{i}\in(0,\infty]$,
$\|\one_{R_{k_{i}}}\|_{L^{q_{i}}(\rr)}=2^{k_{i}\f{1}{q_{i}}}$.
Then we consider the following two cases on $p_{i}$. If
$p_{i}\in (0,\infty)$,
then we conclude that
\begin{equation*}
\|\one_{I_{m}}\|_{\ki(\rr)}=\left[\sum_{k_{i}=-m+1}^{m}
2^{k_{i}p_{i}\alpha_{i}}
\|\one_{R_{k_{i}}}\|_{L^{q_{i}}(\rr)}^{p_{i}} \right]^
{\f{1}{p_{i}}}
\sim\left[\sum_{k_{i}=-m+1}^{m}2^{k_{i}p_{i}(\alpha_{i}+
\f{1}{q_{i}})}
\right]^{\f{1}{p_{i}}}<\infty.
\end{equation*}
If $p_{i}=\infty$, then we conclude that
\begin{equation*}
\|\one_{I_{m}}\|_{\ki(\rr)}=\left[\sup_{k_{i}\in\zz\cap[-m+1,m]}
2^{k_{i}\alpha_{i}}\|\one_{R_{k_{i}}}\|_{L^{\infty}(\rr)}
\right]
\sim\left(\sup_{k_{i}\in\zz\cap[-m+1,m]}2^{k_{i}\alpha_{i}}
\right)<\infty.
\end{equation*}
Thus, for any $p_{i},\ q_{i}\in(0,\infty]$, $\|\one_{I_{m}}\|_
{\ki(\rr)}<\infty$.
By this, we obtain $$\|\one_{A_{m}}\|_{\iihz(\rn)}=
\prod_{i=1}^{n}\|\one_{I_{m}}
\|_{\ki(\rr)}<\infty,$$ which implies that $\one_{A_{m}}
\in \iihz(\rn)$.
This finishes the proof of Lemma \ref{Am}.
\end{proof}
\begin{proposition}\label{equ0}
Let $\vec{p}$, $\vec{q}\in (0,\infty]^{n}$ and
$\vec{\alpha}\in \rn$. For any $f\in \iihz(\rn)$, if
$\|f\|_{\iihz(\rn)}=0$,
then $f=0$ almost everywhere.
\end{proposition}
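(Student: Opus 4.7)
The plan is to prove the statement by induction on $n$, peeling off one variable at a time from the outermost norm. Write $\vec{p}:=(p_1,\ldots,p_n)$, $\vec{q}:=(q_1,\ldots,q_n)$, and $\vec{\alpha}:=(\alpha_1,\ldots,\alpha_n)$. First I would establish the one-dimensional base case: if $g\in\MM(\rr)$ satisfies $\|g\|_{\dot{K}^{\alpha_1,p_1}_{q_1}(\rr)}=0$, then $g=0$ almost everywhere on $\rr$. Indeed, by Definition \ref{chz} that norm is a weighted $\ell^{p_1}$-combination (or, when $p_1=\infty$, a weighted supremum) of the nonnegative quantities $\{\|g\one_{R_{k_1}}\|_{L^{q_1}(\rr)}\}_{k_1\in\zz}$ with strictly positive weights $\{2^{k_1\alpha_1}\}_{k_1\in\zz}$; its vanishing forces $\|g\one_{R_{k_1}}\|_{L^{q_1}(\rr)}=0$ for every $k_1\in\zz$, hence $g=0$ almost everywhere on $R_{k_1}$ for every such $k_1$, and therefore on $\bigcup_{k_1\in\zz}R_{k_1}=\rr\setminus\{0\}$, which is of full measure in $\rr$.

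For the inductive step, assume the claim in dimension $n-1$ and suppose $\|f\|_{\iihz(\rn)}=0$. By Definition \ref{mhz},
\begin{equation*}
\left\|\,\|f(\cdot,x_n)\|_{\nuiihz}\,\right\|_{\kn(\rr)}=0,
\end{equation*}
where the outer norm is taken in the variable $x_n$. Applying the one-dimensional base case to the nonnegative function $x_n\mapsto\|f(\cdot,x_n)\|_{\nuiihz}$ then yields $\|f(\cdot,x_n)\|_{\nuiihz}=0$ for almost every $x_n\in\rr$. The inductive hypothesis applied fiberwise gives $f(\cdot,x_n)=0$ almost everywhere on $\rr^{n-1}$ for almost every $x_n\in\rr$. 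Finally, the slicing result from \cite[p.\,69, Exercise 49]{folland} (already invoked in the proof of Lemma \ref{donn}) upgrades this to $f=0$ almost everywhere on $\rn$.

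The main technical hurdle is verifying that the intermediate function $x_n\mapsto\|f(\cdot,x_n)\|_{\nuiihz}$ is measurable, so that the one-dimensional case can legitimately be applied to it; this follows from the Tonelli theorem applied to the iterated sums and integrals in Definition \ref{mhz}, since each intermediate operation preserves measurability in the remaining variables and the series and suprema involved are taken over countable index sets. The degenerate cases $p_i=\infty$ or $q_i=\infty$ cause no additional difficulty: a supremum of nonnegative terms vanishes if and only if each term vanishes, and $\|g\one_{R_{k_i}}\|_{L^\infty(\rr)}=0$ is equivalent to $g=0$ almost everywhere on $R_{k_i}$. Hence the peeling argument runs uniformly across all parameter cases.
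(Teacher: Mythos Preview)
Your proof is correct and takes a genuinely different route from the paper. The paper proceeds by first choosing $r\in(0,\min\{p_-,q_-\})$, using the convexification lemma (Lemma~\ref{hzcox-2}) together with Proposition~\ref{E-lat} to see that $\||f|^r\one_{A_m}\|_{\dot{E}^{r\vec\alpha,\vec p/r}_{\vec q/r}(\rn)}=0$, and then applying the mixed-norm H\"older inequality (Lemma~\ref{mhr}) against $\one_{A_m}$ (which lies in the dual space by Lemma~\ref{Am}) to conclude $\||f|^r\one_{A_m}\|_{L^1(\rn)}=0$; monotone convergence then gives $f=0$ almost everywhere. Your approach instead unwinds the iterated norm directly via induction on $n$, using only the definition and the Tonelli theorem, and does not touch convexification, H\"older, or the sets $A_m$ at all. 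Your argument is more elementary and self-contained, while the paper's proof showcases the duality machinery (Lemmas~\ref{mhr} and~\ref{Am}) that is reused heavily later on. One small remark: the direction of the slicing result you need at the end is the Tonelli direction (sections of measure zero for almost every $x_n$ imply the full set has measure zero), which is the converse of the statement cited from \cite[p.\,69, Exercise 49]{folland} in Lemma~\ref{donn}; this is of course immediate from Tonelli applied to $\one_{\{f\neq 0\}}$, and your reference to Tonelli already covers it.
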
	
\begin{proof}
Let $\vec{p}:=(p_{1},\ldots,p_{n}),$ $\vec{q}:=(q_{1},\ldots,q_{n})\in (0,\infty]^{n}$ and
$\vec{\alpha}:=(\alpha_{1},\ldots,\alpha_{n})\in\rn$.
Let $r\in
(0,\min\{p_{-},q_{-} \})$, here and thereafter,
\begin{equation}\label{p-}
p_{-}:=\min\{p_{1},\ldots,p_{n} \}\ \text{and}\ q_{-}:=\min\{q_{1},
\ldots,q_{n} \}.
\end{equation}
From Lemma \ref{hzcox-2} and Proposition \ref{E-lat},
we deduce that, for any $f\in\iihz(\rn)$ and $m\in\nn$,
\begin{equation*}
\left\||f|^{r}\one_{A_{m}}\right\|^{1/r}_{\dot{E}^
{r\vec{\alpha},\vec{p}/r}
_{\vec{q}/r}(\rn)}=\|f\one_{A_{m}}\|_{\dot{E}^
{\vec{\alpha},\vec{p}}_
{\vec{q}}(\rn)}\leq\|f\|_{\iihz(\rn)}=0,
\end{equation*}
where $A_{m}$ is as in Lemma \ref{Am}. This, combined with Lemmas \ref{mhr} and \ref{Am}, further
implies that,
for any $m\in\nn$,
\begin{equation*}
\left\||f|^{r}\one_{A_{m}}\right\|_{L^{1}(\rn)}\leq \left\||f|^{r}
\one_{A_{m}}\right\|
_{\dot{E}^{r\vec{\alpha},\vec{p}/r}_{\vec{q}/r}(\rn)}
\|\one_{A_{m}}\|_
{\dot{E}^{-r\vec{\alpha},(\vec{p}/r)'}_{(\vec{q}/r)'}(\rn)}
\ls 0.
\end{equation*}
By this and the monotone convergence theorem, we conclude
that $\||f|^{r}\|_{L^{1}(\rn)}=0$ and hence $f=0$
almost everywhere on $\rn$. This finishes the proof of
Proposition \ref{equ0}.
\end{proof}
The following lemma is an essential generalization of \cite[p.\,303,\ Theorem 1]{BP}.
\begin{lemma}\label{assnd}
Let $\vec{p}$, $\vec{q}\in[1,\infty]^{n}$ and
$\vec{\alpha}\in\rn$. Then, for any $f\in\iihz(\rn)$,
\begin{equation}\label{fsp}
\|f\|_{\iihz(\rn)}=\sup\left\{\|fg\|_{L^{1}(\rn)}:\ \|g\|_{\diihz(\rn)}
=1 \right\}.
\end{equation}
\end{lemma}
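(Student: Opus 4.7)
The upper bound $\sup\{\|fg\|_{L^{1}(\rn)}:\|g\|_{\diihz(\rn)}=1\}\leq\|f\|_{\iihz(\rn)}$ is immediate from the mixed H\"{o}lder-type inequality in Lemma \ref{mhr}: any admissible $g$ yields $\|fg\|_{L^{1}(\rn)}\leq\|f\|_{\iihz(\rn)}\|g\|_{\diihz(\rn)}=\|f\|_{\iihz(\rn)}$.

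For the reverse inequality, I first dispose of the trivial case $\|f\|_{\iihz(\rn)}=0$, where Proposition \ref{equ0} forces $f=0$ almost everywhere on $\rn$. Otherwise, assume $\|f\|_{\iihz(\rn)}\in(0,\infty)$. The plan is to build a test function $g$ by tensoring together $n$ one-variable Herz duals, one per coordinate. The one-variable duality for $\ky(\rr)$ admits an explicit extremizer when $p_{1},q_{1}\in(1,\infty)$: for $F\in\ky(\rr)$ of finite positive norm, set
$$\Phi(F)(x_{1}):=\sum_{k_{1}\in\zz}\frac{2^{k_{1}p_{1}\alpha_{1}}\|F\one_{R_{k_{1}}}\|_{L^{q_{1}}(\rr)}^{p_{1}-q_{1}}}{\|F\|_{\ky(\rr)}^{p_{1}-1}}|F(x_{1})|^{q_{1}-1}\mathrm{sgn}\bigl(\overline{F(x_{1})}\bigr)\one_{R_{k_{1}}}(x_{1});$$
using the conjugate-exponent identities $q_{1}'(q_{1}-1)=q_{1}$ and $p_{1}'(p_{1}-1)=p_{1}$, direct computation yields $\|\Phi(F)\|_{\dky(\rr)}=1$ and $\int_{\rr}F\,\Phi(F)\,dx_{1}=\|F\|_{\ky(\rr)}$. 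I would then set $g_{1}(x):=\Phi(f(\cdot,x_{2},\ldots,x_{n}))(x_{1})$, iterate the same procedure with the next-coordinate Herz space applied to the slice-norm function $(x_{2},\ldots,x_{n})\mapsto\|f(\cdot,x_{2},\ldots,x_{n})\|_{\ky(\rr)}$, and continue through all $n$ coordinates to obtain further factors $g_{2},\ldots,g_{n}$. Setting $g:=\prod_{i=1}^{n}g_{i}$ and applying Fubini at each level, both $\int_{\rn}fg\,dx=\|f\|_{\iihz(\rn)}$ and $\|g\|_{\diihz(\rn)}=1$ telescope out.

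Two adjustments are needed. When some $p_{i}$ or $q_{i}$ lies in $\{1,\infty\}$ the closed-form extremizer $\Phi$ is ill-defined, and I would replace it by an $\epsilon$-maximizing proxy: for $q_{i}=\infty$, divide by the measure of a set on which $|F|$ is within $\epsilon$ of its essential supremum on $R_{k_{i}}$; for $p_{i}=\infty$, concentrate mass on a single near-optimal dyadic index $k_{i}^{*}$. This produces, for each $\epsilon\in(0,1)$, a $g_{\epsilon}$ with $\|g_{\epsilon}\|_{\diihz(\rn)}\leq 1$ and $\|fg_{\epsilon}\|_{L^{1}(\rn)}\geq(1-\epsilon)\|f\|_{\iihz(\rn)}$, which suffices on sending $\epsilon\to 0^{+}$ (and rescaling to the boundary $\|g_{\epsilon}\|_{\diihz(\rn)}=1$). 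To ensure measurability and finiteness of the slice-norm coefficients, I would first apply the construction to the truncation $f\one_{A_{m}}$ (Lemma \ref{Am}), and then pass to $m\to\infty$ by means of Proposition \ref{ab-2}. The principal technical obstacle is the bookkeeping of weights and exponents across the $n$ iterations so that the two telescoping identities for $\int fg$ and $\|g\|_{\diihz(\rn)}$ hold simultaneously; this reduces level by level to the one-variable computation above.
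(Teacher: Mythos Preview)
Your proposal is correct and follows essentially the same approach as the paper. The paper's explicit test function $\widehat{g}_{1,\epsilon}(\ell)$ is precisely the product of iterated one-variable near-extremizers you describe: the factors $[E_{0}(\ell)]^{\widetilde{q}_{1}-1}$ and $P^{(k_{j})}_{\widetilde{p}_{j}-\widetilde{q}_{j},q_{j}}$ encode your $|F|^{q_{1}-1}$ and $\|F\one_{R_{k_{1}}}\|_{L^{q_{1}}}^{p_{1}-q_{1}}$ weights, the auxiliary functions $h_{k_{j},\epsilon}$ and $f_{k_{j},\epsilon}$ implement exactly your endpoint proxies (concentrating on a near-optimal dyadic index when $p_{j}=\infty$, and dividing by the measure of a near-supremum set when $q_{j}=\infty$), and the paper likewise truncates to $f\one_{A_{m}}$ before passing to the limit.
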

\begin{proof}
Let $\vec{p}:=(p_{1},\ldots,p_{n}),$ $\vec{q}:=(q_{1},\ldots,q_{n})
\in [1,\infty]^{n}$ and
$\vec{\alpha}:=(\alpha_{1},\ldots,\alpha_{n})\in\rn$.
For any $f\in \iihz(\rn)$, we consider
the following
two cases on $\|f\|_{\iihz(\rn)}$.

\emph{Case 1)} $\|f\|_{\iihz(\rn)}=0$. In this case, using Proposition \ref{equ0},
we obtain $f=0$ almost everywhere. By this, we have
\begin{equation*}
\|f\|_{\iihz(\rn)}=0=\sup\left\{\|fg\|_{L^{1}(\rn)}:\
\|g\|_{\diihz(\rn)}=1
\right\}
\end{equation*}
and hence \eqref{fsp} holds true in this case.

\emph{Case 2)}
$\|f\|_{\iihz(\rn)}\neq 0$. In this case,
from Lemma \ref{mhr}, it follows
that, for any $g\in\diihz(\rn)$ satisfying $\|g\|_{\diihz(\rn)}=1$,
\begin{equation*}
\|fg\|_{L^{1}(\rn)}\leq \|f\|_{\iihz(\rn)}\|g\|_{\diihz(\rn)}
=\|f\|_{\iihz(\rn)},
\end{equation*}
which implies that
\begin{equation*}
\sup\left\{\|fg\|_{L^{1}(\rn)}:\ \|g\|_{\diihz(\rn)}=1
\right\}\leq
\|f\|_{\iihz(\rn)}.
\end{equation*}

Conversely, we prove that
\begin{equation}\label{gf}
\sup\left\{\|fg\|_{L^{1}(\rn)}:\ \|g\|_{\diihz(\rn)}=1
\right\} \geq
\|f\|_{\iihz(\rn)}.
\end{equation}
To this end, we only need to find a family of functions,
$\{g_{\epsilon}\}_
{\epsilon\in (0,\infty)}$ with $\|g_{\epsilon}\|_{\diihz(\rn)}=1$
for any $\epsilon\in (0,\infty)$,
such that
$$\sup_{\epsilon\in (0,\infty)}\|fg_{\epsilon}\|_{L^{1}(\rn)}\geq
\|f\|_{\iihz(\rn)}.$$
To show this, let
$\ell\in\iihz(\rn)$, $\|\ell\|_{\iihz(\rn)}\neq 0$, and
$\supp(\ell)\subset A_{k_{0}}$
for some $k_{0}\in\nn$, where $A_{k_{0}}$ is as in
Lemma \ref{Am}.
For any $i\in\{1,\ldots,n-1\}$, let $\vec{p}_{i}:=
(p_{1},\ldots,p_{i})$, $\vec{q}_{i}:=(q_{1},\ldots,q_{i})
\in[1,\infty]^{i}$, and $\vec{\alpha}_{i}:=
(\alpha_{1},\ldots,\alpha_{i})\in\rr^{i}$.

Now, for any given $i\in\{0,1,\ldots,n-1\}$ and $\gamma\in\rr$,  and
for any $x_{i+1},\ldots,x_{n}\in\rr$,
we define $[E_{i}(\ell)(x_{i+1},\ldots,x_{n})]^{\gamma}$ by
setting, for any $i\in\{1,\ldots,n-1\}$,
\begin{align}\label{Eiga}
&[E_{i}(\ell)(x_{i+1},\ldots,x_{n})]^{\gamma}\\
&\quad:=
\begin{cases}
\left\|\ell(\cdot,x_{i+1},\ldots,x_{n})\right\|^{\gamma}
_{\dot{E}^
{\vec{\alpha}_{i},\vec{p}_{i}}_{\vec{q}_{i}}(\rr^{i})}&
\mathrm{if}\ \left\|\ell(\cdot,x_{i+1},\ldots,x_{n})
\right\|_{\dot{E}^
{\vec{\alpha}_{i},\vec{p}_{i}}_{\vec{q}_{i}}
(\rr^{i})}\neq 0,\\
0&\mathrm{otherwise},\nonumber
\end{cases}
\end{align}
$[E_{0}(\ell)(x)]^{\gamma}:=|\ell(x)|^{\gamma}$
if $|\ell(x)|\neq0$, and
$[E_{0}(\ell)(x)]^{\gamma}:=0$ if $|\ell(x)|=0$, and $E_{n}(\ell):=
\|\ell\|_{\iihz(\rn)}\neq 0$.
Then, for any given $j\in\{1,\ldots,n-1\}$, $ q_{j}\in[1,\infty]$, $s_{j}\in\rr$,
and $k_{j}\in\zz$, and for any $x_{j+1},\ldots,x_{n}\in\rr$, let
\begin{align}\label{2.9x}
&P^{(k_{j})}_{s_{j},q_{j}}(\ell)(x_{j+1},\ldots,x_{n})\\
&\quad:=\begin{cases}
\left\|E_{j-1}(\ell)(\cdot,x_{j+1},\ldots,x_{n})\one_
{R_{k_{j}}}(\cdot)\right\|^{s_{j}}_{L^{q_{j}}(\rr)}\\
&\hspace{-3cm}\mathrm{if}\ \left\|E_{j-1}(\ell)(\cdot,x_{j+1},\ldots,x_{n})\one_
{R_{k_{j}}}(\cdot)\right\|_{L^{q_{j}}(\rr)}\neq 0,\\
0 &\hspace{-3cm}\mathrm{otherwise}.
\end{cases}\nonumber
\end{align}
Moreover, let $P^{(k_{n})}_{s_{n},q_{n}}(\ell):=\|E_{n-1}(\ell)\one_
{R_{k_{n}}}\|^{s_{n}}_{L^{q_{n}}(\rr)}$ if $\|E_{n-1}(\ell)
\one_{R_{k_{n}}}\|_{L^{q_{n}}(\rr)}\neq 0$, and $P^{(k_{n})}_
{s_{n},q_{n}}(\ell):=0$ if $\|E_{n-1}(\ell)\one_
{R_{k_{n}}}\|_{L^{q_{n}}(\rr)}= 0$.  Now, for any given $\epsilon\in(0,\infty),$
$j\in\{1,\ldots,n-1 \}$, and
$k_{j}\in\zz$, and for any $x_{j+1},\ldots,x_{n}\in\rr$, we define
$h_{k_{j},\epsilon}(\ell)(x_{j+1},\ldots,x_{n})$ by setting
\begin{align}\label{hke}
&h_{k_{j},\epsilon}(\ell)(x_{j+1},\ldots,x_{n})\\
&\quad:=\begin{cases}
\displaystyle\f{\one_{E_{j,\epsilon}(\ell)(x_{j+1},\ldots,x_{n})}(k_{j})}{\#[E_{j,\epsilon}(\ell)
(x_{j+1},\ldots,x_{n}) ]} &\text{if}\ p_{j}=\infty\ \text{and}\
E_{j,\epsilon}(\ell)(x_{j+1},\ldots,x_{n})\neq \emptyset,\\
0 &\text{if}\ p_{j}=\infty\ \text{and}\ E_{j,\epsilon}(\ell)
(x_{j+1},\ldots,x_{n})
=\emptyset,\\
1 & \text{if}\ p_{j}\in[1,\infty),
\end{cases}	\notag
\end{align}
and $h_{k_{n},\epsilon}(\ell)$ is as in \eqref{hke}
with $E_{j,\epsilon}(\ell)(x_{j+1},\ldots,x_{n})$
replaced by $E_{n,\epsilon}(\ell)$, where $\#$ denotes the
\emph{counting measure}
(see, for instance, \cite[p.\,263]{ra} for the precise definition)
and $E_{j,\epsilon}(\ell)(x_{j+1},\ldots,x_{n})$ is
defined by setting
\begin{align}\label{Esion}
&E_{j,\epsilon}(\ell)(x_{j+1},\ldots,x_{n})\\
&\quad:=
\bigg\{k_{j}\in\zz:\  2^{k_{j}\alpha_{j}}(1+\epsilon)
\|E_{j-1}(\ell)
(\cdot,x_{j+1},\ldots,x_{n})\one_{R_{k_{j}}}(\cdot)
\|_{L^{q_{j}}(\rr)}\nonumber\\
&\qquad\quad > E_{j}(\ell)(x_{j+1},\ldots,x_{n}) \bigg\}\nonumber
\end{align}
and
$$E_{n,\epsilon}(\ell):=\left\{k_{n}\in\zz:\ 2^{k_{n}
\alpha_{n}}(1+\epsilon)
\left\|E_{n-1}(\ell)(\cdot)\one_{R_{k_{n}}}(\cdot)\right\|_
{L^{q_{n}}(\rr)}>E_{n}(\ell)\right\}.$$
For any given $\epsilon\in(0,\infty)$, $j\in\{ 1,\ldots,n\}$,
and $k_{j}\in\zz$, and for any $x_{j+1},\ldots,x_{n}\in\rr$, let
\begin{align}\label{GKj}
G_{k_{j},\epsilon}(\ell)(x_{j+1},\ldots,x_{n}):=P^{(k_{j})}_
{\widetilde{p}_{j}-\widetilde{q}_{j},q_{j}}(\ell)(x_{j+1},
\ldots,x_{n})
h_{k_{j},\epsilon}(\ell)(x_{j+1},\ldots,x_{n})
\end{align}
and
\begin{equation}\label{2.12x}
	G_{k_{n},\epsilon}(\ell):=
	P^{(k_{n})}_{\widetilde{p}_{n}-\widetilde{q}_{n},q_{n}}(\ell)
	h_{k_{n},\epsilon}(\ell),
\end{equation}
where, for any $j\in \{1,\ldots,n \}$,
\begin{equation}\label{pyw}
\widetilde{p}_{j}:=
\begin{cases}
p_{j} & \text{if}\ p_{j}\in [1,\infty),\\
1 & \text{if}\ p_{j}=\infty
\end{cases}
\quad\mathrm{and}\quad
\widetilde{q}_{j}:=
\begin{cases}
q_{j} & \text{if}\ q_{j}\in[1,\infty),\\
1 & \text{if}\ q_{j}=\infty.
\end{cases}
\end{equation}
For any given $\epsilon\in(0,\infty)$, $j\in\{1,\ldots,n-1 \}$,
and $k_{j}\in\zz$, and for any $x_{j},\ldots,x_{n}\in\rr$,  let
\begin{align}\label{fksion}
&f_{k_{j},\epsilon}(\ell)(x_{j},\ldots,x_{n})\\
&\quad:=\begin{cases}
\displaystyle\f{\one _{F_{\epsilon}^{(k_{j})}(\ell)(x_{j+1},\ldots,
x_{n})}(x_{j})}
{|F_{\epsilon}^{(k_{j})}(\ell)(x_{j+1},\ldots,x_{n})|}
& \text{if}
\ q_{j}=\infty\ \text{and}\ |F_{\epsilon}^{(k_{j})}
(\ell)(x_{j+1},
\ldots,x_{n})|\neq 0,\\
0 & \text{if}\ q_{j}=\infty\ \text{and}\ |
F_{\epsilon}^{(k_{j})}
(\ell)(x_{j+1},\ldots,x_{n})|= 0,\\
1 & \text{if}\ q_{j}\in[1,\infty),
\end{cases}\nonumber
\end{align}
and $f_{k_{n},\epsilon}(\ell)(x_{n})$ is as in \eqref{fksion}
with $F^{(k_{j})}_
{\epsilon}(\ell)(x_{j+1},\ldots,x_{n})$ there
 replaced by $F^{(k_{n})}_{\epsilon}(\ell)$,
where the set $F^{(k_{j})}_{\epsilon}(\ell)(x_{j+1},
\ldots,x_{n})$
is defined by setting
\begin{align}\label{Fsion}
F^{(k_{j})}_{\epsilon}(\ell)(x_{j+1},\ldots,x_{n}):
=&\bigg\{x_{j}
\in\rr:\  (1+\epsilon)E_{j-1}(\ell)(x_{j},\ldots,x_{n})
\one_{R_{k_{j}}}
(x_{j})\\
&>\|E_{j-1}(\ell)(\cdot,x_{j+1},\ldots,x_{n})
\one_{R_{k_{j}}}(\cdot)
\|_{L^{\infty}(\rr)}\bigg\},\nonumber
\end{align}
and
$$F_{\epsilon}^{(k_{n})}(\ell)
:=\left\{x_{n}\in\rr:\ (1+\epsilon)E_{n-1}
(\ell)(x_{n})\one_{R_{k_{n}}}(x_{n}) >\|E_{n-1}(\ell)
\one_{R_{k_{n}}}\|_{L^{\infty}(\rr)}\right\}.$$
For any given $\epsilon\in(0,\infty)$ and $j\in\{1,\ldots,n \}$,
and for any $x_{j},\ldots,x_{n}\in\rr$, let
\begin{equation}\label{Hje}
H_{j,\epsilon}(\ell)(x_{j},\ldots,x_{n}):=\sum_{k_{j}\in\zz}f_{k_{j},
\epsilon}(\ell)(x_{j},\ldots,x_{n})\one_{R_{k_{j}}}(x_{j}).
\end{equation}
Now, we can define the following iterated functions.
For any given $\epsilon\in(0,\infty)$ and $j\in\{1,\ldots, n-1 \}$,
and for any $x_{j},\ldots,x_{n}\in\rr$, let
\begin{equation}\label{2.40x}
g_{j,\epsilon}(\ell)(x_{j},\ldots,x_{n}):=
\sum_{k_{j}\in\zz}2^{k_{j}\widetilde{p}_{j}\alpha_{j}}G_{k_{j},
\epsilon}(\ell)(x_{j+1},\ldots,x_{n})\one_{R_{k_{j}}}(x_{j})
\end{equation}
and
\begin{equation}\label{2.17x}
g_{n,\epsilon}(\ell)(x_{n}):=\sum_{k_{n}\in\zz}2^{k_{n}\widetilde{p}_{n}
\alpha_{n}}G_{k_{n},\epsilon}(\ell)\one_{R_{k_{n}}}(x_{n}),
\end{equation}
and
\begin{align}\label{2.40y}
\widetilde{g}_{j,\epsilon}(\ell)(x_{j},\ldots,x_{n}):=&\,[E_{n}(\ell)]^
{1-\widetilde{p}_{n}}\prod_{i=j}^{n-1}[E_{i}(\ell)(x_{i+1},\ldots,x_{n})]
^{\widetilde{q}_{i+1}-\widetilde{p}_{i}}\\
&\,\times \prod_{i=j}^{n}[g_{i,\epsilon}(\ell)(x_{i},\ldots,x_{n}) H_{i,\epsilon}(\ell)(x_{i},\ldots,x_{n})]\nonumber
\end{align}
and
\begin{equation}\label{2.18xx}
\widetilde{g}_{n,\epsilon}(\ell)(x_{n}):=[E_{n}(\ell)]^{1-\widetilde{p}
_{n}}g_{n,\epsilon}(\ell)(x_{n}) H_{n,\epsilon}(\ell)(x_{n}).
\end{equation}
For any given $\epsilon\in(0,\infty)$ and $j\in\{2,\ldots,n \}$, and for any $x_{j},
\ldots,x_{n}\in\rr$, let
\begin{equation}\label{2.41y}
\widehat{g}_{j,\epsilon}(\ell)(x_{j},\ldots,x_{n}):=[E_{j-1}(\ell)(x_{j},
\ldots,x_{n})]^{\widetilde{q}_{j}-1}\widetilde{g}_{j,\epsilon}
(\ell)(x_{j},\ldots,x_{n})
\end{equation}
and
\begin{equation}\label{2.41x}
\widehat{g}_{1,\epsilon}(\ell)(x):=\left[\mathrm{sgn}\,\overline{\ell(x)}\right][E_{0}
(\ell)(x)]^{\widetilde{q}_{1}-1}\widetilde{g}_{1,\epsilon}(\ell)(x),
\end{equation}
where, for any $z\in\cc$,
\begin{equation*}
\mathrm{sgn}\,z:=\begin{cases}
0 & \mathrm{if}\ z=0,\\
\dis\f{z}{|z|} & \mathrm{if}\ z\neq 0.
\end{cases}
\end{equation*}
Then we claim that, for any $\vec{p},$ $\vec{q}\in [1,\infty]^{n}$,
$\vec{\alpha}\in\rn$, $\epsilon\in (0,\infty)$, and $\ell\in\iihz(\rn)$ satisfying $\|\ell\|
_{\iihz(\rn)}\neq 0$ and $\supp(\ell)\subset A_{k_{0}}$ for some
$k_{0}\in\nn$,
\begin{equation}\label{c1}
\left\|\widehat{g}_{1,\epsilon}(\ell)\right\|_{\diihz(\rn)}=1
\end{equation}
and
\begin{equation}\label{c22}
\int_{\rn}\ell(x)\widehat{g}_{1,\epsilon}(\ell)(x)\,dx\geq (1+\epsilon)^
{-2n}\|\ell\|_{\iihz(\rn)},
\end{equation}
where $1/\vec{p}+1/\vec{p}'=1=1/\vec{q}+1/\vec{q}'$.

We first show \eqref{c1}. Indeed, by \eqref{2.41x}, \eqref{2.40y},
and \eqref{2.40x}, we
conclude that, for any given $\epsilon\in(0,\infty)$ and $k_{1}\in\zz$,
and for any $x_{2},\ldots,x_{n}\in\rr$,
\begin{align*}
&\left\|\widehat{g}_{1,\epsilon}(\ell)(\cdot,x_{2},\ldots,x_{n})\one_
{R_{k_{1}}}(\cdot)\right\|_{L^{q_{1}'}(\rr)}\\
&\quad=\left\|[E_{0}(\ell)(\cdot,x_{2},\ldots,x_{n})]^{\widetilde{q}_{1}-1}g
_{1,\epsilon}(\ell)(\cdot,x_{2},\ldots,x_{n})H_{1,\epsilon}(\ell)(\cdot,
x_{2},\ldots,x_{n})\one_{R_{k_{1}}}(\cdot)\right\|_{L^{q_{1}'}(\rr)}
\nonumber\\
&\quad\quad\times[E_{1}(\ell)(x_{2},\ldots,x_{n})]^{\widetilde{q}
_{2}-\widetilde{p}_{1}}\widetilde{g}_{2,\epsilon}(\ell)(x_{2}
,\ldots,x_{n})\\\nonumber
&\quad=2^{k_{1}\widetilde{p}_{1}\alpha_{1}}G_{k_{1},\epsilon}
(\ell)(x_{2},\ldots,x_{n})\left\|[E_{0}(\ell)(\cdot,x_{2},\ldots,x_{n})]
^{\widetilde{q}_{1}-1}H_{1,\epsilon}(\ell)(\cdot,x_{2},\ldots,x_
{n})\one_{R_{k_{1}}}(\cdot)\right\|_{L^{q_{1}'}(\rr)}\nonumber\\
&\quad\quad\times[E_{1}(\ell)(x_{2},\ldots,x_{n})]^{\widetilde{q}
_{2}-\widetilde{p}_{1}}\widetilde{g}_{2,\epsilon}(\ell)(x_{2}
,\ldots,x_{n}).\nonumber
\end{align*}
Then we consider the following three cases on $q_{1}$.

If $q_{1}\in(1,\infty)$, then, from \eqref{GKj}, \eqref{pyw},
\eqref{Hje}, \eqref{fksion}, and \eqref{2.9x}, we deduce that,
for any given $\epsilon\in(0,\infty)$ and $k_{1}\in\zz$,
and for any $x_{2},\ldots,x_{n}\in\rr$,
\begin{align*}
&\left\|\widehat{g}_{1,\epsilon}(\ell)(\cdot,x_{2},\ldots,x_{n})\one_{R_{
k_{1}}}(\cdot)\right\|_{L^{q_{1}'}(\rr)}\\\nonumber
&\quad=
2^{k_{1}\widetilde{p}_{1}\alpha_{1}}G_{k_{1},\epsilon}(\ell)(x_{2}
,\ldots,x_{n})\left\|[E_{0}(\ell)(\cdot,x_{2},\ldots,x_{n})]^{q_{1}-1}
\one_{R_{k_{1}}}(\cdot)\right\|_{L^{q_{1}'}(\rr)}\\\nonumber
&\quad\quad\times[E_{1}(\ell)(x_{2},\ldots,x_{n})]^{\widetilde{q}_
{2}-\widetilde{p}_{1}}\widetilde{g}_{2,\epsilon}(\ell)(x_{2},
\ldots,x_{n}).\\\nonumber
&\quad
=2^{k_{1}\widetilde{p}_{1}\alpha_{1}}G_{k_{1},\epsilon}(\ell)(x_{
2},\ldots,x_{n})\left\|E_{0}(\ell)(\cdot,x_{2},\ldots,x_{n})
\one_{R_{k_{1}}}(\cdot)\right\|^{\f{q_{1}}{q'_{1}}}_{L^{q_{1}}(
\rr)}\\\nonumber
&\quad\quad\times[E_{1}(\ell)(x_{2},\ldots,x_{n})]^{\widetilde{q}
_{2}-\widetilde{p}_{1}}\widetilde{g}_{2,\epsilon}(\ell)(x_{2},
\ldots,x_{n})\\\nonumber
&\quad=2^{k_{1}\widetilde{p}_{1}\alpha_{1}} P^{(k_{1})}_
{\widetilde{p}_{1}-1,q_{1}}(\ell)(x_{2},\ldots,x_{n})h_{k_{1},
\epsilon}(\ell)(x_{2},\ldots,x_{n})\\\nonumber
&\quad\quad\times[E_{1}(\ell)(x_{2},\ldots,x_{n})]^{\widetilde{q}
_{2}-\widetilde{p}_{1}}\widetilde{g}_{2,\epsilon}(\ell)(x_{2}
,\ldots,x_{n}).\nonumber
\end{align*}

If $q_{1}=1$, then, using \eqref{pyw}, \eqref{GKj}, \eqref{Eiga}, and \eqref{2.9x}, we find that, for any given $\epsilon\in(0,\infty)$ and $k_{1}\in\zz$,
and for any $x_{2},\ldots,x_{n}\in\rr$,
\begin{align*}
&\left\|\widehat{g}_{1,\epsilon}(\ell)(\cdot,x_{2},\ldots,x_{n})\one_{R_
{k_{1}}}(\cdot)\right\|_{L^{\infty}(\rr)}\\
&\quad =2^{k_{1}\widetilde{p}_{1}\alpha_{1}}G_{k_{1},\epsilon}
(\ell)(x_{2},\ldots,x_{n})
\left\|[E_{0}(\ell)(\cdot,x_{2},\ldots,x_{n})]^{0}
\one_{R_{k_{1}}}(\cdot)\right\|_{L^{\infty}(\rr)}
\nonumber\\
&\quad\quad\times[E_{1}(\ell)(x_{2},\ldots,x_{n})]^{\widetilde{q}
_{2}-\widetilde{p}_{1}}\widetilde{g}_{2,\epsilon}(\ell)(x_
{2},\ldots,x_{n})\\\nonumber
&\quad=2^{k_{1}\widetilde{p}_{1}\alpha_{1}} P^{(k_{1})}_
{\widetilde{p}_{1}-1,1}(\ell)(x_{2},\ldots,x_{n})h_{k_{1},\epsilon}
(\ell)(x_{2},\ldots,x_{n})\\\nonumber
&\quad\quad\times[E_{1}(\ell)(x_{2},\ldots,x_{n})]^
{\widetilde{q}_{2}-\widetilde{p}_{1}}\widetilde{g}_{2,\epsilon}
(\ell)(x_{2},\ldots,x_{n}).\nonumber	
\end{align*}

Finally, we consider the case $q_{1}=\infty$. Observe that, for any given
$\epsilon\in (0,\infty)$ and $k_{1}\in\zz$, and for any $x_{2},
\ldots,x_{n}\in\rr$, $F_{\epsilon}^{(k_{1})}(\ell)(x_{2},\ldots
,x_{n})\subset R_{k_{1}}$. We now consider two cases on
$|F_{\epsilon}^{(k_{1})}(\ell)(x_{2},\ldots,x_{n})|$.
If $|F_{\epsilon}^{(k_{1})}
(\ell)(x_{2},\ldots,x_{n})|\neq 0$, then, by \eqref{Hje}, \eqref{fksion},
\eqref{GKj},
\eqref{pyw}, \eqref{Eiga}, and \eqref{2.9x}, we obtain
\begin{align*}
&\left\|\widehat{g}_{1,\epsilon}(\ell)(\cdot,x_{2},\ldots,x_{n})\one_
{R_{k_{1}}}(\cdot)\right\|_{L^{1}(\rr)}\\
&\quad=2^{k_{1}\widetilde{p}_{1}\alpha_{1}}G_
{k_{1},\epsilon}(\ell)(x_{2},\ldots,x_{n})
\left\|[E_{0}(\ell)(\cdot,x_{2},\ldots,x_{n})]^{0}f_{k_{1},
\epsilon}(\cdot,x_{2},\ldots,x_{n})\one_{R_{k_{1}}}
(\cdot)\right\|_{L^{1}(\rr)}\\
&\quad\quad\times[E_{1}(\ell)(x_{2},\ldots,x_{n})]^
{\widetilde{q}_{2}-\widetilde{p}_{1}}\widetilde{g}_{2,
\epsilon}(\ell)(x_{2},\ldots,x_{n})\\
&\quad=2^{k_{1}\widetilde{p}_{1}\alpha_{1}}P^{(k_{1})}
_{\widetilde{p}_{1}-1,\infty}(\ell)(x_{2},\ldots,x_{n})
h_{k_{1},\epsilon}(\ell)(x_{2},\ldots,x_{n})\left\|\f{
\one _{F_{\epsilon}^{(k_{1})}(\ell)(x_{2},\ldots,
x_{n})}(\cdot)}{|F_{\epsilon}^{(k_{1})}(\ell)(x_{2},\ldots,x_
{n})|}\one_{R_{k_{1}}}(\cdot)\right\|_{L^{1}(\rr)}\\
&\quad\quad\times [E_{1}(\ell)(x_{2},\ldots,x_{n})]^{
\widetilde{q}_{2}-\widetilde{p}_{1}}\widetilde{g}
_{2,\epsilon}(\ell)(x_{2},\ldots,x_{n})\\
&\quad=2^{k_{1}\widetilde{p}_{1}\alpha_{1}}P^{(k_{1})
}_{\widetilde{p}_{1}-1,\infty}(\ell)(x_{2},\ldots,x_{n}
)h_{k_{1},\epsilon}(\ell)(x_{2},\ldots,x_{n})\\
&\quad\quad\times[E_{1}(\ell)(x_{2},\ldots,x_{n})]^{
\widetilde{q}_{2}-\widetilde{p}_{1}}\widetilde{g}
_{2,\epsilon}(\ell)(x_{2},\ldots,x_{n}).
\end{align*}
If $|F_{\epsilon}^{(k_{1})}(\ell)(x_{2},\ldots,x_{n})|=0$,
then, by \eqref{Fsion}, we conclude that
\begin{align}\label{FQar}
&\left\|E_{0}(\ell)(\cdot,x_{2},\ldots,x_{n})\one_{R_{k_{1}}}(
\cdot)\right\|_{L^{\infty}(\rr)}\\
&\quad\leq \sup_{x_{1}\in\rr\setminus
F_{\epsilon}^{(k_{1})}(f)(x_{2},\ldots,x_{n})}E_{0}
(\ell)(x_{1},\ldots,x_{n})\one_{R_{k_{1}}}(x_{1})\nonumber\\
&\quad\leq (1+\epsilon)^{-1}\left\|E_{0}(\ell)(\cdot,x_{2},\ldots,
x_{n})\one_{R_{k_{1}}}(\cdot)\right\|_{L^{\infty}(\rr)},\nonumber
\end{align}
which implies that $\|E_{0}(\ell)(\cdot,x_{2},\ldots,x_{n})
\one_{R_{k_{1}}}(\cdot)\|_{L^{\infty}(\rr)}=0$. By this, \eqref{fksion}, \eqref{Hje},
and \eqref{2.9x}, we obtain
\begin{align*}
&\left\|\widehat{g}_{1,\epsilon}(\ell)(\cdot,x_{2},\ldots,x_{n})\one
_{R_{k_{1}}}(\cdot)\right\|_{L^{1}(\rr)}\\
&\quad=0\\
&\quad=2^{k_{1}\widetilde{p}_{1}\alpha_{1}}P^{(k_{1})}_{
\widetilde{p}_{1}-1,\infty}(\ell)(x_{2},\ldots,x_{n})
h_{k_{1},\epsilon}(\ell)(x_{2},\ldots,x_{n})\\
&\qquad\times[E_{1}(\ell)(x_{2},\ldots,x_{n})]^{
\widetilde{q}_{2}-\widetilde{p}_{1}}\widetilde{g}_{2,
\epsilon}(\ell)(x_{2},\ldots,x_{n}).
\end{align*}
Thus, combining the above three cases on $q_{1}$, we have, for any
given $\epsilon\in(0,\infty)$, $k_{1}\in\zz$, and $q_{1}\in [1,\infty]$,
and for any $x_{2},\ldots,x_{n}\in\rr$,
\begin{align*}
&\left\|\widehat{g}_{1,\epsilon}(\ell)(\cdot,x_{2},\ldots,x_{n})\one_
{R_{k_{1}}}(\cdot)\right\|_{L^{q_{1}'}(\rr)}\\
&\quad=2^{k_{1}\widetilde{p}
_{1}\alpha_{1}} P^{(k_{1})}_{\widetilde{p}_{1}-1,
q_{1}}(\ell)(x_{2},\ldots,x_{n})h_{k_{1},\epsilon}(\ell)
(x_{2},\ldots,x_{n})\\
&\qquad\times[E_{1}(\ell)(x_{2},\ldots,x_{n})]^{\widetilde
{q}_{2}-\widetilde{p}_{1}}\widetilde{g}_{2,\epsilon}
(\ell)(x_{2},\ldots,x_{n}).
\end{align*}

Then, for any given $\epsilon\in(0,\infty)$, $q_{1}\in[1,\infty]$,
and $\alpha_{1}\in\rr$,
and for any $x_{2},\ldots,x_{n}\in\rr$, we consider the following
three cases on $p_{1}$.

If $p_{1}\in(1,\infty)$, then,  from Definition \ref{chz}, \eqref{pyw},
\eqref{hke}, \eqref{2.9x}, \eqref{Eiga}, and \eqref{2.41y},
we deduce that
\begin{align*}
&\left\|\widehat{g}_{1,\epsilon}(\ell)(\cdot,x_{2},\ldots,x_{n})
\right\|_{\dky(\rr)}\\
&\quad=\left\{
\sum_{k_{1}\in\zz}2^{k_{1}\alpha_{1}p_{1}'(p_{1}-1)}\left[
P^{(k_{1})}_{p_{1}-1,q_{1}}(\ell)(x_{2},\ldots,x_{n})
h_{k_{1},\epsilon}(\ell)(x_{2},\ldots,x_{n})\right]^{p_{1}'}
\right\}^{\f{1}{p_{1}'}}\nonumber\\
&\quad\quad\times[E_{1}(\ell)(x_{2},\ldots,x_{n})]^{
\widetilde{q}_{2}-p_{1}}\widetilde{g}_{2,\epsilon}(\ell)
(x_{2},\ldots,x_{n})\nonumber\\
&\quad=[E_{1}(\ell)(x_{2},\ldots,x_{n})]^{p_{1}/p_{1}'}
[E_{1}(\ell)(x_{2},\ldots,x_{n})]^{\widetilde{q}_{2}-p_{1}}
\widetilde{g}_{2,\epsilon}(\ell)(x_{2},\ldots,x_{n})\nonumber\\
&\quad=[E_{1}(\ell)(x_{2},\ldots,x_{n})]^{\widetilde{q}_{2}-1}
\widetilde{g}_{2,\epsilon}(\ell)(x_{2},\ldots,x_{n})=\widehat{g}_{2,
\epsilon}(\ell)(x_{2},\ldots,x_{n}).\nonumber
\end{align*}

If $p_{1}=1$, then, by Definition \ref{chz}, \eqref{pyw}, \eqref{2.9x},
and \eqref{hke}, we obtain
\begin{align*}
&\left\|\widehat{g}_{1,\epsilon}(\ell)(\cdot,x_{2},\ldots,x_{n})\right\|
_{\dot{K}_{q'_{1}}^{-\alpha_{1},\infty}(\rr)}\\
&\quad =\sup_{k_{1}\in\zz}\left\{2^{-k_{1}\alpha_{1}}
2^{k_{1}\alpha_{1}}P^{(k_{1})}_{0,q_{1}}(\ell)(x_{2},\ldots,
x_{n})h_{k_{1},\epsilon}(\ell)(x_{2},\ldots,x_{n})\right\}\\
&\quad\quad\times[E_{1}(\ell)(x_{2},\ldots,x_{n})]^
{\widetilde{q}_{2}-1}\widetilde{g}_{2,\epsilon}(\ell)(x_{2},
\ldots,x_{n})\nonumber\\
&\quad
=[E_{1}(\ell)(x_{2},\ldots,x_{n})]^{\widetilde{q}_{2}-1}
\widetilde{g}_{2,\epsilon}(\ell)(x_{2},\ldots,x_{n})=\widehat{
g}_{2,\epsilon}(\ell)(x_{2},\ldots,x_{n}).\nonumber
\end{align*}

If $p_{1}=\infty$, then, using Definition \ref{chz}, we find that
\begin{align*}
&\left\|\widehat{g}_{1,\epsilon}(\ell)(\cdot,x_{2},\ldots,x_{n})\right\|
_{\dot{K}^{-\alpha_{1},1}_{q_{1}'}(\rr)}\\
&\quad=
\sum_{k_{1}\in\zz}2^{-k_{1}\alpha_{1}}\cdot2^{k_{1}\alpha_{
1}}P^{(k_{1})}_{0,q_{1}}(\ell)(x_{2},\ldots,x_{n})h
_{k_{1},\epsilon}(\ell)(x_{2},\ldots,x_{n})\\
&\quad\quad\times[E_{1}(\ell)(x_{2},\ldots,x_{n})]^{\widetilde{q}_{2}-1}
\widetilde{g}_{2,\epsilon}(\ell)
(x_{2},\ldots,x_{n}).
\end{align*}
If $E_{1,\epsilon}(\ell)(x_{2},\ldots,x_{n})\neq \emptyset$,
since $\supp(\ell)\subset A_{k_{0}}$, from the definitions
of both $A_{k_{0}}$ and $E_{0}(\ell)$ in \eqref{Eiga}, it follows that $E_
{1,\epsilon}(\ell)(x_{2},\ldots,x_{n})$ is a finite set. By this, \eqref{hke}, \eqref{Esion}, and \eqref{2.41y},
we conclude that
\begin{align*}
&\left\|\widehat{g}_{1,\epsilon}(\ell)(\cdot,x_{2},\ldots,x_{n})\right\|
_{\dot{K}^{-\alpha_{1},1}_{q_{1}'}(\rr)}\\
&\quad=\sum_{k_{1}\in\zz}
P^{(k_{1})}_{0,q_{1}}(\ell)(x_{2},\ldots,x_{n})\f{\one_{E
_{1,\epsilon}(\ell)(x_{2},\ldots,x_{n})}(k_{1})}{
\#[E_{1,\epsilon}(\ell)(x_{2},\ldots,x_{n}) ]}\\
&\quad\quad\times
[E_{1}(\ell)(x_{2},\ldots,x_{n})]^{\widetilde{q}_{2}-1}
\widetilde{g}_{2,\epsilon}(\ell)(x_{2},\ldots,x_{n})\\
&\quad=[E_{1}(\ell)(x_{2},\ldots,x_{n})]^{\widetilde{q}_{2}-1}
\widetilde{g}_{2,\epsilon}(\ell)(x_{2},\ldots,x_{n})=\widehat{g}
_{2,\epsilon}(\ell)(x_{2},\ldots,x_{n}).
\end{align*}
If  $E_{1,\epsilon}(\ell)(x_{2},\ldots,x_{n})=\emptyset$,
then, from \eqref{Esion}, we infer that, for any $k_{1}\in\zz$,
$$\left\|E_{0}(\ell)
(\cdot,x_{2},\ldots,x_{n})\one_{R_{k_{1}}}(\cdot)\right\|_
{L^{q_{1}}(\rr)}=0.$$
Using this and \eqref{hke}, we find that
\begin{align*}
&\left\|\widehat{g}_{1,\epsilon}(\ell)(\cdot,x_{2},\ldots,x_{n})\right\|
_{\dot{K}^{-\alpha_{1},1}_{q_{1}'}(\rr)}\\
&\quad=0
=[E_{1}(\ell)(x_{2},\ldots,x_{n})]^{\widetilde{q}_{2}-1}
\widetilde{g}_{2,\epsilon}(\ell)(x_{2},\ldots,x_{n})\\
&\quad=\widehat{g}_{2,\epsilon}(\ell)(x_{2},\ldots,x_{n}).
\end{align*}
Therefore,  we conclude that, for any given $\epsilon\in(0,\infty)$, $p_{1}$, $q_{1}
\in[1,\infty],$ and $\alpha_{1}\in\rr$, and for any $x_{2},
\ldots,x_{n}\in\rr$,
\begin{align*}
\left\|\widehat{g}_{1,\epsilon}(\ell)(\cdot,x_{2},\ldots,x_{n})\right\|
_{\dky(\rr)}=\widehat{g}_{2,\epsilon}(\ell)(x_{2},\ldots,x_{n}).
\end{align*}
Similarly to this estimation, we have, for any given $\epsilon\in(0,\infty)$
and $j\in\{2,\ldots,n-1\}$, and for any $x_{j+1},\ldots,x_{n}\in\rr$,
\begin{align*}
\left\|\widehat{g}_{1,\epsilon}(\ell)(\cdot,x_{j+1},\ldots,x_{n})
\right\|_{\dot{E}_{\vec{q}'_{j}}^{-\vec{\alpha}_{j},\vec{p}
'_{j}}(\rr^{j})}&=\left\|\widehat{g}_{j,\epsilon}(\ell)(
\cdot,x_{j+1},\ldots,x_{n})\right\|_{\dot{K}_{q_{j}'}^{
-\alpha_{j},p_{j}'}(\rr)}\\
&=\widehat{g}_{j+1,
\epsilon}(\ell)(x_{j+1},\ldots,x_{n}).
\end{align*}
By this and \eqref{2.41y}, we conclude that, for any given $\epsilon\in(0,\infty)$,
\begin{equation}\label{deeo}
\left\|\widehat{g}_{1,\epsilon}(\ell)\right\|_{\diihz(\rn)}=\left\|\widehat{g}
_{n,\epsilon}(\ell)\right\|_{\dot{K}_{q'_{n}}^{-\alpha_{n},p'_{n}}
(\rr)}=\left\|[E_{n-1}(\ell)]^{\widetilde{q}_{n}-1
}\widetilde{g}_{n,\epsilon}(\ell)
\right\|_{\dot{K}_{q'_{n}}^{-\alpha_{n},p'_{n}}(\rr)}.
\end{equation}
Moreover, from \eqref{2.18xx} and \eqref{2.17x},
it follows that, for any given $\epsilon\in(0,\infty)$ and $k_{n}\in\zz$,
\begin{align*}
&\left\|[E_{n-1}(\ell)]^{\widetilde{q}_{n}-1}\widetilde{g}
_{n,\epsilon}(\ell)\one_{R_{k_{n}}}\right\|_{L^{q'_{n}}(\rr)}\\
&\quad= \left\|[E_{n-1}(\ell)]^{\widetilde{q}_{n}-1} g_{n,
\epsilon}(\ell) H_{n,\epsilon}(\ell)\one_{R_{k_{n}}}
\right\|_{L^{q'_{n}}(\rr)}[E_{n}(\ell)]^{1-\widetilde{p}_{n}}\\
&\quad=
2^{k_{n}\widetilde{p}_{n}\alpha_{n}}G_{k_{n},
\epsilon}(\ell)\left\|[E_{n-1}(\ell)]^{\widetilde{q}_{n}-1}
H_{n,\epsilon}(\ell)\one_{R_{k_{n}}} \right\|_{L^{q'_{n}}
(\rr)}[E_{n}(\ell)]^{1-\widetilde{p}_{n}}.
\end{align*}
Then, for any given $\epsilon\in(0,\infty)$ and $k_{n}\in\zz$,
we consider the following three cases on $q_{n}$.
If $q_{n}\in (1,\infty)$, then, by \eqref{Hje}, \eqref{pyw}, \eqref{2.12x},
and \eqref{2.9x},
we conclude that
\begin{align*}
&\left\|[E_{n-1}(\ell)]^{\widetilde{q}_{n}-1}\widetilde{g}_{n,
\epsilon}(\ell)\one_{R_{k_{n}}}\right\|_{L^{q'_{n}}(\rr)}\\
&\quad=
2^{k_{n}\widetilde{p}_{n}\alpha_{n}}G_{k_{n},\epsilon}
(\ell)\left\|[E_{n-1}(\ell)]^{q_{n}-1} \one_{R_{k_{n}}} \right\|
_{L^{q'_{n}}(\rr)}[E_{n}(\ell)]^{1-\widetilde{p}_{n}}\\
&\quad=
2^{k_{n}\widetilde{p}_{n}\alpha_{n}}G_{k_{n},\epsilon}
(\ell)\left\|E_{n-1}(\ell) \one_{R_{k_{n}}} \right\|_{L^{q_{n}}(\rr)}
^{\f{q_{n}}{q_{n}'}}[E_{n}(\ell)]^{1-\widetilde{p}_{n}}\\
&\quad=
2^{k_{n}\widetilde{p}_{n}\alpha_{n}}P_{\widetilde{p}
_{n}-1,q_{n}}^{(k_{n})}(\ell)h_{k_{n},\epsilon}
(\ell)[E_{n}(\ell)]^{1-\widetilde{p}_{n}}.
\end{align*}
If $q_{n}=1$, then, from \eqref{Hje}, \eqref{fksion}, \eqref{2.12x},
\eqref{pyw}, and \eqref{2.9x}, it follows that
\begin{align*}
&\left\|[E_{n-1}(\ell)]^{\widetilde{q}_{n}-1}\widetilde{g}
_{n,\epsilon}(\ell)\one_{R_{k_{n}}}\right\|_{L^{\infty}(\rr)}\\
&\quad=
2^{k_{n}\widetilde{p}_{n}\alpha_{n}}G_{k_{n},\epsilon}
(\ell)\left\|[E_{n-1}(\ell)]^{0} \one_{R_{k_{n}}} \right\|_{L^{\infty}
(\rr)}[E_{n}(\ell)]^{1-\widetilde{p}_{n}}\\
&\quad=
2^{k_{n}\widetilde{p}_{n}\alpha_{n}}P_{\widetilde{p}
_{n}-1,1}^{(k_{n})}(\ell)h_{k_{n},\epsilon}
(\ell)[E_{n}(\ell)]^{1-\widetilde{p}_{n}}.
\end{align*}
We now consider the case $q_{n}=\infty$.  In this case, we consider the following two
cases on $|F^{(k_{n})}_{\epsilon}(\ell)|$. If $|F^{(k_{n})}
_{\epsilon}(\ell)|\neq 0$, then we first observe that,
for any given $\epsilon\in(0,\infty)$ and
$k_{n}\in\zz$, $F^{(k_{n})}_{\epsilon}
(\ell)\subset R_{k_{n}}$. Applying this, the definitions of both
$H_{n,\epsilon}(\ell)$ in \eqref{Hje} and $f_{k_{n},\epsilon}(\ell)$,
and \eqref{2.12x},
we find that
\begin{align*}
&\left\|[E_{n-1}(\ell)]^{\widetilde{q}_{n}-1}\widetilde{g}_
{n,\epsilon}(\ell)\one_{R_{k_{n}}}\right\|_{L^{1}(\rr)}\\
&\quad=
2^{k_{n}\widetilde{p}_{n}\alpha_{n}}G_{k_{n},\epsilon}
(\ell)\left\|f_{k_{n},\epsilon}(\ell) \one_{R_{k_{n}}} \right\|_{L^{1}(\rr)
}[E_{n}(\ell)]^{1-\widetilde{p}_{n}}\\
&\quad=
2^{k_{n}\widetilde{p}_{n}\alpha_{n}}P_{\widetilde{p}_
{n}-1,\infty}^{(k_{n})}(\ell)h_{k_{n},\epsilon}
(\ell)[E_{n}(\ell)]^{1-\widetilde{p}_{n}}.
\end{align*}
If $|F^{(k_{n})}_{\epsilon}(\ell)|= 0$, then, similarly to the
estimation of
\eqref{FQar}, we have $\|E_{n-1}(\ell)\one_{R_{k_{n}}}\|
_{L^{\infty}(\rr)}=0$. From this, \eqref{Hje}, \eqref{fksion},
and \eqref{2.9x}, we deduce that
\begin{equation*}
\left\|[E_{n-1}(\ell)]^{\widetilde{q}_{n}-1}\widetilde{g}_{n,
\epsilon}(\ell)\one_{R_{k_{n}}}\right\|_{L^{1}(\rr)}=0=2^{k_{n}
\widetilde{p}_{n}\alpha_{n}}P_{\widetilde{p}_{n}-1,\infty}
^{(k_{n})}(\ell)h_{k_{n},\epsilon}
(\ell)[E_{n}(\ell)]^{1-\widetilde{p}_{n}}.
\end{equation*}
Thus, for any given $\epsilon\in(0,\infty)$, $p_{n},$ $q_{n}\in [1,\infty]$,
and $k_{n}\in\zz$, we obtain
\begin{equation}\label{2.28x}
\left\|[E_{n-1}(\ell)]^{\widetilde{q}_{n}-1}\widetilde{g}_{n,\epsilon}
(\ell)\one_{R_{k_{n}}}\right\|_{L^{q'_{n}}(\rr)}=2^{k_{n}\widetilde{p}_
{n}\alpha_{n}}P^{(k_{n})}_{\widetilde{p}_{n}-1,q_{n}}(\ell)h_
{k_{n},\epsilon}(\ell) [E_{n}(\ell)]^{1-\widetilde{p}_{n}}.
\end{equation}
Then we consider the following three cases on
$p_{n}$. If $p_{n}\in(1,\infty)$, then, by \eqref{deeo},
\eqref{2.28x}, $E_{n}(\ell)\neq 0$, and the definitions of both
$\dot{K}_{q'_{n}}^{-\alpha_{n},\infty}(\rr)$
and $P^{(k_{n})}_{\widetilde{p}_{n}-1,q_{n}}(\ell)$, we find that
\begin{align*}
\left\|\widehat{g}_{1,\epsilon}(\ell)\right\|_{\diihz(\rn)}&=\left\{
\sum_{k_{n}\in\zz}2^{k_{n}\alpha_{n}p_{n}'(p_{n}-1)}\left\|
E_{n-1}(\ell)\one_{R_{k_{n}}}\right\|_{L^{q_{n}}(\rr)}^{p_{n}'(p_{n}-1)}
\right\}^{\f{1}{p_{n}'}}[E_{n}(\ell)]^{1-p_{n}}\nonumber\\
&=[E_{n}(\ell)]^{\f{p_{n}}{p_{n}'}}[E_{n}(\ell)]^{1-p_{n}}
=1.\nonumber
\end{align*}
If $p_{n}=1$, then, by \eqref{deeo}, \eqref{2.28x},
\eqref{hke}, and $E_{n}(\ell)\neq 0$, we have
\begin{align*}
\left\|\widehat{g}_{1,\epsilon}(\ell)\right\|_{\diihz(\rn)}&=\left\|[E_{n-1}(\ell)]^
{q_{n}-1}\widetilde{g}_{n,\epsilon}(\ell)\right\|_{\dot{K}_{q'_{n}}^
{-\alpha_{n},\infty}(\rr)}\\
&=\sup_{k_{n}\in\zz}\left\{2^{-k_{n}\alpha_{n}}2^{k_{n}
\alpha_{n}} \left\|E_{n-1}(\ell)\one_{R_{k_{n}}}\right\|_
{L^{q_{n}}(\rr)}^{0} \right\}=1.
\end{align*}
If $p_{n}=\infty$, then, from \eqref{deeo}, \eqref{2.28x}, \eqref{pyw},
\eqref{hke}, \eqref{Esion}, \eqref{2.9x}, and $E_{n}(\ell)\neq 0$, we deduce
that
\begin{align*}
\|\widehat{g}_{1,\epsilon}\|_{\diihz(\rn)}&=\sum_{k_{n}\in\zz}
2^{-k_{n}\alpha_{n}}2^{k_{n}\alpha_{n}}P^{(k_{n})}_{0,
q_{n}}(\ell)h_{k_{n},\epsilon}(\ell)\\
&=\sum_{k_{n}\in\zz}P_{0,q_{n}}^{(k_{n})}(\ell)\f{\one_{E_{n,\epsilon}(\ell)}}
{\#[E_{n,\epsilon}(\ell)]}=1.
\end{align*}
Thus, \eqref{c1} holds true.

Next, we show \eqref{c22}. To this end, using \eqref{2.41x}, \eqref{2.40y}, and
\eqref{2.40x}, we conclude that, for any given $\epsilon\in(0,\infty)$ and for any
$x_{2},\ldots,x_{n}\in\rr$,
\begin{align}\label{ifg-2}
&\int_{\rr}\ell(x_{1},\ldots,x_{n})\widehat{g}_{1,\epsilon}(\ell)
(x_{1},\ldots,x_{n})\,dx_{1}\\\nonumber
&\quad=\int_{\rr}[E_{0}(\ell)(x_{1},\ldots,x_{n})]^
{\widetilde{q}_{1}}\widetilde{g}_{1,\epsilon}(\ell)(x_{1},
\ldots,x_{n})\,dx_{1}\\\nonumber
&\quad=\int_{\rr}[E_{0}(\ell)(x_{1},\ldots,x_{n})]^
{\widetilde{q}_{1}}g_{1,\epsilon}(\ell)(x_{1},\ldots,x_{n})
H_{1,\epsilon}(\ell)(x_{1},\ldots,x_{n})\,dx_{1}\\\nonumber
&\qquad\times [E_{1}(\ell)(x_{2},\ldots,x_{n})]^{
\widetilde{q}_{2}-\widetilde{p}_{1}}\widetilde{g}
_{2,\epsilon}(\ell)(x_{2},\ldots,x_{n})\\\nonumber
&\quad=\sum_{k_{1}\in\zz}2^{k_{1}\widetilde{p}_{1}
\alpha_{1}}G_{k_{1},\epsilon}(\ell)(x_{2},\ldots,x_{n})\\\nonumber
&\qquad\times\int_{\rr}[E_{0}(\ell)(x_{1},\ldots,x_{n})]^{\widetilde{q}_{1}}
H_{1,\epsilon}(\ell)(x_{1},\ldots,x_{n})\one_{R_{k_{1}}}
(x_{1})\,dx_{1}\\\nonumber
&\qquad\times
[E_{1}(\ell)(x_{2},\ldots,x_{n})]^{\widetilde{q}_{2}-
\widetilde{p}_{1}}\widetilde{g}_{2,\epsilon}
(\ell)(x_{2},\ldots,x_{n}).\nonumber
\end{align}
Then, for any given $\epsilon\in(0,\infty)$ and
for any $x_{2},\ldots,x_{n}\in\rr$,
we consider the following four cases on both $p_{1}$ and
$q_{1}$. If $p_{1}\in[1,\infty)$ and $q_{1}\in[1,\infty)$,
then $G_{k_{1},\epsilon}(\ell)(x_{2},\ldots,x_{n})=
P^{(k_{1})}_{p_{1}-q_{1},q_{1}}(\ell)(x_{2},\ldots,x_{n})$
and $H_{1,\epsilon}(\ell)(x)=1$.
By this, \eqref{ifg-2}, \eqref{pyw}, \eqref{2.9x}, and
\eqref{Eiga}, we conclude that
\begin{align*}
&\int_{\rr}\ell(x_{1},\ldots,x_{n})\widehat{g}_{1,\epsilon}
(\ell)(x_{1},\ldots,x_{n})\,dx_{1}\\
&\quad=\sum_{k_{1}\in\zz}2^{k_{1}p_{1}\alpha_{1}}
P^{(k_{1})}_{p_{1}-q_{1},q_{1}}(\ell)(x_{2},\ldots,x_{n})
\|E_{0}(\ell)(\cdot,x_{2},\ldots,x_{n})\one_{R_{k_{1}}}
(\cdot)\|_{L^{q_{1}}(\rr)}^{q_{1}}\\
&\quad\quad\times[E_{1}(\ell)(x_{2},\ldots,x_{n})]^
{\widetilde{q}_{2}-p_{1}}\widetilde{g}_{2,\epsilon}(\ell)
(x_{2},\ldots,x_{n})\\
&\quad=\sum_{k_{1}\in\zz}2^{k_{1}p_{1}\alpha_{1}}\|
E_{0}(\ell)(\cdot,x_{2},\ldots,x_{n})\one_{R_{k_{1}}}(\cdot)
\|_{L^{q_{1}}(\rr)}^{p_{1}}\\
&\quad\quad\times[E_{1}(\ell)(x_{2},\ldots,x_{n})]^
{\widetilde{q}_{2}-p_{1}}\widetilde{g}_{2,\epsilon}(\ell)
(x_{2},\ldots,x_{n})\\
&\quad=[E_{1}(\ell)(x_{2},\ldots,x_{n})]^{p_{1}}
[E_{1}(\ell)(x_{2},\ldots,x_{n})]^{\widetilde{q}_{2}-p_{1}}
\widetilde{g}_{2,\epsilon}(\ell)(x_{2},\ldots,x_{n})\\
&\quad\geq (1+\epsilon)^{-2}[E_{1}(\ell)(x_{2},\ldots,
x_{n})]^{\widetilde{q}_{2}}\widetilde{g}_{2,\epsilon}
(\ell)(x_{2},\ldots,x_{n}).
\end{align*}
If $p_{1}=\infty$ and $q_{1}\in[1,\infty)$, then
$$G_{k_{1},\epsilon}(\ell)(x_{2},\ldots,x_{n})=P^{(k_{1})}
_{1-q_{1},q_{1}}(\ell)(x_{2},\ldots,x_{n})
h_{k_{1},\epsilon}(\ell)(x_{2},\ldots,x_{n})$$
and $H_{1,\epsilon}(\ell)(x)=1$. Applying this, \eqref{ifg-2},
\eqref{pyw}, and \eqref{2.9x},
we obtain
\begin{align}\label{2.30xx}
&\int_{\rr}\ell(x_{1},\ldots,x_{n})\widehat{g}_{1,\epsilon}
(\ell)(x_{1},\ldots,x_{n})\,dx_{1}\\
&\quad=
\sum_{k_{1}\in\zz}2^{k_{1}\alpha_{1}}P^{(k_{1})}_
{1-q_{1},q_{1}}(\ell)(x_{2},\ldots,x_{n})h_{k_{1},
\epsilon}(\ell)(x_{2},\ldots,x_{n})\|E_{0}(\ell)(\cdot,
x_{2},\ldots,x_{n})\one_{R_{k_{1}}}(\cdot)\|_{L^{q_{1}}
(\rr)}^{q_{1}}\nonumber\\
&\quad\quad\times
[E_{1}(\ell)(x_{2},\ldots,x_{n})]^{\widetilde{q}_{2}-1}
\widetilde{g}_{2,\epsilon}(\ell)(x_{2},\ldots,x_{n})\nonumber\\
&\quad=
\sum_{k_{1}\in\zz}2^{k_{1}\alpha_{1}}P^{(k_{1})}_{1,
q_{1}}(\ell)(x_{2},\ldots,x_{n})h_{k_{1},\epsilon}(\ell)
(x_{2},\ldots,x_{n})\nonumber\\
&\quad\quad\times
[E_{1}(\ell)(x_{2},\ldots,x_{n})]^{\widetilde{q}_{2}-1}
\widetilde{g}_{2,\epsilon}(\ell)(x_{2},\ldots,x_{n}).\nonumber
\end{align}
Next, we consider the following two cases on
$E_{1,\epsilon}(\ell)(x_{2},\ldots,x_{n})$.
If $E_{1,\epsilon}(\ell)(x_{2},\ldots,x_{n})=\emptyset$, then,
by \eqref{hke} and \eqref{Esion}, we find that $h_{k_{1},\epsilon}(\ell)
(x_{2},\ldots,x_{n})=E_{1}(\ell)(x_{2},\ldots,x_{n})=0$. Thus,
\begin{align*}
&\sum_{k_{1}\in\zz}2^{k_{1}\alpha_{1}}P^{(k_{1})}_
{1,q_{1}}(\ell)(x_{2},\ldots,x_{n})h_{k_{1},\epsilon}(\ell)
(x_{2},\ldots,x_{n})\\
&\quad=0=(1+\epsilon)^{-1}E_{1}(\ell)(x_{2},\ldots,x_{n}).
\end{align*}
If $E_{1,\epsilon}(\ell)(x_{2},\ldots,x_{n})\neq\emptyset$, then, since, for
any $k_{1}\in E_{1,\epsilon}(\ell)(x_{2},\ldots,x_{n})$,
$$\|E_{0}(\ell)(\cdot,x_{2},\ldots,x_{n})
\one_{R_{k_{1}}}(\cdot)\|_{L^{q_{1}}(\rr)}>0,$$
from \eqref{2.30xx}, \eqref{Esion}, and \eqref{2.9x},
we deduce that, for any $k_{1}\in E_{1,\epsilon}(\ell)(x_{2},\ldots,x_{n}),$ $$2^{k_{1}\alpha_{1}}P^{(k_{1})}_{1,
q_{1}}(\ell)(x_{2},\ldots,x_{n})>(1+\epsilon)^{-1}E_{1}(\ell)(x_{2},\ldots,x_{n}).$$
Therefore, we have
\begin{align*}
\sum_{k_{1}\in\zz}2^{k_{1}\alpha_{1}}P^{(k_{1})}_
{1,q_{1}}(\ell)(x_{2},\ldots,x_{n})h_{k_{1},\epsilon}(\ell)
(x_{2},\ldots,x_{n})\geq (1+\epsilon)^{-1}E_{1}(\ell)(x_{2},\ldots,x_{n}).
\end{align*}
Using this and \eqref{2.30xx}, we conclude that
\begin{align*}
	&\int_{\rr}\ell(x_{1},\ldots,x_{n})\widehat{g}_{1,\epsilon}
	(\ell)(x_{1},\ldots,x_{n})\,dx_{1}\\
	&\quad\geq
	(1+\epsilon)^{-2}[E_{1}(\ell)(x_{2},\ldots,
	x_{n})]^{\widetilde{q}_{2}}\widetilde{g}_{2,\epsilon}
	(\ell)(x_{2},\ldots,x_{n}).
\end{align*}
If $p_{1}\in[1,\infty)$ and $q_{1}=\infty$, then
$G_{k_{1},\epsilon}(\ell)(x_{2},\ldots,x_{n})=P^{(k_{1})}_
{p_{1}-1,\infty}(\ell)(x_{2},\ldots,x_{n})$ and
$$H_{1,\epsilon}
(\ell)(x)=\sum_{k_{1}\in\zz}f_{k_{1},\epsilon}(\ell)(x_{1},\ldots,x_{n})
\one_{R_{k_{1}}}(x_{1}).$$
From this, \eqref{ifg-2}, \eqref{fksion},
and \eqref{Fsion}, it follows that
\begin{align}\label{2.30yy}
&\int_{\rr}\ell(x_{1},\ldots,x_{n})\widehat{g}_{1,\epsilon}(\ell)
(x_{1},\ldots,x_{n})\,dx_{1}\\
&\quad=\sum_{k_{1}\in\zz}2^{k_{1}p_{1}\alpha_{1}}P^{(k_{1})}
_{p_{1}-1,\infty}(\ell)(x_{2},\ldots,x_{n})\nonumber\\
&\qquad\times\int_{\rr}
E_{0}(\ell)(x_{1},\ldots,x_{n})f_{k_{1},\epsilon}(\ell)(x_{1},\ldots,
x_{n})\one_{R_{k_{1}}}(x_{1})\,dx_{1}\nonumber\\
&\qquad\times[E_{1}(\ell)(x_{2},\ldots,x_{n})]^{\widetilde{q}_{2}-p_{1}}
\widetilde{g}_{2,\epsilon}(\ell)(x_{2},\ldots,x_{n}).\nonumber
\end{align}
Then we consider the following two cases on $|F_{\epsilon}^{(k_{1})}(\ell)
(x_{2},\ldots,x_{n})|$. If $|F_{\epsilon}^{(k_{1})}(\ell)
(x_{2},\ldots,x_{n})|=0$, then we have
\begin{align*}
	&\int_{\rr}
	E_{0}(\ell)(x_{1},\ldots,x_{n})f_{k_{1},\epsilon}(\ell)(x_{1},\ldots,
	x_{n})\one_{R_{k_{1}}}(x_{1})\,dx_{1}\\
	&\quad=0=(1+\epsilon)^{-1}\|E_{0}(\ell)(\cdot,x_{2},
	\ldots,x_{n})\one_{R_{k_{1}}}(\cdot)\|_{L^{\infty}(\rr)}.
\end{align*}
If $|F_{\epsilon}^{(k_{1})}(\ell)(x_{2},\ldots,x_{n})|\neq 0$, then,
by both \eqref{Fsion} and \eqref{fksion},
we obtain
\begin{align*}
	&\int_{\rr}
	E_{0}(\ell)(x_{1},\ldots,x_{n})f_{k_{1},\epsilon}(\ell)(x_{1},\ldots,
	x_{n})\one_{R_{k_{1}}}(x_{1})\,dx_{1}\\
	&\quad>(1+\epsilon)^{-1}\|E_{0}(\ell)(\cdot,x_{2},
	\ldots,x_{n})\one_{R_{k_{1}}}(\cdot)\|_{L^{\infty}(\rr)}.
\end{align*}
Applying this and \eqref{2.30yy}, we find that
\begin{align*}
	&\int_{\rr}\ell(x_{1},\ldots,x_{n})\widehat{g}_{1,\epsilon}(\ell)
	(x_{1},\ldots,x_{n})\,dx_{1}\\
	&\quad\geq (1+\epsilon)^{-1} \sum_{k_{1}\in\zz}2^{k_{1}p_{1}
	\alpha_{1}}P^{(k_{1})}_{p_{1}
	-1,\infty}(\ell)(x_{2},\ldots,x_{n})\|E_{0}(\ell)(\cdot,x_{2},
	\ldots,x_{n})\one_{R_{k_{1}}}(\cdot)\|_{L^{\infty}(\rr)}\\
	&\qquad\times
	[E_{1}(\ell)(x_{2},\ldots,x_{n})]^{\widetilde{q}_{2}-p_{1}}
	\widetilde{g}_{2,\epsilon}(\ell)(x_{2},\ldots,x_{n})\\
	&\quad=(1+\epsilon)^{-1}[E_{1}(\ell)(x_{2},\ldots,x_{n})]
	^{p_{1}}[E_{1}(\ell)(x_{2},\ldots,x_{n})]^{\widetilde{q}_{2}
	-p_{1}}\widetilde{g}_{2,\epsilon}(\ell)(x_{2},
	\ldots,x_{n})\\
	&\quad\geq (1+\epsilon)^{-2}[E_{1}(\ell)(x_{2},\ldots,x_{n})]
	^{\widetilde{q}_{2}}\widetilde{g}_{2,\epsilon}(\ell)(x_{2},
	\ldots,x_{n}).
\end{align*}
If $p_{1}=q_{1}=\infty$, then
$$G_{k_{1},\epsilon}(\ell)(x_{2},\ldots,x_{n})=P^{(k_{1})}_{0,
\infty}(\ell)(x_{2},\ldots,x_{n})h_{k_{1},\epsilon}(\ell)(
x_{2},\ldots,x_{n})$$ and $$H_{1,\epsilon}(\ell)(x_{1},\ldots,
x_{n})=\sum_{k_{1}\in\zz}f_{k_{1},\epsilon}(\ell)(x_{1},\ldots,
x_{n})\one_{R_{k_{1}}}(x_{1}).$$ By this, similarly to the estimations of both
\eqref{2.30xx} and \eqref{2.30yy}, we conclude that
\begin{align*}
&\int_{\rr}\ell(x_{1},\ldots,x_{n})\widehat{g}_{1,\epsilon}(\ell)(
x_{1},\ldots,x_{n})\,dx_{1}\\
&\quad=\sum_{k_{1}\in\zz}2^{k_{1}\alpha_{1}}P^{(k_{1})}_
{0,\infty}(\ell)(x_{2},\ldots,x_{n})h_{k_{1},\epsilon}(\ell)(
x_{2},\ldots,x_{n})\\
&\qquad\times\int_{\rr}E_{0}(\ell)(x_{1},\ldots,x_{n})f
_{k_{1},\epsilon}(\ell)(x_{1},\ldots,x_{n})\one_{R_{k_{1}}}(x_
{1})\,dx_{1}\\
&\qquad\times
[E_{1}(\ell)(x_{2},\ldots,x_{n})]^{\widetilde{q}_{2}-1}
\widetilde{g}_{2,\epsilon}(\ell)(x_{2},\ldots,x_{n})\\
&\quad\geq (1+\epsilon)^{-2}
E_{1}(\ell)(x_{2},\ldots,x_{n})
[E_{1}(\ell)(x_{2},\ldots,x_{n})]^{\widetilde{q}_{2}-1}
\widetilde{g}_{2,\epsilon}(\ell)(x_{2},\ldots,x_{n})\\
&\quad= (1+\epsilon)^{-2}[E_{1}(\ell)(x_{2},\ldots,x_{n})]^{
\widetilde{q}_{2}}\widetilde{g}_{2,\epsilon}(\ell)(x_{2},
\ldots,x_{n}).
\end{align*}
Thus, for any given $\epsilon\in(0,\infty)$, $p_{1}$, $q_{1}\in[1,\infty]$,
and $\alpha_{1}\in\rr$, and for any $x_{2},\ldots,x_{n}\in\rr$, we have
\begin{align*}
&\int_{\rr}\ell(x_{1},\ldots,x_{n})\widehat{g}_{1,\epsilon}(\ell)(x_{1},
\ldots,x_{n})\,dx_{1}\\
&\quad\geq (1+\epsilon)^{-2}[E_{1}(\ell)(x_{2},\ldots,x_{n})]^{
\widetilde{q}_{2}}\widetilde{g}_{2,\epsilon}(\ell)(x_{2},
\ldots,x_{n}).
\end{align*}
Similarly to this estimation, by \eqref{2.18xx} and \eqref{2.17x}, we obtain
\begin{align*}
\int_{\rn}\ell(x)\widehat{g}_{1,\epsilon}(\ell)(x)\,dx&\geq (1+\epsilon)
^{-2n+2}\int_{\rr}
[E_{n-1}(\ell)(x_{n})]^{\widetilde{q}_{n}}\widetilde{g}_
{n,\epsilon}(\ell)(x_{n})\,dx_{n}\\
&=(1+\epsilon)^{-2n+2}[E_{n}(\ell)]^{1-\widetilde{p}_{n}}
\sum_{k_{n}\in\zz}2^{k_{n}\widetilde{p}_{n}\alpha_{n}}G_{k_{n},\epsilon}(\ell)\\
&\quad\times\int_{\rr}[E_{n-1}(\ell)(x_{n})]^{\widetilde{q}_{n}}H_{n,\epsilon}(\ell)
(x_{n})\one_{R_{k_{n}}}\,dx_{n}.
\end{align*}
From this, similarly to the estimation of \eqref{ifg-2},
we deduce that
\begin{equation*}
\int_{\rn}\ell(x)\widehat{g}_{1,\epsilon}(\ell)(x)\,dx\geq
(1+\epsilon)^{-2n}\|\ell\|_{\iihz(\rn)},
\end{equation*}
namely, \eqref{c22} and hence the above claim holds true.

For any $m\in\nn$ and
$f\in\iihz(\rn)$ with $\|f\|_{\iihz(\rn)}\neq 0$, let
$f_{m}:=f\one_{A_{m}}$. Then $f_{m}\rightarrow f$
almost everywhere and $|f_{m}|\uparrow |f|$ as $m\to\infty$. By this,
\eqref{c1}, \eqref{c22}, and Lemma \ref{donn}, we conclude
that
\begin{align}\label{kz}
&\sup\left\{\|fg\|_{L^{1}(\rn)}:\ \|g\|_{\diihz(\rn)}
=1\right\}\\\nonumber
&\quad\geq \sup_{\epsilon\in(0,\infty)}\sup_{m\in\nn}\int_{\rn}|
f(x)\widehat{g}_{1,\epsilon}(f_{m})(x)|\,dx\geq \sup_{
\epsilon\in(0,\infty)}\sup_{m\in\nn}\int_{\rn}|f_{m}(x)
\widehat{g}_{1,\epsilon}(f_{m})(x)|\,dx\\\nonumber
&\quad\geq \sup_{\epsilon\in(0,\infty)}(1+\epsilon)^{-2n}
\sup_{m\in\nn}\|f_{m}\|_{\iihz(\rn)}\\\nonumber
&\quad=\lim_{m\to\infty}\|f_{m}\|_{\iihz(\rn)}=
\|f\|_{\iihz(\rn)}\nonumber
\end{align}
and hence \eqref{gf} holds true.
This finishes the proof of Lemma \ref{assnd}.
\end{proof}
The following proposition is a generalization of the Fatou
lemma on $\iihz(\rn)$.
\begin{proposition}\label{E-Fatou}
Let $\vec{p}$,
$\vec{q}\in (0,\infty]^{n}$ and $
\vec{\alpha}\in\rn$. If $\{f_{k} \}_{k\in\nn}\subset
\MM(\rn)$ and $f_{k}\geq 0$ for any $k\in\nn$, then
\begin{equation}\label{fatou}
\left\|\varliminf_{k\to\infty}f_{k}\right\|_{\iihz(\rn)}
\leq \varliminf_{k\to\infty}\left\|f_{k}\right\|_{\iihz(\rn)}.
\end{equation}
\end{proposition}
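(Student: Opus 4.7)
The plan is to combine the duality characterization just established in Lemma \ref{assnd} with the convexification identity of Lemma \ref{hzcox-2} to reduce to the classical Fatou lemma. First I would treat the Banach range $\vec p, \vec q \in [1,\infty]^n$. By Lemma \ref{assnd},
$$\left\|\varliminf_{k\to\infty} f_{k}\right\|_{\iihz(\rn)} = \sup\left\{\left\|\Bigl(\varliminf_{k\to\infty}f_{k}\Bigr) g\right\|_{L^{1}(\rn)} :\ \|g\|_{\diihz(\rn)} = 1\right\}.$$
For each such $g$, the nonnegativity of $f_{k}$ gives $|(\varliminf_{k}f_{k}) g| = \varliminf_{k}(f_{k}|g|)$ pointwise, so the classical Fatou lemma combined with Lemma \ref{mhr} yields
$$\int_{\rn}\varliminf_{k\to\infty}(f_{k}|g|)\,dx \leq \varliminf_{k\to\infty}\int_{\rn}f_{k}|g|\,dx \leq \varliminf_{k\to\infty}\|f_{k}\|_{\iihz(\rn)}\|g\|_{\diihz(\rn)}.$$
Taking the supremum over $g$ with $\|g\|_{\diihz(\rn)}=1$ gives \eqref{fatou} in this range.

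For the general case $\vec p, \vec q \in (0,\infty]^{n}$, I would pass to a convexification. Choose $r \in (0,\min\{p_{-},q_{-}\})$ with $p_{-}$, $q_{-}$ as in \eqref{p-}, so that $\vec p/r,\vec q/r \in [1,\infty]^{n}$. Since $f_{k}\geq 0$ and $r>0$, the continuity and monotonicity of $t\mapsto t^{r}$ on $[0,\infty)$ imply
$$\Bigl(\varliminf_{k\to\infty}f_{k}\Bigr)^{r} = \varliminf_{k\to\infty}f_{k}^{\,r}.$$
By Lemma \ref{hzcox-2}, $\|h\|_{\iihz(\rn)}^{r} = \|\,|h|^{r}\|_{\dot E^{r\vec\alpha,\vec p/r}_{\vec q/r}(\rn)}$ for any $h\in\MM(\rn)$. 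Applying the Banach-range case already proved to $\{f_{k}^{\,r}\}_{k\in\nn}$ in $\dot E^{r\vec\alpha,\vec p/r}_{\vec q/r}(\rn)$ and then extracting an $r$-th root (which preserves $\varliminf$, again by monotonicity and continuity) completes the proof.

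The main obstacle is the Banach-range step: a direct attack on the iterated Herz norm is awkward because $\varliminf$ does not commute in a useful way with the sequence of $L^{q_{i}}$-norms and weighted $\ell^{p_{i}}$-sums that define $\iihz(\rn)$. The duality formula from Lemma \ref{assnd} sidesteps this entirely by collapsing the whole mixed-norm structure into a single Lebesgue integral $\int f_{k}|g|\,dx$, where the classical monotone/Fatou machinery applies directly.
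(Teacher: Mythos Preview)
Your argument has a genuine gap in the Banach-range step. Lemma~\ref{assnd} is stated only for $f\in\iihz(\rn)$, so invoking it as an identity for $\varliminf_{k}f_{k}$ presupposes that this function already lies in $\iihz(\rn)$. What your computation actually establishes is
\[
\sup_{\|g\|_{\diihz(\rn)}=1}\int_{\rn}\Bigl(\varliminf_{k\to\infty}f_{k}\Bigr)|g|\,dx
\;\leq\;\varliminf_{k\to\infty}\|f_{k}\|_{\iihz(\rn)},
\]
and to recover $\|\varliminf_{k}f_{k}\|_{\iihz(\rn)}$ on the left you would need the norm--supremum identity for an \emph{arbitrary} nonnegative measurable function. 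In the paper this extension is exactly Corollary~\ref{Eass} (or, essentially, Lemma~\ref{fland}), both of which are proved \emph{using} Proposition~\ref{E-Fatou}; likewise the natural truncation patch $h_{m}:=(\varliminf_{k}f_{k})\wedge m\cdot\one_{A_{m}}$ requires the monotone convergence property (Proposition~\ref{ballqfs-3}), which the paper also derives from Proposition~\ref{E-Fatou}. So within the logical structure you are working in, the duality route is circular.

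Your diagnosis that ``a direct attack on the iterated Herz norm is awkward'' is also off: the paper proves the proposition precisely by that direct attack, and it is short. The point is that Fatou-type inequalities chain through nested norms: for each layer one has $\|\varliminf_{k}g_{k}\|_{L^{q_{i}}(\rr)}\leq\varliminf_{k}\|g_{k}\|_{L^{q_{i}}(\rr)}$ (classical Fatou for $q_{i}<\infty$, trivial for $q_{i}=\infty$) and $\big(\sum_{j}2^{jp_{i}\alpha_{i}}\varliminf_{k}a_{k,j}^{\,p_{i}}\big)^{1/p_{i}}\leq\varliminf_{k}\big(\sum_{j}2^{jp_{i}\alpha_{i}}a_{k,j}^{\,p_{i}}\big)^{1/p_{i}}$ (Fatou for sums, or $\sup\varliminf\leq\varliminf\sup$ when $p_{i}=\infty$). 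Iterating gives \eqref{fatou} directly, with no appeal to duality and no need for the convexification detour.
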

\begin{proof}
Let $\vec{p}:=(p_{1},\ldots,p_{n}),$ $\vec{q}:=(q_{1},\ldots,q_{n})\in (0,\infty]^{n}$ and
$\vec{\alpha}:=(\alpha_{1},\ldots,\alpha_{n})\in\rn$.
We first consider the following two cases on $q_{1}$.
If $q_{1}\in(0,\infty)$, then,
from the Fatou lemma (see, for instance, \cite[p.\,52]
{folland}), we deduce that, for any given $k_{1}\in\zz$ and
for any $x_{2},\ldots,x_{n}\in\rr$,
\begin{equation}\label{2.32x}
\left\|\varliminf_{k\to\infty}f_{k}(\cdot,x_{2},\ldots,x_{n})
\one_{R_{k_{1}}}(\cdot)\right\|_{L^{q_{1}}(\rr)}\leq \varliminf_{k\to
\infty}\left\|f_{k}(\cdot,x_{2},\ldots,x_{n})\one_{R_{k_{1}}}(\cdot)\right
\|_{L^{q_{1}}(\rr)}.
\end{equation}
If $q_{1}=\infty$, then, for any given $k_{1}\in\zz$ and
for any $x_{2},\ldots,x_{n}\in\rr$,
$$\left\|\varliminf_{k\to\infty} f_{k}(\cdot,x_{2},\ldots,x_{n})
\one_{R_{k_{1}}}(\cdot)\right\|_{L^{\infty}(\rr)}\leq
\varliminf_{k\to\infty}\left\|f_{k}(\cdot,x_{2},\ldots,x_{n})
\one_{R_{k_{1}}}(\cdot)\right\|_{L^{\infty}(\rr)}.$$
Thus, for any given $q_{1}\in(0,\infty]$ and $k_{1}\in\zz$, and
for any
$x_{2},\ldots,x_{n}\in\rr$, \eqref{2.32x} holds true.
Now, for any given $q_{1}\in(0,\infty]$ and $\alpha_{1}\in\rr$, and for any
$x_{2},\ldots,x_{n}\in\rr$, we consider the following two cases on $p_{1}$.
If $p_{1}\in(0,\infty)$, then, from \eqref{2.32x}, the Fatou lemma, and
Definition \ref{chz}, we deduce that
\begin{align*}
\left\|\varliminf_{k\to\infty} f_{k}(\cdot,x_{2},\ldots,x_{n})
\right\|_{\ky(\rr)}&\leq
\left\{\sum_{k_{1}\in\zz}2^{k_{1}p_{1}\alpha_{1}}
\varliminf_{k\to\infty}\left\|f_{k}(\cdot,x_{2},\ldots,x_{n})
\one_{R_{k_{1}}}(\cdot)\right\|^{p_{1}}_{L^{q_{1}}(\rr)}\right\}
^{\f{1}{p_{1}}}\\
&\leq \varliminf_{k\to\infty} \|f_{k}(\cdot,x_{2},\ldots,
x_{n})\|_{\ky(\rr)}.
\end{align*}
If $p_{1}=\infty$, then we easily obtain
\begin{align*}
&\sup_{k_{1}\in\zz}\left\{2^{k_{1}\alpha_{1}}
\varliminf_{k\to\infty}\left\|f_{k}(\cdot,x_{2},\ldots,x_{n})
\one_{R_{k_{1}}}(\cdot)\right\|_{L^{\infty}(\rr)}\right\}\\
&\quad\leq
\varliminf_{k\to\infty}\sup_{k_{1}\in\zz}\left\{2^{k_{1}
\alpha_{1}}\left
\|f_{k}(\cdot,x_{2},\ldots,x_{n})\one_{R_{k_{1}}}(\cdot)
\right\|_{L^{\infty}(\rr)}\right\}.
\end{align*}
Using this, we find that, for any given $p_{1},$ $q_{1}\in(0,\infty]$ and
$\alpha_{1}\in\rr$, and
for any $x_{2},\ldots,x_{n}\in\rr$,
\begin{equation*}
\left\|\varliminf_{k\to\infty} f_{k}(\cdot,x_{2},\ldots,x_{n})\right\|_
{\ky(\rr)}\leq \varliminf_{k\to\infty} \|f_{k}(\cdot,x_{2},\ldots,
x_{n})\|_{\ky(\rr)}.
\end{equation*}
Similarly to this estimation, we conclude that \eqref{fatou}
holds
true. This finishes the proof of Proposition \ref{E-Fatou}.
\end{proof}
As an immediate consequence of both Propositions \ref{E-lat}
and \ref{E-Fatou}, we have
the following Fatou property; we omit the details.
\begin{proposition}\label{ballqfs-3}
Let $\vec{p}$, $\vec{q}\in(0,\infty]
^{n},\
\vec{\alpha}\in\rn$, $f\in\MM(\rn)$, and $\{f_{m}\}_{m\in\nn}
\subset \MM(\rn)$. If $0\leq f_{m}\uparrow f$ almost
everywhere as $m\to \infty$,
then $\|f_{m}\|_{\iihz(\rn)}\uparrow \|f\|_{\iihz(\rn)}$ as $m\to \infty$.
\end{proposition}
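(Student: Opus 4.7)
The plan is to combine the two propositions that immediately precede the statement, namely the lattice property (Proposition \ref{E-lat}) and the Fatou-type inequality (Proposition \ref{E-Fatou}), to squeeze $\lim_{m\to\infty}\|f_{m}\|_{\iihz(\rn)}$ from both sides by $\|f\|_{\iihz(\rn)}$.

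First I would exploit monotonicity. Since $0\leq f_{m}\leq f_{m+1}\leq f$ almost everywhere for any $m\in\nn$, Proposition \ref{E-lat} yields the chain
\begin{equation*}
\|f_{m}\|_{\iihz(\rn)}\leq \|f_{m+1}\|_{\iihz(\rn)}\leq \|f\|_{\iihz(\rn)}
\end{equation*}
for any $m\in\nn$. Hence $\{\|f_{m}\|_{\iihz(\rn)}\}_{m\in\nn}$ is a nondecreasing sequence bounded above by $\|f\|_{\iihz(\rn)}$, so its limit exists and satisfies
\begin{equation*}
\lim_{m\to\infty}\|f_{m}\|_{\iihz(\rn)}=\sup_{m\in\nn}\|f_{m}\|_{\iihz(\rn)}\leq \|f\|_{\iihz(\rn)}.
\end{equation*}

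For the reverse inequality, I would invoke Proposition \ref{E-Fatou}. Because $0\leq f_{m}\uparrow f$ almost everywhere, we have $f=\varliminf_{m\to\infty}f_{m}$ almost everywhere, and each $f_{m}\geq 0$. Therefore Proposition \ref{E-Fatou} gives
\begin{equation*}
\|f\|_{\iihz(\rn)}=\left\|\varliminf_{m\to\infty}f_{m}\right\|_{\iihz(\rn)}\leq \varliminf_{m\to\infty}\|f_{m}\|_{\iihz(\rn)}=\lim_{m\to\infty}\|f_{m}\|_{\iihz(\rn)},
\end{equation*}
where the last equality uses the existence of the limit established in the previous paragraph. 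Combining the two bounds yields $\lim_{m\to\infty}\|f_{m}\|_{\iihz(\rn)}=\|f\|_{\iihz(\rn)}$, and together with the monotonicity this is precisely the desired statement $\|f_{m}\|_{\iihz(\rn)}\uparrow\|f\|_{\iihz(\rn)}$.

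There is no real obstacle here; the argument is a standard two-line sandwich once the monotonicity lemma and the Fatou lemma for $\iihz(\rn)$ are in hand. The only minor point worth flagging is that one must invoke Proposition \ref{E-lat} separately to show monotonicity of the norm sequence (to upgrade the $\varliminf$ delivered by Fatou to a genuine $\lim$ and $\sup$); this is why the author bills the result as an immediate consequence of \emph{both} Propositions \ref{E-lat} and \ref{E-Fatou} rather than Fatou alone.
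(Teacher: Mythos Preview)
Your proof is correct and matches the paper's approach exactly: the authors state that the result is ``an immediate consequence of both Propositions \ref{E-lat} and \ref{E-Fatou}'' and omit the details, which are precisely the two-sided squeeze you wrote out.
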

The following lemma is a generalization of \cite[p.\,189,\
Theorem 6.14]{folland}.
\begin{lemma}\label{fland}
Let $\vec{p}$, $\vec{q}\in[1,\infty
]^{n}$ and $\vec{\alpha}\in\rn$. Suppose that $f\in
\MM(\rn)$
is such that the quantity
\begin{equation*}
S(f):=\sup\left\{\left|\int_{\rn}f(x)g(x)\,dx\right|:
\ g\in U\
\text{and}\ \|g\|_{\diihz(\rn)}=1 \right\}
\end{equation*}
is finite, where $U$ denotes the set of all the simple
functions that vanish outside some $A_{m}$ with $m\in\nn$ and $A_{m}$
as in Lemma \ref{Am}. Then $f\in\iihz(\rn)$ and $S(f)=\|f\|_
{\iihz(\rn)}$.
\end{lemma}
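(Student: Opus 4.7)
\emph{Proof plan.}
The inequality $S(f)\le \|f\|_{\iihz(\rn)}$ is immediate: any $g\in U$ with $\|g\|_{\diihz(\rn)}=1$ yields $\bigl|\int fg\bigr|\le \|fg\|_{L^1(\rn)}\le\|f\|_{\iihz(\rn)}$ by Lemma \ref{mhr}, and taking the supremum gives the bound. Note that any $g\in U$ does lie in $\diihz(\rn)$, because $|g|\lesssim \one_{A_m}$ for some $m$ and Lemma \ref{Am}, applied with the dual tuple $(-\vec\alpha,\vec p',\vec q')$, ensures $\one_{A_m}\in\diihz(\rn)$.

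For the reverse inequality I plan to reduce to Lemma \ref{assnd} via a double truncation. For $m,N\in\nn$, set $f_{m,N}:=f\one_{A_m\cap\{|f|\le N\}}$, where $A_m$ is as in Lemma \ref{Am}. Since $|f_{m,N}|\le N\one_{A_m}$, Lemma \ref{Am} together with Proposition \ref{E-lat} gives $f_{m,N}\in\iihz(\rn)$. The plan is to show $\|f_{m,N}\|_{\iihz(\rn)}\le S(f)$ and then pass to the limit twice using the Fatou property.

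By Lemma \ref{assnd}, $\|f_{m,N}\|_{\iihz(\rn)}=\sup\{\|f_{m,N}\widetilde g\|_{L^1(\rn)}:\|\widetilde g\|_{\diihz(\rn)}=1\}$. Replacing $\widetilde g$ by $\mathrm{sgn}(\overline{f_{m,N}})|\widetilde g|$ (which preserves $|\widetilde g|$ and hence the $\diihz(\rn)$-norm by Proposition \ref{E-lat}), the task reduces to showing that $\bigl|\int f_{m,N}g\bigr|\le S(f)$ for every $g\in\diihz(\rn)$ with $\|g\|_{\diihz(\rn)}=1$. Given such a $g$, a standard truncation-and-quantization produces simple functions $g_k$ supported in $A_k$ with $|g_k|\le|g|$ and $g_k\to g$ almost everywhere; writing $g_k=\sum_j c_j\one_{E_j}$ with $E_j\subset A_k$ of finite measure, I set
\begin{equation*}
h_{m,N,k}:=\one_{A_m\cap\{|f|\le N\}}\,g_k=\sum_{j}c_j\one_{E_j\cap A_m\cap\{|f|\le N\}}.
\end{equation*}
Then $h_{m,N,k}$ is a simple function supported in $A_m$, hence $h_{m,N,k}\in U$, and $|h_{m,N,k}|\le|g|$ forces $\|h_{m,N,k}\|_{\diihz(\rn)}\le 1$ by Proposition \ref{E-lat}. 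When this dual norm is positive, normalizing $h_{m,N,k}$ and applying the definition of $S(f)$ yields $\bigl|\int fh_{m,N,k}\bigr|\le S(f)\|h_{m,N,k}\|_{\diihz(\rn)}\le S(f)$; when it vanishes, Proposition \ref{equ0} gives $h_{m,N,k}=0$ almost everywhere, making the bound trivial.

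It remains to pass to the limit in $k$. The identity $\int f_{m,N}g_k=\int fh_{m,N,k}$ holds since both sides equal $\int_{A_m\cap\{|f|\le N\}}fg_k$. Moreover $|f_{m,N}g_k|\le N\one_{A_m}|g|$, and the dominator is integrable on $\rn$ by Lemma \ref{mhr} applied to the pair $\one_{A_m}\in\iihz(\rn)$ (Lemma \ref{Am}) and $g\in\diihz(\rn)$. Dominated convergence then promotes the uniform estimate $\bigl|\int f_{m,N}g_k\bigr|\le S(f)$ to $\bigl|\int f_{m,N}g\bigr|\le S(f)$, whence $\|f_{m,N}\|_{\iihz(\rn)}\le S(f)$. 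Finally, since $|f_{m,N}|\uparrow |f|\one_{A_m}$ as $N\to\infty$ and $|f|\one_{A_m}\uparrow |f|$ almost everywhere as $m\to\infty$ (the coordinate hyperplanes have measure zero), two applications of the Fatou property (Proposition \ref{ballqfs-3}) give $\|f\|_{\iihz(\rn)}\le S(f)<\infty$; in particular $f\in\iihz(\rn)$. The main technical obstacle is producing the approximants $h_{m,N,k}\in U$ with $\|h_{m,N,k}\|_{\diihz(\rn)}\le 1$ while still capturing the correct integral in the limit; this is achieved by inserting the truncating indicator $\one_{A_m\cap\{|f|\le N\}}$ \emph{after} choosing the simple-function approximant of $g$, which is what guarantees membership in $U$.
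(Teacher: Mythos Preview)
Your proof is correct, and it takes a genuinely different route from the paper's. Both arguments share the same skeleton---the easy direction by Lemma~\ref{mhr}, then a truncation of $f$ followed by the Fatou property (Proposition~\ref{ballqfs-3})---but the way you bound $\|f_{m,N}\|_{\iihz(\rn)}$ by $S(f)$ is different. The paper reopens the proof of Lemma~\ref{assnd} and reuses the explicit near-extremizer $\widehat{g}_{1,\epsilon}^{(k)}$ constructed there; since that function is only bounded (not simple) on $A_k$, the paper first proves an auxiliary claim that bounded measurable test functions supported in $A_m$ with unit dual norm already satisfy $\bigl|\int f g_m\bigr|\le S(f)$, which in turn requires showing $|f|\one_{A_m}\in L^1(\rn)$ by testing against $\mathrm{sgn}(f)\one_{A_m}/\|\one_{A_m}\|_{\diihz(\rn)}$. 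You instead treat Lemma~\ref{assnd} as a black box, approximate an arbitrary $g\in\diihz(\rn)$ by simple functions $g_k$, and push the truncation $\one_{A_m\cap\{|f|\le N\}}$ onto $g_k$ to produce $h_{m,N,k}\in U$; dominated convergence is then supplied by $N\one_{A_m}|g|\in L^1(\rn)$ via Lemma~\ref{mhr} and Lemma~\ref{Am}. Your approach is more modular and avoids revisiting the rather elaborate construction of $\widehat{g}_{1,\epsilon}$; the paper's approach is natural given that machinery was just built, but needs the extra intermediate claim. One small point: after showing $\bigl|\int f_{m,N}g\bigr|\le S(f)$ for all $g$ with $\|g\|_{\diihz(\rn)}=1$, note that your substitute $g^*=\mathrm{sgn}(\overline{f_{m,N}})|\widetilde g|$ may have dual norm strictly less than~$1$ (where $f_{m,N}=0$); this is harmless by rescaling, but worth stating explicitly.
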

\begin{proof}
Let all the symbols be as in the present lemma. We
consider the following two cases on $\|f\|_{\iihz(\rn)}$.
If $\|f\|_{\iihz(\rn)}=0$, then, by Proposition \ref{equ0},
we find that $f=0$ almost everywhere and hence $S(f)=\|f\|
_{\iihz(\rn)}=0$.

If $\|f\|_{\iihz(\rn)}\neq 0$,   then we claim that, for
any $m\in\nn$,
if $g_{m}$ is a bounded measurable function on $\rn$,
$\supp(g_{m})\subset A_{m}$, and $\|g_{m}\|_{\diihz(\rn)}=1$,
where $A_{m}$ is as in Lemma \ref{Am}, then
$$\left|\int_{\rn}f(x)g_{m}(x)\,dx\right|\leq S(f).$$
Indeed, fix an $m\in\nn$. From \cite[p.\,47,
Theorem 2.10]{folland}, we deduce that there exists a
sequence $\{g_{j,m}\}_{j\in\nn}$ of simple functions with
$\supp(g_{j,m})\subset A_{m}$ for any $j\in\nn$ such that
$|g_{j,m}|\leq |g_{m}|$ and $g_{j,m}\rightarrow g_{m}$ almost
everywhere on $\rn$ as $j\to \infty$. From $\one_{A_{m}}\in \diihz(\rn)$,
it follows that $	\mathrm{sgn}\,(f)\one_{A_{m}}/\|\one_
{A_{m}}\|_{\diihz(\rn)}\in U$. By this and $S(f)<\infty$,
we find that
\begin{equation*}
\left|\int_{\rn}f(x) [\mathrm{sgn}\,(f)(x)]\one_{A_{m}}(x)
\,dx\right|
\ls S(f)<\infty,
\end{equation*}
which implies that $|f|\one_{A_{m}}\in L^{1}(\rn)$. Applying
this and $|g_{j,m}|\leq \|g_{m}\|_{L^{\infty}(\rn)}\one_{A_{m}}$,
we obtain $|fg_{j,m}|\ls |f|\one_{A_{m}}
$, which, together with the dominated convergence theorem and
$$\|g_{j,m}\|_{\diihz(\rn)}\leq \|g_{m}\|_{\diihz(\rn)}=1,$$
further implies that
\begin{equation}\label{claim-1}
\left|\int_{\rn}f(x)g_{m}(x)\,dx\right| = \lim\limits_
{j\to \infty}\left|\int_{\rn}f(x)g_{j,m}(x)\,dx\right|\leq S(f).
\end{equation}
Thus, the above claim holds true.

Let $\{\phi_{k}\}_{k\in\nn}$
be a sequence of simple functions such that $\phi_{k}
\rightarrow f$
pointwise and $|\phi_{k}|\uparrow |f|$ as $k\to \infty$ (see \cite[p.\,47,
Theorem 2.10]{folland}). For any $k\in\nn$, let $f_{k}:=
\phi_{k}\one_{A_{k}}$. Then, as $k\to \infty$, $f_{k}\rightarrow f$ pointwise on
$\rn\setminus\{\mathbf{0} \}$ and
$|f_{k}|\uparrow |f|$, and $\supp(f_{k})\subset A_{k}$ for any $k\in\nn$. From this,
we deduce that, for any $k\in\nn$,
\begin{equation*}
\|f_{k}\|_{\iihz(\rn)}\ls \|\one_{A_{k}}\|_{\iihz(\rn)}<\infty,
\end{equation*}
which implies that $f_{k}\in\iihz(\rn)$ for any $k\in\nn$. Using Proposition
\ref{ballqfs-3} and $\|f\|_{\iihz(\rn)}\neq 0$, we conclude that
there exists an $N\in\nn$ such that, for any $k>N$,
$
\|f_{k}\|_{\iihz(\rn)}\neq 0.
$
Based on this, for any given $k>N$ and
$\epsilon\in(0,\infty)$,
and for any $x\in\rn$, let
\begin{equation*}
\widehat{g}_{1,\epsilon}^{(k)}(x):=\left[\mathrm{sgn}\,\overline{f(x)}\right]
[E_{0}(f_{k})(x)]^{\widetilde{q}_{1}-1}\widetilde{g}_{1,\epsilon}
(f_{k})(x),
\end{equation*}
where $\widetilde{q}_{1}$ is as in \eqref{pyw}, and $E_{0}(f_{k})
(x)$ and $\widetilde{g}_{1,\epsilon}(f_{k})(x)$ are, respectively, as in
\eqref{Eiga} and \eqref{2.40y} with $\ell$ there replaced by $f_{k}$.
By this, \eqref{2.41x}, and \eqref{c1}, we conclude that $\|\widehat{g}^{(k)}_{1,\epsilon}\|
_{\diihz(\rn)}=1$. Note that $\widehat{g}_{1,\epsilon}^{(k)}$ is a
bounded
measurable function and $\supp(\widehat{g}_{1,\epsilon}^{(k)})\subset
A_{k}$.
Indeed, since $f_{k}$ is bounded
and $\|\one_{\supp(\widehat{g}_{1,\epsilon}^{(k)})}\|_{\iihz(\rn)}
\leq
\|\one_{A_{k}}\|_{\iihz(\rn)}<\infty$, we deduce that $\widehat{g}_
{1,\epsilon}^{(k)}$ is bounded.
Using this, Proposition \ref{E-Fatou}, \eqref{c22} with $\ell$ and
$\widehat{g}_{1,\epsilon}$ replaced, respectively, by $f_{k}$ and
$\widehat{g}_{1,\epsilon}^{(k)}$, $|f_{k}|\leq |f|$, the definition
of $\widehat{g}_{1,\epsilon}^{(k)}$, and the above
claim, we conclude that
\begin{align*}
\|f\|_{\iihz(\rn)}&\leq \varliminf_{k\to\infty}\|f_{k}\|_
{\iihz(\rn)}\leq (1+\epsilon)^{2n}
\varliminf_{k\to\infty}\int_{\rn}\left|f_{k}(x)\widehat{g}^{(k)}_{1,
\epsilon}(x)\right|
\,dx\\
&\leq (1+\epsilon)^{2n} \varliminf_{k\to\infty}\int_{\rn}\left|f(x)
\widehat{g}
^{(k)}_{1,\epsilon}(x)\right|\,dx\\
&=(1+\epsilon)^{2n}\varliminf_{k\to\infty}\int_{\rn}f(x)
\widehat{g}^{(k)}
_{1,\epsilon}(x)\,dx\leq
(1+\epsilon)^{2n}S(f)<\infty.
\end{align*}
Letting $\epsilon \rightarrow 0^{+}$, we have $\|f\|_{\iihz(\rn)}
\leq
S(f)$, here and thereafter, $\epsilon \to 0^{+}$ means
that $\epsilon\in(0,\infty)$ and $\epsilon\to 0$.

On the other hand, by the H\"{o}lder inequality, we easily
obtain
$S(f)\leq \|f\|_{\iihz(\rn)}$. This finishes the proof of
Lemma \ref{fland}.
\end{proof}
Now, using Lemma \ref{fland}, we can prove Theorem \ref{dii}.
\begin{proof}[Proof of Theorem \ref{dii}]
Let all the symbols be as in the present theorem. Let $g\in
\diihz(\rn)$.
From Lemma \ref{mhr}, it follows that, for any $f
\in\iihz(\rn)$,
\begin{equation*}
|J_{g}(f)|=\left|\int_{\rn}f(x)g(x)\,dx\right|\leq \|f\|_{
\iihz(\rn)}
\|g\|_{\diihz(\rn)}.
\end{equation*}
Thus, $J_{g} \in[\iihz(\rn)]^{*}$.

Conversely, let $J$ be a continuous linear functional on
$\iihz(\rn)$. Observe that $\rn\setminus\{\mathbf{0}\}=\bigcup_{m\in\nn}A_{m}$,
where,
for any $m\in\nn$, $A_{m}$ is as in Lemma \ref{Am}. Now,
we claim
that, for any given $m\in\nn$,
the set function $\nu$ defined by setting, for any measurable set $E\subset A_{m}$,
\begin{equation*}
\nu(E):=J(\one_{E})
\end{equation*}
is a finite signed measure on $A_{m}$ (see, for instance, \cite[p.\,285]
{ra} for the precise definition) and absolutely
continuous with respect to the Lebesgue measure
on $\rn$ (see, for instance, \cite[p.\,288]{ra} for the precise definition).
Indeed, for any given $m\in\nn$, let
$\{E_{i}\}_{i\in\nn}
\subset A_{m}$ be a
countable family of disjoint sets. Since $J$ is a
continuous linear functional on $\iihz(\rn)$, we
deduce that, for any
$N\in\nn$,
\begin{equation*}
\left|\nu\left(\bigcup_{i=1}^{\infty}E_{i} \right)-
\nu\left(\bigcup_
{i=1}^{N}E_{i} \right)\right|=\left|\nu\left(
\bigcup_{i=N+1}^{\infty}
E_{i} \right) \right|\ls \left\|\one_{
\bigcup_{i=N+1}^{\infty}E_{i}}
\right\|_{\iihz(\rn)}.
\end{equation*}
Observe that, for any given $m\in\nn$ and for any
$N\in\nn$, $\bigcup_{
i=N+1}^{\infty}E_{i}\subset A_{m}$.
By this and Lemma \ref{donn}, we conclude
that
\begin{align*}
\left|\nu\left(\bigcup_{i=1}^{\infty}E_{i}
\right)-\sum_{i=1}^{\infty}
\nu\left(E_{i}\right)\right|&=\lim_{N\to
\infty}\left|\nu\left(\bigcup
_{i=1}^{\infty}E_{i} \right)- \nu\left(
\bigcup_{i=1}^{N}E_{i}
\right)\right|\\
&\ls\lim_{N\to \infty}\|\one_{
\bigcup_{i=N+1}^{\infty}
E_{i}}\|_{\iihz(\rn)}=0,
\end{align*}
which implies that $\nu$ is a signed measure.
Moreover, since $J$ is a continuous linear functional on
$\iihz(\rn)$, it follows that, for any
measurable set $E\subset A_{m}$,
$$|\nu(E)|
\ls \|\one_{E}\|_{\iihz
(\rn)}\ls\|\one_{A_{m}}\|_{\iihz(\rn)}
<\infty$$
and hence
$\nu$ is a finite signed measure. In
addition, for any given
$m\in\nn$, assume that $E\subset A_{m}$
with $|E|=0$. Then we
conclude that
\begin{equation*}
|\nu(E)|\ls \|\one_{E}\|_{\iihz(\rn)}=0
\end{equation*}
and hence $\nu$ is absolutely continuous
with respect to the Lebesgue measure. Thus, the above
claim holds true.
For any given $m\in\nn$, applying the
Radon--Nikodym theorem
(see, for instance, \cite[p.\,290, Theorem 4.3]{ra} for
the precise theorem),
we obtain a measurable function $h_{m}$
such that $\nu(E)=\int_{E}
h_{m}(x)\,dx$ for
any $E\subset A_{m}$. Then we obtain a
function $h\in\MM(\rn)$ which is defined to be $h_{m}$ almost everywhere on
$A_{m}$ for any $m\in\nn$. It is easy to show that $h$ is well defined on $\rn$
and, moreover, for any simple function $f$ with
$\supp(f)\subset A_{m}$,
\begin{equation}\label{2.33x}
J(f)=\int_{\rn} h(x)f(x)\,dx.
\end{equation}
By this and Lemma \ref{fland},
we conclude that
\begin{align*}
S(h)
:=&\,\sup\left\{\left|\int_{\rn}h(x)g(x)\,
dx\right|
:\ g\in U\ \text{and}\ \|g\|_{\iihz(\rn)}
=1\right\}\\
\leq&\, \sup\left\{\left|J(g)\right|
:\  \|g\|_{\iihz(\rn)}=1\right\}\\
=&\,\|J\|_{[\iihz(\rn)]^{*}}<\infty,
\end{align*}
which, combined with Lemma \ref{fland},
further implies that
\begin{equation}\label{dcc}
\|h\|_{\diihz(\rn)}=S(h)<\infty
\end{equation} and hence $h\in\diihz(\rn)$.
Now, let $f\in\iihz(\rn)$. Then, by
\cite[p.\,47,\
Theorem 2.10]
{folland}, we conclude that there exists
a sequence $\{g_{m}\}_{m\in\nn}$ of simple
functions such that
$g_{m}\rightarrow f$
almost everywhere and $|g_{m}|\uparrow|f|$ as $m\to \infty$.
For any $m\in\nn$,
let $f_{m}:=g_{m}\one_{A_{m}}$.
Observe that $f_{m}\rightarrow f$
as $m\rightarrow\infty$ and,
for any $m\in\nn$, $\supp(f_{m})\subset A_{m}$
and $|f_{m}-f|\leq 2|f|\in\iihz(\rn).$
From this and Lemma \ref{donn}, we deduce
that $\|f_{m}-f\|_{\iihz(\rn)}\rightarrow 0$ as $m\to\infty$,
which implies that, as $m\to\infty$,
\begin{equation}\label{J1}
|J(f_{m})-J(f)|\ls \|f_{m}-f\|_{\iihz(\rn)}
\rightarrow 0.
\end{equation}
Note that, from Lemma \ref{mhr}
and \eqref{dcc}, we
deduce that
$$\int_{\rn}|h(x)f(x)|\,dx\leq
\|h\|_{\diihz(\rn)}\|f\|_{\iihz(\rn)}
<\infty.$$
By this, \eqref{J1}, \eqref{2.33x}, $|hf_{m}|
\leq |hf|$ for any $m\in\nn$, the continuity
of $J$, and the dominated convergence theorem,
we conclude that
\begin{equation*}
J(f)=\lim_{m\to\infty}J(f_{m})=\lim_{m\to\infty}
\int_{\rn}h(x)f_{m}(
x)\,dx=\int_{\rn} h(x)f(x)\,dx.
\end{equation*}
The uniqueness of $h$ is obvious. This finishes
the proof of Theorem \ref{dii}.
\end{proof}
For any $\vec{p}$, $\vec{q}\in(0,\infty]^{n}$ and
$\vec{\alpha}\in\rn$,
the \emph{associate space (K\"othe dual)}
$[\iihz(\rn)]'$ of $\iihz(\rn)$
is defined by setting
\begin{align*}
[\iihz(\rn)]':=\left\{f\in \MM(\rn):\ \|f\|_
{[\iihz(\rn)]'}<\infty \right\},
\end{align*}
where, for any $f\in\MM(\rn)$, $$\|f\|_{[\iihz(\rn)]'}:=
\sup\left\{\|fg\|_{L^{1}(\rn)}:\ g\in
\iihz(\rn),\ \|g\|_{\iihz(\rn)}=1\right\}.$$

As a consequence of Lemma \ref{assnd}, we obtain the
associate spaces of $\iihz(\rn)$ as follows.
\begin{corollary}\label{Eass}
Let $\vec{p},$ $\vec{q}\in[1,\infty]^{n}$ and
$\vec{\alpha}\in\rn$. Then
$f\in[\iihz(\rn)]'$ if and only if
$f\in\diihz(\rn)$. Moreover,
\begin{equation*}
\|f\|_{[\iihz(\rn)]'}=\|f\|_{\diihz(\rn)}.
\end{equation*}
\end{corollary}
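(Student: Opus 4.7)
The plan is to derive this corollary directly from the H\"{o}lder-type inequality in Lemma \ref{mhr} together with Lemma \ref{fland}, applied once with the original indices and once with their conjugates; all the delicate constructions (auxiliary functions, case splits on $p_{j}$, $q_{j}$) have already been absorbed into the proofs of Lemmas \ref{assnd} and \ref{fland}, so no new machinery is required.

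For the inclusion $\diihz(\rn)\subset [\iihz(\rn)]'$ together with $\|f\|_{[\iihz(\rn)]'}\leq \|f\|_{\diihz(\rn)}$, I would invoke Lemma \ref{mhr}: for any $f\in\diihz(\rn)$ and any $g\in\iihz(\rn)$ normalized by $\|g\|_{\iihz(\rn)}=1$, it delivers $\|fg\|_{L^{1}(\rn)}\leq \|f\|_{\diihz(\rn)}$, and taking the supremum over such $g$ settles this direction.

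For the reverse inclusion, I would apply Lemma \ref{fland} to the conjugate triple $(\vec{p}',\vec{q}',-\vec{\alpha})$, which again lies in $[1,\infty]^{n}\times[1,\infty]^{n}\times\rn$ since conjugation is an involution on $[1,\infty]$; after simplifying via $(\vec{p}')'=\vec{p}$ and $(\vec{q}')'=\vec{q}$, the lemma reads: any $f\in\MM(\rn)$ with
$$
S(f):=\sup\left\{\left|\int_{\rn}f(x)g(x)\,dx\right|:\ g\in U\ \text{and}\ \|g\|_{\iihz(\rn)}=1\right\}
$$
finite belongs to $\diihz(\rn)$ and satisfies $\|f\|_{\diihz(\rn)}=S(f)$. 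Given $f\in[\iihz(\rn)]'$, the class of normalized simple functions $g\in U$ is a subclass of the normalized test functions defining $\|f\|_{[\iihz(\rn)]'}$, hence $S(f)\leq \|f\|_{[\iihz(\rn)]'}<\infty$. Lemma \ref{fland} then yields both $f\in\diihz(\rn)$ and $\|f\|_{\diihz(\rn)}=S(f)\leq \|f\|_{[\iihz(\rn)]'}$. Combining the two bounds gives the claimed norm equality.

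I anticipate no substantive obstacle; the only care needed is the bookkeeping of conjugate indices, in particular verifying that Lemma \ref{fland} is legitimately applicable with $(\vec{p}',\vec{q}',-\vec{\alpha})$ in place of $(\vec{p},\vec{q},\vec{\alpha})$, which reduces to the involutive nature of conjugation on $[1,\infty]$ together with the fact that the space appearing in the dual norm of Lemma \ref{fland} becomes, upon substitution, exactly $\iihz(\rn)$.
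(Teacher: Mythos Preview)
Your argument is correct and, in fact, slightly cleaner than the paper's own proof. Both proofs use Lemma~\ref{mhr} for the inequality $\|f\|_{[\iihz(\rn)]'}\leq \|f\|_{\diihz(\rn)}$, and both exploit the involutive symmetry $(\vec{p},\vec{q},\vec{\alpha})\leftrightarrow(\vec{p}',\vec{q}',-\vec{\alpha})$ for the reverse direction. The difference lies in which duality lemma is invoked: you apply Lemma~\ref{fland} (with the conjugate triple) directly, whereas the paper applies the weaker Lemma~\ref{assnd} to truncations $g_{m}:=\min\{|f|,m\}\one_{A_{m}}$ and then passes to the limit via the Fatou-type monotone convergence in Proposition~\ref{ballqfs-3}. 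Since Lemma~\ref{fland} already contains precisely this truncation-and-limit mechanism in its proof, your route avoids repeating that step. What the paper's approach buys is that it keeps the logical dependence on the slightly more elementary Lemma~\ref{assnd}; what your approach buys is brevity and a cleaner appeal to the exact tool built for this purpose. One small point worth making explicit in your write-up: the supremum $S(f)$ in Lemma~\ref{fland} is over $|\int fg\,dx|$, not $\int|fg|\,dx$, so the inequality $S(f)\leq \|f\|_{[\iihz(\rn)]'}$ uses the trivial bound $|\int fg|\leq \|fg\|_{L^{1}(\rn)}$ together with the observation (via Lemma~\ref{Am}) that every $g\in U$ with $\|g\|_{\iihz(\rn)}=1$ actually belongs to $\iihz(\rn)$.
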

\begin{proof}
Let all the symbols be as in the present
corollary. For any given $m\in\nn$ and $f\in\MM(\rn)$,
and for any $x\in\rn$, let
$f_{m}(x):=f(x)$ if
$|f(x)|\leq m$, and $f_{m}(x):=m$ if $|f(x)|>m$.
Without loss of
generality, we may assume that $f$ is nonnegative.
Then, for any given $m\in\nn$ and for any $x\in\rn$, let
$$g_{m}(x):=f_{m}(x)\one_{A_{m}}(x),$$
where $A_{m}$ is
as in Lemma \ref{Am}.
By this, we easily obtain $g_{m}\in\iihz(\rn)$
and $g_{m}\uparrow f$ as $m\to\infty$.
This, together with Proposition \ref{ballqfs-3} and
Lemma \ref{assnd}, further implies that
\begin{align*}
\|f\|_{\iihz(\rn)}&=\lim_{m\to\infty}\|g_{m}\|
_{\iihz(\rn)}
=\lim_{m\to\infty}\sup\left\{\|g_{m}h\|_{L^{1}
(\rn)}:\ \|h\|_
{\diihz(\rn)}=1 \right\}\\
&\leq \sup\left\{\|fh\|_{L^{1}(\rn)}:\ \|h\|_
{\diihz(\rn)}=1 \right\}.
\end{align*}
On the other hand, from Lemma \ref{mhr},
we deduce that
$$\|f\|_{\iihz(\rn)}\geq \sup\left\{\|fh\|_{L^{1}(\rn)}:
\ \|h\|_{\diihz(\rn)}
=1\right\}.$$
This finishes the proof of Corollary \ref{Eass}.
\end{proof}

\subsection{Ball Quasi-Banach Function Spaces}\label{s2.2}

The following concept of ball quasi-Banach function
spaces on $\rn$
is from \cite{SHYY}.
\begin{definition}\label{ballqB}
Let $X\subset\MM(\rn)$ be a quasi-normed linear space,
equipped with a quasi-norm $\|\cdot\|_X$ which makes sense for all
measurable functions on $\rn$.
Then $X$ is called a \emph{ball quasi-Banach
function space} on $\rn$ if it satisfies:
\begin{enumerate}
\item[$\mathrm{(i)}$] for any $f\in\MM(\rn)$, $\|f\|_{X}=0$ implies
that $f=0$ almost everywhere;
\item[$\mathrm{(ii)}$] for any $f$, $g\in\MM(\rn)$, $|g|\le |f|$ almost
everywhere implies that $\|g\|_X\le\|f\|_X$;
\item[$\mathrm{(iii)}$] for any $\{f_m\}_{m\in\nn}\subset\MM(\rn)$ and $f\in\MM(\rn)$,
$0\le f_m\uparrow f$ almost everywhere as $m\to\infty$
implies that $\|f_m\|_X\uparrow\|f\|_X$ as $m\to\infty$;
\item[$\mathrm{(iv)}$] $B\in\BB$ implies
that $\one_B\in X$,
where $\BB$ is as in \eqref{Eqball}.
\end{enumerate}

Moreover, a ball quasi-Banach function space $X$
is called a
\emph{ball Banach function space} if it satisfies:
\begin{enumerate}
\item[$\mathrm{(v)}$] for any $f,$ $g\in X$,
\begin{equation*}
\|f+g\|_X\le \|f\|_X+\|g\|_X;
\end{equation*}
\item[$\mathrm{(vi)}$] for any $B\in \BB$,
there exists a positive constant $C_{(B)}$,
depending on $B$, such that, for any $f\in X$,
\begin{equation*}
\int_B|f(x)|\,dx\le C_{(B)}\|f\|_X.
\end{equation*}
\end{enumerate}
\end{definition}
\begin{remark}
\begin{enumerate}
\item[$\mathrm{(i)}$] Let $X$ be a ball quasi-Banach
function space on $\rn$. By \cite[Remark 2.6(i)]{aaa} (see also \cite{yhyy21b}),
we conclude that, for any $f\in\MM(\rn)$, $\|f\|_{X}=0$ if and only if $f=0$
almost everywhere on $\rn$.
\item[$\mathrm{(ii)}$] As was mentioned in
\cite[Remark 2.6(ii)]{aaa}, we obtain an
equivalent formulation of Definition \ref{ballqB}
via replacing any ball $B$ there by any
bounded measurable set $E$.
\item[$\mathrm{(iii)}$] We should point out that,
in Definition \ref{ballqB}, if we
replace any ball $B$ by any measurable set $E$ with
finite measure, we obtain the
definition of (quasi-)Banach function spaces, which were originally
introduced in \cite[Definitions 1.1 and 1.3]{IO}. Thus,
a (quasi-)Banach function space
is also a ball (quasi-)Banach function space.
\item[$\mathrm{(iv)}$] By \cite[Theorem 2]{Orlicz},
we conclude that both (ii) and (iii) of
Definition \ref{ballqB} imply that any ball quasi-Banach
function space is complete.
\end{enumerate}
\end{remark}
We have the following conclusions on mixed-norm Herz spaces.
\begin{proposition}\label{bqb-2}
Let $\vec{p}$, $\vec{q}:=(q_1,\ldots,q_n)\in(0,\infty]^{n}$
and $\vec{\alpha}:=(\alpha_1,\ldots,\alpha_n)\in\rn$. Then
\begin{enumerate}
\item[$\mathrm{(i)}$] the mixed-norm Herz space $\iihz(\rn)$ is
a ball quasi-Banach function space if and only if,\
for any $i\in\{1,\ldots,n\}$,
$\alpha_{i}\in(-\f{1}{q_{i}},\infty)$;
\item[$\mathrm{(ii)}$] the mixed-norm Herz
space $\iihz(\rn)$ is
a ball Banach function space if $\vec{p}$, $\vec{q}\in[1,\infty]^{n}$ and, for any $i\in\{1,\ldots
,n\}$, $\alpha_{i}\in(-\f{1}{q_{i}},1-\f{1}{q_{i}})$.
\end{enumerate}
\end{proposition}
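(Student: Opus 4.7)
The plan is to verify the four (respectively six) conditions of Definition \ref{ballqB} in turn, leveraging the structural propositions already proved for the ``automatic'' conditions and reducing the remaining conditions to a one-dimensional computation via the tensor-product structure of the iterated norm.

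For Part (i), conditions (i), (ii), and (iii) of Definition \ref{ballqB} are precisely Propositions \ref{equ0}, \ref{E-lat}, and \ref{ballqfs-3}, and these hold for every choice of $\vec{\alpha}$, $\vec{p}$, $\vec{q}$. Hence the characterization comes entirely from condition (iv), namely that $\one_B\in\iihz(\rn)$ for every $B\in\BB$. For sufficiency, given any ball $B$, choose $m\in\nn$ large enough that $B\subset[-2^m,2^m]^n$; Proposition \ref{E-lat} together with the factorization of the iterated norm on tensor-product indicator functions reduces the estimate to
\begin{equation*}
\|\one_{[-2^m,2^m]^n}\|_{\iihz(\rn)}=\prod_{i=1}^n\|\one_{[-2^m,2^m]}\|_{\ki(\rr)},
\end{equation*}
and a direct computation analogous to the one in the proof of Lemma \ref{Am}, but with summation range extended downward to $-\infty$, shows that each factor is comparable to $[\sum_{k_i\le m}2^{k_ip_i(\alpha_i+1/q_i)}]^{1/p_i}$ (with the obvious supremum interpretation when $p_i=\infty$); this is finite exactly under the hypothesis $\alpha_i>-1/q_i$. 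For necessity, assuming $\iihz(\rn)$ is a ball quasi-Banach function space, apply (iv) to $B(\mathbf{0},\sqrt n)$, observe $[-1,1]^n\subset B(\mathbf{0},\sqrt n)$, use Proposition \ref{E-lat} to descend to $\one_{[-1,1]^n}$, factor as above, and read off from the finiteness of each $\|\one_{[-1,1]}\|_{\ki(\rr)}$ (whose defining series has terms $2^{k_ip_i(\alpha_i+1/q_i)}$ for $k_i\le 0$) that $\alpha_i+1/q_i>0$.

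For Part (ii), given $\vec{p},\vec{q}\in[1,\infty]^n$ and $\alpha_i\in(-1/q_i,1-1/q_i)$, conditions (i)--(iv) are inherited from Part (i). Condition (v), the triangle inequality, follows by iterated application of Minkowski's inequality in $L^{q_i}(\rr)$ and $\ell^{p_i}(\zz)$, each valid since $p_i,q_i\ge 1$, peeling off successive variables. For condition (vi), given $B\in\BB$ and $f\in\iihz(\rn)$, Lemma \ref{mhr} yields
\begin{equation*}
\int_B|f(x)|\,dx=\int_{\rn}|f(x)|\one_B(x)\,dx\le \|f\|_{\iihz(\rn)}\,\|\one_B\|_{\diihz(\rn)},
\end{equation*}
so it suffices to verify $\one_B\in\diihz(\rn)$. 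Applying the already-proved Part (i) to the dual parameter triple $(-\vec{\alpha},\vec{p}\,',\vec{q}\,')$ transforms this into the condition $-\alpha_i>-1/q_i'$, i.e., $\alpha_i<1-1/q_i$, which is part of our hypothesis and supplies the constant $C_{(B)}:=\|\one_B\|_{\diihz(\rn)}$.

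The main technical nuisance I expect is the endpoint regime $p_i=\infty$ (and the related $q_i=\infty$): there the relevant geometric sum becomes a supremum over $k_i\le m$, so establishing that the \emph{strict} inequality $\alpha_i>-1/q_i$ is genuinely necessary (rather than the non-strict one) requires choosing the test ball and the index $i$ carefully and treating the supremum separately from the sum. A secondary bookkeeping point is justifying the tensor-product factorization $\|\prod_i\one_{J_i}\|_{\iihz(\rn)}=\prod_i\|\one_{J_i}\|_{\ki(\rr)}$ directly from Definition \ref{mhz} by peeling off one coordinate at a time.
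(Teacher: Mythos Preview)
Your approach to Part (i) matches the paper's: Propositions \ref{equ0}, \ref{E-lat}, \ref{ballqfs-3} handle conditions (i)--(iii), and condition (iv) is checked by enclosing $B$ in a dyadic cube centered at the origin, factoring the iterated norm as a product of one-dimensional Herz norms, and summing the geometric series $\sum_{k_i\le m}2^{k_ip_i(\alpha_i+1/q_i)}$ in each coordinate (with the supremum variant when $p_i=\infty$). For Part (ii), condition (vi), you take a different and cleaner route than the paper. The paper performs a direct iterated H\"older estimate: it enlarges $B$ to a cube, decomposes each coordinate integral as $\sum_{k_i\le r}\int_{R_{k_i}}$, applies H\"older in $L^{q_i}$ and then in $\ell^{p_i}$, and uses $\alpha_i<1-1/q_i$ to sum the dual series $\sum_{k_i\le r}2^{k_ip_i'[(1-1/q_i)-\alpha_i]}$. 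You instead invoke Lemma \ref{mhr} once to obtain $\int_B|f|\le\|f\|_{\iihz(\rn)}\|\one_B\|_{\diihz(\rn)}$ and then feed the dual triple $(-\vec\alpha,\vec p\,',\vec q\,')$ into the already-established Part (i), which yields $\|\one_B\|_{\diihz(\rn)}<\infty$ precisely from $-\alpha_i>-1/q_i'$. Your approach recycles more of the existing machinery and is shorter; the paper's is more self-contained at that step but repeats work already packaged in Lemma \ref{mhr}.

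Your flagged endpoint concern is genuine and not merely a bookkeeping nuisance: when $p_m=\infty$ and $\alpha_m=-1/q_m$, one has $\sup_{k_m\le j}2^{k_m(\alpha_m+1/q_m)}=1<\infty$, so the $m$-th factor does \emph{not} blow up and the necessity argument as sketched fails to exclude this boundary point. The paper's proof glosses over this case as well, writing the product of factors as $\infty$ without isolating the $p_m=\infty$ regime; so your proposal is no worse off than the paper here, but be aware that the strict inequality in the ``only if'' direction is not actually established by either argument at this endpoint.
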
	
\begin{proof}
Let $\vec{p}:=(p_{1},\ldots,p_{n})$, $\vec{q}:=(q_1,\ldots,q_n)\in(0,\infty]^{n}$ and
$\vec{\alpha}:=(\alpha_1,\ldots,\alpha_n)\in\rn$. We first prove the sufficiency
of (i). By Propositions \ref{equ0}, \ref{E-lat},
and \ref{ballqfs-3}, we find that the space $\iihz(\rn)$
satisfies (i), (ii), and (iii) of Definition \ref{ballqB}.
For any $i\in\{1,\ldots,n \}$, let $\alpha_{i}
\in(-\f{1}{q_{i}},\infty)$ .
Observe that, for any $B\in \BB$, there exist an $r\in\zz$
and a cube $Q(\textbf{0},2^{r+1})=I_{1}\times\cdots\times I_{n}$
such that $B\subset Q(\textbf{0},2^{r+1})$, where
$I_{i}:=(-2^{r},2^{r})$ for any $i\in\{1,\ldots,n \}$.
Note that $\one_{Q(\textbf{0},2^{r+1})}(x)=\one_{I_{1}}(x_{1})
\times\cdots\times \one_{I_{n}}(x_{n})$.
From this and Proposition \ref{E-lat}, we deduce that
\begin{align}\label{2.30x}
\|\one_{B}\|_{\iihz(\rn)}\leq \|\one_{Q(\textbf{0},2^{r+1})}
\|_{\iihz(\rn)}=\prod_{i=1}^{n}\|\one_{I_{i}}\|_{\ki(\rr)}.
\end{align}
For any given $i\in\{1,\ldots,n \}$, we estimate $\|\one_{I_{i}}\|_
{\ki(\rr)}$. To achieve it, we consider the following two
cases on $p_{i}$. If $p_{i}\in(0,\infty)$, then, by
$\alpha_{i}\in (-\f{1}{q_{i}},\infty)$, we conclude that
\begin{equation}\label{2.30y}
\|\one_{I_{i}}\|_{\ki(\rr)}= \left[\sum_{k_{i} \in \zz}2^
{k_{i}p_{i}\alpha_{i}}\left\|\one_{I_{i}}\one_{R_{k_{i}}}
\right\|^{p_{i}}_{L^{q_{i}}(\rr)}\right]^{\f{1}{p_{i}}}
\sim\left[ \sum_{k_{i}=-\infty}^{r}2^{k_{i}p_{i}(\alpha_{i}+
\f{1}{q_{i}
})}\right]^{\f{1}{p_{i}}} < \infty.
\end{equation}
If $p_{i}=\infty$, then we find that
\begin{align}\label{2.30z}
\|\one_{I_{i}}\|_{\ki(\rr)} = \sup_{k_{i}\in\zz}\left[2^{k_{i}
\alpha_{i}}\|\one_{I_{i}}\one_{R_{k_{i}}}\|_{L^{q_{i}}(\rn)}
\right]\sim\sup_{k_{i}\in(-\infty,r]\cap \zz}
2^{k_{i}(\alpha_{i}+\f{1}{q_{i}})} < \infty.
\end{align}
Thus, combining \eqref{2.30x} with \eqref{2.30y} and \eqref{2.30z}, we
obtain $\|\one_{B}\|_{\iihz(\rn)}<\infty$ and hence the
space $\iihz(\rn)$ satisfies Definition \ref{ballqB}(iv),
which completes the proof of sufficiency of (i).

Now, we prove the necessity of (i).
Let $\alpha_{m}\in(-\infty,-\f{1}{q_{m}}]$ with some $m\in
\{1,\ldots,n\}$. Note that there exists a
$j\in\zz\cap(-\infty,0]$
such that $B(\mathbf{0},1)\supset Q(\mathbf{0},2^{j+1})$.
Observe that
$Q(\mathbf{0},2^{j+1})=J_{1}\times\cdots\times J_{n},$
where $J_{i}:=(-2^{j},2^{j})$ for any $i\in\{1,\ldots,n\}$.
Similarly to the estimation of $\|\one_{I_{i}}\|_{\ki(\rr)}$,
using both Proposition \ref{E-lat} and Definition \ref{mhz}, we
conclude that
\begin{equation*}
\|\one_{B(\mathbf{0},1)}\|_{\iihz(\rn)}\geq \|\one_
{Q(\mathbf{0},2^{j+1})}\|_{\iihz(\rn)}=\prod_{i=1}^{n}\|
\one_{J_{i}}\|_{\ki(\rr)}=
\infty,
\end{equation*}
which implies that $\one_{B(\mathbf{0},1)}\notin \iihz(\rn)$
and hence $\iihz(\rn)$ does not satisfy Definition \ref{ballqB}(iv).
Thus, in this case, $\iihz(\rn)$ is not a ball quasi-Banach
function space. This finishes the proof of (i).

Next, we show (ii). Let $\vec{p},$ $\vec{q}\in [1,\infty]^{n}$
and $\vec{\alpha}\in\rn$ with $\alpha_{i}\in(-\f{1}{q_{i}},
1-\f{1}{q_{i}})$ for any $i\in\{1,\ldots, n \}$. By (i), we know that
$\iihz(\rn)$ is a ball
quasi-Banach function space. Moreover, it is easy to prove that
$\|\cdot\|_{\iihz(\rn)}$ satisfies the norm triangle inequality.
Now, we show that the space $\iihz(\rn)$ satisfies Definition
\ref{ballqB}(vi). Indeed, for any given $B\in\BB$ and for any
$f\in\iihz(\rn)$,
\begin{align}\label{2.18x}
\int_{B}|f(x)|\,dx&\leq \int_{Q(\mathbf{0},2^{r+1})}|f(x)|\,dx
=\int_{I_{n}}\cdots \int_{I_{1 }}|f(x_{1},\ldots,x_{n})|\,
dx_{1} \cdots \,dx_{n}\\
&=\sum_{k_{n}=-\infty}^{r}\int_{R_{kn}}
\cdots \left[\sum_{k_{1}=-\infty}^{r}\int_{R_{k_{1} }} |
f(x_{1},\ldots,x_{n})|\, dx_{1}\right]\cdots \,dx_{n}.\nonumber
\end{align}
Using the H\"{o}lder inequality, Definition \ref{chz}, and
$\alpha_{1}<1-\f{1}{q_{1}}$,
we find that, for any $x_{2},\ldots,x_{n}\in\rr$,
\begin{align*}
&\sum_{k_{1}=-\infty}^{r}\int_{R_{k_{1} }} |f(x_{1},\ldots,x_{n})|
\, dx_{1}\\
&\quad\ls \sum_{k_{1}=-\infty}^{r}2^{k_{1}\alpha
_{1}}\|f(\cdot,x_{2},\ldots,x_{n})\one_{R_{k_{1}}}(\cdot)\|
_{L^{q_{1}}(\rr)}\cdot
2^{k_{1}[(1-\f{1}{q_{1}})-\alpha_{1} ]}\\
&\quad\ls \|f(\cdot,x_{2},\ldots,x_{n})\|_{\ky(\rr)} \left\{\sum_{k_{1}
=-\infty}^{r}2^{k_{1}p_{1}'[(1-\f{1}{q_{1}})-\alpha_{1} ]}
\right\}^{\f{1}{p_{1}'}}\\
&\quad\sim \|f(\cdot,x_{2},\ldots,x_{n})\|_{\ky(\rr)}.
\end{align*}
Similarly to this estimation, from Definition \ref{mhz} and
\eqref{2.18x}, we deduce that
\begin{equation*}
\int_{B}|f(x)|\,dx\ls \|f\|_{\iihz(\rn)},
\end{equation*}
where the implicit constant depends on $B$ and is
independent of $f$. Thus, $\iihz(\rn)$ satisfies Definition
\ref{ballqB}(vi) and hence it is a ball Banach function space.
This finishes the proof of Proposition \ref{bqb-2}.
\end{proof}
\begin{remark}
We should point out that, from \cite[p.\,26,\ Example 1.2.25]{lyh},
it follows that the conditions of Proposition \ref{bqb-2}(ii) are sharp in some sense.
\end{remark}
In what follows, the classical \emph{centered Hardy--Littlewood
maximal operator M}
is defined by setting, for any $f \in \MM(\rn)$ and $x\in \rn$,
\begin{equation}\label{Max}
M(f)(x):=\sup_{r\in (0,\infty)}\frac{1}{|B(x,r)|}\int_{B(x,r)}
|f(y)|\,dy.
\end{equation}

The following maximal operators can be found, for instance,
in \cite{mxM}.
\begin{definition}\label{itmax}
Let $k\in [1,n]\cap\zz$. The \emph{maximal operator} $M_{k}$
for the $k$-th variable is defined by
setting, for any given $f\in \MM(\rn)$ and for any $x:=(x_{1},\ldots,x_{n})\in\rn$,
\begin{equation}\label{Mk}
M_{k}(f)(x):=\sup_{B\subset \rr,B\ni x_{k}}\f{1}{|B|}\int_{B}|
f(x_{1},\ldots,x_{k-1},y_{k},x_{k+1},\ldots,x_{n})|\,dy_{k},
\end{equation}
where $B$ denotes any ball in $\rr$ containing $x_{k}$.
Furthermore, for any given $t\in(0,\infty)$, the iterated maximal operator $\cm_{t}$ is
defined by setting, for any given $f\in\MM(\rn)$ and for any $x\in \rn$,
\begin{equation*}
\cm_{t}(f)(x):=\left[M_{n}\cdots M_{1}\left(|f|^{t}\right)(x)\right]^{\f{1}{t}}.
\end{equation*}
\end{definition}
\begin{remark}\label{max-cp}
In Definition \ref{itmax}, when $t:=1$, the iterated maximal
operator $\cm_{1}$
is larger than $M$, where $M$ is as
in \eqref{Max}.
Indeed, by \cite[(2.5)]{duo}
and the Tonelli theorem, we conclude that, for any given $f\in\MM(\rn)$
and for any $x\in\rn$,
\begin{align*}
M(f)(x) &\ls \sup_{r\in(0,\infty)}\f{1}{(2r)^{n}}
\int_{[-r,r]^{n}}|f(x-y)|\,dy\\
&\sim\sup_{r\in(0,\infty)}\f{1}{(2r)^{n}}\int_{[-r,r]}\cdots
\int_{[-r,r]}|f(x_{1}-y_{1},\ldots,x_{n}-y_{n})|\,dy_{1}
\cdots dy_{n}\\
&\ls\cm_{1}(f)(x),
\end{align*}
where the implicit constants only depend on $n$.
\end{remark}
For any given ball Banach function space $X$,
its \emph{associate space (K\"othe dual)} $X'$
is defined by setting
\begin{equation}\label{ass-X}
X':=\left\{f\in \MM(\rn):\ \|f\|_{X'}<\infty\right\},
\end{equation}
where, for any $f\in \MM(\rn)$,
$$\|f\|_{X'}:=\sup\left\{\|fg\|_{L^{1}(\rn)}:
\ g\in X,\ \|g\|_{X}=1\right\}$$
(see \cite[Chapter 1, Section 2]{IO} for details).
Let us recall the concept of the convexification of ball
quasi-Banach function spaces (see, for instance,
\cite[p.\,10,\ Definition 2.6]{SHYY}).
\begin{definition}\label{cvx}
Let $X$ be a ball quasi-Banach function space and $p\in (0,\infty)$.
Then the \emph{p-convexification} $X^{p}$ of $X$ is defined by
setting $$X^{p}:=\{f\in \mathscr M(\rn): \ |f|^{p} \in X\}$$
equipped
with the quasi-norm $\|f\|_{X^{p}}:=\|\,|f|^{p}\|_{X}^{1/p}$ for any $f\in X^{p}$.
\end{definition}
Next,
we recall the definition of
$A_{p}(\rn)$-weights (see, for instance, \cite{HA}).

\begin{definition}
Let $p\in(1,\infty)$ and $\oz$ be a weight.
Then $\oz$ is said to be an $A_{p}(\rn)$-\emph{weight} if
\begin{equation}
[\oz]_{A_{p}(\rn)}:=\sup_{Q\subset \rn}
\left[\f{1}{|Q|}\int_{Q}\oz(x)\,dx \right]
\left\{\f{1}{|Q|}\int_{Q}[\oz(x)]^{-\f{1}{p-1}}\,dx \right\}^{p-1}<\infty,
\end{equation}
and a weight $\oz$ is said to be an $A_{1}(\rn)$-\emph{weight} if
\begin{equation}
[\oz]_{A_{1}(\rn)}:=\sup_{Q\subset \rn}
\left[\f{1}{|Q|}\int_{Q}\oz(x)\,dx \right]
\left\|\oz^{-1} \right\|_{L^{\infty}(Q)}<\infty,
\end{equation}
where $Q$ is any cube of $\rn$.
\end{definition}
The following lemma is the Fefferman--Stein vector-valued maximal
inequality on $M_{n}$, whose proof borrows some ideas from the proof
of the extrapolation theorem (see, \cite[pp.\,18-19]{Cruz}).
\begin{lemma}\label{ax-Fs}
Let $X$ be a ball quasi-Banach function space and $r$, $p\in (1,\infty)$.
Assume that $X^{1/p}$ is a ball Banach function space and $M_{n}$
is bounded on $(X^{1/p})'$. Then there exists a positive
constant $C$ such that, for any $\{f_{i} \}_{i\in\zz}\subset
\MM(\rn)$,
\begin{equation}\label{2.36x}
\left\|\left\{\sum_{i\in\zz}\left[M_{n}(f_{i}) \right]^{r}
\right\}^{\f{1}{r}} \right\|_{X}\leq
C\left\|\left(\sum_{i\in\zz}|f_{i} |^{r} \right)^{\f{1}{r}}
\right\|_{X},
\end{equation}
where $M_{n}$ is as in \eqref{Mk}.
\end{lemma}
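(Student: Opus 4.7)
The plan is to use the Rubio de Francia extrapolation machinery to reduce \eqref{2.36x} to the classical one-dimensional vector-valued weighted Fefferman--Stein inequality with an $A_{1}(\rr)$-weight acting in the $n$-th variable. Set
$F:=(\sum_{i\in\zz}[M_{n}(f_{i})]^{r})^{1/r}$ and $G:=(\sum_{i\in\zz}|f_{i}|^{r})^{1/r}$. By Definition \ref{cvx}, $\|F\|_{X}=\|F^{p}\|_{X^{1/p}}^{1/p}$. Since $X^{1/p}$ is a ball Banach function space, the Fatou property (Definition \ref{ballqB}(iii)) together with the Lorentz--Luxemburg type identity $X^{1/p}=((X^{1/p})')'$ lets me pick a nonnegative $h\in(X^{1/p})'$ with $\|h\|_{(X^{1/p})'}\leq 1$ satisfying $\|F^{p}\|_{X^{1/p}}\leq 2\int_{\rn}F^{p}(x)h(x)\,dx$.

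Next, writing $K:=\|M_{n}\|_{(X^{1/p})'\to(X^{1/p})'}<\infty$, I would run the Rubio de Francia iteration by setting
\begin{equation*}
\mathcal{R}h:=\sum_{k=0}^{\infty}\f{M_{n}^{k}h}{(2K)^{k}},
\end{equation*}
where $M_{n}^{k}$ denotes the $k$-fold composition. A geometric series argument in $(X^{1/p})'$ yields (a) $h\leq\mathcal{R}h$ almost everywhere, (b) $\|\mathcal{R}h\|_{(X^{1/p})'}\leq 2\|h\|_{(X^{1/p})'}\leq 2$, and (c) $M_{n}(\mathcal{R}h)\leq 2K\,\mathcal{R}h$ almost everywhere. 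Since $M_{n}$ is, by \eqref{Mk}, the one-dimensional Hardy--Littlewood maximal operator acting on the $n$-th coordinate with the other coordinates frozen, the Tonelli/Fubini argument (analogous to the one invoked via \cite[p.\,69,\ Exercise 49]{folland} earlier in the paper) turns (c) into the statement that, for almost every $(x_{1},\ldots,x_{n-1})\in\rr^{n-1}$, the fiber $y\mapsto \mathcal{R}h(x_{1},\ldots,x_{n-1},y)$ is an $A_{1}(\rr)$-weight whose $A_{1}(\rr)$-constant is uniformly bounded by $2K$.

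Then I would apply the classical vector-valued weighted Fefferman--Stein inequality on $\rr$: for any $A_{1}(\rr)$-weight $w$ and any $p,r\in(1,\infty)$,
\begin{equation*}
\int_{\rr}\left[\sum_{i\in\zz}(Mg_{i}(y))^{r}\right]^{p/r}w(y)\,dy\leq C\int_{\rr}\left[\sum_{i\in\zz}|g_{i}(y)|^{r}\right]^{p/r}w(y)\,dy,
\end{equation*}
with $C$ depending only on $p$, $r$, and the $A_{1}(\rr)$-constant of $w$. Applying this fiberwise with frozen variables $(x_{1},\ldots,x_{n-1})$, the weight $w:=\mathcal{R}h(x_{1},\ldots,x_{n-1},\cdot)$, and $g_{i}:=f_{i}(x_{1},\ldots,x_{n-1},\cdot)$, and then integrating in $(x_{1},\ldots,x_{n-1})$ via the Tonelli theorem, I obtain
\begin{equation*}
\int_{\rn}F^{p}(x)\mathcal{R}h(x)\,dx\leq C\int_{\rn}G^{p}(x)\mathcal{R}h(x)\,dx,
\end{equation*}
with $C$ depending only on $p$, $r$, $K$. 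Finally, the H\"older-type inequality attached to $X^{1/p}$ and its associate space (see \eqref{ass-X}) gives
$\int_{\rn}G^{p}\mathcal{R}h\,dx\leq \|G^{p}\|_{X^{1/p}}\|\mathcal{R}h\|_{(X^{1/p})'}\leq 2\|G\|_{X}^{p}$, and chaining every estimate produces $\|F\|_{X}^{p}\leq 4C\|G\|_{X}^{p}$, which is exactly \eqref{2.36x}.

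The main obstacle is twofold. First, I must justify the Lorentz--Luxemburg reverse duality $X^{1/p}=((X^{1/p})')'$ in the ball Banach function space setting; this follows from the Fatou property built into Definition \ref{ballqB}(iii) but requires a careful adaptation of the classical Banach-function-space argument. Second, I must make the fiberwise $A_{1}(\rr)$ interpretation of (c) rigorous, verifying the measurability of $\mathcal{R}h$ and translating the global pointwise inequality $M_{n}(\mathcal{R}h)\leq 2K\,\mathcal{R}h$ into a uniform-in-parameter almost-everywhere statement about fibers, so that the constant in the one-dimensional Fefferman--Stein inequality can be taken independent of the slice. Once these two points are nailed down, the Rubio de Francia iteration plus the classical one-dimensional weighted theory deliver \eqref{2.36x} in a direct way.
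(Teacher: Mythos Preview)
Your proposal is correct and follows essentially the same Rubio de Francia iteration argument as the paper: the paper also builds $\mathcal{R}h:=\sum_{k\geq 0}M_{n}^{k}h/(2^{k}\|M_{n}\|^{k})$, records the same three properties (i)--(iii), interprets (iii) as a fiberwise $A_{1}(\rr)$ condition in the $n$-th variable, and then invokes the weighted vector-valued Fefferman--Stein inequality together with the $X^{1/p}$--$(X^{1/p})'$ duality to close the estimate. The only cosmetic difference is that the paper works with the supremum over all normalized $h\in(X^{1/p})'$ directly rather than selecting a near-extremal $h$, and it dispatches your two ``obstacles'' by citation: the duality identity is asserted from the associate-space definition (and is standard for ball Banach function spaces via the Fatou property), the $A_{1}(\rr)$ characterization is \cite[p.\,134,\ (7.5)]{duo}, the weighted Fefferman--Stein step is \cite[Theorem 3.1]{AJ}, and the H\"older inequality for $X^{1/p}$ is \cite[Lemma 2.5]{wk-1}.
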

\begin{proof}
Let all the symbols be as in the present lemma.
Since $M_{n}$ is bounded on $(X^{1/p})'$, we can define $\RR$ by setting,
for any given non-negative measurable function $h$ and for any $x\in\rn$,
\begin{equation*}
\RR h(x):=\sum_{k=0}^{\infty}\f{M_{n}^{k}h(x)}{2^{k}\|M_{n}\|^{k}_
{(X^{1/p})'\rightarrow (X^{1/p})'}},
\end{equation*}
where, for any $k\in\nn$, $M^{k}_{n}:=M_{n}\circ\cdots\circ M_{n}$ is the $k$-times
iteration of the maximal operator $M_{n}$, and $M_{n}^{0}h:=h$.
Moreover,
$$\|M_{n}\|_{(X^{1/p})'\rightarrow (X^{1/p})'}:=\sup\left\{\|M_{n}h\|_
{(X^{1/p})'}:\ \|h\|_{(X^{1/p})'}\leq 1 \right\}.$$
Now, we claim that the operator $\RR$ has the following properties:
for any given non-negative measurable function $h$,
\begin{enumerate}
\item[$\mathrm{(i)}$] for any $x\in\rn$, $h(x)\leq \RR h(x)$ ;
\item[$\mathrm{(ii)}$] $\|\RR h\|_{(X^{1/p})'} \leq 2
\|h\|_{(X^{1/p})'}$;
\item[$\mathrm{(iii)}$] for any given $x_{1},\ldots,x_{n-1}\in\rr$,
$\RR h(x_{1},\ldots,x_{n-1},\cdot)$ is an
$A_{1}(\rr)$-weight and
$$[\RR h(x_{1},\ldots,x_{n-1})]_
{A_{1}(\rr)}\leq 2\|M_{n}\|_{(X^{1/p})'\rightarrow (X^{1/p})'}.$$
\end{enumerate}
Indeed, both (i) and (ii) follow immediately from the definition of $\RR h$.
To show (iii), note that $M_{n}$ is
sublinear. From this, we further infer that, for any given non-negative
measurable function
$h$ and for any $x\in\rn$,
\begin{equation*}
M_{n}(\RR h)(x) \leq \sum_{k=0}^{\infty}\f{M_{n}^{k+1}h(x)}{2^{k}
\|M_{n}\|^{k}_{(X^{1/p})'\rightarrow (X^{1/p})'}}\leq 2\|M_{n}\|_
{(X^{1/p})'\rightarrow (X^{1/p})'} \RR h(x),
\end{equation*}
which, combined with \cite[p.\,134, (7.5)]{duo}, further implies that,
for any given $x_{1},\ldots,x_{n-1}\in\rr$, $\RR h(x_{1},\ldots,
x_{n-1},\cdot)$ is an $A_{1}(\rr)$-weight and
$$[\RR h(x_{1},\ldots,
x_{n-1})]_{A_{1}(\rr)}\leq 2\|M_{n}\|_{(X^{1/p})'\rightarrow (X^{1/p})'}.$$
Thus, (iii) and hence the above whole claim hold true.

For any $\{f_{i}\}_{i\in\zz}\subset \MM(\rn)$, let
\begin{equation*}
f:=\left\{\sum_{i\in\zz}[M_{n}(f_{i})]^{r} \right\}^{\f{1}{r}}\quad \mathrm{and}\quad
g:=\left(\sum_{i\in\zz}|f_{i}|^{r} \right)^{\f{1}{r}}.
\end{equation*}
From \eqref{ass-X} and Definition \ref{cvx}, we deduce that
\begin{equation}\label{S1}
\|f\|_{X}^{p}=\||f|^{p}\|_{X^{1/p}}= \sup\left\{\left\||f|^{p}h\right\|
_{L^{1}(\rn)}:\
h\in (X^{1/p})',\ \|h\|_{(X^{1/p})'}=1 \right\}.
\end{equation}
By this, we find that, to prove \eqref{2.36x}, it suffices
to show that, for any non-negative measurable
function $h$ satisfying $\|h\|_{(X^{1/p})'}=1$,
$$\left\{\int_{\rn}[f(x)]^{p}h(x)\,dx\right\}^{\f{1}{p}} \ls \|g\|_{X}.$$
Indeed, using the above claim, \cite[p.\,22,\ Theorem 3.1]{AJ},
and \cite[p.\,2013,\ Lemma 2.5]{wk-1}, we have
\begin{align*}
\int_{\rn}[f(x)]^{p}h(x)\,dx&\leq \int_{\rn}[f(x)]^{p}\RR h(x)\,dx
\ls\int_{\rn}[g(x)]^{p}\RR h(x)\,dx\\
&\ls \left\||g|^{p}\right\|_{X^{1/p}}\|\RR h\|_{(X^{1/p})'}
\ls \|g\|_{X}^{p}\|h\|_{(X^{1/p})'}
\sim \|g\|_{X}^{p},
\end{align*}
which, combined with \eqref{S1},
further implies that $\|f\|_{X}\ls \|g\|_{X}$.
This finishes the proof of Lemma \ref{ax-Fs}.
\end{proof}

\section{Riesz--Thorin Interpolation Theorem on Mixed-Norm Spaces\label{s3}}

Recall that, for any given $r\in (0,\infty]$ and for any sequence
$\{a_j\}_{j\in\zz}\subset\cc$,
$$\left\|\{a_j\}_{j\in\zz}\right\|_{l^r}:
=\left[\sum_{j\in\zz}|a_j|^r\right]^{1/r}$$
with the usual modification made when $r=\infty$.

In this section, we introduce a special
mixed norm space which is called the $l^{r}$-Herz type mixed-norm space and
proves to be crucial in the proof of the boundedness of the maximal operator
on $\iihz(\rn)$. Moreover, we establish the Riesz--Thorin
interpolation theorem associated with the linear operators
from the mixed-norm Herz space to the $l^{\infty}$-Herz type mixed-norm space.
\begin{definition}\label{EL}
Let $r\in (0,\infty]$, $\vec{p}$,
$\vec{q}\in (0,\infty]^{n},$ and $\vec{\alpha}\in\rn$.
The $l^{r}$\emph{-Herz type mixed-norm space}
$(\iihz(\rn),l^{r})$ is
defined to be the set of all the sequences of functions,
$f:=\{f(j,\cdot)\}_{j\in\zz}\subset \MM(\rn)$, such that
\begin{equation*}
\|f\|_{(\iihz(\rn),l^{r})}:=\left\|\left[\sum_{j\in\zz}
|f(j,\cdot)|^{r} \right]^{\f{1}{r}} \right\|_{\iihz(\rn)}<\infty
\end{equation*}
with the usual modifications made when $r=\infty$.
\end{definition}
Now, we have the following variant of Lemma \ref{assnd} on $(\iihz(\rn),l^{r})$.
\begin{lemma}\label{lherz-and}
Let $r\in [1,\infty]$, $\vec{p}$, $\vec{q}\in
[1,\infty]^{n},$ and $\vec{\alpha}\in\rn$. Then,
for any $f\in
(\iihz(\rn),l^{r})$,
\begin{align}\label{lherz-as}
&\|f\|_{(\iihz(\rn),l^{r})}\\\nonumber
&\quad=\sup\bigg\{
\bigg\|\sum_{j\in\zz}|f(j,\cdot)g(j,\cdot)| \bigg\|_{L^{1}
(\rn)}:\  g\in (\diihz(\rn),l^{r'}),\  \|g\|_
{(\diihz(\rn),l^{r'})}=1\bigg\}\\\nonumber
&\quad=:I(f).\nonumber
\end{align}
\end{lemma}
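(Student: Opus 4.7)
The plan is to prove the two inequalities $I(f)\le \|f\|_{(\iihz(\rn),l^{r})}$ and $I(f)\ge \|f\|_{(\iihz(\rn),l^{r})}$ separately. The upper bound is a two-step H\"older argument invoking Lemma \ref{mhr}; the lower bound, which carries the main work, is obtained by combining the dual function supplied by Lemma \ref{assnd} with a pointwise splitting along the discrete index $j$.

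For the upper bound, set $F(x):=[\sum_{j\in\zz}|f(j,x)|^{r}]^{1/r}$ and $G(x):=[\sum_{j\in\zz}|g(j,x)|^{r'}]^{1/r'}$, with the usual modifications when $r\in\{1,\infty\}$. Pointwise H\"older in $l^{r}$ gives $\sum_{j}|f(j,x)g(j,x)|\le F(x)G(x)$ for every $x\in\rn$. Applying Lemma \ref{mhr} to the pair $(F,G)$ then yields
\begin{equation*}
\bigg\|\sum_{j\in\zz}|f(j,\cdot)g(j,\cdot)|\bigg\|_{L^{1}(\rn)}\le \|F\|_{\iihz(\rn)}\|G\|_{\diihz(\rn)}=\|f\|_{(\iihz(\rn),l^{r})}\|g\|_{(\diihz(\rn),l^{r'})},
\end{equation*}
and taking the supremum over admissible $g$ delivers $I(f)\le \|f\|_{(\iihz(\rn),l^{r})}$.

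For the lower bound, the strategy is, for each $\epsilon\in(0,\infty)$, to construct $g_{\epsilon}:=\{g_{\epsilon}(j,\cdot)\}_{j\in\zz}$ with $\|g_{\epsilon}\|_{(\diihz(\rn),l^{r'})}\le 1$ such that the H\"older inequality used above becomes an equality up to the factor $(1+\epsilon)^{-2n}$. First, apply Lemma \ref{assnd} to the scalar function $F\in\iihz(\rn)$: by the same truncation $F_{m}:=F\one_{A_{m}}$ and the double limiting procedure $\epsilon\to 0^{+}$, $m\to\infty$ as in the proof of \eqref{kz}, one obtains $G_{\epsilon}\in\diihz(\rn)$ with $\|G_{\epsilon}\|_{\diihz(\rn)}=1$ and $\int_{\rn}F(x)|G_{\epsilon}(x)|\,dx\ge (1+\epsilon)^{-2n}\|F\|_{\iihz(\rn)}$. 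Next, distribute $|G_{\epsilon}|$ along the index $j$ case by case. For $r\in(1,\infty)$, take
\begin{equation*}
g_{\epsilon}(j,x):=\left[\mathrm{sgn}\,\overline{f(j,x)}\right]\frac{|f(j,x)|^{r-1}}{[F(x)]^{r-1}}|G_{\epsilon}(x)|
\end{equation*}
on $\{F>0\}$ and $g_{\epsilon}(j,x):=0$ elsewhere; a direct computation using $(r-1)r'=r$ yields $[\sum_{j}|g_{\epsilon}(j,x)|^{r'}]^{1/r'}=|G_{\epsilon}(x)|$ and $\sum_{j}|f(j,x)g_{\epsilon}(j,x)|=F(x)|G_{\epsilon}(x)|$, so $\|g_{\epsilon}\|_{(\diihz(\rn),l^{r'})}=1$ and the lower bound is immediate. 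For $r=1$ (so $r'=\infty$), set $g_{\epsilon}(j,x):=[\mathrm{sgn}\,\overline{f(j,x)}]|G_{\epsilon}(x)|$ for every $j\in\zz$; then $\sup_{j}|g_{\epsilon}(j,x)|=|G_{\epsilon}(x)|$ and $\sum_{j}|f(j,x)g_{\epsilon}(j,x)|=F(x)|G_{\epsilon}(x)|$. For $r=\infty$ (so $r'=1$), truncate to $|j|\le N$, pick the lexicographically smallest index $j_{N}^{\ast}(x)$ attaining $\max_{|j|\le N}|f(j,x)|$ (Borel measurable since the index set is countable), set $g_{\epsilon,N}(j,x):=\one_{\{j=j_{N}^{\ast}(x)\}}[\mathrm{sgn}\,\overline{f(j,x)}]|G_{\epsilon}(x)|$, and let $N\to\infty$ by monotone convergence.

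The principal obstacle is the lower bound, and within it, the endpoint cases $r\in\{1,\infty\}$, where pointwise H\"older degenerates; the $r=\infty$ case additionally requires a measurable selection, handled above via lexicographic tie-breaking. Once $g_{\epsilon}$ is in place, the remainder is the routine double passage to the limit in $m$ and $\epsilon$ already carried out at the end of the proof of Lemma \ref{assnd}, and letting $\epsilon\to 0^{+}$ closes the argument.
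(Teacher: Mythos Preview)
Your proof is correct and takes a more modular route than the paper. The paper rebuilds the entire dual-function construction of Lemma~\ref{assnd} with an extra $l^{r}$-layer: it defines $\widehat{g}_{0,\epsilon}(\ell_{r})$ via the full family of auxiliary functions \eqref{SSE}--\eqref{3.5-x}, then verifies the analogues \eqref{c11} and \eqref{lherz-B} case by case in $r$, after first truncating $\ell$ in both the spatial variable (to $A_{k_{0}}$) and the discrete index $j$. Your argument instead factorises the problem: invoke the already-proved Lemma~\ref{assnd} on the scalar $F:=\|\{f(j,\cdot)\}_{j}\|_{l^{r}}\in\iihz(\rn)$ to obtain a near-extremal $G_{\epsilon}\in\diihz(\rn)$, and then lift $|G_{\epsilon}|$ to a sequence $g_{\epsilon}(j,\cdot)$ via the standard pointwise $l^{r}$--$l^{r'}$ duality (with the usual measurable-selection fix at $r=\infty$). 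This is shorter and avoids duplicating the Herz-layer machinery. The paper's explicit construction does carry one advantage downstream: its test functions $\widehat{g}_{0,\epsilon}(\ell_{r})$ are bounded and have finite support in both $x$ and $j$, and these properties are reused verbatim in the proof of Lemma~\ref{fland-2}. Your $g_{\epsilon}$ inherits only the support and boundedness of $G_{\epsilon}$, so if one wanted to reuse it there a little extra bookkeeping (use the explicit $\widehat{g}_{1,\epsilon}(F_{m})$ from the proof of Lemma~\ref{assnd} rather than an abstract near-extremiser) would be required.
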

\begin{proof}
Let all the symbols be as in the present lemma. We
consider the following two cases on $\|f\|_{(\iihz(\rn),l^{r})}$
with $f\in (\iihz(\rn),l^{r})$. If $\|f\|_{(\iihz(\rn),l^{r})}=0$,
then, from Proposition \ref{equ0}, it follows that,
for any $j\in\zz$,
$f(j,\cdot)=0$ almost everywhere on $\rn$, which
implies that
\eqref{lherz-as} holds true in this case.

If $\|f\|_{(\iihz(\rn),l^{r})}\neq 0$, then, by the H\"{o}lder
inequality and Lemma \ref{mhr}, we easily find that,
for any $g\in (\diihz(\rn),l^{r'})$ satisfying
$\|g\|_{(\diihz(\rn),l^{r'})}=1$,
\begin{equation*}
\left\|\sum_{j\in\zz}|f(j,\cdot)g(j,\cdot)| \right\|_
{L^{1}(\rn)}\leq \|f\|_{(\iihz(\rn),l^{r})}\|g\|_
{(\diihz(\rn),l^{r'})}=\|f\|_{(\iihz(\rn),l^{r})},
\end{equation*}
which further implies that
\begin{equation*}
I(f)\leq \|f\|_{(\iihz(\rn),l^{r})}.
\end{equation*}
Conversely, we aim to show that
\begin{equation}\label{lherz-gf}
I(f)\geq \|f\|_{(\iihz(\rn),l^{r})}.
\end{equation}
To achieve it, we first consider that $\ell:=\{\ell(j,\cdot) \}_{j\in\zz}\in (\iihz(\rn),l^{r})$ is such that
$\ell_{r}:=\|\{\ell(j,\cdot)\}_{j\in\zz}\|_{l^{r}}$
vanishes outside from $A_{k_{0}}$ for some $k_{0}\in\nn$
and $\ell(j,\cdot):=0$ if $j\in \zz\cap [B(0,k_{0})]^{\com}$.
For any $j\in \zz$ and $x\in\rn$, let $[\ell(j,x)]^{0}:=0$ if $\ell(j,x)=0$.
For any given $\epsilon\in(0,\infty)$ and for any $x\in\rn$,  let
\begin{equation}\label{Ld}
\Lambda_{\epsilon}(x):=\left\{j\in\zz:\ |\ell(j,x)|>
(1+\epsilon)^{-1}\|\{ \ell(j,x)\}_{j\in\zz}\|_{l^{\infty}} \right\}.
\end{equation}
For any given $r\in[1,\infty]$ and $\epsilon\in(0,\infty)$,
and for any $j\in\zz$ and $x\in\rn$, let
\begin{equation}\label{3.2x}
g_{0,\epsilon}(\ell_{r})(j,x):=
\begin{cases}
\displaystyle\f{\one_{\Lambda_{\epsilon}(x)}(j)}{\#[\Lambda_
{\epsilon}(x)]} & \text{if}\ r=\infty\ \text{and}\
\Lambda_{\epsilon}(x)\neq \emptyset,\\
0 & \text{if}\ r=\infty\ \text{and}\
\Lambda_{\epsilon}(x)=\emptyset,\\
1 & \text{if}\ r\in[1,\infty)
\end{cases}
\end{equation}
and
\begin{align}\label{SSE}
\widetilde{g}_{0,\epsilon}(\ell_{r})(j,x):=&\,[E_{n}(\ell_{r})]
^{1-\widetilde{p}_{n}}\prod_{i=0}^{n-1}[E_{i}(\ell_{r})
(x_{i+1},\ldots,x_{n})]^{\widetilde{q}_{i+1}-
\widetilde{p}_{i}}\\
&\,\times \prod_{i=1}^{n}[g_{i,\epsilon}(\ell_{r})
(x_{i},\ldots,x_{n})H_{i,\epsilon}(\ell_{r})(x_{i},
\ldots,x_{n})]g_{0,\epsilon}(\ell_{r})(j,x),\nonumber
\end{align}
where $\widetilde{p}_{0}:=r$ if $r\in[1,\infty)$,
$\widetilde{p}_{0}:=1$ if $r=\infty$,
$\widetilde{p}_{i}$ and $\widetilde{q}_{i}$ for any $i\in\{1,\ldots,n\}$
are as in \eqref{pyw}, $H_{i,\epsilon}(\ell_{r})$
for any $i\in\{1,\ldots,n \}$ is as
in \eqref{Hje} with $\ell$ there replaced by $\ell_{r}$,
$[E_{i}(\ell_{r})]^{\gamma}$ for any $i\in\{ 0,\ldots,n\}$ and $\gamma\in\rr$ is
as in \eqref{Eiga} with $\ell$ replaced by $\ell_{r}$,
and $g_{i,\epsilon}(\ell_{r})$ for any $i\in\{1,\ldots,n \}$
is as in \eqref{2.40x} with $\ell$ replaced by $\ell_{r}$.
For any given $r\in[1,\infty]$ and $i\in\{1,\ldots,n\}$,
and for any $\epsilon\in(0,\infty)$,
let $\widetilde{g}_{i,\epsilon}(\ell_{r})$ be as
\eqref{2.40y} with $\ell$ replaced
by $\ell_{r}$.
Then, for any given $r\in[1,\infty]$ and  $\epsilon\in(0,\infty)$,
and for any $x\in\rn$, let
\begin{equation}\label{3.4x}
\widehat{g}_{1,\epsilon}(\ell_{r})(x):=[E_{0}(\ell_{r})(x)]^
{\widetilde{q}_{1}-1}\widetilde{g}_{1,\epsilon}(\ell_{r})(x),
\end{equation}
and, for any $j\in\zz$,
\begin{equation}\label{3.5-x}
\widehat{g}_{0,\epsilon}(\ell_{r})(j,x):=\left[\mathrm{sgn}\,
\overline{\ell(j,x)}\right]|\ell(j,x)|^{\widetilde{p}_{0}-1}
\widetilde{g}_{0,\epsilon}(\ell_{r})(j,x).
\end{equation}
Moreover, for any given $r\in[1,\infty]$ and $i\in\{2,\ldots,n\}$,
and for any $\epsilon\in(0,\infty)$,
let $\widehat{g}_{i,\epsilon}(\ell_{r})$
be as in \eqref{2.41y} with $\ell$ replaced by $\ell_{r}$.

Then we claim that, for any given $\vec{p},$ $\vec{q}\in[1,\infty]^{n}$,
$\vec{\alpha}\in\rn$, and
$r\in[1,\infty]$, and for any $\epsilon\in(0,\infty)$,
\begin{equation}\label{c11}
\left\|\widehat{g}_{0,\epsilon}(\ell_{r})\right\|_
{(\diihz(\rn),l^{r'})}=1
\end{equation}
and
\begin{equation}\label{lherz-B}
\int_{\rn}\sum_{j\in\zz}\ell(j,x)\widehat{g}_{0,\epsilon}(\ell_{r})
(j,x)\,dx\geq (1+\epsilon)^{-2n-1}\|\ell\|_{(\iihz(\rn),l^{r})}.
\end{equation}

Indeed, for any given $\epsilon\in(0,\infty)$, we consider the
following three cases on $r$. If
$r\in(1,\infty)$, then $g_{0,\epsilon}(\ell_{r})(j,\cdot)=1$
for any $j\in\zz$ and $\epsilon\in(0,\infty)$. In this case,
by this, \eqref{3.5-x}, \eqref{SSE}, and
the definition of
$\widetilde{g}_{1,\epsilon}(\ell_{r})$,
we have
\begin{align*}
\left\|\{\widehat{g}_{0,\epsilon}(\ell_{r})(j,\cdot)\}_{j\in\zz}
\right\|_{l^{r'}}&=
\left[\sum_{j\in\zz}|\widehat{g}_{0,\epsilon}(\ell_{r})(j,\cdot)|^
{r'} \right]^{\f{1}{r'}}=\left[\sum_{j\in\zz}|\ell(j,\cdot)|^
{r} [\widetilde{g}_{0,\epsilon}(\ell_{r})(j,\cdot)]^{r'} \right]^
{\f{1}{r'}}\\
&=[E_{0}(\ell_{r})]^{\f{r}{r'}}[E_{0}(\ell_{r})]^{\widetilde{q}_
{1}-r}\widetilde{g}_{1,\epsilon}(\ell_{r})\\
&=[E_{0}(\ell_{r})]^{\widetilde{q}_{1}-1}\widetilde{g}_{1,\epsilon}
(\ell_{r})=\widehat{g}_{1,\epsilon}(\ell_{r}).
\end{align*}
If $r=1$, then $g_{0,\epsilon}(\ell_{r})(j,\cdot)=1$ for any $j\in\zz$
and $\epsilon\in(0,\infty)$. In this case, from this,
\eqref{3.5-x}, and \eqref{SSE}, we deduce that
\begin{align*}
\left\|\{\widehat{g}_{0,\epsilon}(\ell_{r}) (j,\cdot)\}_{j\in\zz}
\right\|_{l^{r'}}&= \sup_{j\in\zz}\left\{|\widehat{g}_{0,\epsilon}
(\ell_{r})(j,\cdot)|\right\}=\sup_{j\in\zz}\left\{|\ell(j,\cdot)|^{0}
\widetilde{g}_{0,\epsilon}(\ell_{r}) (j,\cdot)\right\}\\
&=[E_{0}(\ell_{r})]^{\widetilde{q}_{1}-1}\widetilde{g}_{1,\epsilon}
(\ell_{r})=\widehat{g}_{1,\epsilon}(\ell_{r}).
\end{align*}
If $r=\infty$, then we consider the following two cases on $\Lambda_{\epsilon}(x)$.
If $\Lambda_{\epsilon}(x)\neq \emptyset$,
then, from $\ell(j,x)=0$ if $j\in\zz\cap [B(0,k_{0})]^{\com}$
and $x\in\rn$, we deduce that, for any $\epsilon\in(0,\infty)$ and $x\in\rn$, $\Lambda_
{\epsilon}(x)$ is a finite set and, for any $j\in \Lambda_{\epsilon}(x)$,
$\ell(j,x)>0$.
By this, \eqref{3.2x}, \eqref{SSE}, and \eqref{3.4x},
we obtain
\begin{align*}
\left\|\{\widehat{g}_{0,\epsilon}(\ell_{r}) (j,\cdot)\}_{j\in\zz}
\right\|_{l^{r'}}&= \sum_{j\in\zz}|\widehat{g}_{0,\epsilon}
(\ell_{r})(j,\cdot)|=
\sum_{j\in\zz}|\ell(j,\cdot)|^{0}[E_{0}(\ell_{r})]^
{\widetilde{q}_{1}-1}\f{\one_{\Lambda_{\epsilon}(\cdot)}(j)}
{\#[\Lambda_{\epsilon}(\cdot)]}\widetilde{g}_
{1,\epsilon}(\ell_{r})	\\
&=	[E_{0}(\ell_{r})]^{\widetilde{q}_{1}-1}	
\widetilde{g}_{1,\epsilon}(\ell_{r}) =\widehat{g}_{1,\epsilon}(\ell_{r}).
\end{align*}
If $\Lambda_{\epsilon}(x)=\emptyset$,
then, for any $j\in\zz$, $|\ell(j,\cdot)|=0$. Using this, we have
\begin{equation*}
\left\|\{\widehat{g}_{0,\epsilon}(\ell_{r})(j,\cdot)\}_{j\in\zz}
\right\|_{l^{r'}}=0=[E_{0}(\ell_{r})]^{\widetilde{q}_{1}-1}	
\widetilde{g}_{1,\epsilon}(\ell_{r}) =\widehat{g}_{1,\epsilon}(\ell_{r}).
\end{equation*}
Thus, we conclude that, for any $r\in[1,\infty]$ and $\epsilon\in(0,\infty)$,
\begin{equation*}
\left\|\left\{\widehat{g}_{0,\epsilon}(\ell_{r})(j,\cdot)
\right\}_{j\in\zz}\right\|_{l^{r'}}=\widehat{g}_{1,\epsilon}
(\ell_{r}).
\end{equation*}
This, combined with the estimation of \eqref{c1}, further
implies that $\|\widehat{g}_{0,\epsilon}(\ell_{r})\|_{(\diihz(\rn),
l^{r'})}=1$, namely, \eqref{c11} holds true.

Now, we prove \eqref{lherz-B}.
From \eqref{3.5-x} and \eqref{SSE}, it follows that, for
any $\epsilon\in(0,\infty)$ and $r\in [1,\infty]$,
\begin{align*}
\sum_{j\in\zz}\ell(j,\cdot)\widehat{g}_{0,\epsilon}(\ell_{r})
(j,\cdot)&=
\sum_{j\in\zz}|\ell(j,\cdot)|^{\widetilde{p}_{0}}\widetilde{g}_
{0,\epsilon}(\ell_{r})(j,\cdot)\\
&=\left[\sum_{j\in\zz}|\ell(j,\cdot)|^
{\widetilde{p}_{0}}g_{0,\epsilon}(\ell_{r})(j,\cdot)\right]
[E_{0}(\ell_{r})]^{\widetilde{q}_{1}-\widetilde{p}_{0}}
\widetilde{g}_{1,\epsilon}(\ell_{r}).
\end{align*}
Now, for any given $\epsilon\in(0,\infty)$, we consider the following
two cases on $r$. If $r\in[1,\infty)$,
then $g_{0,\epsilon}(\ell_{r})=1$. Using this, we find that
\begin{align*}
\sum_{j\in\zz}\ell(j,\cdot)\widehat{g}_{0,\epsilon}(\ell_{r})(j,\cdot)=
[E_{0}(\ell_{r})]^{\widetilde{q}_{1}}\widetilde{g}
_{1,\epsilon}(\ell_{r})=\ell_{r}\widehat{g}_{1,\epsilon}(\ell_{r}).
\end{align*}
If $r=\infty$, then we consider the following two cases on $\Lambda_{\epsilon}(\cdot)$.
If $\Lambda_{\epsilon}(\cdot)\neq \emptyset$, then, from \eqref{Ld},
we deduce that
\begin{align*}
\sum_{j\in\zz}\ell(j,\cdot)\widehat{g}_{0,\epsilon}(\ell_{r})(j,\cdot)&
=\left[\sum_{j\in\zz}|\ell(j,\cdot)|^
{\widetilde{p}_{0}}\f{\one_{\Lambda_{\epsilon}(\cdot)}(j)}{\#[\Lambda_
{\epsilon}(\cdot)]}\right][E_{0}(\ell_{r})]^{\widetilde{q}_{1}-\widetilde{p}_{0}}
\widetilde{g}_{1,\epsilon}(\ell_{r})\\
&>(1+\epsilon)^{-1}E_{0}(\ell_{r})[E_{0}(\ell_{r})]^
{\widetilde{q_{1}}-1}\widetilde{g}_{1,\epsilon}(\ell_{r})\\
&=(1+\epsilon)^{-1}[E_{0}(\ell_{r})]^{\widetilde{q}_{1}}
\widetilde{g}_{1,\epsilon}(\ell_{r})\\
&=(1+\epsilon)^{-1}\ell_{r}\widehat{g}_{1,\epsilon}(\ell_{r}).
\end{align*}
If $\Lambda_{\epsilon}(\cdot)= \emptyset$, then
\begin{align*}
&\sum_{j\in\zz}\ell(j,\cdot)\widehat{g}_{0,\epsilon}(\ell_{r})(j,\cdot)\\
&\quad=0=(1+\epsilon)^{-1}[E_{0}(\ell_{r})]^{\widetilde{q}_{1}}
\widetilde{g}_{1,\epsilon}(\ell_{r})=(1+\epsilon)^{-1}\ell_{r}
\widehat{g}_{1,\epsilon}(\ell_{r}).
\end{align*}
Thus, we obtain, for any $r\in[1,\infty]$ and $\epsilon\in(0,\infty)$,
\begin{equation*}
\sum_{j\in\zz}\ell(j,\cdot)\widehat{g}_{0,\epsilon}(\ell_{r})(j,\cdot)
\geq
(1+\epsilon)^{-1}\ell_{r}\widehat{g}_{1,\epsilon}(\ell_{r}),
\end{equation*}
which, similarly to the estimation of \eqref{c22}, further
implies that
\eqref{lherz-B} holds true for any $r\in [1,\infty]$
and hence the claim holds true. By this, similarly to the
estimation of \eqref{kz}, we conclude that \eqref{lherz-gf}
holds true for any $r\in [1,\infty]$.
This finishes the proof of Lemma \ref{lherz-and}.
\end{proof}
The following lemma is a direct consequence of Lemma \ref{lherz-and}.
\begin{lemma}\label{Eass-2}
Let $r\in [1,\infty],$ $\vec{p}$, $\vec{q}\in
[1,\infty]^{n}$, and $\vec{\alpha}\in\rn$. Then, for any
$f:=\{f(j,\cdot)\}_{j\in\zz}\subset\MM(\rn)$,
\begin{equation*}
\|f\|_{(\iihz(\rn),l^{r})}=I(f),
\end{equation*}
where $I(f)$ is as in Lemma \ref{lherz-and}.
\begin{proof}
Let all the symbols be as in the present lemma. For any
given $m\in\nn$ and for any
$f:=\{f(j,\cdot)\}_{j\in\zz}\subset \MM(\rn)$, $j\in\zz$, and
$x\in\rn$, let
\begin{equation*}
f_{m}(j,x):=\begin{cases}
f(j,x) & \text{if}\ |f(j,x)|\leq m,\\
m & \text{otherwise},
\end{cases}
\end{equation*}
and $g_{m}(j,x):=f_{m}(j,x)\one_{A_{m}}(x)$ if
$j\in\zz\cap B(0,m)$, and $g_{m}(j,x):=0$ if
$j\in\zz\cap [B(0,m)]^{\com}$, where $A_{m}$ is as in Lemma
\ref{Am}. Obviously, for any given $m\in\nn$, $g_{m}:=
\{g_{m}(j,\cdot)\}_{j\in\zz}\in (\iihz(\rn),l^{r})$ and, as $m\to\infty$,
$|g_{m}|\uparrow |f|$ almost everywhere. From this and Lemma \ref{lherz-and},
it follows that, for any $m\in\nn$,
\begin{align*}
&\|g_{m}\|_{(\iihz(\rn),l^{r})}\\
&\quad=\sup\left\{\left\|\sum_
{j\in\zz}|g_{m}(j,\cdot)h(j,\cdot)| \right\|_{L^{1}(\rn)}:\
h\in(\diihz(\rn),l^{r'}),\  \|h\|_{(\diihz(\rn),l^{r'})}=
1 \right\}\\
&\quad\leq
\sup\left\{\left\|\sum_{j\in\zz}|f(j,\cdot)h(j,\cdot)|
\right\|_{L^{1}(\rn)}:\ h\in(\diihz(\rn),l^{r'}) ,\ \|h\|_
{(\diihz(\rn),l^{r'})}=1 \right\},
\end{align*}
which, combined with the monotone convergence theorem and
Proposition \ref{ballqfs-3}, further implies that
\begin{align*}
&\|f\|_{(\iihz(\rn),l^{r})}\\
&\quad\leq \sup\left\{\left\|\sum_{j\in\zz}|f(j,\cdot)
h(j,\cdot)|\right\|_{L^{1}(\rn)}:\ h\in(\diihz(\rn),l^{r'}) ,\
\|h\|_{(\diihz(\rn),l^{r'})}=1\right\}.
\end{align*}
On the other hand,
applying Lemma \ref{mhr} and the H\"{o}lder inequality,
we easily obtain the reverse inequality. This finishes
the proof of Lemma \ref{Eass-2}.
\end{proof}
\end{lemma}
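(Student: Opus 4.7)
The plan is to reduce Lemma \ref{Eass-2} to Lemma \ref{lherz-and} by a truncation-plus-monotone-convergence argument, exactly parallel to how Corollary \ref{Eass} extends Lemma \ref{assnd} from $\iihz(\rn)$ to arbitrary $\MM(\rn)$. Lemma \ref{lherz-and} already establishes the identity $\|f\|_{(\iihz(\rn),l^{r})}=I(f)$ under the a priori hypothesis $f\in (\iihz(\rn),l^{r})$; the new content here is to drop that hypothesis, allowing $\|f\|_{(\iihz(\rn),l^{r})}$ to be infinite. Without loss of generality, assume the entries of $f$ are non-negative; the general case follows by considering $|f(j,\cdot)|$.

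First, I would construct the truncation. For each $m\in\nn$ and $(j,x)\in \zz\times\rn$, set
\[
f_{m}(j,x):=\min\{|f(j,x)|,m\},\qquad g_{m}(j,x):=f_{m}(j,x)\one_{A_{m}}(x)\one_{\zz\cap B(\mathbf{0},m)}(j),
\]
with $A_{m}$ as in Lemma \ref{Am}. Then $g_{m}(j,\cdot)$ is bounded, supported in $A_{m}$, and vanishes for $|j|\geq m$, so $\|g_{m}\|_{(\iihz(\rn),l^{r})}\ls m\cdot (2m+1)^{1/r}\|\one_{A_{m}}\|_{\iihz(\rn)}<\infty$ by Lemma \ref{Am}. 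Moreover, $|g_{m}|\uparrow |f|$ pointwise on $\zz\times(\rn\setminus\{\mathbf{0}\})$ as $m\to\infty$.

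Next, I would apply Lemma \ref{lherz-and} to each $g_{m}$ and let $m\to\infty$. For any admissible test sequence $h\in(\diihz(\rn),l^{r'})$ with $\|h\|_{(\diihz(\rn),l^{r'})}=1$, the inequality $|g_{m}(j,\cdot)h(j,\cdot)|\leq |f(j,\cdot)h(j,\cdot)|$ and Lemma \ref{lherz-and} give
\[
\|g_{m}\|_{(\iihz(\rn),l^{r})}\leq \sup\left\{\bigg\|\sum_{j\in\zz}|f(j,\cdot)h(j,\cdot)|\bigg\|_{L^{1}(\rn)}:\ \|h\|_{(\diihz(\rn),l^{r'})}=1\right\}=I(f).
\]
Because $\sum_{j\in\zz}|g_{m}(j,\cdot)|^{r}\uparrow \sum_{j\in\zz}|f(j,\cdot)|^{r}$ almost everywhere (by the monotone convergence theorem inside the $l^{r}$ norm, using the usual convention at $r=\infty$), the Fatou-type property in Proposition \ref{ballqfs-3} yields $\|g_{m}\|_{(\iihz(\rn),l^{r})}\uparrow \|f\|_{(\iihz(\rn),l^{r})}$. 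Passing to the limit gives $\|f\|_{(\iihz(\rn),l^{r})}\leq I(f)$, and this inequality is meaningful whether the left side is finite or infinite.

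For the reverse direction $I(f)\leq \|f\|_{(\iihz(\rn),l^{r})}$, I would simply apply the H\"{o}lder inequality in $j$ followed by Lemma \ref{mhr} in $x$: for any admissible $h$,
\[
\bigg\|\sum_{j\in\zz}|f(j,\cdot)h(j,\cdot)|\bigg\|_{L^{1}(\rn)}\leq \big\|\|f(j,\cdot)\|_{l^{r}}\|h(j,\cdot)\|_{l^{r'}}\big\|_{L^{1}(\rn)}\leq \|f\|_{(\iihz(\rn),l^{r})}\|h\|_{(\diihz(\rn),l^{r'})}.
\]
The only delicate point worth monitoring is the $r=\infty$ case in the monotone passage, where the pointwise identity $\sup_{j}|g_{m}(j,x)|\uparrow \sup_{j}|f(j,x)|$ still holds because the truncation in $j$ and $x$ only removes or scales down values and the supremum is monotone in each entry; after this is verified, the Fatou property of $\iihz(\rn)$ applies uniformly. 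No other obstacle arises, since all the heavy lifting (the explicit dual construction and the three-line-type auxiliary function) is already absorbed into Lemma \ref{lherz-and}.
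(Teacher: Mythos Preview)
Your proposal is correct and follows essentially the same approach as the paper: the same truncation $g_{m}$ (bounded, supported in $A_{m}$, with only finitely many nonzero $j$-entries), the same application of Lemma \ref{lherz-and} to each $g_{m}$, the same monotone passage via Proposition \ref{ballqfs-3}, and the same reverse inequality via H\"{o}lder plus Lemma \ref{mhr}. Your additional remark on the $r=\infty$ case is a helpful explicit check but does not depart from the paper's argument.
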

As a generalization of Lemma \ref{fland}, we have the following
conclusion.
\begin{lemma}\label{fland-2}
Let $\vec{p}$, $\vec{q}\in [1,\infty]^{n}$
and $\vec{\alpha}\in \rn$. Suppose that $f:=\{f(j,\cdot)\}_
{j\in\zz}\subset \MM(\rn)$ is such that the quantity
\begin{equation*}
S_{\infty}(f):=\sup\left\{\left|\int_{\rn}\sum_{j\in\zz}f(j,x)
g(j,x)\,dx \right|:\ g\in V \ \text{and}\ \|g\|_{(\diihz(\rn),
l^{1})}=1\right\}
\end{equation*}
is finite, where $V$ denotes the set of all the sequences of
simple functions, $\{g(j,\cdot)\}_{j\in\zz}$, satisfying, for some $m\in\nn$,
$\supp(g(j,\cdot))\subset A_{m}$ if $j\in \zz\cap B(0,m)$, and  $g(j,\cdot)=0$ if
$j\in \zz\cap [B(0,m)]^{\com}$, where $A_{m}$ is as in Lemma
\ref{Am}. Then $f\in (\iihz(\rn),l^{\infty})$ and $S_{\infty}(f)
=\|f\|_{(\iihz(\rn),l^{\infty})}$.
\end{lemma}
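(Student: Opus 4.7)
The plan is to mirror the proof of Lemma~\ref{fland}, using the sequence-valued auxiliary functions already constructed in the proof of Lemma~\ref{lherz-and} (specialized to $r=\infty$), so that the Hahn--Banach style argument carries over from the scalar setting to the $l^\infty$-setting.

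First, I would dispose of the case $\|f\|_{(\iihz(\rn),l^\infty)}=0$: by Proposition~\ref{equ0} applied componentwise, $f(j,\cdot)=0$ almost everywhere for every $j\in\zz$, so both sides of the claimed equality vanish. Assume now that $\|f\|_{(\iihz(\rn),l^\infty)}\neq 0$. I would next establish an auxiliary claim: for any fixed $m\in\nn$, if $g=\{g(j,\cdot)\}_{j\in\zz}$ is bounded and measurable with $\supp g(j,\cdot)\subset A_m$ for $j\in\zz\cap B(0,m)$ and $g(j,\cdot)\equiv 0$ otherwise, and $\|g\|_{(\diihz(\rn),l^1)}=1$, then $\lvert\int_\rn\sum_{j}f(j,x)g(j,x)\,dx\rvert\leq S_\infty(f)$. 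To prove this, I would apply \cite[p.\,47,~Theorem~2.10]{folland} to each $g(j,\cdot)$ to obtain simple functions $g_{i,m}(j,\cdot)$ with $\supp g_{i,m}(j,\cdot)\subset A_m$, $|g_{i,m}(j,\cdot)|\leq|g(j,\cdot)|$, and $g_{i,m}(j,\cdot)\to g(j,\cdot)$ almost everywhere. Testing $S_\infty(f)$ against the $V$-element $\{\mathrm{sgn}(f(j,\cdot))\one_{A_m}/\|\one_{A_m}\|_{(\diihz(\rn),l^1)}\}_{j\in\zz\cap B(0,m)}$ shows $\sum_{j\in\zz\cap B(0,m)}|f(j,\cdot)|\one_{A_m}\in L^1(\rn)$; the dominated convergence theorem together with $\|g_{i,m}\|_{(\diihz(\rn),l^1)}\leq 1$ then yields the claim.

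Next, I would truncate: set $f_k(j,x):=\phi_k(j,x)\one_{A_k}(x)\one_{\zz\cap B(0,k)}(j)$, where $\phi_k(j,\cdot)$ is a simple function with $\phi_k(j,\cdot)\to f(j,\cdot)$ and $|\phi_k(j,\cdot)|\uparrow|f(j,\cdot)|$, again via \cite[p.\,47,~Theorem~2.10]{folland}. By Lemma~\ref{Am}, each $f_k\in(\iihz(\rn),l^\infty)$, and Proposition~\ref{ballqfs-3} applied to $\|\{f_k(j,\cdot)\}_{j\in\zz}\|_{l^\infty}$ provides $N\in\nn$ with $\|f_k\|_{(\iihz(\rn),l^\infty)}\neq 0$ for all $k>N$. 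For each such $k$ and each $\epsilon\in(0,\infty)$, I copy the construction from the proof of Lemma~\ref{lherz-and} with $r=\infty$ to form
\[
\widehat{g}_{0,\epsilon}^{(k)}(j,x):=\left[\mathrm{sgn}\,\overline{f(j,x)}\right]|f_k(j,x)|^{\widetilde{p}_{0}-1}\widetilde{g}_{0,\epsilon}\big(\|\{f_k(j,\cdot)\}_{j\in\zz}\|_{l^\infty}\big)(j,x),
\]
where $\widetilde{p}_0=1$ and $\widetilde{g}_{0,\epsilon}$ is as in \eqref{SSE} with $\ell$ replaced by $f_k$. As a direct reading of the construction shows, $\widehat{g}_{0,\epsilon}^{(k)}$ is bounded, supported in $(\zz\cap B(0,k))\times A_k$, and $\|\widehat{g}_{0,\epsilon}^{(k)}\|_{(\diihz(\rn),l^1)}=1$ by \eqref{c11}, so the auxiliary claim applies to it.

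Combining the above with \eqref{lherz-B} applied to $\ell:=f_k$ (giving $\int_\rn\sum_j f_k(j,x)\widehat{g}_{0,\epsilon}^{(k)}(j,x)\,dx\geq(1+\epsilon)^{-2n-1}\|f_k\|_{(\iihz(\rn),l^\infty)}$), with Proposition~\ref{E-Fatou} applied to the sequence $\{\|\{f_k(j,\cdot)\}_{j\in\zz}\|_{l^\infty}\}_{k\in\nn}$, and with $|f_k|\leq|f|$ pointwise, I obtain
\[
\|f\|_{(\iihz(\rn),l^\infty)}\leq\varliminf_{k\to\infty}\|f_k\|_{(\iihz(\rn),l^\infty)}\leq (1+\epsilon)^{2n+1}S_\infty(f).
\]
Letting $\epsilon\to 0^+$ yields $\|f\|_{(\iihz(\rn),l^\infty)}\leq S_\infty(f)$, and in particular $f\in(\iihz(\rn),l^\infty)$. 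The reverse inequality $S_\infty(f)\leq\|f\|_{(\iihz(\rn),l^\infty)}$ follows from the $l^1$--$l^\infty$ H\"older inequality combined with Lemma~\ref{mhr}. The main obstacle I expect is the bookkeeping in the boundedness/support verification for $\widehat{g}_{0,\epsilon}^{(k)}$: each iterated quantity $E_i$, $P^{(k_i)}_{\cdot,q_i}$, $h_{k_i,\epsilon}$, $G_{k_i,\epsilon}$, $H_{i,\epsilon}$, $\widetilde{g}_{i,\epsilon}$ feeding into $\widehat{g}_{0,\epsilon}^{(k)}$ must be shown to inherit $L^\infty$-boundedness and the support in $(\zz\cap B(0,k))\times A_k$ from $f_k$, which is automatic from their definitions but requires a careful induction to justify membership in the admissible class used in the auxiliary claim.
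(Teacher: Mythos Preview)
Your proposal is correct and follows essentially the same approach as the paper's proof: dispose of the trivial case via Proposition~\ref{equ0}, prove the auxiliary claim extending $S_\infty(f)$ to bounded (not just simple) test sequences via dominated convergence, truncate $f$ to $f_k$ supported in $(\zz\cap B(0,k))\times A_k$, apply the $r=\infty$ extremizer $\widehat{g}_{0,\epsilon}$ from the proof of Lemma~\ref{lherz-and} together with \eqref{c11} and \eqref{lherz-B}, and conclude via Fatou and H\"older. The only cosmetic difference is that the paper truncates with $\phi_k(j,x)=\min\{|f(j,x)|,k\}$ rather than simple approximants; either choice works since all that is needed is boundedness, the support condition, and $|f_k|\uparrow|f|$.
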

\begin{proof}
Let all the symbols be as in the present lemma. We consider
the following two cases on $\|f\|_{(\iihz(\rn),l^{\infty})}$.
If $\|f\|_{(\iihz(\rn),l^{\infty})}=0$, then, by Proposition
\ref{equ0}, we have
$$S_{\infty}(f)=\|f\|_{(\iihz(\rn),l^{\infty})}=0.$$
If $\|f\|_{(\iihz(\rn),l^{\infty})}\neq 0$, we then claim
that, for any $m\in\nn$, if
$g_{m}:=\{g_{m}(j,\cdot)\}_{j\in\zz}$ is a
sequence of bounded measurable functions on $\rn,$ $\supp(g_{m}
(j,\cdot))\subset A_{m}$ for any $j\in\zz\cap B(0,m)$, $g_{m}(j,\cdot)=0$
if $j\in \zz\cap[B(0,m)]^{\com}$, and $\|g_{m}\|_{(\diihz(\rn),l^{1})}=1,$
then
\begin{equation*}
\left|\int_{\rn}\sum_{j\in\zz}f(j,x)g_{m}(j,x) \,dx\right|
\leq S_{\infty}(f).
\end{equation*}
Indeed, by \cite[p.\,47,\ Theorem 2.10]{folland} and the
dominated convergence theorem, similarly to the estimation
of \eqref{claim-1}, we conclude that the above claim holds true.

For any given $k\in\nn$ and for any $j\in\zz$ and $x\in\rn$, let
\begin{equation*}
\phi_{k}(j,x):=\begin{cases}
f(j,x) & \text{if}\ |f(j,x)|\leq k,\\
k & \text{otherwise},
\end{cases}
\end{equation*}
and $f_{k}(j,x):=\phi_{k}(j,x)\one_{A_{k}}(x)$ if
$j\in\zz\cap B(0,k)$, and $f_{k}(j,x):=0$ if
$j\in\zz\cap [B(0,k)]^{\com}$.
Obviously, for any given $k\in\nn$, $\{f_{k}(j,\cdot) \}_{j\in\zz}\in
(\iihz(\rn),l^{\infty})$ and, as $k\to \infty$, $|f_{k}|\uparrow |f|$
almost everywhere.
By this and Proposition
\ref{ballqfs-3}, we conclude that there exists an $N\in\nn$
such that, for any
$k>N$, $\|f_{k}\|_{(\iihz(\rn),l^{\infty})}\neq 0$. Then,
for any $\epsilon\in(0,\infty)$, $k>N$, $j\in\zz$, and $x\in\rn$,
let
\begin{equation*}
\widehat{g}_{\epsilon,k}(j,x):=\left[\mathrm{sgn}\,\overline{f(j,x)}\right]
\widetilde{g}_{\epsilon,k}(j,x),
\end{equation*}
where $\widetilde{g}_{\epsilon,k}$ is as in \eqref{SSE}
with $\ell_{r}(x)$ there replaced by $\|\{f_{k}(j,x) \}_
{j\in\zz}\|_{l^{\infty}}$. By \eqref{c11}, we have, for
any $k>N$, $\|\widehat{g}_{\epsilon,k}\|_{(\diihz(\rn),
l^{1})}=1$. Note that, for any given $\epsilon\in (0,\infty)$
and $k\in\nn$, and for any $j\in\zz$, $\widehat{g}_
{\epsilon,k}(j,\cdot)$ is a bounded measurable function on
$\rn$, $\supp(\widehat{g}_{\epsilon,k}(j,\cdot))\subset A_{k}$,
and $\widehat{g}_{\epsilon,k}(j,\cdot)=0$ if $j\in \zz\cap
[B(0,k)]^{\com}$. Indeed, since $\|\{f_{k}(j,\cdot) \}_{j\in\zz}\|
_{l^{\infty}}$ is bounded and $\|\one_{\supp(\widehat{g}_{\epsilon,k}(j,\cdot))}\|_{\iihz(\rn)}
\leq \|\one_{A_{k}}\|_{\iihz(\rn)}<\infty$, we infer that, for any
$j\in\zz$, $\widehat{g}_{\epsilon,k}(j,\cdot)$ is bounded.
From this, Proposition \ref{ballqfs-3},
\eqref{lherz-B}, $|f_{k}(j,\cdot)|\leq |f(j,\cdot)|$
for any $j\in\zz$, and the above claim, we deduce that
\begin{align*}
\|f\|_{(\iihz(\rn),l^{\infty})}&\leq  \varliminf_
{k\to\infty}\|f_{k}\|_{(\iihz(\rn),l^{\infty})}\leq
(1+\epsilon)^{2n+1}\varliminf_{k\to\infty}\int_{\rn}
\sum_{j\in\zz}|f_{k}(j,x)\widehat{g}_{\epsilon,k}(j,x)|
\,dx\\
&\leq (1+\epsilon)^{2n+1}\varliminf_{k\to\infty}
\int_{\rn}\sum_{j\in\zz}|f(j,x)\widehat{g}_{\epsilon,k}
(j,x)|\,dx\\
&=(1+\epsilon)^{2n+1}\varliminf_{k\to\infty}\int_
{\rn}\sum_{j\in\zz}f(j,x)\widehat{g}_{\epsilon,k}(j,x)\,dx
\leq (1+\epsilon)^{2n+1}S_{\infty}(f).
\end{align*}
Letting $\epsilon\rightarrow 0^{+}$, we obtain
$$\|f\|_{(\iihz(\rn),l^{\infty})}\leq S_{\infty}(f)<\infty.$$
On the other hand, applying the H\"{o}lder inequality and Lemma \ref{mhr},
we easily find that
$$S_{\infty}(f)\leq \|f\|_{(\iihz(\rn),
l^{\infty})}.$$
This finishes the proof of Lemma \ref{fland-2}.
\end{proof}
The following conclusion is a simple consequence of
Lemma \ref{fland-2}.
\begin{corollary}\label{sppAm}
Let $\vec{p}$, $\vec{q}\in [1,\infty]
^{n}$ and $\vec{\alpha}\in\rn$. Suppose that
$f:=\{f(j,\cdot) \}_{j\in\zz}\subset \MM(\rn)$ is such
that the quantity
\begin{equation*}
S_{\infty}'(f):=\sup\left\{\left|\int_{\rn}\sum_
{j\in\zz}f(j,x)g(j,x)\,dx \right|:\ g\in V' \
\text{and}\ \|g\|_{(\diihz(\rn),l^{1})}=1\right\}
\end{equation*}
is finite, where $V'$ denotes the set of all the sequences
of simple functions, $\{g(j,\cdot)\}_{j\in\zz}$, satisfying
that,
for some $m\in\nn$, $\supp(g(j,\cdot))=A_{m}$ if $j\in \zz
\cap B(0,m)$, where $A_{m}$ is as in Lemma \ref{Am}, and
$g(j,\cdot)=0$ if $j\in\zz\cap [B(0,m)]^{\com}$. Then $f\in
(\iihz(\rn),l^{\infty})$ and $S_{\infty}'(f)=\|f\|_
{(\iihz(\rn),l^{\infty})}$.	
\end{corollary}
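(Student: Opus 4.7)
The proof is a direct modification of that of Lemma \ref{fland-2}. Since $V'\subset V$, the inequality $S'_{\infty}(f)\leq S_{\infty}(f)$ is automatic, and the H\"older-type bound in Lemma \ref{lherz-and} gives $S'_{\infty}(f)\leq \|f\|_{(\iihz(\rn),l^{\infty})}$, whether or not the latter is finite. My task is therefore the reverse inequality $\|f\|_{(\iihz(\rn),l^{\infty})}\leq S'_{\infty}(f)$ under the standing assumption $S'_{\infty}(f)<\infty$, from which equality, and hence $f\in(\iihz(\rn),l^{\infty})$, will follow.

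The plan is to rerun the proof of Lemma \ref{fland-2}, replacing the test functions $\widehat{g}_{\epsilon,k}$ (which lie in $V$, with supports contained in $A_{k}$) by slight perturbations lying in $V'$. Specifically, for each $\epsilon\in(0,\infty)$ and each sufficiently large $k\in\nn$, I will set
\begin{equation*}
\overline{g}_{\epsilon,k,\delta}(j,x):=\begin{cases}
[\widehat{g}_{\epsilon,k}(j,x)+\delta]\one_{A_{k}}(x)& \text{if}\ j\in\zz\cap B(0,k),\\
0 & \text{otherwise},
\end{cases}
\end{equation*}
with $\delta>0$ chosen small enough to avoid the finitely many negatives of the values assumed by $\widehat{g}_{\epsilon,k}(j,\cdot)$ for $j\in\zz\cap B(0,k)$. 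Because $\widehat{g}_{\epsilon,k}$ is already supported in $A_{k}$, this upgrades the support inclusion $\supp(\cdot)\subset A_{k}$ to the equality $\supp(\cdot)=A_{k}$ demanded by $V'$. The perturbation contributes $\delta\one_{A_{k}}$ on each coordinate $j\in\zz\cap B(0,k)$, so $\|\overline{g}_{\epsilon,k,\delta}-\widehat{g}_{\epsilon,k}\|_{(\diihz(\rn),l^{1})}$ is of order $\delta$ and $\|\overline{g}_{\epsilon,k,\delta}\|_{(\diihz(\rn),l^{1})}\to 1$ as $\delta\to 0^{+}$.

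To transfer estimates, I will use the identity
\begin{equation*}
\int_{\rn}\sum_{j\in\zz}f(j,x)\widehat{g}_{\epsilon,k}(j,x)\,dx=\int_{\rn}\sum_{j\in\zz}f(j,x)\overline{g}_{\epsilon,k,\delta}(j,x)\,dx-\delta\sum_{j\in\zz\cap B(0,k)}\int_{A_{k}}f(j,x)\,dx,
\end{equation*}
the first summand on the right being bounded in absolute value by $S'_{\infty}(f)\|\overline{g}_{\epsilon,k,\delta}\|_{(\diihz(\rn),l^{1})}$ after normalizing $\overline{g}_{\epsilon,k,\delta}$. The key preliminary that I need is $\int_{A_{m}}|f(j,x)|\,dx<\infty$ for every $m\in\nn$ and every $j\in\zz\cap B(0,m)$; this I will extract from $S'_{\infty}(f)<\infty$ by testing against sequences in $V'$ obtained by placing a simple approximant of $\mathrm{sgn}(\overline{f(j_{0},\cdot)})\one_{A_{m}}$ (further perturbed by a small constant multiple of $\one_{A_{m}}$ so that its support is exactly $A_{m}$) at the coordinate $j_{0}$, and a small constant multiple of $\one_{A_{m}}$ at the remaining coordinates $j\in\zz\cap B(0,m)\setminus\{j_{0}\}$. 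Granted this preliminary, letting $\delta\to 0^{+}$ in the displayed identity yields $\int_{\rn}\sum_{j\in\zz}f(j,x)\widehat{g}_{\epsilon,k}(j,x)\,dx\leq S'_{\infty}(f)$, where the nonnegativity of the integrand comes from the $\mathrm{sgn}$ factor built into $\widehat{g}_{\epsilon,k}$.

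Plugging this into the chain already established in the proof of Lemma \ref{fland-2}---namely, $\|f\|_{(\iihz(\rn),l^{\infty})}\leq\varliminf_{k\to\infty}\|f_{k}\|_{(\iihz(\rn),l^{\infty})}\leq(1+\epsilon)^{2n+1}\varliminf_{k\to\infty}\int_{\rn}\sum_{j\in\zz}f(j,x)\widehat{g}_{\epsilon,k}(j,x)\,dx$ via Proposition \ref{ballqfs-3} and the bound \eqref{lherz-B}---gives $\|f\|_{(\iihz(\rn),l^{\infty})}\leq(1+\epsilon)^{2n+1}S'_{\infty}(f)$ for every $\epsilon\in(0,\infty)$, and sending $\epsilon\to 0^{+}$ finishes the argument. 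The main technical obstacle is precisely the local-integrability preliminary on each $A_{m}$, which demands a careful simple-function approximation of $\mathrm{sgn}(\overline{f(j_{0},\cdot)})$ respecting the exact-support constraint of $V'$; once that is in hand, the rest is a routine perturbative variant of Lemma \ref{fland-2}.
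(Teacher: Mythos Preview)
Your approach is correct in strategy but takes a somewhat different route from the paper. Rather than reopening the proof of Lemma \ref{fland-2} and perturbing the specific test functions $\widehat{g}_{\epsilon,k}$, the paper shows directly that $S_{\infty}(f)\leq S'_{\infty}(f)$ (the reverse being trivial) and then simply invokes Lemma \ref{fland-2}. Concretely, for an arbitrary $g\in V$ with $\|g\|_{(\diihz(\rn),l^{1})}=1$, the paper defines $g_{\epsilon}$ by replacing the zero values of $g(j,\cdot)$ on $A_{m}$ by $\epsilon$, so that $g_{\epsilon}\in V'$ with $\|g_{\epsilon}\|\to 1$ as $\epsilon\to 0^{+}$; the needed domination for passing to the limit is obtained from $S'_{\infty}(f)<\infty$ by testing with the normalized $\one_{\zz\cap B(0,m)}\one_{A_{m}}$, which already lies in $V'$. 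This gives $S'_{\infty}(f)\geq |\int\sum fg|$ for all $g\in V$, hence $S'_{\infty}(f)=S_{\infty}(f)$, and the corollary follows from Lemma \ref{fland-2}. The modular reduction avoids rerunning the approximation machinery of Lemma \ref{fland-2}.

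One point to tighten in your write-up: the functions $\widehat{g}_{\epsilon,k}$ from the proof of Lemma \ref{fland-2} are bounded measurable, but in general \emph{not} simple (they are built from the truncations $f_{k}$ of $f$, not from simple approximants), so your parenthetical ``which lie in $V$'' and the phrase ``the finitely many negatives of the values assumed by $\widehat{g}_{\epsilon,k}(j,\cdot)$'' are inaccurate. Consequently your perturbation $\overline{g}_{\epsilon,k,\delta}$ need not lie in $V'$, and you cannot bound $\bigl|\int\sum f\,\overline{g}_{\epsilon,k,\delta}\bigr|$ by $S'_{\infty}(f)\|\overline{g}_{\epsilon,k,\delta}\|$ without an additional simple-function approximation step. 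This is fixable: once you have the local integrability $\int_{A_{k}}|f(j,\cdot)|<\infty$ that you already plan to establish, you can approximate each $\overline{g}_{\epsilon,k,\delta}(j,\cdot)$ by simple functions with support exactly $A_{k}$ (approximate the bounded $\widehat{g}_{\epsilon,k}$ and then add the constant $\delta$ on $A_{k}$) and pass to the limit by dominated convergence, exactly as in the ``claim'' inside the proof of Lemma \ref{fland-2}. With that adjustment your argument goes through, but the paper's route---perturb arbitrary simple $g\in V$ to simple $g_{\epsilon}\in V'$, then cite Lemma \ref{fland-2}---is shorter and sidesteps this issue entirely.
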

\begin{proof}
Let all the symbols be as in the present corollary. For
any given $g\in V$ with $\|g\|_{(\diihz(\rn),l^{1})}=1$,
where $V$ is as in Lemma \ref{fland-2}, assume that, for
some $m\in\nn$, $\supp(g(j,\cdot))\subset A_{m}$ if
$j\in\zz\cap B(0,m)$, and $g(j,\cdot)=0$ if $j\in\zz\cap[ B
(0,m)]^{\com}$.
Then, for any given $\epsilon\in (0,\infty)$ and for any
$j\in\zz\cap B(0,m)$ and $x\in\rn$, let
\begin{equation}\label{geon}
g_{\epsilon}(j,x):=\begin{cases}
g(j,x) & \text{if}\ x\in A_{m}\ \text{and}\ g(j,x)
\neq 0,\\
\epsilon & \text{if}\ x\in A_{m}\ \text{and}\ g(j,x)
=0,\\
0 & \text{otherwise}
\end{cases}
\end{equation}
and, for any $j\in\zz\cap [B(0,m)]^{\com}$ and $x\in\rn$,
$g_{\epsilon}(j,x):=0$. Obviously, $\supp(g_{\epsilon}
(j,\cdot))=A_{m}$ if $j\in\zz\cap B(0,m)$. Moreover, by
Proposition \ref{E-lat}, we obtain
$$\|g_{\epsilon}\|_
{(\diihz(\rn),l^{1})}\geq \|g\|_{(\diihz(\rn),l^{1})}=1$$
and
$$\|g_{\epsilon}\|_{(\diihz(\rn),l^{1})}\ls \left\|
\one_{A_{m}}\right\|_{\diihz(\rn)}<
\infty.$$
Thus, $g_{\epsilon}/\|g_{\epsilon}\|_{(\diihz(\rn),l^{1})}
\in V'$. From this and the definition of $S_{\infty}'(f)$,
it follows that, for any $\epsilon\in(0,\infty)$,
\begin{equation}\label{Sf-A}
S_{\infty}'(f)\geq \left|\int_{\rn}\sum_{j\in\zz}
f(j,x)\f{g_{\epsilon}(j,x)}{\|g_{\epsilon}\|_
{(\diihz(\rn),l^{1})}}\,dx \right|.
\end{equation}
Using \eqref{geon}, we conclude
that, for any $\delta\in(0,\infty)$, there exists
a $K:=\delta\in(0,\infty)$ such that, for any $\epsilon
\in (0,K)$, $j\in\zz$, and $x\in\rn$,
$$|g_{\epsilon}(j,x)|
\leq |g(j,x)|+\delta\one_{\zz\cap B(0,m)}(j)\one_{A_{m}}(x),$$
which, combined with Proposition \ref{E-lat}, further
implies that $$\|g_{\epsilon}\|_{(\diihz(\rn),l^{1})}
\leq \|g\|_{(\diihz(\rn),l^{1})}+\delta\left\|\sum_{j\in
\zz\cap B(0,m)}\one_{A_{m}}\right\|_{\diihz(\rn)}.$$
Letting $\delta\rightarrow 0^{+}$, we obtain $\lim_{\epsilon
\to 0^{+}}\|g_{\epsilon}\|_{(\diihz(\rn),l^{1})}\leq
\|g\|_{(\diihz(\rn),l^{1})}=1$ and hence
$$\lim_{\epsilon
\to 0^{+}}\|g_{\epsilon}\|_{(\diihz(\rn),l^{1})}=\|g\|_
{(\diihz(\rn),l^{1})}=1.$$
By \eqref{geon} and the definition of $S_{\infty}'(f)$, we conclude that
\begin{align*}
&\left|\int_{\rn}\sum_{j\in\zz}f(j,x)\f{g_{\epsilon}
	(j,x)}{\|g_{\epsilon}\|_{(\diihz(\rn),l^{1})}}
\,dx \right|\\
&\quad\ls
\int_{\rn}\sum_{j\in\zz\cap B(0,m)}|f(j,x)|\one_
{A_{m}}(x)\,dx\ls S_{\infty}'(f)<\infty.\nonumber
\end{align*}
This, together with \eqref{Sf-A} and the dominated convergence theorem,
further implies that
\begin{align*}
S_{\infty}'(f)
&\geq \lim_{\epsilon\to 0^{+}} \left|\int_{\rn}\sum_
{j\in\zz}f(j,x)\f{g_{\epsilon}(j,x)}{\|g_{\epsilon}
\|_{(\diihz(\rn),l^{1})}}\,dx \right|\\
&= \left|\int_{\rn}\sum_{j\in\zz}f(j,x)\lim_
{\epsilon\to 0^{+}}\f{g_{\epsilon}(j,x)}{\|g_{\epsilon}
\|_{(\diihz(\rn),l^{1})}}\,dx \right|\\
&= \left|\int_{\rn}\sum_{j\in\zz}f(j,x)g(j,x)\,dx
\right|.
\end{align*}
By the arbitrariness of $g\in V$ with $\|g\|_{(\diihz(\rn),
l^{1})}=1$, we conclude that $S_{\infty}(f)\leq
S_{\infty}'(f)<\infty$, where $S_{\infty}(f)$ is as
in Lemma \ref{fland-2}. On the other hand, it is
easy to show that $S_{\infty}(f)\geq S_{\infty}'(f)$.
This, combined with Lemma \ref{fland-2}, further
implies that
$$\|f\|_{(\iihz(\rn),l^{\infty})}=S_
{\infty}(f)=S_{\infty}'(f)<\infty,$$
which completes
the proof of Corollary \ref{sppAm}.
\end{proof}
Now, we aim to establish the Riesz--Thorin interpolation
theorem
on the mixed-norm Herz space $\iihz(\rn)$ and the
$l^{\infty}$-Herz type mixed-norm space.
Recall that Benedek and Panzone proved the Riesz--Thorin
interpolation theorem for the mixed Lebesgue spaces
$L^{\vec{p}}(\rn)$ (see \cite[p.\,316, Theorem 2]{BP}).

In what follows,
we denote by $H(\rn)$ the class of all simple functions as follows:
\begin{equation}\label{Hsim}
f=\sum_{k=1}^{N}a_{k}\one_{E_{k}},
\end{equation}
where $N\in\nn$, $\{a_{k} \}_{k=1}^{N}\subset\cc$, and, for any $k\in\{1,\ldots,N \}$,
$E_{k}\subset A_{m}$ for some $m\in\nn$, $E_{k}$ is of the form
\begin{equation}\label{pppeds}
E_{k}=E_{k_{1}}\times \cdots\times E_{k_{n}}
\end{equation}
with $E_{k_{i}}\subset \rr$ for any $i\in\{1,\ldots,n \}$, and $\{E_{k} \}_{k=1}^{N}$
are pairwise disjoint. Recall that $E_{k}$ in \eqref{pppeds} is called the \emph{parallelepiped}.

Now, we have the following generalization of the
Riesz--Thorin interpolation theorem.
\begin{theorem}\label{threeL}
For any $j\in\{0,1 \}$, let $\vec{\alpha}^{(j)}$,
$\vec{\beta}^{(j)}\in\rn$, $\vec{p}^{(j)},$ $\vec{q}
^{(j)}$, $\vec{s}^{(j)},$ $\vec{t}^{(j)}\in[1,\infty]^{n}$,
$\theta\in[0,1]$, and $\vec{p},$ $\vec{q},$ $\vec{\alpha}$,
$\vec{s},$ $\vec{t}$, $\vec{\beta}\in\rn$ satisfy
$$\begin{cases}\dis
\f{1}{\vec{p}}=\f{1-\theta}{\vec{p}^{(0)}}+\f{\theta}
{\vec{p}^{(1)}}, \quad\dis
&\dis\f{1}{\vec{q}}=\f{1-\theta}{\vec{q}^{(0)}}+\f{\theta}
{\vec{q}^{(1)}},\\
\dis\f{1}{\vec{s}}=\f{1-\theta}{\vec{s}^{(0)}}+\f{\theta}
{\vec{s}^{(1)}},  \quad
&\dis\f{1}{\vec{t}}=\f{1-\theta}{\vec{t}^{(0)}}+\f{\theta}
{\vec{t}^{(1)}},\\
\vec{\alpha}=(1-\theta)\vec{\alpha}^{(0)}+\theta\vec
{\alpha}^{(1)},\quad &\dis\vec{\beta}=(1-\theta)\vec{\beta}
^{(0)}+\theta\vec{\beta}^{(1)}.
\end{cases}$$
Let $T$ be a linear operator satisfying that there exist
positive constants $M_{0}$ and $M_{1}$ such that,
for any $f\in H(\rn)$,

\begin{equation}\label{point-0}
\|T(f)\|_{\mEz}\leq M_{0}\|f\|_{\nEz}
\end{equation}
and
\begin{equation}\label{point-1}
\|T(f)\|_{\mEo}\leq M_{1}\|f\|_{\nEo}.
\end{equation}
Then, for any $f\in H(\rn)$,
\begin{equation}\label{000}
\|T(f)\|_{\mE}\leq M_{0}^{1-\theta}M_{1}^{\theta}
\|f\|_{\nE}.
\end{equation}
\end{theorem}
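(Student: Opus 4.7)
The plan is to follow the classical Riesz--Thorin strategy: realise the target norm through duality, construct analytic families of test functions and dual test functions in the closed strip $\bar S:=\{z\in\cc:0\le\mathrm{Re}(z)\le 1\}$ that interpolate the source and dual data, then apply Hadamard's three-lines theorem to the pairing. By homogeneity we may assume $\|f\|_{\nE}=1$. Since $T(f)$ lives in the $l^{\infty}$-Herz type mixed-norm space $\mE$, Corollary \ref{sppAm} rewrites
\begin{equation*}
\|T(f)\|_{\mE}=\sup\left\{\left|\int_{\rn}\sum_{j\in\zz}T(f)(j,x)g(j,x)\,dx\right|:\ g\in V',\ \|g\|_{(\diinhz,l^{1})}=1\right\},
\end{equation*}
where $V'$ consists of sequences of simple functions supported on some $A_{m}$. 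It therefore suffices to bound each pairing by $M_{0}^{1-\theta}M_{1}^{\theta}$; then taking the supremum gives \eqref{000}. Fix such a $g$; note both $f$ and $g$ are finite linear combinations of characteristic functions of parallelepipeds inside some $A_{m}$.

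Next, I would construct two analytic families $\{F_{z}\}_{z\in\bar S}\subset H(\rn)$ and $\{G_{z}(j,\cdot)\}_{z\in\bar S}$ (the latter indexed by $j\in\zz$, with support in $A_{m}$ for a uniform $m$) built from $f$ and $g$ by the formulas \eqref{Fzpsi} and \eqref{Gzphi} of Section \ref{s3}. The construction replaces, on each parallelepiped appearing in $f$, each one-dimensional factor by a power adapted to that coordinate's Herz parameters, and attaches to the dyadic indicator $\one_{R_{k_{i}}}$ a weight of the form $2^{k_{i}\alpha_{i}(z)}$ where $\alpha_{i}(z):=(1-z)\alpha_{i}^{(0)}+z\alpha_{i}^{(1)}$; simultaneously the Lebesgue exponent $q_{i}$ and the outer Herz exponent $p_{i}$ are interpolated to $q_{i}(z)$ and $p_{i}(z)$. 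The construction of $G_{z}$ is analogous, but also interpolates the dual $l^{1}$ block using the associate-space identities from Lemma \ref{Eass-2}. The key properties are: $F_{\theta}=f$, $G_{\theta}=g$; each $F_{z}$ and $G_{z}$ is a finite sum of simple functions with coefficients analytic in $z$; and on the two edges of $\bar S$ the normalisations
\begin{equation*}
\|F_{iy}\|_{\nEz}\le 1,\quad \|F_{1+iy}\|_{\nEo}\le 1,\quad \|G_{iy}\|_{(\nEz)'_{l^{1}}}\le 1,\quad \|G_{1+iy}\|_{(\nEo)'_{l^{1}}}\le 1
\end{equation*}
hold for all $y\in\rr$, where the primed norms are the $(\diinhz^{(j)},l^{1})$-norms furnished by Lemma \ref{Eass-2}.

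With these ingredients in place, define
\begin{equation*}
\Phi(z):=\int_{\rn}\sum_{j\in\zz}T(F_{z})(j,x)G_{z}(j,x)\,dx.
\end{equation*}
Because $F_{z}$ is a finite sum of $H(\rn)$-functions with analytic coefficients and $T$ is linear, $T(F_{z})$ depends analytically on $z$ with values in $\mE$; together with the analytic, compactly supported nature of $G_{z}$, this makes $\Phi$ holomorphic in the open strip, continuous and bounded on $\bar S$. On the left edge, Lemma \ref{lherz-and} together with \eqref{point-0} yields
\begin{equation*}
|\Phi(iy)|\le\|T(F_{iy})\|_{\mEz}\|G_{iy}\|_{(\diinhz^{(0)},l^{1})}\le M_{0}\|F_{iy}\|_{\nEz}\|G_{iy}\|_{(\diinhz^{(0)},l^{1})}\le M_{0},
\end{equation*}
and analogously $|\Phi(1+iy)|\le M_{1}$. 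Hadamard's three-lines theorem then gives $|\Phi(\theta)|\le M_{0}^{1-\theta}M_{1}^{\theta}$, which is the required pairing bound since $F_{\theta}=f$ and $G_{\theta}=g$.

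The main obstacle is Step 2, the construction of $F_{z}$ and $G_{z}$. In the Benedek--Panzone argument for mixed Lebesgue spaces one raises each coordinate factor of $|f|$ to an interpolating complex power, but here the Herz structure forces one also to interpolate the dyadic weight exponents $\alpha_{i}$ as well as the inner $q_{i}$ and outer $p_{i}$ exponents, and to do so in a way that remains compatible with the sgn phase and with the condition $\supp(F_{z}),\supp(G_{z}(j,\cdot))\subset A_{m}$. Because simple functions generally do \emph{not} lie in mixed-norm Herz spaces, the classical choice of auxiliary function used in \cite{BP} is unavailable, and the replacement must respect the block structure of the iterated norm $\|\cdots\|f\|_{\ky(\rr)}\cdots\|_{\kn(\rr)}$. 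Verifying the edge normalisations then reduces, by the factorisation of the Herz norm along parallelepiped decompositions and a careful accounting of geometric sums in each $k_{i}$, to the facts that $\mathrm{Re}\,\alpha_{i}(iy)=\alpha_{i}^{(0)}$, $\mathrm{Re}\,\alpha_{i}(1+iy)=\alpha_{i}^{(1)}$ and the corresponding identities for $1/q_{i}(z)$ and $1/p_{i}(z)$. Once these edge estimates are in hand, the three-lines step is automatic.
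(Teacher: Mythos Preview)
Your proposal is correct and follows essentially the same route as the paper: reduce to a pairing via Corollary \ref{sppAm}, build the analytic families $F_{z}$ and $G_{z}$ from the explicit formulas \eqref{Fzpsi} and \eqref{Gzphi}, verify $F_{\theta}=f$, $G_{\theta}=g$ (Proposition \ref{ae=0}) and the edge normalisations (the paper proves these as exact equalities $\|F_{ih}^{(n)}(\psi)\|_{\nEz}=\|\psi\|_{\nE}^{\widetilde{\lambda}_{n}(0)}=1$ rather than mere inequalities, but either suffices), and then apply the three-lines lemma to $\Phi$. The only places where the paper is more explicit than your sketch are the verification that $F_{z}^{(n)}(\psi)\in H(\rn)$ (Proposition \ref{simple-H}, needed so that the endpoint hypotheses \eqref{point-0}--\eqref{point-1} apply to $F_{z}$) and the argument for analyticity of $\Phi$, which the paper carries out by expanding $T(F_{z}^{(n)}(\psi))$ as a finite linear combination $\sum_{u}c_{u}(z)T(\one_{E_{u}})$ with entire coefficients and then using a power-series/uniform-convergence argument for the $G_{z}$ factor (Lemma \ref{entire}) rather than asserting vector-valued analyticity of $T(F_{z})$.
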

Let all the symbols be as in Theorem \ref{threeL}. In the
remainder of this
section, we introduce the following more symbols.
For any $j\in\{0,1 \}$, let
$\vec{\alpha}^{(j)}:=(\alpha_{1}^{(j)},\ldots,\alpha_{n}^{(j)})$,
$\vec{\beta}^{(j)}:=(\beta_{1}^{(j)},\ldots,\beta_{n}^{(j)})$,
$\vec{p}^{(j)}:=(p_{1}^{(j)},\ldots,p_{n}^{(j)})$,
$\vec{q}^{(j)}:=(q_{1}^{(j)},\ldots,q_{n}^{(j)})$,
$\vec{s}^{(j)}:=(s_{1}^{(j)},\ldots,s_{n}^{(j)})$,
$\vec{t}^{(j)}:=(t_{1}^{(j)},\ldots,t_{n}^{(j)})$,
$\vec{\alpha}:=(\alpha_{1},\ldots,\alpha_{n})$,
$\vec{\beta}:=(\beta_{1},\ldots,\beta_{n})$,
$\vec{p}:=(p_{1},\ldots,p_{n})$,
$\vec{q}:=(q_{1},\ldots,q_{n})$,
$\vec{s}:=(s_{1},\ldots,s_{n})$, and
$\vec{t}:=(t_{1},\ldots,t_{n})$. For any $k\in\{1,\ldots,n \}$ and
$z\in\cc$, let
$$\alpha_{k}(z):=(1-z)\alpha_{k}^{(0)}+z\alpha_{k}^{(1)},
\quad\ \  \beta_{k}(z):=(1-z)\beta_{k}^{(0)}+z
\beta_{k}^{(1)},$$
$$\xi_{k}(z):=(1-z)/p_{k}^{(0)}+z/p_{k}^{(1)},\quad\ \
\eta_{k}(z):=(1-z)/q_{k}^{(0)}+z/q_{k}^{(1)},$$
and
$$\lambda_{k}(z):=(1-z)/s_{k}^{(0)}+z/s_{k}^{(1)},\quad\ \
\mu_{k}(z):=(1-z)/t_{k}^{(0)}+z/t_{k}^{(1)}.
$$
Moreover, for any $k\in\{1,\ldots,n \}$, let $\xi_{k}:
=1/p_{k}=\xi_{k}(\theta)$, $\eta_{k}:=1/q_{k}=\eta_{k}
(\theta)$, $\lambda_{k}:=1/s_{k}=\lambda_{k}(\theta
)$, and $\mu_{k}:=1/t_{k}=\mu_{k}(\theta)$. In particular,
we have $\alpha_{k}(\theta)=\alpha_{k}$
and $\beta_{k}(\theta)=\beta_{k}$.

In what follows, for any given $m\in\nn$, we denote by $H_{m}(\zz\times\rn)$
the set of all the sequences $\phi:=\{\phi(j,\cdot) \}_{j\in\zz}$ of simple functions satisfying that
$\supp(\phi(j,\cdot))= A_{m}$
if $j\in\zz\cap B(0,m)$, and $\phi(j,\cdot)=0$ if
$j\in\zz\cap [B(0,m)]^{\com}$, where $A_{m}$ is as in
Lemma \ref{Am}. Let $\psi\in H(\rn)$ and $\phi\in H_{m}(\zz\times\rn)$ for some
$m\in\nn$.
Now, for any given $k\in\{1,\ldots,n-1 \}$ and $z\in\cc$, we
define $\cf_{z,k}(\psi)$ by setting, for any $x:=(x_{1},
\ldots,x_{n})\in\rn$,
\begin{align}\label{Fzk}
&\cf_{z,k}(\psi)(x_{k+1},\ldots,x_{n})\\
&\quad:=
\begin{cases}
\left[\left\|\psi(\cdot,x_{k+1},\ldots,x_{n})\right\|
_{\dot{E}^{\vec{\beta}_{k},\vec{s}_{k}}_{\vec{t}_{k}}
(\rr^{k})} \right]^{\widetilde{\mu}_{k+1}(z)-
\widetilde{\lambda}_{k}(z)}\\
&\hspace{-3cm} \text{if}\  \|
\psi(\cdot,x_{k+1},\ldots,x_{n})\|
_{\dot{E}^{\vec{\beta}_{k},\vec{s}_{k}}_{\vec{t}_{k}}
(\rr^{k})}\neq 0,\\
0 &\hspace{-3cm}\text{otherwise},
\end{cases}\nonumber
\end{align}
where, for any $k\in\{1,\ldots,n \}$ and $z\in\cc$,
\begin{equation}\label{mulam}
\widetilde{\mu}_{k}(z):=
\begin{cases}
\mu_{k}(z)/\mu_{k} &\text{if}\ \mu_{k}\in (0,1],\\
1 &\text{if}\ \mu_{k}=0
\end{cases}
\end{equation}
and
\begin{equation}\label{mulam1}
\widetilde{\lambda}_{k}(z):=
\begin{cases}
\lambda_{k}(z)/\lambda_{k} &\text{if}\ \lambda_{k}\in(0,1],\\
1 &\text{if}\ \lambda_{k}=0.
\end{cases}
\end{equation}
For any given $k\in\{1,\ldots,n \}$ and $i_{k}\in\zz$, and for any
$x:=(x_{1},\ldots,x_{n})\in\rn$, let
\begin{equation}\label{Pik-1}
P_{i_{k}}(\psi)(x_{k},\ldots,x_{n}):=\left\|\psi(\cdot,
x_{k},\ldots,x_{n}) \right\|_{\dot{E}^{\vec{\beta}_{k-1},
\vec{s}_{k-1}}_{\vec{t}_{k-1}}(\rr^{k-1})}
\one_{R_{i_{k}}} (x_{k}),
\end{equation}
where $\|\psi\|_{\dot{E}^{\vec{\beta}_{0},\vec{s}_{0}}_
{\vec{t}_{0}}(\rr^{0})}:=|\psi|$. Moreover, for any given
$k\in\{1,\ldots,n-1 \}$, $i_{k}\in\zz$, and $s\in\cc$,
and for any $x:=(x_{1},\ldots,x_{n})\in\rn$, let
\begin{align}\label{Pik}
&P_{i_{k},s}(\psi)(x_{k+1},\ldots,x_{n})\\
&\quad:=\begin{cases}
\left\|P_{i_{k}}(\psi)(\cdot,x_{k+1},\ldots,x_{n})\right
\|_{L^{t_{k}}(\rr)}^{s}\\
&\hspace{-3cm}\text{if}\ \left\|P_{i_{k}}
(\psi)(\cdot,x_{k+1},\ldots,x_{n})\right\|_{L^{t_{k}}(\rr)}
\neq 0,\\
0 &\hspace{-3cm}\text{otherwise},
\end{cases}\nonumber
\end{align}
and $P_{i_{n},s}(\psi)$ be as in \eqref{Pik} with
$t_{k}$ and $P_{i_{k}}(\psi)(x_{k},\ldots,x_{n})$ there replaced,
respectively, by $t_{n}$ and $P_{i_{n}}(\psi)(x_{n})$.
Then, for any given $k\in\{ 1,\ldots,n-1\}$ and $z\in\cc$, and
for any $x:=(x_{1},\ldots,x_{n})\in\rn$, let
\begin{align}\label{Fhatzk}
\widehat{\cf}_{z,k}(\psi)(x_{k},\ldots,x_{n}):=&\,
\sum_{i_{k}\in\zz}2^{-i_{k}\beta_{k}(z)+i_{k}\beta_{k}
\widetilde{\lambda}_{k}(z)}\\
&\,\times
P_{i_{k},\widetilde{\lambda}
_{k}(z)-\widetilde{\mu}_{k}(z)}(\psi)(x_{k+1},\ldots,
x_{n})\one_{R_{i_{k}}}(x_{k})\nonumber
\end{align}
and
\begin{equation}\label{3.18x}
\widehat{\cf}_{z,n}(\psi)(x_{n}):=\sum_{i_{n}\in\zz}2^
{-i_{n}\beta_{n}(z)+i_{n}\beta_{n}\widetilde{\lambda}_{n}(z) }
P_{i_{n},\widetilde{\lambda}_{n}(z)-\widetilde{\mu}_{n}(z)}
(\psi)\one_{R_{i_{n}}}(x_{n}).
\end{equation}
For any given $z\in\cc$, if
$n\in\nn$ and $n>1$, then, for any $x:=(x_{1},\ldots,x_{n})\in\rn$, let
\begin{align}\label{Fzpsi}
F_{z}^{(n)}(\psi)(x):=&\,	
[\psi(x)]^{\widetilde{\mu}_{1}(z)}\left[\mathrm{sgn}\,(\overline
{\psi(x)})\right]\prod_{k=1}^{n-1}\cf_{z,k}(\psi)(x_{k+1},
\ldots,x_{n})\\
&\,\times\prod_{k=1}^{n}\widehat{\cf}_{z,k}(\psi)
(x_{k},\ldots,x_{n}),\nonumber
\end{align}
if $n:=1$, then, for any $x\in\rr$, let
\begin{equation}\label{3.19x}
F_{z}^{(n)}(\psi)(x):=[\psi(x)]^{\widetilde{\mu}
_{1}(z)}\left[\mathrm{sgn}\,(\overline{\psi(x)})
\right]\widehat{\cf}_{z,1}(\psi)(x),
\end{equation}
where, for any given
$x\in\rn$, if $\psi(x)=0$, then let $[\psi(x)]^{\widetilde
{\mu}_{1}(z)}:=0$; if $\psi(x)\neq 0$, then let $[\psi(x)]
^{\widetilde{\mu}_{1}(z)}:=e^{\widetilde{\mu}_{1}(z)\ln(|\psi(x)|)}$.
Similarly, for any $y:=(y_{1},\ldots,y_{n})\in\rr^{n}$, let
$\varphi(y):=\|\{\phi(j,y)\}_{j\in\zz}\|_{l^{1}}$. For any
$k\in\{1,\ldots, n \}$ and $z\in\cc$, let
\begin{equation}\label{etaxi}
\widetilde{\eta}_{k}(z):=
\begin{cases}\displaystyle
\f{1-\eta_{k}(z)}{1-\eta_{k}} &\text{if}\ \eta_{k}\in[0,1),\\
1 &\text{if}\ \eta_{k}=1,
\end{cases}
\end{equation}
\begin{equation}\label{etaxi1}
\widetilde{\xi}_{k}(z):=
\begin{cases}\displaystyle
\f{1-\xi_{k}(z)}{1-\xi_{k}} &\text{if}\ \xi_{k}\in[0,1),\\
1 &\text{if}\ \xi_{k}=1,
\end{cases}
\end{equation}
and $\widetilde{\xi}_{0}(z):=1$. Then, for any given $k\in\{0,
\ldots,n-1 \}$ and $z\in\cc$, we define $\cg_{z,k}(\varphi)$
by setting, for any $y:=(y_{1},\ldots,y_{n})\in\rr^{n}$,
\begin{align}\label{ggzk}
&\cg_{z,k}(\varphi)(y_{k+1},\ldots,y_{n})\\
&\quad:=\begin{cases}
\left[\|\varphi(\cdot,y_{k+1},\ldots,y_{n})\|_{\dot{E}^{-\vec{
\alpha}_{k},\vec{p}_{k}'}_{\vec{q}_{k}'}(\rr^{k})}
\right]^{\widetilde{\eta}_{k+1}(z)-\widetilde{\xi}_{k}(z)}\\
&\hspace{-3cm} \text{if}\ \|\varphi(\cdot,y_{k+1},\ldots,y_{n})\|_
{\dot{E}^{-\vec{\alpha}_{k},\vec{p}_{k}'}_{\vec{q}_{k}'}
(\rr^{k})}\neq 0,\\
0 &\hspace{-3cm}\text{otherwise},
\end{cases}\nonumber
\end{align}
where $\|\varphi\|_{\dot{E}^{-\vec{\alpha}_{0},\vec{p}'_{0}}_{
\vec{q}'_{0}}(\rr^{0})}:=|\varphi|$.
For any given $k\in\{1,\ldots,n \}$ and $i_{k}\in\zz$, and for any $y:=(y_{1},
\ldots,y_{n})\in\rn$, let
\begin{equation*}
Q_{i_{k}}(\varphi)(y_{k},\ldots,y_{n}):=\|\varphi(\cdot,y_{k},
\ldots,y_{n}) \|_{\dot{E}^{-\vec{\alpha}_{k-1},\vec{p}'_{k-1}}_
{\vec{q}'_{k-1}}(\rr^{k-1})}\one_{R_{i_{k}}}(y_{k})
\end{equation*}
and, for any given $k\in\{1,\ldots,n-1 \}$ and $s\in\cc$, and for
any $y_{k+1},\ldots,y_{n}\in\rr$, let
\begin{align}\label{Qim}
&Q_{i_{k},s}(\varphi)(y_{k+1},\ldots,y_{n})\\
&\quad:=\begin{cases}
\left\|Q_{i_{k}}(\varphi)(\cdot,y_{k+1},\ldots,y_{n})\right\|_
{L^{q_{k}'}(\rr)}^{s} &\text{if}\ \left\|Q_{i_{k}}(\varphi)(
\cdot,y_{k+1},\ldots,y_{n})\right\|_{L^{q_{k}'}(\rr)}\neq 0,\\
0 &\text{otherwise}
\end{cases}\nonumber
\end{align}
and $Q_{i_{n},s}(\varphi)$ be as in \eqref{Qim} with $q_{k}'$
and $Q_{i_{k}}(\varphi)(\cdot, y_{k+1},\ldots,y_{n})$ replaced,
respectively, by $q_{n}'$ and $Q_{i_{n}}(\varphi)(\cdot)$.
Then, for any
$k\in\{1,\ldots,n-1 \}$ and $z\in\cc$, we define $\widehat{\cg}_{z,k}
(\varphi)$ by setting, for any
$y:=(y_{1},\ldots,y_{n})\in\rr^{n}$,
\begin{equation}\label{hatgg}
\widehat{\cg}_{z,k}(\varphi)(y_{k},\ldots,y_{n}):=
\sum_{i_{k}\in\zz}2^{i_{k}\alpha_{k}(z)-i_{k}\alpha_{k}
\widetilde{\xi}_{k}(z)} Q_{i_{k},\widetilde{\xi}_{k}(z)-
\widetilde{\eta}_{k}(z)}(\varphi)(y_{k+1},\ldots,y_{n})
\one_{R_{i_{k}}}(y_{k})
\end{equation}
and
$$\widehat{\cg}_{z,n}(\varphi)(y_{n}):=
\sum_{i_{n}\in\zz}2^{i_{n}\alpha_{n}(z)-i_{n}\alpha_{n}\widetilde
{\xi}_{n}(z)} Q_{i_{n},\widetilde{\xi}_{n}(z)-\widetilde{\eta}
_{n}(z)}(\varphi)\one_{R_{i_{n}}}(y_{n}).$$
For any given $z\in\cc$ and for any $j\in\zz$ and $y:=(y_{1},\ldots,y_{n})\in\rn$, let
\begin{align}\label{Gzphi}
G_{z}^{(n)}(\phi)(j,y):=&\,
|\phi(j,y)|\left[\mathrm{sgn}\,(\overline{\phi(j,y)})\right]\prod_{k=0}
^{n-1}\cg_{z,k}(\varphi)(y_{k+1},\ldots,y_{n})\\
&\,\times\prod_{k=1}^{n}\widehat{\cg}_{z,k}(\varphi)(y_{k},\ldots,y_{n}).\nonumber
\end{align}

To show Theorem \ref{threeL}, we need the following several
conclusions.
\begin{proposition}\label{ae=0}
Let $n\in\nn$. For any $z\in\cc$, let $F_{z}^{(n)}(\psi)$ and $G_{z}^{(n)}(\phi)$ be, respectively, as in
\eqref{Fzpsi}, \eqref{3.19x}, and \eqref{Gzphi}. Then, for any $\theta\in [0,1]$,
$F_{\theta}^{(n)}(\psi)=\psi$
almost everywhere on $\rn$ and, for any $j\in\zz$, $G_{\theta}^{(n)}(
\phi)(j,\cdot)=\phi(j,\cdot)$ almost everywhere on $\rn$.
\end{proposition}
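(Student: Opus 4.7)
The plan is to verify the identity by substituting $z=\theta$ into each of the auxiliary building blocks and checking that they collapse to factors that are either $1$ or $0$ in a controlled way. The starting point is the observation that, by \eqref{mulam}, \eqref{mulam1}, \eqref{etaxi}, and \eqref{etaxi1}, one has $\widetilde{\mu}_{k}(\theta)=\widetilde{\lambda}_{k}(\theta)=\widetilde{\eta}_{k}(\theta)=\widetilde{\xi}_{k}(\theta)=1$ for all $k\in\{1,\ldots,n\}$ and, from the construction of $\alpha_{k}(z)$ and $\beta_{k}(z)$, also $\alpha_{k}(\theta)=\alpha_{k}$ and $\beta_{k}(\theta)=\beta_{k}$. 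Consequently, every exponent of the form $\widetilde{\mu}_{k+1}(z)-\widetilde{\lambda}_{k}(z)$ or $\widetilde{\lambda}_{k}(z)-\widetilde{\mu}_{k}(z)$ appearing in \eqref{Fzk}, \eqref{Pik}, and \eqref{Fhatzk} vanishes at $z=\theta$, and the exponential prefactor $2^{-i_{k}\beta_{k}(\theta)+i_{k}\beta_{k}\widetilde{\lambda}_{k}(\theta)}$ in \eqref{Fhatzk} and \eqref{3.18x} equals $1$.

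With these simplifications in hand, I would argue that, for each $k\in\{1,\ldots,n-1\}$, $\cf_{\theta,k}(\psi)(x_{k+1},\ldots,x_{n})$ equals $1$ precisely when $\|\psi(\cdot,x_{k+1},\ldots,x_{n})\|_{\dot{E}^{\vec{\beta}_{k},\vec{s}_{k}}_{\vec{t}_{k}}(\rr^{k})}\neq 0$ and $0$ otherwise. Likewise, at $z=\theta$ the function $\widehat{\cf}_{\theta,k}(\psi)(x_{k},\ldots,x_{n})$ reduces via \eqref{Pik-1} and \eqref{Pik} to $\sum_{i_{k}\in\zz}\one_{R_{i_{k}}}(x_{k})$ restricted to those indices $i_{k}$ for which $\|P_{i_{k}}(\psi)(\cdot,x_{k+1},\ldots,x_{n})\|_{L^{t_{k}}(\rr)}\neq 0$. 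Since $\{R_{i_{k}}\}_{i_{k}\in\zz}$ partitions $\rr\setminus\{0\}$, this factor equals $1$ for almost every $x_{k}$ in the $(k+1,\ldots,n)$-slice supporting $\psi$, and equals $0$ otherwise. The same argument applies with $k=n$. Finally, by the convention $[\psi(x)]^{\widetilde{\mu}_{1}(\theta)}=e^{\ln|\psi(x)|}=|\psi(x)|$ when $\psi(x)\neq0$ together with $|\psi(x)|\cdot\mathrm{sgn}\,(\overline{\psi(x)})=\overline{\psi(x)}$ (interpreting the claim for the appropriate complex convention used in \eqref{Fzpsi}), multiplying everything through \eqref{Fzpsi} and \eqref{3.19x} recovers $\psi(x)$ at every point where $\psi(x)\neq0$; at points where $\psi(x)=0$, the factor $[\psi(x)]^{\widetilde{\mu}_{1}(\theta)}$ is $0$, and the product vanishes, matching $\psi$.

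The argument for $G_{\theta}^{(n)}(\phi)(j,\cdot)=\phi(j,\cdot)$ proceeds symmetrically. At $z=\theta$ the exponents $\widetilde{\eta}_{k+1}(z)-\widetilde{\xi}_{k}(z)$ and $\widetilde{\xi}_{k}(z)-\widetilde{\eta}_{k}(z)$ from \eqref{ggzk} and \eqref{hatgg} vanish and the exponential $2^{i_{k}\alpha_{k}(\theta)-i_{k}\alpha_{k}\widetilde{\xi}_{k}(\theta)}$ equals $1$, so each $\cg_{\theta,k}(\varphi)$ and $\widehat{\cg}_{\theta,k}(\varphi)$ reduces to an indicator of the proper support slice of $\varphi=\|\{\phi(j,\cdot)\}_{j\in\zz}\|_{l^{1}}$. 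Since $|\phi(j,y)|\cdot\mathrm{sgn}\,(\overline{\phi(j,y)})$ collapses the absolute value back to $\phi(j,y)$ (up to conjugation), the product in \eqref{Gzphi} recovers $\phi(j,y)$ almost everywhere; on the null set where $\varphi(y)=0$, both sides vanish.

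The main technical subtlety—and what I expect to need the most care—is the justification that the slicewise vanishing sets assemble correctly into a full null set on $\rn$, so that the identity really holds almost everywhere. Concretely, one needs that the set of $(x_{k+1},\ldots,x_{n})$ at which $\|\psi(\cdot,x_{k+1},\ldots,x_{n})\|_{\dot{E}^{\vec{\beta}_{k},\vec{s}_{k}}_{\vec{t}_{k}}(\rr^{k})}=0$ differs from the set where $\psi(\cdot,x_{k+1},\ldots,x_{n})\equiv0$ only by a null set, and similarly for the $L^{t_{k}}$-slices used in $P_{i_{k}}(\psi)$. This follows from Proposition \ref{equ0} applied slicewise, combined with \cite[p.\,69,\ Exercise 49]{folland} as used in the proof of Proposition \ref{E-lat}, exactly the Fubini-type device employed repeatedly in Section \ref{s2}. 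Once this is in place, the inductive collapse $F_{\theta}^{(n)}(\psi)=\psi$ and $G_{\theta}^{(n)}(\phi)(j,\cdot)=\phi(j,\cdot)$ is immediate.
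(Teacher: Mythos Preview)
Your approach is correct and essentially the same as the paper's: both rest on the observation that at $z=\theta$ one has $\widetilde{\mu}_{k}(\theta)=\widetilde{\lambda}_{k}(\theta)=\widetilde{\eta}_{k}(\theta)=\widetilde{\xi}_{k}(\theta)=1$, so every auxiliary factor $\cf_{\theta,k}$, $\widehat{\cf}_{\theta,k}$, $\cg_{\theta,k}$, $\widehat{\cg}_{\theta,k}$ becomes a $\{0,1\}$-valued indicator, and one then uses Proposition~\ref{equ0} slicewise together with Fubini to show the ``indicator $=0$ but $\psi\neq0$'' set is null. The only difference is organizational: the paper packages this as an explicit induction on $n$ (base cases $n=1,2$, then $m\mapsto m+1$) with a case analysis on whether $\|\,\|\psi\|_{\dot{E}^{\vec{\beta}_{m},\vec{s}_{m}}_{\vec{t}_{m}}}\one_{R_{i_{m+1}}}\|_{L^{t_{m+1}}}$ vanishes, whereas you do the collapse globally in one pass; the ingredients and the Fubini device are identical.
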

\begin{proof}
Let all the symbols be as in the present
proposition. We only show $F_{\theta}^{(n)}(\psi)=\psi$ almost
everywhere on $\rn$ because the case of $G_{\theta}^{(n)}(\phi)$ is
similar and we omit the details. We perform induction on $n$.
If $n:=1$, then, by \eqref{3.19x} and \eqref{3.18x}, we conclude
that
\begin{align*}
F_{\theta}^{(1)}(\psi)(x_{1})=\psi(x_{1})\left[\mathrm{sgn}\,(\overline{\psi(x_{1})})
\right]\sum_{i_{1}\in\zz}P_{i_{1},0}(\psi)\one_
{R_{i_{1}}}(x_{1}).
\end{align*}
Then, for any given $R_{i_{1}}$ with $i_1\in\zz$, we consider the following two cases on
$\|\psi \one_{R_{i_{1}}}\|_{L^{t_{1}}(\rr)}$. If $\|\psi \one_
{R_{i_{1}}}\|_{L^{t_{1}}(\rr)}\neq 0$,
then $P_{i_{1},0}(\psi)=1$ and hence $F_{\theta}^{(1)}(\psi)(x_{1})
=\psi(x_{1})$ for
any $x_{1}\in R_{i_{1}}$. If $\|\psi \one_{R_{i_{1}}}
\|_{L^{t_{1}}(\rr)}= 0$, then $\psi\one_{R_{i_{1}}}=0$ almost
everywhere on $R_{i_{1}}$ and $P_{i_{1},0}(\psi)=0$. Thus,
$\psi(x_{1})=0$ for almost every $x_{1}\in R_{i_{1}}$, and
$F_{\theta}^{(1)}(\psi)(x_{1})=0$ for any $x_{1}\in R_{i_{1}}$.
Therefore, for any given $R_{i_{1}}$ with $i_1\in\zz$,
$F_{\theta}^{(1)}(\psi)=\psi$ almost everywhere on $R_{i_{1}}$
and hence almost everywhere on $\rr$.

If $n:=2$, then, by \eqref{Fzpsi}, we obtain
\begin{align*}
F_{\theta}^{(2)}(\psi)(x_{1},x_{2})=\psi(x_{1},x_{2})
\left[\mathrm{sgn}\,(\overline{\psi(x_{1},x_{2})})\right]
\cf_{\theta,1}(\psi)(x_{2}) \widehat{\cf}_{\theta,1}
(\psi)(x_{1},x_{2})\widehat{\cf}_{\theta,2}(\psi)(x_{2}).
\end{align*}
Now, we claim that, for almost every given $x_{2}\in\rr$,
$F_{\theta}^{(2)}(\psi)(x_{1},x_{2})=\psi(x_{1},x_{2})$ for almost
every $x_{1}\in\rr$. Indeed,  using the conclusion
of the case $n:=1$, we conclude that, for any given $x_{2}
\in\rr$ and for almost every $x_{1}\in \rr$,
\begin{equation*}
F_{\theta}^{(2)}(\psi)(x_{1},x_{2})=\psi(x_{1},x_{2})
\cf_{\theta,1}(\psi)(x_{2})\widehat{\cf}_{\theta,2}(\psi)(x_{2}).
\end{equation*}
Then we consider two cases on $R_{i_{2}}$ with $i_2\in\zz$. If
$R_{i_{2}}$ satisfies $\|\|\psi\|_{\dot{E}_{t_{1}}^
{\beta_{1},s_{1}}(\rr)} \one_{R_{i_{2}}}\|_{L^{t_{2}}
(\rr)}=0$, then,
applying Proposition \ref{equ0}, we find that, for
almost every $x_{2}\in R_{i_{2}}$, $\psi(\cdot,x_{2})=0$ almost
everywhere on $\rr$ and hence $F_{\theta}^{(2)}(\psi)(\cdot,x_{2})=
\psi(\cdot,x_{2})$ almost everywhere on $\rr$.
If $R_{i_{2}}$ satisfies $\|\|\psi\|_{\dot{E}_{t_{1}}^
{\beta_{1},s_{1}}(\rr)} \one_{R_{i_{2}}}\|_{L^{t_{2}}
(\rr)}\neq 0$, then, from \eqref{Fhatzk}, it follows that,
for any $x_{2}\in R_{i_{2}}$,
$\widehat{\cf}_{\theta,2}(\psi)(x_{2})=1$.
In this case, we consider the following two cases on
$x_{2}\in R_{i_{2}}$. If $x_{2}\in R_{i_{2}}$
satisfies $\|\psi(\cdot,x_{2})\|_{\dot{E}_{t_{1}}^
{\beta_{1},s_{1}}(\rr)}=0$, then, using
Proposition \ref{equ0}, we find that $\psi(x_{1},x_{2})=0$ for
almost every $x_{1}\in\rr$; moreover, by \eqref{Fzk}, we have
$\cf_{\theta,1}(\psi)(x_{2})=0$ and hence $F_
{\theta}^{(2)}(\psi)(x_{1},x_{2})=\psi(x_{1},x_{2})$
for almost every $x_{1}\in\rr$.
If $x_{2}\in R_{i_{2}}$ satisfies $\|\psi(\cdot,x
_{2})\|_{\dot{E}_{t_{1}}^{\beta_{1},s_{1}}(\rr)}\neq 0$,
then
$$\cf_{\theta,1}(\psi)(x_{2})=\widehat{\cf}
_{\theta,2}(\psi)(x_{2})=1.$$
In this case, $F_{\theta}^{(2)}(\psi)(\cdot,x_{2})=\psi(\cdot,x_{2})$ almost
everywhere on $\rr$. Therefore, for any $R_{i_{2}}$
satisfying $\|\|\psi\|_{\dot{E}_{t_{1}}^{\beta
_{1},s_{1}}(\rr)} \one_{R_{i_{2}}}\|_{L^{t_{2}}(\rr)}
\neq 0$ and for any $x_{2}\in R_{i_{2}}$,
$F_{\theta}^{(2)}(\psi)(\cdot,x_{2})=\psi(\cdot,x_{2})$ almost
everywhere on $\rr$, which further implies that the above
claim holds true. By this claim and the Fubini theorem,
we conclude that
$F_{\theta}^{(2)}(\psi)=\psi$ almost everywhere on $\rr^{2}$.

Assume that, when $n:=m\geq 2$, $F_{\theta}^{(m)}(\psi^{(m)})=
\psi^{(m)}$ almost everywhere on $\rr^{m}$, where $\psi^{(m)}\in
H(\rr^{m})$.
Then, we aim to show that $F_{\theta}^{(m+1)}(\psi^{(m+1)})=\psi^{(m+1)}$
almost everywhere on $\rr^{m+1}$. From \eqref{Fzpsi}
and $F_{\theta}^{(m)}(\psi^{(m)})=\psi^{(m)}$ almost everywhere
on $\rr^{m}$, we deduce that, for any given $x_{m+1}
\in\rr$,
\begin{equation*}
F_{\theta}^{(m+1)}(\psi^{(m+1)})(\cdot,x_{m+1})
=\psi^{(m+1)}(\cdot,x_{m+1})\cf_{\theta
,m}(\psi^{(m+1)})(x_{m+1})\widehat{\cf}_{\theta
,m+1}(\psi^{(m+1)})(x_{m+1})
\end{equation*}
almost everywhere on $\rr^{m}$.  Then we consider
two cases on $R_{i_{m+1}}$ with $i_{m+1}\in\zz$. If
$R_{i_{m+1}}$ satisfies $\|\|\psi^{(m+1)}\|_
{\dot{E}_{
\vec{t}_{m}}^{\vec{\beta}_{m},\vec{s}
_{m}}(\rr)}
\one_{R_{i_{m+1}}}\|_{L^{t_{m+1}}(\rr)}=0$,
then, by \eqref{3.18x}, we obtain, for any $x_{m+1}\in R_{i_{m+1}}$,
$\widehat{\cf}_{\theta,m+1}(\psi^{(m+1)})(x_{m+1})=0$ and
hence $F_{\theta}^{(m+1)}(\psi^{(m+1)})=0$. Furthermore,
from Proposition \ref{equ0}, it follows that, for
almost every $x_{m+1}\in R_{i_{m+1}}$,
$$\psi^{(m+1)}(x_{1},\ldots,x_{m},x_{m+1})=0$$
for almost every $(x_{1},\ldots,x_{m})\in \rr^{m}$.
Therefore, for almost every $x_{m+1}
\in R_{i_{m+1}}$, we conclude that
$F_{\theta}^{(m+1)}(\psi^{(m+1)})(x_{1},\ldots,x_{m+1})=\psi
^{(m+1)}(x_{1},\ldots,x_{m+1})$ for almost every $(x_{1},\ldots,x_{m})\in \rr^{m}$.
On the other hand, if
$R_{i_{m+1}}$ satisfies $\|\|\psi^{(m+1)}\|_{\dot{E}
_{\vec{t}_{m}}^{\vec{\beta}_{m},\vec{s}_{m}}
(\rr^{m})} \one_{R_{i_{m+1}}}\|_{L^{t_{m+1}}(\rr)}
\neq 0$, then, for any $x_{m+1}\in R_{i_{m+1}}$,
$$\widehat{\cf}_{\theta,m+1}(\psi^{(m+1)})(x_{m+1})=1.$$
In this case, we consider the following two cases on
$x_{m+1}\in R_{i_{m+1}}$. If $x_{m+1}\in R_{i_{m+1}}$
satisfies $\|\psi^{(m+1)}(\cdot,x_{m+1})\|_
{\dot{E}_{\vec{t}_{m}}
^{\vec{\beta}_{m},\vec{s}_{m}}(\rr^{m})}=0$, then,
applying Proposition \ref{equ0}, we find that
$\psi^{(m+1)}(x_{1},\ldots,x_{m+1})=0$ almost every $(x_{1},\ldots,x_{m})\in\rr^{m}$.
Moreover, using \eqref{Fzk}, we have $\cf_{\theta,m}(\psi^{(m+1)})(x_{m+1})
=0$ and hence $$F_{\theta}^{(m+1)}(\psi^{(m+1)})(x_{1},\ldots,x_{m+1})=\psi^{(m+1)}(x_{1},\ldots,x_{m+1})$$
for almost every $(x_{1},\ldots,x_{m})\in\rr^{m}$. If $x_{m+1}\in R_{i_
{m+1}}$ satisfies $\|\psi^{(m+1)}(\cdot,x_{m+1})
\|_{\dot{E}_{\vec{t}_{m}}^{\vec{\beta}_{m},\vec{s}_
{m}}(\rr^{m})}\neq 0$, then, from \eqref{Fzk} and \eqref{3.18x},
it follows that
$$\cf_{\theta,m}(\psi^{(m+1)})(x_{m+1})=\widehat{\cf}_{\theta,m+1}(\psi^{(m+1)}
)(x_{m+1})=1.$$
Thus,
$$F_{\theta}^{(m+1)}(\psi^{(m+1)})=\psi^{(m+1)},$$
which further implies that, for any $x_{m+1}\in
R_{i_{m+1}}$ satisfies $\|\|\psi^{(m+1)}\|_{\dot{E}
_{\vec{t}_{m}}^{\vec{\beta}_{m},\vec{s}_{m}}
(\rr^{m})} \one_{R_{i_{m+1}}}\|_{L^{t_{m+1}}(\rr)}
\neq 0$,
$F_{\theta}^{(m+1)}(\psi^{(m+1)})(x_{1},\ldots,x_{m+1})=\psi^{(m+1)}(x_{1},\ldots,x_{m+1})$
for almost every
$(x_{1},\ldots,x_{m})\in\rr^{m}$.
Therefore, for almost every $x_{m+1}\in\rr$,
$$F_{\theta}^{(m+1)}(\psi^{(m+1)})(x_{1},\ldots,x_{m+1})=\psi^{(m+1)}(x_{1},\ldots,x_{m+1})$$
for almost every $(x_{1},\ldots,x_{m})\in\rr^{m}$. This, combined with the
Fubini theorem, further implies that
$$F_{\theta}^{(m+1)}(\psi^{(m+1)})=\psi^{(m+1)}$$
almost everywhere on
$\rr^{m+1}.$ This finishes the proof of Proposition \ref{ae=0}.
\end{proof}
\begin{lemma}\label{piped}
Let $n\in\nn$, $n>1$, and
$\psi:=\sum_{i=1}^{N}a_{i}\one_{E_{i}}\in H(\rn)$,
where $N\in\nn$ and, for any $i\in\{1,\ldots,N \}$, $a_{i}
\in\cc$ with $a_{i}\neq 0$ and $E_{i}:=E_{i,1}\times\cdots
\times E_{i,n}$ with $E_{i,k}\subset I_{m}$ for any $k
\in\{1,\ldots,n \}$, some $m\in\nn$, and $I_{m}$
as in Lemma \ref{Am}. Then, for any given $k\in\{1,
\ldots,n-1 \}$, there exist an $M_{k}\in\nn$, a sequence
$\{U_{k,l_{k}}\}_
{l_{k}=1}^{M_{k}}\subset I_{m}^{n-k}$ of
pairwise disjoint parallelepipeds, depending only
on $\{E_{i,k+1}\times\cdots\times E_{i,n} \}_{i=1}^{N}$,
and a sequence $\{a_{k,l_{k}}\}_{l_{k}=1}^{M_{k}}
\subset L^{\infty}(\rr^{k})$ such that, for any $(x_{1}
,\ldots,x_{n})\in\rn$,
\begin{equation}\label{psi-111}
\psi(x_{1},\ldots,x_{n})=\sum_{l_{k}=1}^{M_{k}}a_
{k,l_{k}}(x_{1},\ldots,x_{k})\one_{U_{k,l_{k}}}
(x_{k+1},\ldots,x_{n}).
\end{equation}
Moreover, for any $k\in\{2,\ldots,n-1 \}$ and $l_{k-1}
\in\{1,\ldots,M_{k-1} \}$, there exists
an $l_{k}\in\{1,\ldots,M_{k} \}$ such that, for any $
(x_{k},\ldots,x_{n})\in U_{k-1,l_{k-1}}$,  $(x_{k+1},
\ldots,x_{n})\in U_{k,l_{k}}$.
\end{lemma}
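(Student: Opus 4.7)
The plan is to build the parallelepipeds $U_{k,l_k}$ via a standard ``atomization'' of the finite collection of edge sets that appear in the decomposition of $\psi$, and then read off the coefficient functions $a_{k,l_k}$ coordinate by coordinate.

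Fix $k\in\{1,\ldots,n-1\}$. For every coordinate $j\in\{k+1,\ldots,n\}$, I would look at the finite collection $\{E_{i,j}\}_{i=1}^{N}\subset I_{m}$ and form its atom partition: the non-empty sets of the form
\begin{equation*}
V_{j,T}:=\bigcap_{i\in T}E_{i,j}\setminus\bigcup_{i\notin T}E_{i,j},\qquad T\subset\{1,\ldots,N\}.
\end{equation*}
Enumerate the non-empty $V_{j,T}$ as $\{V_{j,1},\ldots,V_{j,N_{j}}\}$; these are pairwise disjoint subsets of $I_{m}$ such that each $E_{i,j}$ is a union of some of them. I then define the parallelepipeds
\begin{equation*}
U_{k,l_{k}}:=V_{k+1,l_{k+1}}\times\cdots\times V_{n,l_{n}},\qquad l_{k}\leftrightarrow (l_{k+1},\ldots,l_{n}),
\end{equation*}
and let $M_{k}:=\prod_{j=k+1}^{n}N_{j}$. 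By construction $\{U_{k,l_{k}}\}_{l_{k}=1}^{M_{k}}$ is a family of pairwise disjoint parallelepipeds contained in $I_{m}^{n-k}$ which depends only on $\{E_{i,k+1}\times\cdots\times E_{i,n}\}_{i=1}^{N}$, as required.

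Next, for each $l_{k}$ I let
\begin{equation*}
S_{l_{k}}:=\left\{i\in\{1,\ldots,N\}: U_{k,l_{k}}\subset E_{i,k+1}\times\cdots\times E_{i,n}\right\},
\end{equation*}
and set
\begin{equation*}
a_{k,l_{k}}(x_{1},\ldots,x_{k}):=\sum_{i\in S_{l_{k}}}a_{i}\one_{E_{i,1}\times\cdots\times E_{i,k}}(x_{1},\ldots,x_{k}).
\end{equation*}
The key dichotomy, which follows from the atom construction, is that for every $i$ and every $l_{k}$ either $U_{k,l_{k}}\subset E_{i,k+1}\times\cdots\times E_{i,n}$ or these two sets are disjoint; indeed, on each coordinate $j>k$, the atom $V_{j,l_{j}}$ either lies inside $E_{i,j}$ or is disjoint from it. Consequently, on the slab $\{(x_{k+1},\ldots,x_{n})\in U_{k,l_{k}}\}$ only the indices $i\in S_{l_{k}}$ contribute to $\psi$, and we obtain \eqref{psi-111}. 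The boundedness $a_{k,l_{k}}\in L^{\infty}(\rr^{k})$ is immediate from $|a_{k,l_{k}}|\leq\sum_{i=1}^{N}|a_{i}|$.

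Finally, for the nesting property at level $k\geq 2$: by construction $U_{k-1,l_{k-1}}=V_{k,l_{k}^{\ast}}\times V_{k+1,l_{k+1}^{\ast}}\times\cdots\times V_{n,l_{n}^{\ast}}$ for some indices determined by $l_{k-1}$, so the projection of $U_{k-1,l_{k-1}}$ onto the last $n-k$ coordinates is precisely $V_{k+1,l_{k+1}^{\ast}}\times\cdots\times V_{n,l_{n}^{\ast}}$, which is one of the $U_{k,l_{k}}$. Thus, whenever $(x_{k},\ldots,x_{n})\in U_{k-1,l_{k-1}}$, we have $(x_{k+1},\ldots,x_{n})\in U_{k,l_{k}}$ for the matching $l_{k}$. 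The entire argument is combinatorial and essentially bookkeeping; the only mild subtlety is to make sure that the family $\{U_{k,l_{k}}\}$ depends on the projections of the $E_{i}$'s onto the last $n-k$ coordinates rather than on the individual factors $E_{i,j}$ with $j\leq k$, which is exactly what the atom-partition construction above delivers.
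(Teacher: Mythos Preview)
Your argument is correct and matches the paper's approach: both build the $U_{k,l_k}$ as products of coordinate-wise atoms of the families $\{E_{i,j}\}_{i=1}^N$ for $j>k$ (the paper reaches these in two stages, first forming the coarse intersections $\bigcap_{i} V_k^{(i)}$ with $V_k^{(i)}\in\{\EE_{i,k},\EE_{i,k}^{\com}\}$ and then refining each $\EE_{i,k}^{\com}$ into product pieces, but the resulting parallelepipeds coincide with yours and the nesting property follows from the same product structure). One small correction: for $T=\emptyset$ the atom $V_{j,\emptyset}=\rr\setminus\bigcup_i E_{i,j}$ is not contained in $I_m$, so you should enumerate only the atoms with $T\neq\emptyset$; this is harmless because any product using a $T=\emptyset$ factor has $S_{l_k}=\emptyset$ and hence $a_{k,l_k}=0$, and \eqref{psi-111} still holds on all of $\rn$ since $\psi$ vanishes whenever some $x_j$ with $j>k$ lies outside $\bigcup_i E_{i,j}$.
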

\begin{proof}
Let all the symbols be as in the present lemma.  For any
$k\in \{1,\ldots,n-1 \}$ and $i\in \{1,\ldots, N \}$,
let
\begin{equation*}
\EE_{i,k}:=E_{i,k+1}\times \cdots \times E_{i,n}.
\end{equation*}
Then we obtain the following different partitions on $\rr
^{n-k}$:
\begin{equation*}
\cp_{1,k}:=\left\{ \EE_{1,k}, \EE_{1,k}^{\com}\right\},\
\ldots,\
\cp_{N,k}:=\left\{
\EE_{N,k},\EE_{N,k}^{\com}
\right\}.
\end{equation*}
Let
\begin{equation}\label{Vk}
\vv_{k}:=\left\{V_{k}:=\bigcap_{i=1}^{N}V_{k}^{(i)}:\
V_{k}^{(i)}\in \cp_{i,k},\ i\in\{1,\ldots,N \} \right\}.
\end{equation}
By this, we find that, for any $k\in\{1,\ldots,n-1 \}$,
$\#(\vv_{k})=2^{N}$, $\{V_{k} \}_{V_{k}\in \vv_{k}}$
are pairwise disjoint, and $\bigcup_{V_{k}\in\vv_{k}}V_{k}=
\rr^{n-k}$. Now, for any $k\in\{1,\ldots,n-1 \}$, we consider the following two cases on $(x_{1}
,\ldots,x_{k})\in \rr^{k}$. If $(x_{1},\ldots,x_{k})\in
E_{i,1}\times\cdots\times E_{i,k}$ for some $i\in\{1,\ldots,
N \}$, then let
$$I:=\left\{i\in\{1,\ldots, N \}:\ (x_{1},
\ldots,x_{k})\in E_{i,1}\times\cdots\times E_{i,k}\right\}.$$
Next, we consider the following two cases on $V_{k}\in\vv_{k}$
with $V_{k}\neq \emptyset$. If there exists an $i\in I$
such that $V_{k}\subset \EE_{i,k}$, then $\psi(x)=a_{i}$
for any $(x_{k+1},\ldots,x_{n})\in V_{k}$. If $V_{k}
\subset \bigcap_{i\in I}\EE_{i,k}^{\com}$, then $\psi
(x)=0$
for any $(x_{k+1},\ldots,x_{n})\in V_{k}$. If $(x_{1}
,\ldots,x_{k})\notin E_{i,1}\times\cdots\times E_{i,k}$
for any $i\in\{1,\ldots, N \}$, then $\psi(x)=0$ for
any $(x_{k+1},\ldots,x_{n})\in \rr^{n-k}$. Thus, we
find that, for any given $(x_{1},\ldots,x_{k})\in
\rr^{k}$, $\psi$ is a fixed constant on
$V_{k}\in\vv_{k}$ with $V_{k}\neq \emptyset$, denoted
by $b_{V_{k}}(x_{1},\ldots,x_{k})$. Using this,
we easily obtain
\begin{equation}\label{psi11}
\psi(x_{1},\ldots,x_{n})=\sum_{\gfz{V_{k}\in
\vv_{k}}{V_{k}\neq\emptyset}}b_{V_{k}}
(x_{1},\ldots,x_{k})\one_{V_{k}}(x_{k+1},
\ldots,x_{n}).
\end{equation}
Note that, for any $x:=(x_{1},\ldots,x_{n})\in\rn$ satisfying $(x_{k+1},
\ldots,x_{n})\in \bigcap_{i=1}^{N}\EE_{i,k}^{\com}$,
we find that, for any $i\in\{1,\ldots,N \}$, $x
\notin E_{i}$ and hence $\psi(x)=0$. For any given
$k\in\{1,\ldots,n-1 \}$, let
\begin{equation}\label{Vk'}
\vv_{k}':=\left\{V_{k}\in\vv_{k}:\ V_{k}\neq
\emptyset,\ V_{k}\neq \bigcap_{i=1}^{N}
\EE_{i,k}^{\com} \right\}.
\end{equation}
Using this, we find that \eqref{psi11} becomes
\begin{equation}\label{psi22}
\psi(x_{1},\ldots,x_{n})=\sum_{V_{k}\in \vv_{k}'}
b_{V_{k}}(x_{1},\ldots,x_{k})\one_{V_{k}}(x_
{k+1},\ldots,x_{n}),
\end{equation}
where, for any $V_{k}\in \vv_{k}'$, there exists
an
$\EE_{i,k}$ for some $i\in\{1,\ldots,N \}$ such
that $V_{k}\subset \EE_{i,k}$. Thus, for any $V_{k}
\in \vv_{k}'$, $V_{k}\subset I_{m}^{n-k}$.
Now, we prove that, for any $V_{k}\in \vv_{k}'$,
there exists a sequence of pairwise disjoint
parallelepipeds such that their union is
equal to $V_{k}$. To this end,
for any $i\in\{1,\ldots,N \}$ and $k\in\{1,
\ldots,n-1 \}$, let
\begin{align*}
F_{i,k}:=&\,\bigg\{\widetilde{E}_{i,k+1}\times
\cdots \times \widetilde{E}_{i,n}:\
\quad\widetilde{E}_{i,l}\in\{E_{i,l},E_{i,l}^
{\com} \},\ l\in\{k+1,\ldots,n \},\\ &
\quad(\widetilde{E}_{i,k+1}\times\cdots\times
\widetilde{E}_{i,n})\neq (E_{i,k+1}\times
\cdots\times E_{i,n}) \bigg\}.
\end{align*}
Then we find that $\#(F_{i,k})=2^{(n-k)}-1$.
From this, we deduce that, for any $i\in\{1,
\ldots,N \}$,
\begin{equation*}
\EE_{i,k}^{\com}=\bigcup_{F\in F_{i,k}} F,
\end{equation*}
where, for any $F\in F_{i,k}$, $F$ is a
parallelepiped.
By this, we obtain the finer partition on
$\rr^{n-k}$ as follows: for any $i\in\{1,\ldots,N \}$,
\begin{equation*}
\cp_{i,k}':=\{\EE_{i,k}\}\cup F_{i,k}.
\end{equation*}
Let
\begin{equation*}
\uu_{k}:=\left\{U_{k}:=\bigcap_{i=1}^{N}
U_{k}^{(i)}:\ U_{k}^{(i)}\in \cp_{i,k}',\ i
\in\{1,\ldots, N \} \right\}.
\end{equation*}
From this, we deduce that, for any $U_{k}\in\uu_{k}$,
$U_{k}$ is of the following form:
\begin{equation*}
U_{k}=\bigcap_{i=1}^{N}\widetilde{E}_{i,k+1}
\times \cdots\times\bigcap_{i=1}^{N}
\widetilde{E}_{i,n},
\end{equation*}
where, for any $i\in\{1,\ldots,N \}$ and $l\in
\{k+1,\ldots,n \}$, $\widetilde{E}_{i,l}\in
\{E_{i,l},E_{i,l}^{\com} \}$. Thus, $U_{k}$ is a
parallelepiped.
By this and \eqref{Vk'}, we conclude that, for any
$V_{k}\in \vv_{k}'$,
\begin{equation*}
V_{k}=\bigcup_{U_{k}\in \uu_{k},U_{k}\subset
V_{k}}U_{k}.
\end{equation*}
This, combined with \eqref{psi22}, further implies
that \eqref{psi-111} holds true. Moreover, for any
$k\in\{2,\ldots,n-1 \}$ and $U_{k-1}\in\uu_{k-1}$
satisfying $U_{k-1}\subset V_{k-1}$ for some $V_{k-1}
\in\vv_{k-1}'$, let
$$U_{k-1}:=\bigcap_{i=1}^{N}\widetilde{E}_{i,k}
\times \cdots\times\bigcap_{i=1}^{N}\widetilde{E}_{i,n}$$
for some $\widetilde{E}_{i,l}\in\{E_{i,l},E_{i,l}^{\com}
\}$ with $l\in\{k,\ldots,n \}$. By this, let
\begin{equation*}
U_{k}:=\bigcap_{i=1}^{N}\widetilde{E}_{i,k+1}
\times \cdots\times\bigcap_{i=1}^{N}\widetilde{E}_{i,n}
=\bigcap_{i=1}^{N}U_{k}^{(i)},
\end{equation*}
where, for any $i\in\{1,\ldots,N \}$, $U_{k}^{(i)}=
\EE_{i,k}$ or $U_{k}^{(i)}\in F_{i,k}\subset \EE_{i,k}
^{\com}$. Thus, $U_{k}\subset V_{k}$ for some
$V_{k}\in\vv_{k}$.
Obviously, for any $(x_{k},\ldots,x_{n})\in U_{k-1}$,
$(x_{k+1},\ldots,x_{n})\in U_{k}$. Then we show that
$V_{k}\in\vv_{k}'$. Indeed, for any $(x_{k+1},\ldots,
x_{n})\in U_{k}$, there exists an $x_{k}\in\bigcap_{i=1
}^{N}\widetilde{E}_{i,k}$ such that
$$(x_{k},x_{k+1},\ldots,x_{n})\in U_{k-1}\subset V_
{k-1}\in\vv_{k-1}'$$ and hence $(x_{k},x_{k+1},\ldots,
x_{n})\in \EE_{i_{0},k-1}$ for some $i_{0}\in\{1,
\ldots,N \}$. By this, we conclude that
$$(x_{k+1},\ldots,x_{n})\in \EE_{i_{0},k}.$$
Thus,
$$V_{k}\neq \bigcap_{i=1}^{N}\EE_{i,k}^{\com},$$
which further
implies that $V_{k}\in \vv_{k}'$.
This finishes the proof of Lemma \ref{piped}.
\end{proof}
\begin{proposition}\label{simple-H}
Let $n\in\nn$, $n>1$, $\psi\in H(\rn)$, and $F_{z}^{(n)}(\psi)$ be as in
\eqref{Fzpsi}. Then $F_{z}^{(n)}(\psi)\in H(\rn)$.
\end{proposition}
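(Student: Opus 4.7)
The plan is to show that each of the factors appearing in the product defining $F_{z}^{(n)}(\psi)$ is itself a simple function whose pieces are supported on suitable parallelepipeds, and then to combine them. The fundamental input is Lemma \ref{piped}, which provides, for each $k\in\{1,\ldots,n-1\}$, a decomposition
\begin{equation*}
\psi(x_{1},\ldots,x_{n})=\sum_{l_{k}=1}^{M_{k}}a_{k,l_{k}}(x_{1},\ldots,x_{k})\one_{U_{k,l_{k}}}(x_{k+1},\ldots,x_{n})
\end{equation*}
into pairwise disjoint parallelepipeds $U_{k,l_{k}}\subset I_{m}^{n-k}$, with the nesting compatibility that every $U_{k-1,l_{k-1}}$ is contained in some $R_{i_{k}}\times U_{k,l_{k}}$.

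First I would verify that, for each $k\in\{1,\ldots,n-1\}$, the norm $\|\psi(\cdot,x_{k+1},\ldots,x_{n})\|_{\dot{E}^{\vec{\beta}_{k},\vec{s}_{k}}_{\vec{t}_{k}}(\rr^{k})}$ is constant on each $U_{k,l_{k}}$: indeed, on $U_{k,l_{k}}$ the function $\psi(\cdot,x_{k+1},\ldots,x_{n})$ equals the fixed simple function $a_{k,l_{k}}(\cdot)$, and the norm depends only on this slice. Consequently $\cf_{z,k}(\psi)$, defined by \eqref{Fzk}, is a simple function of $(x_{k+1},\ldots,x_{n})$ taking at most $M_{k}$ values, each supported on a single $U_{k,l_{k}}$. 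Next, again using the constancy on each $U_{k,l_{k}}$, the quantity $P_{i_{k},s}(\psi)$ from \eqref{Pik} is a simple function of $(x_{k+1},\ldots,x_{n})$ supported on the $U_{k,l_{k}}$'s; since $\psi$ is supported in $A_{m}$, only finitely many $i_{k}\in\{-m+1,\ldots,m\}$ contribute. Hence
\begin{equation*}
\widehat{\cf}_{z,k}(\psi)(x_{k},\ldots,x_{n})=\sum_{i_{k}=-m+1}^{m}2^{-i_{k}\beta_{k}(z)+i_{k}\beta_{k}\widetilde{\lambda}_{k}(z)}\,P_{i_{k},\widetilde{\lambda}_{k}(z)-\widetilde{\mu}_{k}(z)}(\psi)(x_{k+1},\ldots,x_{n})\one_{R_{i_{k}}}(x_{k})
\end{equation*}
is a finite linear combination of characteristic functions of parallelepipeds $R_{i_{k}}\times U_{k,l_{k}}\subset I_{m}^{n-k+1}$, with disjoint pieces after refinement. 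An analogous argument handles $\widehat{\cf}_{z,n}(\psi)$.

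The leading factor $[\psi(x)]^{\widetilde{\mu}_{1}(z)}[\mathrm{sgn}\,\overline{\psi(x)}]$ is simple on the original parallelepipeds $E_{k}$ of $\psi$, and vanishes where $\psi$ does. The remaining step is to multiply the $n-1$ simple functions $\cf_{z,k}(\psi)$, the $n$ simple functions $\widehat{\cf}_{z,k}(\psi)$, and this leading factor. Each factor is supported on a finite disjoint union of parallelepipeds inside $A_{m}$; the product is therefore supported on the common refinement, which is again a finite disjoint union of parallelepipeds inside $A_{m}$, and on each such refined cell the product is a complex constant. This places $F_{z}^{(n)}(\psi)$ in $H(\rn)$.

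The main obstacle I expect is purely bookkeeping: ensuring that the successive slice-norms really are constant on the parallelepipeds produced at the previous stage, so that the output of each $\cf_{z,k}$ and $\widehat{\cf}_{z,k}$ has the required parallelepiped structure aligned with the decomposition of $\psi$. This is exactly the content of the nesting clause at the end of Lemma \ref{piped}, which guarantees that passing from the $k$-th slice to the $(k+1)$-st slice respects the parallelepiped partitions, so the finite-refinement argument in the last step goes through without producing non-product cells.
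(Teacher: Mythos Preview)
Your overall strategy matches the paper's: decompose $\psi$ via Lemma~\ref{piped}, show each factor $\cf_{z,k}(\psi)$ and $\widehat{\cf}_{z,k}(\psi)$ is simple on parallelepipeds, then take a common refinement. The treatment of $\cf_{z,k}(\psi)$ is fine.

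There is, however, a gap in your handling of $\widehat{\cf}_{z,k}(\psi)$. You assert that ``using the constancy on each $U_{k,l_k}$'' the quantity $P_{i_k,s}(\psi)(x_{k+1},\ldots,x_n)$ is simple on the $U_{k,l_k}$'s. But $P_{i_k,s}(\psi)$ is built from the $L^{t_k}$-norm (in $x_k$) of the $(k-1)$-level slice norm, and Lemma~\ref{piped} as stated only guarantees that the projection of each $U_{k-1,l_{k-1}}$ onto the last $n-k$ coordinates is \emph{contained in} some $U_{k,l_k}$, not equal to it. With only this inclusion, the function $x_k\mapsto P_{i_k}(\psi)(x_k,x_{k+1},\ldots,x_n)$ can change as $(x_{k+1},\ldots,x_n)$ varies within a single $U_{k,l_k}$, so its $L^{t_k}$-norm need not be constant there. (Your parenthetical ``every $U_{k-1,l_{k-1}}$ is contained in some $R_{i_k}\times U_{k,l_k}$'' is also not quite what the lemma says.)

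The paper closes this gap by observing that $P_{i_k}(\psi)\in H(\rr^{n-k+1})$ and then applying Lemma~\ref{piped} a \emph{second} time to $P_{i_k}(\psi)$, producing a new family of parallelepipeds $\{V_{k,l_k}\}_{l_k=1}^{L_k}\subset I_m^{n-k}$ (independent of $i_k$) on which $\|P_{i_k}(\psi)(\cdot,x_{k+1},\ldots,x_n)\|_{L^{t_k}(\rr)}$ is genuinely constant. The final refinement then runs over products $E_i\cap(\rr\times U_{1,j_1})\cap(\rr\times V_{1,l_1})\cap\prod_k R_{i_k}$. Your sketch becomes complete once you insert this second application of Lemma~\ref{piped}.
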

\begin{proof}
Let all the symbols be as in the present proposition.
Let
$\psi=\sum_{i=1}^{N}a_{i}\one_{E_{i}}$ be as in Lemma
\ref{piped} with the same symbols as there. By Lemma \ref{piped}, we find that, for
any given $k\in\{1,\ldots,n-1 \}$, there exists a
sequence of pairwise disjoint parallelepipeds, $\{U_{k,
j_{k}}\}_{j_{k}=1}^{M_{k}}\subset I_{m}^{n-k}$,
$M_{k}\in\nn$, and a sequence $\{a_{k,j_{k}}\}_{j
_{k}=1}^{M_{k}}\subset L^{\infty}(\rr^{k})$ such
that $\psi$ is as in \eqref{psi-111}. Using this, we
conclude that, for any $(x_{k+1},\ldots,x_{n})\in
U_{k,j_{k}}$ with some $j_{k}\in\{1,\ldots,M_{k} \}$,
\begin{equation}\label{3.31x}
\|\psi(\cdot,x_{k+1},\ldots,x_{n})\|_{\dot{E}^{
\vec{\beta}_{k},\vec{s}_{k}}_{\vec{t}_{k}}
(\rr^{k})}=\|a_{k,j_{k}}\|_{\dot{E}^{\vec{\beta}_{k},
\vec{s}_{k}}_{\vec{t}_{k}}(\rr^{k})}=:b_{k,
j_{k}}\in[0,\infty).
\end{equation}
Thus, for any $(x_{k+1},
\ldots,x_{n})\in \rr^{n-k}$,
\begin{equation*}
\left\|\psi(\cdot,x_{k+1},\ldots,x_{n})\right\|_
{\dot{E}^{\vec{\beta}_{k},\vec{s}_{k}}_{\vec{t}
_{k}}(\rr^{k})}=\sum_{j_{k}=1}^{M_{k}}b
_{k,j_{k}}\one_{U_{k,j_{k}}}(x_{k+1},\ldots,
x_{n}).
\end{equation*}
From this, \eqref{Fzk}, and \eqref{Pik-1}, we easily
deduce that, for any $k\in\{1,\ldots,n-1 \}$ and $z
\in\cc$,
\begin{equation}\label{F2}
\cf_{z,k}(\psi)(x_{k+1},\ldots,x_{n})=\sum_{j_{k}
=1}^{M_{k}}c_{k,j_{k}}(z)\one_{U_{k,j_{k}}}
(x_{k+1},\ldots,x_{n})
\end{equation}
and, for any $k\in\{1,\ldots,n-1 \}$ and $i_{k}\in\zz$,
\begin{equation}\label{Pik-S}
P_{i_{k}}(\psi)(x_{k},\ldots,x_{n})=\sum_{j_{
k-1}=1}^{M_{k-1}}b_{k-1,j_{k-1}}\one_{(
R_{i_{k}}\times \rr^{n-k})\cap U_{k-1,j_{k-1
}}}(x_{k},\ldots,x_{n})
\end{equation}
and
$$P_{i_{n}}(\psi)(x_{n})=\sum_{j_{n-1}=1}^{M_{n-1}}
b_{n-1,j_{n-1}}\one_{R_{i_{n}}\cap U_{n-1,j_{n-1}}}
(x_{n}),$$
where, for any $k\in\{1,\ldots,n-1\}$ and $z\in\cc$,
$c_{k,j_{k}}(z):=b_{k,j_{k}}^{\widetilde{\mu}_{k+1}
(z)-\widetilde{\lambda}_{k}(z)}$ if $b_{k,j_{k}}
\neq 0$, and $c_{k,j_{k}}(z):=0$ if $b_{k,j_{k}}= 0$.
Here and thereafter, for any $k\in\{1,\ldots,n \}$ and
$z\in\cc$, $\widetilde{\lambda}_{k}(z)$ and
$\widetilde{\mu}_{k}(z)$ are, respectively, as in \eqref{mulam1} and \eqref{mulam}.
In particular, let $M_{0}:=N$ and, for any $j_{0}\in
\{1,\ldots,N \}$, let $b_{0,j_{0}}:=a_{j_{0}}$ and
$U_{0,j_{0}}:=E_{j_{0}}$. By \eqref{Pik-S}, we conclude
that, for any $k\in\{1,\ldots,n\}$ and $i_{k}\in\zz
\cap \{(-m,m]^{\com}\}$, $P_{i_{k}}(\psi)=0$ and, for
any $k\in\{1,\ldots,n \}$ and $i_{k}\in\zz\cap (-m,m]$,
$P_{i_{k}}(\psi)\in H(\rr^{n-k+1})$. From this and
Lemma \ref{piped} with $\psi$ replaced by $P_{i_{k}}
(\psi)$ for any given $k\in\{1,\ldots,n-1 \}$ and
$i_{k}\in\zz\cap (-m,m]$, it follows that
there exists a sequence of pairwise disjoint
parallelepipeds, $\{V_{k,l_{k}}\}_{l_{k}=1}^{L_{k}}
\subset I_{m}^{n-k}$, independent of $i_{k}$, $L_{k}
\in\nn$, and a sequence $\{a_{k,l_{k}}^{(i_{k})}\}
_{l_{k}=1}^{L_{k}}\subset L^{\infty}(\rr)$ such that,
for any $x_{k},\ldots,x_{n}\in\rr$,
\begin{equation*}
P_{i_{k}}(\psi)(x_{k},\ldots,x_{n})=\sum_{l_{k}=1}
^{L_{k}}a_{k,l_{k}}^{(i_{k})}(x_{k})\one_{V_{k,l_{k}}}
(x_{k+1},\ldots,x_{n}).
\end{equation*}
By this, we obtain, for any given $k\in \{1,\ldots,n-1\}$
and $i_{k}\in\zz\cap (-m,m]$, and for any
$(x_{k+1},\ldots,x_{n})\in V_{k,l_{k}}$ with $
l_{k}\in\{1,\ldots,L_{k} \}$,
\begin{equation}\label{3.32x}
\|P_{i_{k}}(\psi)(\cdot,x_{k+1},
\ldots,x_{n})\|_{L^{t_{k}}(\rr)}=\|a_{k,l_{k}}^{(i_{k})}
\|_{L^{t_{k}}(\rr)}=:d_{k,l_{k}}^{(i_{k})}\in[0,\infty).
\end{equation}
Thus, for any given $k\in \{1,\ldots,n-1\}$
and $i_{k}\in\zz\cap (-m,m]$, and for any $(x_{k+1},
\ldots,x_{n})\in\rr^{n-k}$,
\begin{equation*}
\|P_{i_{k}}(\psi)(\cdot,x_{k+1},\ldots,x_{n})\|_{L
^{t_{k}}(\rr)}=\sum_{l_{k}=1}^{L_{k}}d_{k,l_{k}}
^{(i_{k})}\one_{V_{k,l_{k}}}(x_{k+1},\ldots,x_{n}).
\end{equation*}
For any given $i_{n}\in\zz\cap (-m,m]$, let
\begin{equation}\label{3.32y}
d_{n}^{(i_{n})}:
=\|P_{i_{n}}(\psi)\|_{L^{t_{n}}(\rr)}\in [0,\infty).
\end{equation}
Thus, from \eqref{Pik}, we deduce that, for any given $k\in\{1,
\ldots,n-1 \}$ and $i_{k}\in\zz\cap (-m,m]$, and for any $z\in\cc$ and $
(x_{k+1},\ldots,x_{n})\in\rr^{n-k}$,
\begin{equation*}
P_{i_{k},\widetilde{\lambda}_{k}(z)-\widetilde{\mu}_{k}
(z)}(\psi)(x_{k+1},\ldots,x_{n})=\sum_{l_{k}=1}^{
L_{k}}e_{k,l_{k}}^{(i_{k})}(z)\one_{V_{k,l_{k}}}(
x_{k+1},\ldots,x_{n})
\end{equation*}
and, for any given $i_{n}\in\zz\cap (-m,m]$ and for any $z\in\cc$,
$$P_{i_{n},\widetilde{\lambda}_{n}(z)-\widetilde{\mu}_{n}
(z)}(\psi)=e_{n}^{(i_{n})}(z),$$
where, for any $k\in\{1,\ldots,n-1 \}$, $e_{k,l_{k}}^{(i_
{k})}(z):=[d_{k,l_{k}}^{(i_{k})}]^{\widetilde{\lambda}
_{k}(z)-\widetilde{\mu}_{k}(z)}$ and $e_{n}^{(i_{n})}
(z):=[d_{n}^{(i_{n})}]^{\widetilde{\lambda}_{n}(z)-
\widetilde{\mu}_{n}(z)}$.  For any $k\in\{1,\ldots,n \}$
and $i_{k}\in\zz\cap \{(-m,m]^{\com}\}$, $	P_{i_{k},\widetilde
{\lambda}_{k}(z)-\widetilde{\mu}_{k}(z)}(\psi)=0$.
Using this and \eqref{Fhatzk}, we conclude that, for any given $
k\in\{1,\ldots,n-1 \}$ and $i_{k}\in\zz\cap (-m,m]$,
and for any $z\in\cc$ and $(x_{k},\ldots,x_{n})
\in\rr^{n-k+1}$,
\begin{align}\label{F3}
&\widehat{\cf}_{z,k}(\psi)(x_{k},\ldots,x_{n})\\
&\quad=\sum_{i_{k}\in\zz\cap(-m,m]}2^{-i_{k}\beta_{k}(z)+i_{k}
\beta_{k}\widetilde{\lambda}_{k}(z) }\sum_{l_{k}=1}^
{L_{k}}e_{k,l_{k}}^{(i_{k})}(z)\one_{V_{k,l_{k}}}(x_{k+1}
,\ldots,x_{n})\one_{R_{i_{k}}}(x_{k})\nonumber\\
&\quad=\sum_{i_{k}\in\zz\cap(-m,m]}\sum_{l_{k}=1}^{L_{k}}
f_{k,l_{k}}^{(i_{k})}(z)\one_{R_{i_{k}}}(x_{k})\one_{V_
{k,l_{k}}}(x_{k+1},\ldots,x_{n})\nonumber
\end{align}
and, for any $z\in\cc$ and $x_{n}\in\rr$,
$$\widehat{\cf}_{z,n}(\psi)(x_{n})=\sum_{i_{n}\in\zz\cap(-m,
m]}f_{n}^{(i_{n})}(z)\one_{R_{i_{n}}}(x_{n}),$$
where, for any $l_{k}\in \{1,\ldots,L_{k} \}$,
$$f_{k,l_{k}}^{(i_{k})}(z):=2^{-i_{k}\beta_{k}(z)+
i_{k}\beta_{k}\widetilde{\lambda}_{k}(z) }e_{k,l_{k}}
^{(i_{k})}(z)$$
and
$$f_
{n}^{(i_{n})}(z):=2^{-i_{n}\beta_{n}(z)+i_{n}\beta_{n}
\widetilde{\lambda}_{n}(z) }e_{n}^{(i_{n})}(z).$$
By the above arguments, \eqref{Fzpsi}, \eqref{F2},
and \eqref{F3}, we conclude that,
for any $z\in\cc$ and $x:=(x_{1},\ldots,x_{n})\in\rn$,
\begin{align}\label{3.36x}
F_{z}^{(n)}(\psi)(x)&=\left[\sum_{i=1}^{N}a_{i}\one_{E_{i}}(x)
\right]^{\widetilde{\mu}_{1}(z)}\left[\mathrm{sgn}\,(
\overline{\psi(x)})\right]\prod_{k=1}^{n-1}\left[\sum_
{j_{k}=1}^{M_{k}}c_{k,j_{k}}(z)\one_{U_{k,j_{k}}}(x_{
k+1},\ldots,x_{n})\right]\\
&\quad\times\prod_{k=1}^{n-1}\left[\sum_{i_{k}\in\zz\cap(
-m,m]}\sum_{l_{k}=1}^{L_{k}}
f_{k,l_{k}}^{(i_{k})}(z)\one_{R_{i_{k}}}(x_{k})\one_
{V_{k,l_{k}}}(x_{k+1},\ldots,x_{n})\right]\nonumber\\
&\quad\times \left[\sum_{i_{n}\in\zz\cap(-m,m]}f_{n}^
{(i_{n})}(z)\one_{R_{i_{n}}}(x_{n})\right].\nonumber
\end{align}
Now, we consider the following two cases on $x\in\rn$. If
$x:=(x_{1},\ldots,x_{n})\in (\prod_{k=1}^{n}E_{i,k})\cap (\rr\times U_{1,j_{1}})
\cap (\rr\times V_{1,l_{1}})\cap (\prod_{k=1}^{n}R_{i_{k}})$
for some $i\in\{1,\ldots, N \}$, $j_{1}\in\{1,\ldots,M_{1}\}$,
$l_{1}\in\{1,\ldots,L_{1} \}$, and $i_{k}\in\zz\cap (-m,m]$
with $k\in\{1,\ldots,n \}$, then,
from Lemma \ref{piped}, we deduce that, for any $k\in \{1,
\ldots,n-1\}$, there exist a $j_{k}\in \{1,\ldots,M_{k} \}$
and an $l_{k}\in\{1,\ldots,L_{k} \}$ such that
$$(x_{k+1},\ldots,x_{n})\in\left(\prod_{l=k+1}^{n}E_{i,l}\right)\cap
U_{k,j_{k}}\cap  V_{k,l_{k}}\cap\left(\prod_{l=k+1}^{n}R_{i_{k}}\right).$$
This, combined with both \eqref{F2} and \eqref{F3}, further
implies that, for any $k\in\{1,\ldots,n-1 \}$,
\begin{equation*}
\cf_{z,k}(\psi)(x_{k+1},\ldots,x_{n})= c_{k,j_{k}}(z),
\end{equation*}
\begin{equation*}
\widehat{\cf}_{z,k}(\psi)(x_{k},\ldots,x_{n})=f^{(i_{k})}_
{k,l_{k}}(z),
\end{equation*}
and $\widehat{\cf}_{z,n}(\psi)(x_{n})=f_{n}^{(i_{n})}(z)$. Moreover,
$\psi(x)=a_{i}$.
By this and \eqref{3.36x}, we conclude that
\begin{equation}\label{coff}
F_{z}^{(n)}(\psi)(x)=a_{i}^{\widetilde{\mu}_{1}(z)}
[\mathrm{sgn}\,(\overline{a_{i}})]\prod_{k=1}^{n-1}
c_{k,j_{k}}(z)\prod_{k=1}^{n-1}f_{k,l_{k}}^{(i_
{k})}(z) f_{n}^{(i_{n})}(z).
\end{equation}
If $x:=(x_{1},\ldots,x_{n})\notin (\prod_{k=1}^{n}E_{i,k})\cap (\rr\times U_{1,j_
{1}})\cap (\rr\times V_{1,l_{1}})\cap (\prod_{k=1}^{n}R_{
i_{k}})$ for any $i\in\{1,\ldots, N \}$, $j_{1}\in\{1,
\ldots,M_{1}\}$, $l_{1}\in\{1,\ldots,L_{1} \}$, and $i_{k}\in\zz\cap
(-m,m]$ with $k\in\{1,\ldots,n \}$, we then consider the following
three cases on $x$ in this case.
If $x\in \left(\bigcup_{i=1}^{N}E_{i}\right)^{\com}$, then, in this
case, we find that $\psi(x)=0$ and hence $F_{z}^{(n)}(\psi)(x)=0$.
If $x:=(x_{1},\ldots,x_{n})\in \rr\times (\bigcup_{j_{1}=1}^{M_{1}}U_{1,j_{1}})^
{\com}$, then, in this case, we have $\cf_{z,1}(\psi)(x_{2},
\ldots,x_{n})=0$ and hence $F_{z}^{(n)}(\psi)(x)=0$. If $x\in \rr
\times (\bigcup_{l_{1}=1}^{L_{1}}V_{1,l_{1}})^{\com} $, then, in
this case, we obtain $\widehat{\cf}_{z,1}(\psi)(x)=0$ and
hence $F_{z}^{(n)}(\psi)(x)=0$.
Therefore, for any $x\notin (\prod_{k=1}^{n}E_{i,k})\cap
(\rr\times U_{1,j_{1}})\cap (\rr\times V_{1,l_{1}})\cap (
\prod_{k=1}^{n}R_{i_{k}})$ for any $i\in\{1,\ldots, N \}$,
$j_{1}\in\{1,\ldots,M_{1}\}$, $l_{1}\in\{1,\ldots,L_{1} \}$,
and $i_{k}\in\zz\cap (-m,m]$ with $k\in\{1,\ldots,n \}$, $F_
{z}^{(n)}(\psi)(x)=0$. Thus, $F_{z}^{(n)}(\psi)\in H(\rn)$.
This finishes the proof of Proposition \ref{simple-H}.
\end{proof}
Recall that a function $f$ is said to be an \emph{entire
function} if $f$ is holomorphic in the whole complex plane.
\begin{lemma}\label{entire}
Let $m\in\nn$, $\phi\in H_{m}(\zz\times \rn)$, and $G_{z}^{(n)}(\phi)$ be as in \eqref{Gzphi}. Then, for any
given $j\in\zz\cap B(0,m)$ and $y\in A_{m}$, $G_{z}^{(n)}(\phi)
(j,y)$ is an entire function on $z\in\cc$. Moreover, there
exists a positive constant $C$ such that, for any $j\in
\zz\cap B(0,m)$, $y\in A_{m}$, and $z\in \{z\in\cc:\
\Re(z)\in [-1,2] \}$,
\begin{equation*}
\left|G_{z}^{(n)}(\phi)(j,y)\right|\leq C,
\end{equation*}
where $\Re(z)$ denotes the real part of a complex
number $z$.
\end{lemma}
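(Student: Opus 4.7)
The plan is to view $G_z^{(n)}(\phi)(j,y)$, for each fixed $j\in\zz\cap B(0,m)$ and $y\in A_m$, as a finite product of elementary entire functions of $z$ arising from (i) constants depending only on $\phi,j,y$ and (ii) complex powers of positive numbers, and then to bound each factor uniformly on the strip $\{\Re(z)\in[-1,2]\}$.

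First, since $\phi\in H_m(\zz\times\rn)$, the function $\varphi(y):=\|\{\phi(j,y)\}_{j\in\zz}\|_{l^1}$ is a nonnegative simple function supported in $A_m$ that takes only finitely many values. Consequently, for each fixed $y\in A_m$ and each $k\in\{0,\ldots,n-1\}$, the quantity $A_k:=\|\varphi(\cdot,y_{k+1},\ldots,y_n)\|_{\dot{E}^{-\vec{\alpha}_k,\vec{p}_k'}_{\vec{q}_k'}(\rr^k)}$ is a fixed finite nonnegative constant (finiteness being guaranteed by Lemma \ref{Am} applied on the appropriate convexification). If $A_k=0$, then from \eqref{ggzk}, $\cg_{z,k}(\varphi)(y_{k+1},\ldots,y_n)\equiv 0$, which is trivially entire and bounded. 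If $A_k>0$, then $\cg_{z,k}(\varphi)(y_{k+1},\ldots,y_n)=\exp\{(\widetilde{\eta}_{k+1}(z)-\widetilde{\xi}_k(z))\ln A_k\}$, which is entire in $z$ as a composition of entire functions (in the case $k=0$ one uses $\widetilde{\xi}_0(z)=1$ and the same formula).

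Second, for each $k\in\{1,\ldots,n\}$, the factor $\widehat{\cg}_{z,k}(\varphi)(y_k,\ldots,y_n)$ is defined by \eqref{hatgg} as a sum indexed by $i_k\in\zz$; however, for fixed $y_k\in I_m$ exactly one index $i_k\in\zz\cap(-m,m]$ satisfies $\one_{R_{i_k}}(y_k)=1$, so the sum collapses to the single term $2^{i_k\alpha_k(z)-i_k\alpha_k\widetilde{\xi}_k(z)}\,Q_{i_k,\widetilde{\xi}_k(z)-\widetilde{\eta}_k(z)}(\varphi)(y_{k+1},\ldots,y_n)$. The exponential factor $2^{(\cdot)}$ is entire in $z$, and, by \eqref{Qim}, the factor $Q_{i_k,s}(\varphi)(\cdot)$ is either identically zero or of the form $B^{s}$ with $B>0$ a fixed constant, hence entire in $z$ for the same reason. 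Meanwhile, $|\phi(j,y)|\,\mathrm{sgn}(\overline{\phi(j,y)})=\overline{\phi(j,y)}$ depends only on $j,y$ and is $z$-independent. Therefore $G_z^{(n)}(\phi)(j,y)$ is a finite product of entire functions of $z$, and hence entire.

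Third, for the uniform bound on $\{\Re(z)\in[-1,2]\}$, I use the identity $|c^{w(z)}|=c^{\Re w(z)}$ for $c>0$. Each exponent appearing above—namely $\widetilde{\eta}_{k+1}(z)-\widetilde{\xi}_k(z)$, $\widetilde{\xi}_k(z)-\widetilde{\eta}_k(z)$, and $\alpha_k(z)-\alpha_k\widetilde{\xi}_k(z)$—is affine in $z$, so its real part is bounded on the strip by a constant depending only on the fixed interpolation data $\vec{\alpha}^{(j)},\vec{p}^{(j)},\vec{q}^{(j)}$. Because $\phi$ is simple with support in $A_m$ and takes only finitely many values, as $j,y$ vary the bases $A_k$, $B$ and the indices $i_k$ each range over a finite set (with nonzero bases bounded above and away from zero); hence each factor of $G_z^{(n)}(\phi)(j,y)$ is bounded in modulus by a constant depending only on $\phi$ and $m$. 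Multiplying the finitely many bounds yields the required uniform constant $C$. The only nontrivial step is the combinatorial bookkeeping ensuring that, as $j,y,i_k$ vary, only finitely many distinct values of each base arise and each nonzero base is bounded away from zero; once this is in hand, both the entireness and the strip-bound reduce to the elementary fact that $c\mapsto c^{w(z)}$ is entire in $z$ and has $|c^{w(z)}|=c^{\Re w(z)}$ bounded on strips.
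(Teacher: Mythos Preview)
Your overall strategy---writing $G_z^{(n)}(\phi)(j,y)$ as a finite product of factors of the form $c^{w(z)}$ with $c>0$ and $w$ affine in $z$, then using $|c^{w(z)}|=c^{\Re w(z)}$---is exactly the paper's approach, and your argument for entireness is correct. The gap lies in the uniform bound. Your assertion that ``as $j,y$ vary the bases $A_k$, $B$ \ldots\ each range over a finite set'' is false in general: the simple functions $\phi(j,\cdot)$ in $H_m(\zz\times\rn)$ may have arbitrary measurable level sets (nothing forces them to be parallelepipeds), so for $n\ge 2$ the slice $\varphi(\cdot,y_{k+1},\ldots,y_n)$ can vary with $(y_{k+1},\ldots,y_n)$ in a way that makes its Herz norm $A_k$ take uncountably many values. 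Consequently your finiteness argument does not deliver the needed positive lower bound on the nonzero bases, and you also mis-state the hypothesis as ``support in $A_m$'' rather than the crucial ``support \emph{equal to} $A_m$''.

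What does work (and what the paper does) is to exploit the equality $\supp(\phi(j,\cdot))=A_m$: since each $\phi(j,\cdot)$ is simple and nowhere zero on $A_m$, there exist $\epsilon,M\in(0,\infty)$ with $|\phi(j,y)|\in(\epsilon,M)$ for all $j\in\zz\cap B(0,m)$ and $y\in A_m$, whence $(2m-1)\epsilon<\varphi(y)<(2m-1)M$ on $A_m$. By monotonicity of the Herz norm and Lemma~\ref{Am} this yields uniform two-sided bounds $0<c_1\le A_k\le c_2<\infty$ for all $(y_{k+1},\ldots,y_n)\in I_m^{n-k}$, and similarly for the $L^{q_k'}$ norms $\|Q_{i_k}(\varphi)(\cdot,y_{k+1},\ldots,y_n)\|_{L^{q_k'}(\rr)}$. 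With these uniform bounds in hand, your concluding step goes through unchanged.
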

\begin{proof}
Let all the symbols be as in the present lemma.
Since, for any $j\in\zz\cap B(0,m)$, $\phi(j,\cdot)$ is
a simple function and $\supp(\phi(j,\cdot))= A_{m}$,
we deduce that there
exist two positive constants $\epsilon$ and $M$
such that
\begin{equation}\label{pij}
|\phi(j,\cdot)|\in (\epsilon, M)
\end{equation}
on $A_{m}$ for any $j\in \zz\cap B(0,m)$. For any $y:=(
y_{1},\ldots,y_{n})\in\rr^{n}$, let $\varphi(y):=\|\{\phi
(j,y)\}_{j\in\zz}\|_{l^{1}}$. By \eqref{pij}, we find that
$(2m-1)\epsilon<\varphi<(2m-1)M$ on $A_{m}$ and hence,
for any $k\in \{0,\ldots,n-1 \}$ and $(y_{k+1},\ldots,
y_{n})\in I_{m}^{n-k}$,
\begin{equation}\label{epM}
\epsilon\ls\|\varphi(\cdot,y_{k+1},\ldots,y_{n})\|_{
\dot{E}^{-\vec{\alpha}_{k},\vec{p}_{k}'}_{\vec{q}_
{k}'}(\rr^{k})} \ls M.
\end{equation}
From this and \eqref{ggzk}, it follows that, for any given $k\in
\{0,\ldots,n-1 \}$ and $(y_{k+1},\ldots,y_{n})\in I_{m}^{n-k}$,
and for any $z\in\cc$,
\begin{equation*}
\cg_{z,k}(\varphi)(y_{k+1},\ldots,y_{n})=\left[\|\varphi
(\cdot,y_{k+1},\ldots,y_{n})\|_{\dot{E}^{-\vec{\alpha}_
{k},\vec{p}_{k}'}_{\vec{q}_{k}'}(\rr^{k})} \right]
^{\widetilde{\eta}_{k+1}(z)-\widetilde{\xi}_{k}(z)}.
\end{equation*}
Thus, using both \eqref{etaxi} and \eqref{etaxi1}, we conclude that, for any given
$k\in\{0,\ldots,n-1 \}$ and $(y_{k+1},\ldots,y_{n})\in I_{m}^{n-k} $,
$$\cg_{z,k}(\varphi)(y_{k+1},\ldots,y_{n})$$
is an entire function on $z\in\cc$.
Similarly, from \eqref{epM}, \eqref{Qim}, \eqref{etaxi},
and \eqref{etaxi1},
we deduce that, for any given $k\in\{1,\ldots,n-1 \}$,
$i_{k}\in\zz\cap (-m,m]$, and $(y_{k+1},\ldots,y_{n})\in
I_{m}^{n-k}$,
\begin{equation*}
Q_{i_{k},\widetilde{\xi}_{k}(z)-\widetilde{\eta}_{k}(z)}
(\varphi)(y_{k+1},\ldots,y_{n})=\left\|Q_{i_{k}}(\varphi)
(\cdot,y_{k+1},\ldots,y_{n})\right\|_{L^{q_{k}'}(\rr)}^
{\widetilde{\xi}_{k}(z)-\widetilde{\eta}_{k}(z)}	
\end{equation*}
and, for any given $i_{n}\in \zz\cap (-m,m]$, $Q_{i_{n},
\widetilde{\xi}_{n}(z)-\widetilde{\eta}_{n}(z)}(\varphi)
= \|Q_{i_{n}}(\varphi)\|_{L^{q_{n}'}(\rr)}^{\widetilde{\xi}_
{n}(z)-\widetilde{\eta}_{n}(z)}$ are entire functions on
$z\in\cc$. By \eqref{hatgg}, we find that, for any given $k\in\{1,
\ldots,n \}$ and $(y_{k},\ldots,y_{n})\in I_{m}^{n-k+1}$,
there exists an $i_{k}\in\zz\cap (-m,m]$ such that, for any
$k\in\{1,\ldots,n-1 \}$ and $z\in\cc$,
\begin{equation}\label{hatzkC}
\widehat{\cg}_{z,k}(\varphi)(y_{k},\ldots,y_{n}):=
2^{i_{k}\alpha_{k}(z)-i_{k}\alpha_{k}\widetilde{\xi}_{k}(z)}
Q_{i_{k},\widetilde{\xi}_{k}(z)-\widetilde{\eta}_{k}(z)}
(\varphi)(y_{k+1},\ldots,y_{n})
\end{equation}
and
$$\widehat{\cg}_{z,n}(\varphi)(y_{n}):=
2^{i_{n}\alpha_{n}(z)-i_{n}\alpha_{n}\widetilde{\xi}_{n}(z)}
Q_{i_{n},\widetilde{\xi}_{n}(z)-\widetilde{\eta}_{n}(z)}
(\varphi).$$
Thus, for any given $k\in\{1,\ldots,n \}$
and $(y_{k},\ldots,y_{n})\in I_{m}^{n-k+1}$, $\widehat{\cg}_
{z,k}(\varphi)(y_{k},\ldots,y_{n})$ is an entire function on $z\in\cc$
and hence, for any given $j\in \zz\cap B(0,m)$ and $y
\in A_{m}$, $G_{z}^{(n)}(\phi)(j,y)$ is an entire function
on $z\in\cc$. Moreover, applying \eqref{epM}, we obtain, for
any $k\in \{0,\ldots,n-1 \}$, $(y_{k},\ldots,y_{n})
\in I_{m}^{n-k+1}$, and $z\in\cc$,
\begin{equation}\label{gzkC}
|\cg_{z,k}(\varphi)(y_{k+1},\ldots,y_{n})|\ls
\max\left\{ \epsilon^{\Re(\widetilde{\eta}_{k+1}(z)-
\widetilde{\xi}_{k}(z))}, M^{\Re(\widetilde{\eta}_
{k+1}(z)-\widetilde{\xi}_{k}(z))}\right\}.
\end{equation}
Using this, \eqref{etaxi}, and \eqref{etaxi1}, we conclude that, for any given
$k\in\{0,\ldots,n-1 \}$, there
exists a positive constant $C_{(k)}$ such that, for any
$z$ satisfying $\Re(z)\in [-1,2]$ and for any $(y_{k+1},\ldots,
y_{n})\in I_{m}^{n-k}$,
\begin{equation}\label{ggCk}
|\cg_{z,k}(\varphi)(y_{k+1},\ldots,y_{n})|\leq C_{(k)}.
\end{equation}
Similarly to the estimation of \eqref{gzkC}, we conclude
that, for any $k\in\{1,\ldots,n-1 \}$, $i_{k}\in\zz\cap
(-m,m]$, $(y_{k+1},\ldots,y_{n})\in I_{m}^{n-k}$,
and $z$ satisfying $\Re(z)\in [-1,2]$,
\begin{equation*}
|Q_{i_{k},\widetilde{\xi}_{k}(z)-\widetilde{\eta}_
{k}(z)}(\varphi)(y_{k+1},\ldots,y_{n})|\ls
\max\left\{\epsilon^{\Re(\widetilde{\xi}_{k}(z)-
\widetilde{\eta}_{k}(z))}, M^{\Re(\widetilde{
\xi}_{k}(z)-\widetilde{\eta}_{k}(z))} \right\}
\end{equation*}
and, for any $i_{n}\in\zz\cap (-m,m]$,
$$|Q_{i_{n},\widetilde{\xi}_{n}(z)-\widetilde{\eta}
_{n}(z)}(\varphi)|\ls \max\left\{\epsilon^{\Re(\widetilde
{\xi}_{n}(z)-\widetilde{\eta}_{n}(z))}, M^{\Re(
\widetilde{\xi}_{n}(z)-\widetilde{\eta}_{n}(z))}\right\}.$$
From this, \eqref{etaxi}, and \eqref{etaxi1}, we deduce
that, for any given $k\in\{1,\ldots,n \}$, there exists a
positive constant $\widetilde{C}_{(k)}$ such that, for
any given $k\in\{1,\ldots,n-1 \}$ and for any
$i_{k}\in\zz\cap (-m,m]$,
any $z$ satisfying $\Re(z)\in [-1,2]$, and any $(y_{k+1},
\ldots,y_{n})\in I_{m}^{n-k}$,
\begin{equation*}
\left|Q_{i_{k},\widetilde{\xi}_{k}(z)-\widetilde{
\eta}_{k}(z)}(\varphi)(y_{k+1},\ldots,
y_{n})\right|\ls \widetilde{C}_{(k)},
\end{equation*}
and $|Q_{i_{n},\widetilde{\xi}_{n}(z)-\widetilde{\eta}
_{n}(z)}(\varphi)|\ls \widetilde{C}_{(n)}$ for any
$z\in \{z\in\cc:\ \Re(z)\in [-1,2] \}$ and $i_{n}\in\zz
\cap (-m,m]$. Using this and \eqref{hatzkC}, we find
that, for any $k\in \{1,\ldots,n \}$, there exists a
positive constant $C_{(k)}'$ such that, for any $(y_{k},
\ldots,y_{n})\in I_{m}^{n-k+1}$ and $z\in \{z\in\cc:\
\Re(z)\in [-1,2] \}$,
\begin{equation*}
\left|\widehat{\cg}_{z,k}(\varphi)(y_{k},\ldots,y_{n})\right|\leq C_{(k)}'.
\end{equation*}
From this, \eqref{ggCk}, \eqref{pij}, and \ref{Gzphi},
it follows that there exists a positive constant $C$ such
that, for any $j\in\zz\cap B(0,m)$, $y\in A_{m}$, and
$z\in \{z\in\cc:\ \Re(z)\in [-1,2] \}$,
\begin{equation*}
\left|G_{z}^{(n)}(\phi)(j,y)\right|\leq C.
\end{equation*}
This finishes the proof of Lemma \ref{entire}.
\end{proof}

Now, we prove Theorem \ref{threeL}.

\begin{proof}[Proof of Theorem \ref{threeL}]
Let all the symbols be as in the present theorem.
We consider the following two cases on $\theta$.
If $\theta=0$ or $\theta=1$, then, by \eqref{point-0}
and \eqref{point-1}, we obtain \eqref{000} holds
true in this case.
If $\theta\in (0,1)$, then, for any given $\psi\in H(\rn)$
and $\phi:=\{\phi(j,\cdot) \}_{j\in\zz}\in H_{m}(\zz\times \rn)$ with
some $m\in\nn$, we assume that
$$\|\psi\|_{\nE}=\|\phi\|_{\dmE}=1.$$
To show \eqref{000}, by Corollary \ref{sppAm}, we conclude
that it suffices to prove that
\begin{equation}\label{bth}
\left|\int_{A_{m}}\sum_{j\in\zz\cap B(0,m)}T
(\psi)(j,y)\phi(j,y)\,dy\right|
\leq M_{0}^{1-\theta}M_{1}^{\theta}.
\end{equation}
Now, we claim that, for any $h\in\rr$,
\begin{align}
&\left\|F_{ih}^{(n)}(\psi)\right\|_{\nEz}=\|\psi\|_{\nE}^{\widetilde
{\lambda}_{n}(0)},\label{Fpsi-1}\\
&\left\|F_{1+ih}^{(n)}(\psi)\right\|_{\nEo}=\|\psi\|_{\nE}^{
\widetilde{\lambda}_{n}(1)},\label{Fpsi-2}\\
&\left\|G_{ih}^{(n)}(\phi) \right\|_{(\dot{E}^{-\vec{\alpha}^{(0)},
[\vec{p}^{(0)}]' }_{[\vec{q}^{(0)}]'}
(\rr^{n}),
l^{1}) }=\|\phi\|_{(\dot{E}^{-\vec{\alpha},
\vec{p}'}
_{\vec{q}'}(\rr^{n}),l^{1}) }^{\widetilde{\xi}
_{n}(0)},\label{Gphi-1}
\end{align}
and
\begin{equation}\label{Gphi-2}
\left\|G_{1+ih}^{(n)}(\phi)\right\|_{(\dot{E}^{-\vec{\alpha}^{(1)},
[\vec{p}^{(1)}]'}_{[\vec{q}^{(1)}]'}(\rr^
{n}),l^{1})}=\|\phi\|_{(\dot{E}^{-\vec{\alpha},
\vec{p}'}_{\vec{q}'}(\rr^{n}),l^{1})}^{
\widetilde{\xi}_{n}(1)},
\end{equation}
where, for any $z\in\cc$, $F_{z}^{(n)}(\psi)$ and $G_{z}^{(n)}
(\phi)$ are, respectively, as in \eqref{Fzpsi} and
\eqref{Gzphi}, $\widetilde{\lambda}_{n}(z)$ and
$\widetilde{\xi}_{n}(z)$ are, respectively, as in
\eqref{mulam1} and \eqref{etaxi1}.
Indeed, to prove the above claim, it suffices to show \eqref{Fpsi-1}
and \eqref{Gphi-1} because the proofs of both \eqref{Fpsi-2} and \eqref{Gphi-2}
are similar and we omit the details.

To prove \eqref{Fpsi-1}, we perform induction on $n$.
If $n:=1$, then, using \eqref{3.19x}, we conclude that,
for any $h\in\rr$,
\begin{equation}\label{1c}
\left\|F_{ih}^{(1)}(\psi)\right\|_{\dot{E}^{\beta_{1}^{(0)},s_{1}^{(0)}
}_{t_{1}^{(0)}}(\rr)}=\||\psi|^{\widetilde{\mu}
_{1}(ih)}\widehat{\cf}_{ih,1}(\psi)\|_{\dot{E}^{\beta_{1}
^{(0)},s_{1}^{(0)} }_{t_{1}^{(0)}}(\rr)}.
\end{equation}
Here and thereafter, for any $i\in\{1,\ldots,n \}$,
$\widetilde{\mu}_{i}(z)$ and $\widetilde{\lambda}_{i}
(z)$ are, respectively, as in \eqref{mulam} and \eqref{mulam1}.
For any $s_{1}^{(0)}\in[1,\infty]$, we consider the
following two cases on $t_{1}^{(0)}$. If $t_{1}^{(0)}
\in[1,\infty)$, then, from $\theta\in(0,1)$,
$1/t_{1}=(1-\theta)/t^{(0)}_{1}+\theta/t_{1}^{(1)}$, and
$t_{1}^{(1)}\in [1,\infty]$, it
follows that $t_{1}\in[1,\infty)$. Thus, $\widetilde
{\mu}_{1}(0)=\mu_{1}(0)/\mu_{1}$. By this,
\eqref{Fhatzk}, and \eqref{Pik},
we conclude that, for any $i_{1}\in\zz$ and $h\in\rr$,
\begin{align*}
&\left\|F_{ih}^{(1)}(\psi)\one_{R_{i_{1}}}\right\|_{L^{t_{1}^{(0)}}(\rr)}\\
&\quad=
\left\||\psi|^{\widetilde{\mu}_{1}(ih)}
\left[2^{-i_{1}\beta_{1}(ih)+i_{1}\beta_{1}\widetilde{\lambda}
_{1}(ih)}P_{i_{1},\widetilde{\lambda}_{1}(ih)-
\widetilde{\mu}_{1}(ih)}(\psi)\right]\one_{R_{i_{1}}}
\right\|_{L^{t_{1}^{(0)}}(\rr)}\\
&\quad=\left\||\psi|^{\widetilde{\mu}_{1}(0)}
\left[2^{-i_{1}\beta_{1}(0)+i_{1}\beta_{1}\widetilde{\lambda}_
{1}(0)}P_{i_{1},\widetilde{\lambda}_{1}(0)-\widetilde
{\mu}_{1}(0)}(\psi)\right]\one_{R_{i_{1}}} \right\|_{L^{t_{1}
^{(0)}}(\rr)}\\
&\quad=2^{-i_{1}\beta_{1}(0)+i_{1}\beta_{1}\widetilde{\lambda}
_{1}(0)}P_{i_{1},\widetilde{\lambda}_{1}(0)-
\widetilde{\mu}_{1}(0)}(\psi)\left\||\psi|^{\widetilde
{\mu}_{1}(0)}\one_{R_{i_{1}}} \right\|_{L^{t_{1}^{(0)}}
(\rr)}\\
&\quad=2^{-i_{1}\beta_{1}(0)+i_{1}\beta_{1}\widetilde{\lambda}
_{1}(0)}P_{i_{1},\widetilde{\lambda}_{1}(0)-
\widetilde{\mu}_{1}(0)}(\psi)\left\||\psi|\one_{R_{i_{1}}}
\right\|_{L^{t_{1}}(\rr)}^{\widetilde{\mu}_{1}(0)}\\
&\quad=2^{-i_{1}\beta_{1}(0)+i_{1}\beta_{1}\widetilde
{\lambda}_{1}(0)}P_{i_{1},\widetilde{\lambda}
_{1}(0)}(\psi).
\end{align*}
If $t_{1}^{(0)}=\infty$, then, in this case, we consider the following
two cases on $t_{1}$. If $t_{1}=\infty$,
then, using \eqref{mulam}, we obtain $\widetilde{\mu}_{1}(0)\equiv1$.
By this, we conclude that, for any $i_{1}\in\zz$ and $h\in\rr$,
\begin{align*}
\left\|F_{ih}^{(1)}(\psi)\one_{R_{i_{1}}}\right\|_{L^{\infty}(\rr)}&=2^{
-i_{1}\beta_{1}(0)+i_{1}\beta_{1}\widetilde{\lambda}
_{1}(0)}P_{i_{1},\widetilde{\lambda}_{1}(0)-
\widetilde{\mu}_{1}(0)}(\psi)\left\||\psi|^{\widetilde{\mu}
_{1}(0)}\one_{R_{i_{1}}} \right\|_{L^{\infty}(\rr)}\\
&=2^{-i_{1}\beta_{1}(0)+i_{1}\beta_{1}\widetilde{\lambda}
_{1}(0)}P_{i_{1},\widetilde{\lambda}_{1}(0)}(\psi).
\end{align*}
If $t_{1}\in[1,\infty)$, then $\widetilde{\mu}_{1}(0)\equiv0$.
From this, we deduce that
\begin{align*}
\left\|F_{ih}^{(1)}(\psi)\one_{R_{i_{1}}}\right\|_{L^{\infty}(\rr)}&=2^{-
i_{1}\beta_{1}(0)+i_{1}\beta_{1}\widetilde{\lambda}_
{1}(0)}P_{i_{1},\widetilde{\lambda}_{1}(0)}(\psi)\left\||
\psi|^{0}\one_{R_{i_{1}}} \right\|_{L^{\infty}(\rr)}\\
&=2^{-i_{1}\beta_{1}(0)+i_{1}\beta_{1}\widetilde{\lambda}
_{1}(0)}P_{i_{1},\widetilde{\lambda}_{1}(0)}(\psi).
\end{align*}
Thus, for any $s_{1}^{(0)}$,  $t_{1}^{(0)}\in[1,\infty]$,
we have
\begin{equation*}
\left\|F_{ih}^{(1)}(\psi)\one_{R_{i_{1}}}\right\|_{L^{t_{1}^{(0)}}(\rr)}=2
^{-i_{1}\beta_{1}(0)+i_{1}\beta_{1}\widetilde{\lambda}_{1}
(0)}P_{i_{1},\widetilde{\lambda}_{1}(0)}(\psi).
\end{equation*}
Now, we consider the following two cases on $s_{1}^{(0)}$.
If $s_{1}^{(0)}\in[1,\infty)$, since $\theta\in(0,1)$,
then, from $1/s_{1}=1/s_{1}^{(0)}+1/s_{1}^{(1)}$ and $s_{1}^{(1)}\in [1,\infty]$,
it follows that $s_{1}\in [1,\infty)$. Thus, we have $\widetilde{
\lambda}_{1}(0)=\lambda_{1}(0)/\lambda$. Using this and \eqref{Pik},
we obtain, for any $h\in\rr$,
\begin{align*}
\left\|F_{ih}^{(1)}(\psi)\right\|_{\dot{E}^{\beta_{1}^{(0)},s_{1}^{(0)}
}_{t_{1}^{(0)}}(\rr)}&=\left\{\sum_{i_{1}\in\zz}
2^{i_{1}
s_{1}^{(0)}\beta_{1}^{(0)}}\left[2^{-i_{1}\beta_{1}
(0)+i_{1}\beta_{1}\widetilde{\lambda}_{1}(0)}P_{i_{1}
,\widetilde{\lambda}_{1}(0)}(\psi) \right]^{s_{1}^
{(0)}}  \right\}^{1/s_{1}^{(0)}}\\
&=\left[\sum_{i_{1}\in\zz}2^{i_{1}\beta_{1}s_{1}}\left\||\psi
|\one_{R_{i_{1}}}\right\|^{s_{1}}_{L^{t_{1}}(\rr)} \right]^
{1/s_{1}^{(0)}}
=\|\psi\|_{\dot{E}_{t_{1}}^{\beta_{1},s_{1}}(\rr)}^{\widetilde
{\lambda}_{1}(0)}.
\end{align*}
If $s_{1}^{(0)}=\infty$, then, in this case, we consider the following
two cases on $s_{1}$. If $s_{1}=\infty$,
then $\widetilde{\lambda}_{1}(0)\equiv1$. From this and \eqref{Pik}, we
deduce that, for any $h\in\rr$,
\begin{align*}
\left\|F_{ih}^{(1)}(\psi)\right\|_{\dot{E}^{\beta_{1}^{(0)},\infty}_{t_{1}
^{(0)}}(\rr)}=\sup_{i_{1}\in\zz}\left\{2^{i_{1}
\beta_{1}^{(0)}}2^{-i_{1}\beta_{1}(0)+i_{1}\beta_{1}
\widetilde{\lambda}_{1}(0)} P_{i_{1},\widetilde{
\lambda}_{1}(0)}(\psi)\right\}=\|\psi\|_{\dot{E}
_{t_{1}}^{\beta_{1},s_{1}}(\rr)}.
\end{align*}
If $s_{1}\in[1,\infty)$, then
$\widetilde{\lambda}_{1}(0)\equiv0$. Applying this and
$\|\psi\|_{\dot{E}_{t_{1}}^{\beta_{1},s_{1}}(\rr)}=1$, we
find that, for any $h\in\rr$,
\begin{equation*}
\left\|F_{ih}^{(1)}(\psi)\right\|_{\dot{E}^{\beta_{1}^{(0)},\infty}_{t_{1}
^{(0)}}(\rr)}=\sup_{i_{1}\in\zz}P_{i_{1},0}(\psi)
=1=\|\psi\|_{\dot{E}_{t_{1}}^{\beta_{1},s_{1}}(\rr)}^{
\widetilde{\lambda}_{1}(0)}.
\end{equation*}
Thus, \eqref{Fpsi-1} holds true for $n:=1$. Assume that
\eqref{Fpsi-1}  holds true for a fixed $n\in\nn$, namely,
for any $h\in\rr$,
\begin{equation}\label{Fnp}
\left\|F_{ih}^{(n)}(\psi^{(n)})\right\|_{\dot{E}_{\vec{t}_{n}^{(0)}}^{\vec{
\beta}_{n}^{(0)},\vec{s}_{n}^{(0)}}(\rr^{n})}=
\|\psi^{(n)}\|_{\dot{E}_{\vec{t}_{n}}^{\vec{\beta}_{n},
\vec{s}_{n}}(\rr^{n})}^{\widetilde{\lambda}
_{n}(0)},
\end{equation}
where $\psi^{(n)}(x_{1},\ldots,x_{n}):=\psi(x_{1},\ldots,x_{n})$,
$\vec{\beta}_{n}^{(0)}:=(\beta_{1}^{(0)},\ldots,\beta_{n}^
{(0)}),$ $\vec{s}_{n}^{(0)}:=(s_{1}^{(0)},\ldots,s_{n}^{(0)}),$
$$\vec{t}_{n}^{(0)}:=(t_{1}^{(0)},\ldots,t_{n}^{(0)}),\
\vec{\beta}_{n}:=(\beta_{1},\ldots,\beta_{n}),$$
$\vec{s}_{n}:=(s_{1},\ldots,s_{n}),$
and $\vec{t}_{n}:=(t_{1},\ldots,t_{n})$.
Then we prove \eqref{Fpsi-1} for the case $n+1$. From
\eqref{Fzpsi}, Definition \ref{mhz}, and \eqref{Fnp}, we
deduce that, for any $h\in\rr$,
\begin{align*}
\left\|F_{ih}^{(n+1)}(\psi^{(n+1)})\right\|_{\dot{E}_{\vec{t}_{n+1}^{(0)}}^{
\vec{\beta}_{n+1}^{(0)},\vec{s}_{n+1}^{(0)}}(\rr^
{n+1})}&=\left\|\left\|F_{ih}^{(n+1)}(\psi^{(n+1)})\right\|_{\dot{E}_{\vec
{t}_{n}^{(0)}}^{\vec{\beta}_{n}^{(0)},\vec{s}_{n
}^{(0)}}(\rr^{n})} \right\|_{\dot{K}_{t_{n+1}^{(
0)}}^{\beta_{n+1}^{(0)},s_{n+1}^{(0)}}(\rr)}\\
&=\left\| \left\|\psi^{(n+1)}\right\|_{\dot{E}_{\vec{t}_{n}}^{\vec{\beta}
_{n},\vec{s}_{n}}(\rr^{n})}^{\widetilde
{\lambda}_{n}(0)}\cf_{ih,n}(\psi^{(n+1)})
\widehat{\cf}_{ih,n+1}(\psi^{(n+1)}) \right\|_{\dot{K}_{t_
{n+1}^{(0)}}^{\beta_{n+1}^{(0)},s_{n+1}^{(0)}}
(\rr)}\\
&=\left\| \left\|\psi^{(n+1)}\right\|_{\dot{E}_{\vec{t}_{n}}^{\vec{\beta}
_{n},\vec{s}_{n}}(\rr^{n})}^{\widetilde{\lambda}_
{n}(0)}\cf_{0,n}(\psi^{(n+1)}) \widehat{\cf}_
{0,n+1}(\psi^{(n+1)}) \right\|_{\dot{K}_{t_{n+1}^{(0)}}^{\beta_
{n+1}^{(0)},s_{n+1}^{(0)}}(\rr)}\\
&=\left\| \left\|\psi^{(n+1)}\right\|_{\dot{E}_{\vec{t}_{n}}^{\vec{\beta}
_{n},\vec{s}_{n}}(\rr^{n})}^{\widetilde{\mu}_{n+1}(0)}
\widehat{\cf}_{0,n+1}(\psi^{(n+1)}) \right\|_{\dot{K}_
{t_{n+1}^{(0)}}^{\beta_{n+1}^{(0)},s_{n+1}^{(0)}}(\rr)}.
\end{align*}
By this, similarly to the estimation of \eqref{1c}, we
conclude that, for any $h\in\rr$,
\begin{equation*}
\left\|F_{ih}^{(n+1)}(\psi^{(n+1)})\right\|_{\dot{E}_{\vec{t}_{n+1}^{(0)}}^{\vec
{\beta}_{n+1}^{(0)},\vec{s}_{n+1}^{(0)}}(\rr^{n+1})}=
\left\|\psi^{(n+1)}\right\|_{\dot{E}_{\vec{t}_{n+1}}^{\vec{\beta}_{n+1},
\vec{s}_{n+1}}(\rr^{n+1})}^{\widetilde{\lambda}_
{n+1}(0)}.
\end{equation*}
Thus, \eqref{Fpsi-1} holds true.

Next, we show \eqref{Gphi-1} holds true. Indeed, by
\eqref{Gzphi}, we find that, for any $h\in\rr$
and $y:=(y_{1},\ldots,y_{n})\in \rn$,
\begin{align}\label{3.49x}
\left\|\left\{G_{ih}^{(n)}(\phi)(j,y)\right\}_{j\in\zz} \right\|_{l^{1} }&=
\sum_{j\in\zz}|\phi(j,y)|\cg_{ih,0}(\varphi)(y)
\prod_{k=1}^{n-1}\cg_{ih,k}(\varphi)(y_{k+1},\ldots,y_{n})\\
&\quad\times\prod_{k=1}^{n}\widehat{\cg}_{ih,k}(\varphi)(y_{k},
\ldots,y_{n})\nonumber\\
&=|\varphi(y)|^{\widetilde{\eta}_{1}(ih)}\prod_{k=1}^
{n-1}\cg_{ih,k}(\varphi)(y_{k+1},\ldots,y_{n})\prod_
{k=1}^{n}\widehat{\cg}_{ih,k}(\varphi)(y_{k},\ldots,y_{n}).\nonumber
\end{align}
For any $j\in\{0,1\}$, let $\vec{\beta}
^{(j)}:=-\vec{\alpha}^{(j)},$ $\vec{s}^{(j)}:=[\vec{p}^
{(j)}]',$ and $\vec{t}^{(j)}:=[\vec{q}^{(j)}]'$,
Using this, \eqref{mulam}, \eqref{mulam1}, \eqref{etaxi},
and \eqref{etaxi1}, we easily obtain, for any $k\in\{1,\ldots,n \}$,
$\widetilde{\mu}_{k}(z)=\widetilde
{\eta}_{k}(z)$ and $\widetilde{\lambda}_{k}(z)=\widetilde{\xi}
_{k}(z)$.
From this, \eqref{Fzpsi}, and \eqref{3.49x}, we deduce that
\begin{align*}
\left|F_{ih}^{(n)}(\varphi)\right|&=|\varphi(y)|
^{\widetilde{\mu}_{1}(ih)}\prod_{k=1}^
{n-1}\cf_{ih,k}(\varphi)(y_{k+1},\ldots,y_{n})\prod_
{k=1}^{n}\widehat{\cf}_{ih,k}(\varphi)(y_{k},\ldots,y_{n})\\
&=
\left\|\left\{G_{ih}^{(n)}(\phi)(j,\cdot)\right\}_{j\in\zz}\right\|_{l^{1}}.
\end{align*}
Thus, similarly to the estimation of \eqref{Fpsi-1}, we
conclude that \eqref{Gphi-1} holds true. Thus, the above claim holds
true.

For any $z\in\cc$, let
\begin{equation}\label{aux}
\Phi(z):=\int_{\rr^{n}}\sum_{j\in\zz} T\left[F_{z}^{(n)}(\psi)\right](j,y)G_{z}
^{(n)}(\phi)(j,y)\,dy.
\end{equation}
From \eqref{Fpsi-1}, \eqref{Gphi-1}, the H\"{o}lder inequality, Lemma \ref{mhr},
Proposition \ref{simple-H}, \eqref{point-0}, \eqref{Fpsi-1}, and \eqref{Gphi-1},
it follows that, for any $h\in\rr$,
\begin{align}\label{C111}
|\Phi(ih)|&\leq \int_{\rr^{n}}\sum_{j\in\zz}|T\left[F_{ih}^{(n)}(\psi)\right]
(j,y)G_{ih}^{(n)}(\phi)(j,y)|\,dy\\\nonumber
&\leq \left\|T\left[F_{ih}^{(n)}(\psi)\right]\right\|_{(\dot{E}^{\vec{\alpha}^{(0)},\vec{p}
^{(0)} }_{\vec{q}^{(0)}}(\rr^{n}),l^{\infty})}\left\|G_{ih}^{(n)}
(\phi)\right\|_{(\dot{E}^{-\vec{\alpha}^{(0)},[\vec{p}^{(0)}]' }
_{[\vec{q}^{(0)}]'}(\rr^{n}),l^{1})}\\\nonumber
&\leq M_{0}\left\|F_{ih}^{(n)}(\psi)\right\|_{\nEz}\left\|G_{ih}^{(n)}(\phi)\right\|_{(\dot{E}^{
-\vec{\alpha}^{(0)},[\vec{p}^{(0)}]' }_{[\vec{q}^{(0)}]'}
(\rr^{n}),l^{1})}=M_{0}\nonumber
\end{align}
and, similarly to this estimation, we have,
for any $h\in\rr$,
\begin{equation}\label{C222}
|\Phi(1+ih)|\leq M_{1}.
\end{equation}
By Proposition \ref{simple-H}, \eqref{aux}, and the linearity of $T$, we find
that $\Phi(z)$ has the following expression: for any $z\in\cc$,
\begin{equation}\label{formP}
\Phi(z)=\sum_{u\in J}c_{u}(z)\int_{\rr^{n}}\sum_{j\in\zz}T(\one
_{E_{u}})(j,y)G_{z}^{(n)}(\phi)(j,y)\,dy,
\end{equation}
where $J$ is a finite set, the constant $c_{u}
(z)\neq 0$ for any $u\in J$ is as in \eqref{coff}, and $\{E_{u}\}_{u\in J}\subset
A_{k_{0}}$ for some $k_{0}\in\nn$ are parallelepipeds and
pairwise disjoint. From $\one_{E_{u}}\in H(\rn)$, \eqref{point-0},
Lemma \ref{entire}, $G_{z}^{(n)}(\phi)(j,\cdot)=0$ if $j\in\zz\cap
[B(0,m)]^{\com}$, the H\"{o}lder inequality, and Lemma \ref{mhr}, we deduce that,
for any $u\in J$ and $z\in \{z\in\cc:\ \Re(z)\in [-1,2] \}$,
\begin{align*}
&\left|\int_{\rr^{n}}\sum_{j\in\zz}T(\one_{E_{u}})(j,y)G_{z}^{(n)}
(\phi)(j,y)\,dy \right|\\\nonumber
&\quad\ls
\left\|T(\one_{E_{u}})\right\|_{\mEz}\left\|
\sum_{j\in\zz\cap B(0,m)}\one_{A_{m}}\right\|_{\dot{E}^{-\vec
{\alpha}^{(0)},[\vec{p}^{(0)}]' }_{[\vec{q}^{(0)}]'}
(\rr^{n})}\\\nonumber
&\quad\ls\|\one_{E_{u}}\|_{\nEz}<\infty.\nonumber
\end{align*}
By this, we find that each term of $\Phi(z)$ is bounded on $S:=
\{z\in\cc:\ \Re(z)\in [0,1] \}$. Then we claim that $\Phi(z)$ is
analytic on $S_{0}:=\{ z\in\cc:\ \Re(z)\in (0,1) \}$ and continuous
on $S$. Indeed, since $c(u)(z)\neq 0$, from \eqref{coff}, we deduce
that, for any $k\in\{1,\ldots,n-1 \}$ and $z\in\cc$,
$$c_{k,j_{k}}(z)=b_{k,j_{k}}^
{\widetilde{\mu}_{k+1}(z)-\widetilde{\lambda}_{k}(z)},$$
$$f_{k,l_{k}}^{(i_{k})}(z)=2^{-i_{k}\beta_{k}(z)+i_{k}\beta_{k}\widetilde{\lambda}_{k}(z) }\left[d_{k,l_{k}}^{(i_{k})}\right]^{\widetilde{\lambda}_{k}(z)-
\widetilde{\mu}_{k}(z)},$$
and
$$f_{n}^{(i_{n})}(z)=2^{-i_{n}\beta_{n}
(z)+i_{n}\beta_{n}\widetilde{\lambda}_{n}(z) }
\left[d_{n}^{(i_{n})}\right]^{\widetilde{\lambda}_{n}(z)-\widetilde{\mu}_{n}(z)},$$
where $b_{k,j_{k}}$,
$d_{k,l_{k}}^{(i_{k})}$, and $d_{n}^{(i_{n})}$ are as in
\eqref{3.31x}, \eqref{3.32x}, and \eqref{3.32y}. By this, we conclude that
$c_{u}(z)$ for any $u\in J$ is an entire function on $z\in\cc$
and hence it suffices to prove the analyticity of
$\int_{\rr^{n}}\sum_{j\in\zz}T(\one_{E_{u}})(j,y)G_{z}^{(n)}(\phi)(j,y)\,dy$
on $z\in S_{0}$. From Lemma \ref{entire}, it follows that,
for any given $j\in\zz\cap B(0,m)$ and $y\in \supp(G_{z}^{(n)}(\phi)(j,\cdot))
\subset A_{m}$, $G_{z}^{(n)}(\phi)(j,y)$ is an entire function on $z\in\cc$ and has
the following expansion in power series with center $z_{0}\in S_{0}$
(see, for instance, \cite[p. 207, Theorem 10.16]{Rudin}):
\begin{equation*}
G_{z}^{(n)}(\phi
)(j,y)=\sum_{k=0}^{\infty}a_{k}(j,y)(z-z_{0})^{k},
\end{equation*}
where
\begin{equation*}
a_{k}(j,y)=\f{1}{2\pi i}\int_{|z-z_{0}|=1}\f{G_{z}^{(n)}(\phi)(j,y)}
{(z-z_{0})^{k+1}}\,dz.
\end{equation*}
By this and Lemma \ref{entire}, we conclude that there exists a
positive constant $C$ such that, for any $k\in\zz_{+}$,
$j\in\zz\cap B(0,m)$, and $y\in A_{m}$,
\begin{equation}\label{akj}
|a_{k}(j,y)|\leq \f{C}{2\pi}\int_{|z-z_{0}|=1}\f{1}{|z-z_{0}|
^{k+1}}\,|dz|=C.
\end{equation}
Observe that, for any $j\in\zz\cap [B(0,m)]^{\com}$ and $y\in\rn$,
or for any $j\in\zz$ and $y\in A_{m}^{\com}$,
$a_{k}(j,y)=0$. Using this, \eqref{akj}, \eqref{point-0}, and
the H\"{o}lder inequality,
we conclude that, for any $z,\ z_{0}\in\cc$ satisfying that
$\Re(z_{0})\in(0,1)$ and $|z-z_{0}|<1$,
\begin{align*}
&\sum_{k=0}^{\infty}|z-z_{0}|^{k}\left|\int_{\rr^{n}}\sum_{j\in
\zz}a_{k}(j,y)T(\one_{E_{u}})(j,y)\,dy\right| \\
&\quad\leq
\sum_{k=0}^{\infty}|z-z_{0}|^{k}\int_{\rr^{n}}\sum_{j\in\zz}\left
|a_{k}(j,y)T(\one_{E_{u}})(j,y)\right|\,dy\\
&\quad\ls \sum_{k=0}^{\infty}|z-z_{0}|^{k}
\int_{A_{m}}\sum_{j\in\zz\cap B(0,m)}\left|T(\one_{E_{u}})(j,y)
\right|\,dy\\
&\quad \ls
\|\{T(\one_{E_{u}})(j,\cdot) \}_{j\in\zz}\|_{\dot{E}^{-\vec{
\alpha}^{(0)},[\vec{p}^{(0)}]' }_{[\vec{q}^{(0)}]'}
(\rr^{n})}\ls \|\one_{E_{u}}\|_{\nEz}<\infty.
\end{align*}
Thus, for any $z,\ z_{0}\in\cc$ satisfying that $\Re(z_{0})\in(0,1)$
and $|z-z_{0}|<1$,
\begin{equation}\label{unicov}
\int_{\rr^{n}}\sum_{j\in\zz}T(\one_{E_{u}})(j,y)G_{z}^{(n)}(\phi)(j,y)\,dy
=\sum_{k=0}^{\infty}(z-z_{0})^{k}\int_{\rr^{n}}\sum_{j\in\zz}a_{k}
(j,y)T(\one_{E_{u}})(j,y)\,dy
\end{equation}
is an absolutely and inner closed uniformly convergent series of
analytic functions on $\{z\in\cc:\ |z-z_{0}|<1 \}$, which,
combined with \cite[p.\,53,\ Theorem 5.2]{ss03} and the arbitrariness
of $z_{0}\in S_{0}$, \eqref{formP}, further implies that $\Phi(z)$
is analytic on $S_{0}$. Similarly, for any $z,\ z_{0}\in\cc$
satisfying that $\Re(z_{0})\in[0,1]$ and $|z-z_{0}|<1$,
\eqref{unicov} is also an absolutely and inner closed
uniformly convergent
series of continuous functions and hence $\Phi(z)$ is
continuous on $S$. Thus, the above last claim holds true.
By this last claim, \eqref{C111}, \eqref{C222}, the
boundedness of $\Phi(z)$ on $S$, Proposition \ref{ae=0},
and \cite[p.\,180,\ Lemma 1.4]{SW}, we conclude that
\begin{equation*}
|\Phi(\theta)|=\left|\int_{A_{m}}\sum_{j\in\zz\cap B(0,m)}
T(\psi)(j,y)\phi(j,y)\,dy \right|\leq M_{0}^{1-\theta}
M_{1}^{\theta},
\end{equation*}
which further implies that \eqref{bth} holds true. Thus, for any
$f\in H(\rn)$, \eqref{000} holds true.
This finishes the proof of Theorem \ref{threeL}.
\end{proof}

\section{Boundedness of Maximal Operators on Mixed-Norm Herz Spaces}\label{s4}

In this section, we obtain the boundedness of the Hardy--Littlewood
maximal operator on mixed-norm Herz spaces $\iihz(\rn)$; see Corollary
\ref{HL-2} below. As a corollary,
we establish the Fefferman--Stein
vector-valued maximal inequality on $\iihz(\rn)$.

To study the boundedness of $M_{n}$ on $\iihz(\rn)$, we need the
following conclusions on dense subsets of mixed-norm Herz spaces.
\begin{lemma}\label{Cc-E}
Let $\vec{p}$, $\vec{q}\in(0,\infty)^{n}$ and
$\vec{\alpha}\in\rn$. Then $C_{\mathrm{c}}(\rn)\cap \iihz(\rn)$ is dense
in $\iihz(\rn)$.
\end{lemma}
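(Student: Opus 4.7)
The plan is to approximate $f\in\iihz(\rn)$ in three successive steps: truncate the support, truncate the height, and smooth by mollification. Given $\delta>0$, the goal is to produce $h\in C_{\mathrm{c}}(\rn)\cap\iihz(\rn)$ with $\|f-h\|_{\iihz(\rn)}<\delta$.

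First I would truncate the support by setting $f_m:=f\one_{A_m}$ with $A_m$ as in Lemma \ref{Am}. Since the sets $A_m$ increase to $\rn\setminus\bigcup_{i=1}^{n}\{x:x_i=0\}$, which is a set of full measure, the complements satisfy $\one_{A_m^{\com}}\to 0$ almost everywhere as $m\to\infty$. Proposition \ref{ab-2} (whose hypothesis $\vec p,\vec q\in(0,\infty)^n$ matches ours) then gives $\|f-f_m\|_{\iihz(\rn)}=\|f\one_{A_m^{\com}}\|_{\iihz(\rn)}\to 0$, so pick $m$ so large that this is below $\delta/3$. Next I would truncate the height: set
\begin{equation*}
f_{m,M}(x):=f_m(x)\cdot\min\{1,\,M/|f_m(x)|\}
\end{equation*}
(interpreted as $0$ when $f_m(x)=0$). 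Then $\|f_{m,M}\|_{L^{\infty}(\rn)}\leq M$, $\supp(f_{m,M})\subset A_m$, $|f_m-f_{m,M}|\leq|f_m|\in\iihz(\rn)$, and $f_{m,M}\to f_m$ pointwise as $M\to\infty$, so Lemma \ref{donn} yields $\|f_m-f_{m,M}\|_{\iihz(\rn)}\to 0$. Choose $M$ large enough to make this error below $\delta/3$.

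Finally I would mollify. Pick a non-negative $\phi\in C_{\mathrm{c}}^{\infty}(\rn)$ with $\supp\phi\subset B(\mathbf{0},1)$ and $\int\phi=1$, set $\phi_\epsilon(x):=\epsilon^{-n}\phi(x/\epsilon)$, and let $g_\epsilon:=f_{m,M}*\phi_\epsilon$. Then $g_\epsilon\in C_{\mathrm{c}}^{\infty}(\rn)$ and $\|g_\epsilon\|_{L^{\infty}(\rn)}\leq M$. For $\epsilon\leq 2^{-m-1}$, a direct coordinate check shows $\supp(g_\epsilon)\subset\overline{A_m}+B(\mathbf{0},\epsilon)\subset A_{m+2}$, so by Lemma \ref{Am} combined with Proposition \ref{E-lat} we obtain $g_\epsilon\in\iihz(\rn)\cap C_{\mathrm{c}}(\rn)$. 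Standard properties of mollifiers give $g_\epsilon\to f_{m,M}$ almost everywhere, while $|g_\epsilon-f_{m,M}|\leq 2M\one_{A_{m+2}}\in\iihz(\rn)$, so Lemma \ref{donn} yields $\|g_\epsilon-f_{m,M}\|_{\iihz(\rn)}\to 0$ as $\epsilon\to 0^{+}$. Taking $\epsilon$ small enough and combining the three estimates gives the desired approximation.

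The main obstacle, and the reason the argument is not identical to the classical Lebesgue case, is that $\iihz(\rn)$ need not contain every bounded compactly supported function: by Proposition \ref{bqb-2} the space fails to include $\one_{B(\mathbf{0},1)}$ whenever some $\alpha_i\leq-1/q_i$, so generic $C_{\mathrm{c}}(\rn)$ functions that do not vanish near the coordinate hyperplanes can lie outside $\iihz(\rn)$. Consequently the approximants must vanish in a neighborhood of the axes, and this is precisely why the truncation is performed through the specific sets $A_m$ (which stay a definite distance from the hyperplanes $\{x_i=0\}$) and why the mollification scale $\epsilon$ must be chosen small compared to $2^{-m}$; the bookkeeping of support containment $\overline{A_m}+B(\mathbf{0},\epsilon)\subset A_{m+2}$ is the one nontrivial geometric point in an otherwise routine three-step approximation.
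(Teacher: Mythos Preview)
Your proof is correct. The geometric containment $\overline{A_m}+B(\mathbf{0},\epsilon)\subset A_{m+2}$ for $\epsilon\le 2^{-m-1}$ checks out coordinatewise, and the applications of Lemma~\ref{donn} and Proposition~\ref{ab-2} are legitimate since all dominating functions are multiples of $\one_{A_{m+2}}$ or of $|f|$.

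Your route differs from the paper's. The paper first approximates $f\one_{A_m}$ by a simple function $g=\sum_k\lambda_k\one_{E_k}$ supported in $A_m$, then uses inner regularity to replace each $E_k$ by a compact $F_k$, outer regularity to get bounded open $U_k\supset F_k$, and finally Urysohn's lemma to produce $f_0\in C_{\mathrm c}(\rn)$ sandwiched between $\sum\lambda_k\one_{F_k}$ and $\sum\lambda_k\one_{U_k}$; all errors are controlled via the lattice property and Lemma~\ref{donn}. Your argument bypasses the simple-function and Urysohn steps entirely, replacing them with height truncation plus mollification. This is slightly cleaner (no regularity-of-measure arguments) and yields the stronger conclusion that $C_{\mathrm c}^\infty(\rn)\cap\iihz(\rn)$ is dense. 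The paper's approach, on the other hand, stays closer to the measure-theoretic machinery already developed and does not need the Lebesgue-point a.e.\ convergence of mollifiers. Either method hinges on exactly the observation you isolate: approximants must be supported inside some $A_m$, not merely compactly supported, because $\one_{B(\mathbf{0},1)}\notin\iihz(\rn)$ when some $\alpha_i\le -1/q_i$.
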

\begin{proof}
Let all the symbols be as in the present lemma. Without loss of
generality, we may assume that $f\in\iihz(\rn)$ is a non-negative measurable
function on $\rn$. Then, for any given $m\in\nn$, there exists
an increasing sequence of non-negative simple functions supported in $A_{m}$,
$\{f_{j} \}_{j\in\nn}$, which converges pointwise to
$f\one_{A_{m}}$ as $j\to\infty$ and $0\leq f_{j}\leq f\one_{A_{m}}$
for any $j\in\nn$. By this,
$|f_{j}-f\one_{A_{m}}|\leq 2|f|\in \iihz(\rn)$, and Lemma
\ref{donn}, we conclude that, for any given $\epsilon\in(0,\infty)$,
there exists a
simple function $g:=\sum_{k=1}^{N}\lambda_{k}\one_{E_{k}}\in
\{f_{j} \}_{j\in\nn}$ such that
\begin{equation*}
\|f\one_{A_{m}}-g\|_{\iihz(\rn)}<\epsilon,
\end{equation*}
where, for any $k\in\{1,\ldots,N \}$, $\lambda_{k}$ is a
positive constant and $E_{k}\subset A_{m}$ is a measurable set.
Moreover, for any $k\in\{1,\ldots,N \}$, via the
inner regularity of the Lebesgue measure, we
obtain a sequence of compact sets, $\{F_{k,j} \}_{j\in\nn}\subset
E_{k}$, such that $\one_{F_{k,j}}$ converges pointwise to $\one_
{E_{k}}$ as $j\to \infty$.
From this, $|\one_{F_{k,j}}-\one_{E_{k}}|\leq 2\one_{E_{k}}\ls
\one_{A_{m}}\in \iihz(\rn)$, and Lemma \ref{donn} again, we
deduce that there exists a
simple function $h:=\sum_{k=1}^{N} \lambda_{k}\one_{F_{k}}$
such that
\begin{equation*}
\|g-h\|_{\iihz(\rn)}\leq \epsilon,
\end{equation*}
where, for any $k\in\{1,\ldots,N \}$, $F_{k}\subset E_{k}$ is
a compact set, which, combined with the outer regularity of
the Lebesgue measure and Lemma \ref{donn}, further implies
that there exists a simple function
$u:=\sum_{k=1}^{N}\lambda_{k}\one_{U_{k}}$ such that
\begin{equation*}
\|u-h\|_{\iihz(\rn)}<\epsilon,
\end{equation*}
where, for any $k\in \{1,\ldots,N \}$, $U_{k}\supset F_{k}$
is a bounded open set. Then, applying the Urysohn lemma,
we obtain $f_{0}\in C_{\mathrm{c}}(\rn)$ satisfying $0\leq f_{0}-h\leq
u-h$, which, together with
Proposition \ref{E-lat}, further implies that
\begin{equation*}
\|f_{0}-h\|_{\iihz(\rn)}\leq \|u-h\|_{\iihz(\rn)}<\epsilon.
\end{equation*}
Furthermore,
$$\|f_{0}\|_{\iihz(\rn)}\ls \|f_{0}-h\|_{\iihz
(\rn)}+\|h\|_{\iihz(\rn)}<\infty$$
and hence $f_{0}\in C_{\mathrm{c}}
(\rn)\cap \iihz(\rn)$. By the above estimates, we conclude that
$$\|f_{0}-f\one_{A_{m}}\|_{\iihz(\rn)}\ls\epsilon.$$
Using Proposition \ref{ab-2},
we find that there exists an $N\in\nn$ such that, for any
$m>N$,
$$\|f-f\one_{A_{m}}\|_{\iihz(\rn)}<\epsilon.$$
From this and the above estimates, it follows that there exists
an $f_{0}\in C_{\mathrm{c}}(\rn)\cap\iihz(\rn)$ such that
$$\|f-f_{0}\|_{\iihz(\rn)}\ls \|f-f\one_{A_{m}}\|_{\iihz
(\rn)}+\|f\one_{A_{m}}-f_{0}\|_{\iihz(\rn)}\ls\epsilon.$$
This finishes the proof of Lemma \ref{Cc-E}.
\end{proof}
\begin{proposition}\label{E-dense}
Let $\vec{p}$, $\vec{q}\in(0,\infty)^{n}$
and $\vec{\alpha}\in\rn$. Then $H(\rn)$ is dense in
$\iihz(\rn)$.
\end{proposition}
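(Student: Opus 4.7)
The plan is to carry out the approximation in three successive reductions, all of which are controlled using the tools developed earlier in Section \ref{s2}.

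First, given $f\in\iihz(\rn)$ and $\epsilon\in(0,\infty)$, I would apply Lemma \ref{Cc-E} to produce $f_{0}\in C_{\mathrm{c}}(\rn)\cap\iihz(\rn)$ with $\|f-f_{0}\|_{\iihz(\rn)}<\epsilon/3$. This replaces the arbitrary measurable $f$ with a continuous function of compact support, which is the natural setting for a pointwise grid approximation.

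Second, I would truncate $f_{0}$ to lie on $A_{m}$ for some large $m\in\nn$. Since $\{A_{m}\}_{m\in\nn}$ is increasing with $\bigcup_{m\in\nn}A_{m}=\rn\setminus\bigcup_{i=1}^{n}\{x\in\rn:\ x_{i}=0\}$, the complementary sequence satisfies $\one_{A_{m}^{\com}}\to 0$ almost everywhere as $m\to\infty$. Thus, Proposition \ref{ab-2}, applied to $f_{0}\in\iihz(\rn)$, provides an $m\in\nn$ such that $\|f_{0}-f_{0}\one_{A_{m}}\|_{\iihz(\rn)}<\epsilon/3$.

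Third, I would approximate $f_{0}\one_{A_{m}}$ in $\iihz(\rn)$ by an element of $H(\rn)$ via a grid construction. Note that $A_{m}=I_{m}^{n}$, where $I_{m}=(-2^{m},-2^{-m}]\cup[2^{-m},2^{m})$ is a bounded union of two intervals. For any given $\delta\in(0,\infty)$, partition each $I_{m}$ into a finite collection $\{J_{l}\}_{l=1}^{L}$ of pairwise disjoint intervals (respecting the split at $0$), each of length at most $\delta$. Taking Cartesian products gives a partition of $A_{m}$ into a finite collection of pairwise disjoint parallelepipeds $\{E_{\vec{l}}\}_{\vec{l}}$ with $E_{\vec{l}}\subset A_{m}$ and each $E_{\vec{l}}$ of diameter at most $\delta\sqrt{n}$. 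On each $E_{\vec{l}}$, pick a point $x_{\vec{l}}$ and define
$$g:=\sum_{\vec{l}}f_{0}(x_{\vec{l}})\one_{E_{\vec{l}}}.$$
Then $g\in H(\rn)$ by construction. Since $f_{0}$ is uniformly continuous on $\rn$, its modulus of continuity $\omega_{f_{0}}$ satisfies $\omega_{f_{0}}(\delta\sqrt{n})\to 0$ as $\delta\to 0^{+}$, and pointwise
$$|g(x)-f_{0}(x)\one_{A_{m}}(x)|\leq \omega_{f_{0}}(\delta\sqrt{n})\one_{A_{m}}(x).$$
Applying Proposition \ref{E-lat} together with $\one_{A_{m}}\in\iihz(\rn)$ from Lemma \ref{Am} yields
$$\|g-f_{0}\one_{A_{m}}\|_{\iihz(\rn)}\leq \omega_{f_{0}}(\delta\sqrt{n})\,\|\one_{A_{m}}\|_{\iihz(\rn)}.$$
Choosing $\delta$ small enough makes the right-hand side smaller than $\epsilon/3$, and the triangle inequality gives $\|f-g\|_{\iihz(\rn)}<\epsilon$.

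There is no real obstacle in this plan: the three reductions are routine once the ingredients from Section \ref{s2} are in place. The only subtlety worth flagging is that $A_{m}$ is a disjoint union of $2^{n}$ parallelepipeds rather than a single one, so the partition in step three must split at $0$ in each coordinate; apart from this bookkeeping detail, the grid approximation proceeds as for a standard cube.
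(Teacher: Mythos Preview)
Your proof is correct and follows essentially the same three-step reduction as the paper: pass to $C_{\mathrm{c}}$ via Lemma \ref{Cc-E}, truncate to $A_{m}$ via Proposition \ref{ab-2}, then grid-approximate on $A_{m}$ using uniform continuity. The only cosmetic differences are the order of the first two steps and that you sample at points and bound the error by $\omega_{f_{0}}(\delta\sqrt{n})\one_{A_{m}}$ via Proposition \ref{E-lat}, whereas the paper takes cellwise minima and invokes Lemma \ref{donn}; note also that, since $\vec{p},\vec{q}$ may have components below $1$, the final step uses a quasi-triangle inequality (as the paper does with $\ls$) rather than a strict one, but this is harmless.
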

\begin{proof}
Let all the symbols be as in the present proposition.
Let $f_{m}\in C_{\mathrm{c}}(\rn)$ be a non-negative measurable
function with $\supp(f_{m})\subset A_{m}$
for some $m\in\nn$. By this, we obtain $f_{m}\in C_{\mathrm{c}}
(\rn)\cap \iihz(\rn)$. Obviously, there exists a
sequence of $\{E_{j} \}_{j=1}^{N}$ satisfying that,
for any $j\in\{1,\ldots,N \}$, $E_{j}$ is as in \eqref{pppeds}
and $\{E_{j} \}_{j=1}^{N}$ are pairwise disjoint such that
\begin{equation*}
A_{m}=\bigcup_{j=1}^{N}E_{j}.
\end{equation*}
Then $\{E_{j} \}_{j=1}^{N}$ is called a \emph{partition} of $A_{m}$,
denoted by $P_{m}$.  Let
$$\lambda(P_{m}):=\max_{1\leq j\leq N}
\dist(E_{j}),$$
where
$$\dist(E_{j}):=\sup\{|x-y|:\ x,\
y\in E_{j} \}.$$
For any $j\in\{1,\ldots,N \}$, let
$$f_{m,j}:=\min\left\{f_{m}(x):\ x\in \overline{E}_{j}\right\}\geq 0.$$
Then, for any given partition $P_{m}$, let
\begin{equation*}
g(f_{m},P_{m}):=\sum_{j=1}^{N}f_{m,j}\one_{E_{j}}.
\end{equation*}
Thus, $g(f_{m},P_{m})\in H(\rn)$. By this and the uniformly continuity of $f_{m}$, we
conclude that, for any $\epsilon\in(0,\infty)$, there
exist a $\delta\in(0,\infty)$ and a partition $P_{m}$
satisfying $\lambda(P_{m})<\delta$ such that, for any
$x\in\rn$,
\begin{equation*}
|f_{m}(x)-g(f_{m},P_{m})(x)|<\epsilon.
\end{equation*}
Letting $\lambda(P_{m})\to 0^{+}$, we conclude that
$|f_{m}-g(f_{m},P_{m})|\to 0$ almost everywhere
on $\rn$. Observe that, for any partition $P_{m}$,
$0\leq g(f_{m},P_{m})\leq f_{m}$ and hence
$$|f_{m}-g(f_{m},
P_{m})|\leq 2|f_{m}|\in\iihz(\rn).$$
This, together with
Lemma \ref{donn}, further implies that
\begin{equation}\label{fgm}
\lim_{\lambda(P_{m})\to 0^{+}}\|f_{m}-g(f_{m},P_{m})\|_
{\iihz(\rn)}=0.
\end{equation}
Now, without loss of generality, we may assume that
$h\in\iihz(\rn)$ is a non-negative measurable function
and, for any $m\in\nn$, let $h_{m}:=h\one_{A_{m}}$.
By Proposition \ref{ab-2} and $h\in\iihz(\rn)$, we find
that, for any $\epsilon\in(0,\infty)$, there exists
an $N\in\nn$ such that, for any $m> N$,
$$\|h-h_{m}\|_{\iihz(\rn)}<\epsilon/3.$$
From Lemma \ref{Cc-E}, it follows that, for any given $m>N$,
there exists a non-negative function $f_{m}\in C_{\mathrm{c}}(\rn)
\cap \iihz(\rn)$ with
$\supp(f_{m})\subset A_{m}$ such that $\|h_{m}-f_{m}
\|_{\iihz(\rn)}<\epsilon/3$. Via this $f_{m}$ and
\eqref{fgm}, we conclude that there exists a function
$g_{m}\in H(\rn)$ such that $\|f_{m}-g_{m}\|_{\iihz(\rn)}
<\epsilon/3$. From above arguments, we deduce that,
for any $\epsilon\in(0,\infty)$, there exists an $N\in\nn$ such
that, for any $m>N$, we obtain a function $g_{m}
\in H(\rn)$ satisfying
\begin{align*}
\|h-g_{m}\|_{\iihz(\rn)}&\ls
\|h-h_{m}\|_{\iihz(\rn)}+
\|h_{m}-f_{m}\|_{\iihz(\rn)}+
\|f_{m}-g_{m}\|_{\iihz(\rn)}\\
&\ls \f{\epsilon}{3}+\f{\epsilon}{3}+\f{\epsilon}{3}\sim\epsilon.
\end{align*}
This finishes the proof of Proposition \ref{E-dense}.
\end{proof}
The following dual inequality of Stein type can be founded,
for instance, in \cite[p.\,111,\ Lemma 1]{FS}.
\begin{lemma}\label{stn}
For any $r\in (1,\infty)$, there exists a positive
constant $C$, depending only on $r$, such that, for any
$f\in \MM(\rn)$ and any non-negative measurable function $\varphi$ on
$\rn$,
\begin{equation}
\int_{\rn}[Mf(x)]^{r}\varphi(x)\,dx\leq
C\int_{\rn}|f(x)|^{r}M\varphi(x)\,dx.
\end{equation}
\end{lemma}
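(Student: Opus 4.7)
The plan is to interpret the desired inequality as the $L^{r}$ boundedness of the sublinear operator $M$ from the weighted measure space $(\rn,M\varphi\,dx)$ into $(\rn,\varphi\,dx)$, and to derive this via Marcinkiewicz interpolation between two endpoint estimates. Specifically, I will first establish the weighted weak type $(1,1)$ bound
\[
\int_{\{x\in\rn:\ Mf(x)>\lambda\}}\varphi(x)\,dx
\leq \frac{C}{\lambda}\int_{\rn}|f(x)|M\varphi(x)\,dx
\]
for every $\lambda\in(0,\infty)$, together with the trivial endpoint $\|Mf\|_{L^{\infty}(\varphi\,dx)}\leq \|f\|_{L^{\infty}(M\varphi\,dx)}$; the Marcinkiewicz interpolation theorem (applicable because both $\varphi\,dx$ and $M\varphi\,dx$ are $\sigma$-finite Borel measures on $\rn$) then yields the claim for every intermediate $r\in(1,\infty)$ with a constant depending only on $r$.

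The key technical step is the weighted weak type $(1,1)$ estimate. Fix $\lambda\in(0,\infty)$ and, for each $x\in E_{\lambda}:=\{y\in\rn:\ Mf(y)>\lambda\}$, select via \eqref{Max} a ball $B_{x}\ni x$ satisfying $|B_{x}|^{-1}\int_{B_{x}}|f(y)|\,dy>\lambda$. A standard Vitali covering argument produces a countable pairwise disjoint subfamily $\{B_{i}\}_{i}$ of $\{B_{x}\}_{x\in E_{\lambda}}$ such that $E_{\lambda}\subset \bigcup_{i}5B_{i}$. The essential pointwise observation is that every $y\in 5B_{i}$ satisfies $M\varphi(y)\geq |5B_{i}|^{-1}\int_{5B_{i}}\varphi$, which yields
\[
\varphi(5B_{i})\leq |5B_{i}|\inf_{y\in B_{i}}M\varphi(y)
\leq 5^{n}|B_{i}|\inf_{y\in B_{i}}M\varphi(y).
\]
Combining this with $\lambda|B_{i}|<\int_{B_{i}}|f|$ and the trivial inequality $\inf_{B_{i}}M\varphi\cdot \int_{B_{i}}|f|\leq \int_{B_{i}}|f(y)|M\varphi(y)\,dy$ gives $\varphi(5B_{i})\leq 5^{n}\lambda^{-1}\int_{B_{i}}|f(y)|M\varphi(y)\,dy$, and summing over the disjoint family $\{B_{i}\}$ produces the weighted weak type $(1,1)$ bound with $C=5^{n}$.

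The $L^{\infty}$ endpoint is immediate: if $\|f\|_{L^{\infty}(M\varphi\,dx)}\leq A$, then $|f|\leq A$ almost everywhere on $\{M\varphi>0\}$, which coincides with $\rn$ modulo a null set whenever $\varphi$ is not almost everywhere zero; hence $M|f|\leq A$ pointwise, and so $\|Mf\|_{L^{\infty}(\varphi\,dx)}\leq A$. With both endpoints in hand, Marcinkiewicz interpolation on the $\sigma$-finite weighted spaces $(\rn,\varphi\,dx)$ and $(\rn,M\varphi\,dx)$ yields, for every $r\in(1,\infty)$, a constant $C_{(r)}$ such that $\int_{\rn}[Mf(x)]^{r}\varphi(x)\,dx\leq C_{(r)}\int_{\rn}|f(x)|^{r}M\varphi(x)\,dx$, which is the desired inequality.

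The main obstacle is the weighted weak type $(1,1)$ bound, whose proof hinges on the geometric observation $\varphi(5B_{i})\leq 5^{n}|B_{i}|\inf_{B_{i}}M\varphi$ combined with the Vitali extraction; once this pointwise-to-covering bridge is in place, the remainder is a routine invocation of the Marcinkiewicz interpolation theorem, and there are no further nontrivial inputs.
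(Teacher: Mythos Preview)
The paper does not give its own proof of this lemma; it simply records the inequality and cites Fefferman--Stein \cite[p.\,111, Lemma 1]{FS}. Your proposal is precisely the classical argument used there: establish the weighted weak type $(1,1)$ bound via a Vitali-type covering of the level set $\{Mf>\lambda\}$ together with the elementary inequality $\varphi(5B_i)\lesssim |B_i|\inf_{B_i}M\varphi$, pair it with the trivial $L^\infty$ endpoint, and interpolate by Marcinkiewicz. So the approach matches the cited source.

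Two small technical remarks are worth noting. First, since $M$ in \eqref{Max} is the \emph{centered} maximal operator, the bound $M\varphi(y)\geq |5B_i|^{-1}\varphi(5B_i)$ for $y\in B_i$ holds only up to a dimensional constant (one must enlarge $5B_i$ to a ball centered at $y$); this changes the constant from $5^n$ to something like $6^n$ or $10^n$ but is harmless. Second, the Vitali extraction in the form you invoke requires the radii of the selected balls to be locally bounded; the clean way to handle this is to intersect $E_\lambda$ with $B(\mathbf{0},R)$ first, obtain the estimate there, and let $R\to\infty$. Neither point is a genuine gap---the argument is correct and is exactly the standard proof the paper defers to.
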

We have the following generalization of
\cite[p.\,419,\ Theorem 2]{bag},
which plays a key role in the present section.
\begin{theorem}\label{exine}
Let $n\in\nn$, $\vec{p}_{n}:=(p_{1},\ldots,p_{n})$, $\vec{q}_{n}:=(q_{1},\ldots,q_{n})\in(1,\infty)^{n}$, and
$\vec{\alpha}_{n}:=(\alpha_{1},\ldots,\alpha_{n})$ with $\alpha_{i}\in(-\f{1}{q_{i}},
1-\f{1}{q_{i}})$ for any $i\in\{1,\ldots,n\}$. Then
there exists a positive constant $C$
such that, for any $f\in \MM(\rn),$
\begin{equation}\label{ext}
\|M_{n}(f)\|_{\dot{E}_{\vec{q}_{n}}^{\vec{\alpha}_{n},\vec{p}
_{n}}(\rn)}\leq C \|f\|_{\dot{E}_{\vec{q}_{n}}^{\vec{
\alpha}_{n},\vec{p}_{n}}(\rn)},
\end{equation}
where $M_{n}$ is as in \eqref{Mk}.
\end{theorem}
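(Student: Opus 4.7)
The plan is to combine the Riesz--Thorin interpolation theorem established in Section \ref{s3} (Theorem \ref{threeL}) with the density of the simple-function class $H(\rn)$ in $\iihz(\rn)$ (Proposition \ref{E-dense}). Since $M_{n}$ is sublinear and Theorem \ref{threeL} is stated for linear operators, the first step is to realize $M_{n}$ as the pointwise $l^{\infty}$-norm of a family of linear averaging operators.

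More precisely, for each $j\in\zz$ and $x\in\rn$, set
\[ T_{j}f(x):=\frac{1}{2^{j+1}}\int_{x_{n}-2^{j}}^{x_{n}+2^{j}}f(x_{1},\ldots,x_{n-1},y)\,dy, \]
so that, up to a dimensional constant coming from the comparison of the centered maximal function with its dyadic counterpart, $M_{n}(|f|)(x)$ is comparable to $\|\{T_{j}(|f|)(x)\}_{j\in\zz}\|_{l^{\infty}}$. Bounding $M_{n}$ on $\iihz(\rn)$ is thus equivalent to bounding the linear operator $T:f\mapsto\{T_{j}f\}_{j\in\zz}$ from $\iihz(\rn)$ into $\mE$ on $H(\rn)$. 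I would then choose two endpoint parameter triples $(\vec{\alpha}^{(j)},\vec{p}^{(j)},\vec{q}^{(j)})$, $j\in\{0,1\}$, inside the admissible strip $\alpha_{i}^{(j)}\in(-1/q_{i}^{(j)},1-1/q_{i}^{(j)})$ with $\vec{p}^{(j)},\vec{q}^{(j)}\in(1,\infty)^{n}$, such that the given triple $(\vec{\alpha}_{n},\vec{p}_{n},\vec{q}_{n})$ is a convex combination at some $\theta\in(0,1)$. At each endpoint, boundedness of $T$ from $\nEz$ (respectively $\nEo$) into $\mEz$ (respectively $\mEo$) is verified by testing against the associate space $(\diihz(\rn),l^{1})$ via Lemma \ref{lherz-and} and Corollary \ref{sppAm}; this reduces the problem to a bilinear estimate to which Stein's dual inequality (Lemma \ref{stn}) applies in the $n$-th variable, while the first $n-1$ variables are handled by Minkowski's integral inequality in the ball Banach function space $\dot{E}^{\vec{\alpha}^{(j)}_{n-1},\vec{p}^{(j)}_{n-1}}_{\vec{q}^{(j)}_{n-1}}(\rr^{n-1})$ (a Banach function space by Proposition \ref{bqb-2}(ii)).

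With the two endpoint estimates in hand, Theorem \ref{threeL} produces the bound for $T$ on $H(\rn)$ with constant $M_{0}^{1-\theta}M_{1}^{\theta}$; taking pointwise $l^{\infty}$-norms then yields \eqref{ext} for $f\in H(\rn)$. To extend to arbitrary $f\in\iihz(\rn)$, I would use Proposition \ref{E-dense} to construct a monotone approximation $\{f_{k}\}_{k\in\nn}\subset H(\rn)$ with $|f_{k}|\uparrow|f|$, invoke the pointwise monotonicity $M_{n}(|f_{k}|)\uparrow M_{n}(|f|)$, and conclude by the Fatou property (Proposition \ref{ballqfs-3}). The main obstacle is the second step, namely the uniform-in-$j$ verification of the two endpoint estimates; the scaling of $T_{j}$ interacts nontrivially with the dyadic annuli $R_{k_{n}}$ defining the outer Herz norm, and producing a bound that does not blow up with $|j|\to\infty$ relies crucially on the duality in Lemma \ref{lherz-and} together with the involution $\iihz(\rn)\leftrightarrow\diinhz$ provided by Corollary \ref{Eass}, which allows handling both endpoints by essentially one Stein-type argument combined with the classical one-dimensional Herz-space boundedness of $M$ valid within the strip.
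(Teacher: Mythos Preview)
Your proposal has a genuine circularity problem in the choice of endpoints. You suggest picking two triples $(\vec{\alpha}^{(j)},\vec{p}^{(j)},\vec{q}^{(j)})$, $j\in\{0,1\}$, both lying \emph{inside} the open admissible region $\vec{p}^{(j)},\vec{q}^{(j)}\in(1,\infty)^{n}$, $\alpha_{i}^{(j)}\in(-1/q_{i}^{(j)},1-1/q_{i}^{(j)})$, and then interpolating. But if you could verify the bound at any single interior point by the method you describe (duality plus Stein's inequality plus Minkowski), you would be done without interpolation; the interpolation step would be superfluous. In fact your sketch of the endpoint argument (``Stein's dual inequality in the $n$-th variable, Minkowski's integral inequality in the first $n-1$ variables'') does not go through at a generic interior point: after Lemma \ref{stn} shifts the maximal operator onto the dual side, one still has to control $\|\{M(\phi_{k_{1}})\}_{k_{1}}\|$ in a vector-valued norm on the associate space, and this requires a Fefferman--Stein inequality on a \emph{lower-dimensional} mixed-norm Herz space---which is precisely the statement you are trying to prove.

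The paper resolves this by induction on $n$ and by placing one endpoint \emph{outside} the open region. For $n=2$ (and analogously for the inductive step) the two endpoints are chosen with $q_{1}^{(0)}=\infty$ and with $q_{1}^{(1)}$ small enough that $p_{1}/q_{1}^{(1)},p_{2}/q_{1}^{(1)},q_{2}/q_{1}^{(1)}>1$; only the first coordinate $(q_{1},\alpha_{1})$ is moved, the remaining parameters are held fixed. At $q_{1}^{(0)}=\infty$ there is a trivial pointwise bound $\|M_{n}(f)(\cdot,x_{2},\ldots,x_{n})\one_{R_{k_{1}}}\|_{L^{\infty}}\lesssim M_{n-1}(g_{k_{1},\infty})$, reducing to the $(n-1)$-dimensional case; at small $q_{1}^{(1)}$ the convexification makes the associate space a genuine ball Banach function space on which the \emph{induction hypothesis} supplies the needed boundedness of $M_{n-1}$, so that Lemma \ref{ax-Fs} closes the Stein-duality argument. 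Only then does Theorem \ref{threeL} bridge the two regimes. Your proposal is missing both the induction on $n$ and the asymmetric choice of endpoints, and without them the endpoint verification you outline cannot be carried out.
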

\begin{proof}
Let all the symbols be as in the present theorem. We
perform induction on $n$.
If $n:=1$, then the desired
inequality becomes
\begin{equation*}
\|M_{1}(f)\|_{\ky(\rr)}\ls \|f\|_{\ky(\rr)},
\end{equation*}
which is obtained via the boundedness of the
Hardy--Littlewood maximal
operator on $\ky(\rr)$ (see,
for instance, \cite[p.\,488,\ Corollary 2.1]{ly96} or
\cite[p.\,131, Theorem 5.1.1 and Remark 5.1.3]{HZ}).
If $n:=2$,
then, we aim to show that, for any $\vec{p}_{2},$ $\vec{q}_{2}
\in(1,\infty)^{2}$ and $\alpha_{i}\in(-\f{1}{q_{i}},1-\f{1}
{q_{i}})$ with $i\in\{1,2 \}$,
\begin{equation}\label{m2-G}
\|M_{2}(f)\|_{\dot{E}_{\vec{q}_{2}}^{\vec{\alpha}_{2},\vec{p}_{2}}
(\rr^{2})}\ls \|f\|_{\dot{E}_{\vec{q}_{2}}^{\vec{\alpha}_{2},
\vec{p}_{2}}(\rr^{2})}.
\end{equation}
To this end, for any given $q_{1}\in[1,\infty]$ and $k_{1}\in\zz$,
and for any $x_{2}\in\rr$, let
$$g_{k_{1},q_{1}}(x_{2}):=\|f(\cdot,x_{2})\one_{R_{k_{1}}}
(\cdot)\|_{L^{q_{1}}(\rr)}$$
and
$$G_{k_{1},q_{1}}(x_{2}):=
\|M_{2}(f)(\cdot,x_{2})\one_{R_{k_{1}}}(\cdot)\|_{L^{q_{1}}
(\rr)}.$$
For any given $p_{1},$ $p_{2},$ $q_{2}\in
(1,\infty)$ and $\alpha_{2}\in(-\f{1}{q_{2}},1-\f{1}{q_{2}})$,
we consider the  following two cases on both $q_{1}$ and
$\alpha_{1}$.
If $q_{1}=\infty$ and $\alpha_{1}\in(0,1)$,
then, for any  $k_{1}\in\zz$ and $x_{2}\in\rr,$ and for almost every $x_{1}\in\rr$,
we have
\begin{equation*}
f(x_{1},x_{2})\one_{R_{k_{1}}}(x_{1})\leq
\|f(\cdot,x_{2})\one_{R_{k_{1}}}(\cdot)\|_{L^{\infty}(\rr)},
\end{equation*}
which further implies that
\begin{equation*}
M_{2}(f)(x_{1},x_{2})\one_{R_{k_{1}}}(x_{1})\ls
M(g_{k_{1},\infty})(x_{2}),
\end{equation*}
where $M$ is as in \eqref{Max}.
Thus, for any  $k_{1}\in\zz$ and $x_{2}\in\rr$,
\begin{equation}\label{kl-A}
G_{k_{1},\infty}(x_{2}):=\|M_{2}(f)(\cdot,x_{2})\one_{R_
{k_{1}}}(\cdot)\|_{L^{\infty}(\rr)}\ls
M(g_{k_{1},\infty})(x_{2}).
\end{equation}
By this, Definition \ref{chz}, Proposition \ref{E-lat},
and \cite[p.\,483,\ Corollary 4.5]{Ifs}, we conclude that
\begin{align*}
&\left\|\|M_{2}(f)\|_{\ky(\rr)} \right\|_{\ke(\rr)}\\
&\quad=
\left\|\left(\sum_{k_{1}\in\zz}2^{k_{1}p_{1}\alpha_{1}}
G_{k_{1},\infty}^{p_{1}}\right)^{\f{1}{p_{1}}}
\right\|_{\ke(\rr)}\\
&\quad\ls
\left\|\left\{\sum_{k_{1}\in\zz} [M(2^{k_{1}\alpha_{1}}
g_{k_{1},\infty})]^{p_{1}}\right\}^{1/p_{1}}
\right\|_{\ke(\rr)}\\
&\quad\ls
\left\|\left(\sum_{k_{1}\in\zz}2^{k_{1}p_{1}\alpha_{1}}
g_{k_{1},\infty}^{p_{1}}\right)^{1/p_{1}} \right\|
_{\ke(\rr)}\sim\left\|\|f\|_{\ky(\rr)} \right\|_{\ke(\rr)}.
\end{align*}
If $q_{1}\in (1,\rho_{2})$ and $\alpha_{1}\in (-\f{1}
{q_{1}},1-\f{1}{q_{1}})$, where $\rho_{2}:=\min\{p_{1},
p_{2},q_{2},(\alpha_{2}+\f{1}{q_{2}})^{-1}\}$, then,
from Lemma \ref{hzcox-2} and Definition \ref{cvx-2},
it follows that
\begin{align}\label{M2-A}
&\left\|\|M_{2}(f)\|_{\ky(\rr)} \right\|_{\ke(\rr)}\\\nonumber
&\quad=
\left\|\left(\sum_{k_{1}\in\zz} 2^{k_{1}p_{1}\alpha_{1}}
G_{k_{1},q_{1}} ^{p_{1}} \right)^{\f{q_{1}}{p_{1}}\cdot
\f{1}{q_{1}}} \right\|_{\ke(\rr)}\\\nonumber
&\quad=
\left\|\left\|\left\{2^{k_{1}q_{1}\alpha_{1}}G_{k_{1},q_{1}}
^{q_{1}}\right\}_{k_{1}\in\zz}\right\|_{l^{p_{1}/q_{1}}}
\right\|_{\dot{K}_{q_{2}/q_{1}}^{q_{1}\alpha_{2},p_{2}/q_{1}}
(\rr)}^{\f{1}{q_{1}}}.\nonumber
\end{align}
By this, $q_{1}\in (1,\rho_{2})$, and Lemma \ref{Eass-2}, we find that
\begin{align}\label{M3-A}
\left\|\|M_{2}(f)\|_{\ky(\rr)} \right\|_{\ke(\rr)}^{q_{1}}=
\sup_{\phi}\left\{
\int_{\rr}\sum_{k_{1}\in\zz}2^{k_{1}q_{1}\alpha_{1}}
[G_{k_{1},q_{1}}(x_{2})]^{q_{1}}|\phi_{k_{1}}
(x_{2})|\,dx_{2} \right\},
\end{align}
where the supremum is taken over all $\phi:=\{\phi_{k_{1}(\cdot)}
\}_{k_{1}\in\zz}\in (\dot{K}_{(q_{2}/q_{1})'}^{-q_{1}
\alpha_{2},(p_{2}/q_{1})'}(\rr),l^{(p_{1}/q_{1})'})$
satisfying
$\|\phi\|_{(\dot{K}_{(q_{2}/q_{1})'}^{-q_{1}\alpha_{2},
(p_{2}/q_{1})'}(\rr),l^{(p_{1}/q_{1})'})}=1$.
From this, the Tonelli theorem, the definition of
$G_{k_{1},q_{1}}$,
Lemmas \ref{stn} and \ref{mhr},
\cite[p.\,483,\ Corollary 4.5]{Ifs}, and $q_{1}\in (1,\rho_{2})$,
we deduce that
\begin{align*}
&\left\|\|M_{2}(f)\|_{\ky(\rr)} \right\|_{\ke(\rr)}^{q_{1}}\\
&\quad=\sup_{\phi}\left\{
\sum_{k_{1}\in\zz}2^{k_{1}q_{1}\alpha_{1}}\int_{\rr}\int
_{\rr}|M_{2}(f)(x_{1},x_{2})|^{q_{1}}|\phi_{k_{1}}(x_{2})|
\,dx_{2}\one_{R_{k_{1}}}(x_{1})\,dx_{1} \right\}\\
&\quad\ls
\sup_{\phi}\left\{
\sum_{k_{1}\in\zz}2^{k_{1}q_{1}\alpha_{1}}\int_{\rr}
\int_{\rr}|f(x_{1},x_{2})|^{q_{1}} M(\phi_{k_{1}})
(x_{2}) \,dx_{2}\one_{R_{k_{1}}}(x_{1})\,dx_{1}  \right\}\\
&\quad\sim
\sup_{\phi}\left\{
\int_{\rr}\sum_{k_{1}\in\zz}2^{k_{1}q_{1}\alpha_{1}}
\int_{\rr}|f(x_{1},x_{2})|^{q_{1}}\one_{R_{k_{1}}}
(x_{1})\,dx_{1}  M(\phi_{k_{1}})(x_{2})\,dx_{2}
\right\}\\
&\quad\ls
\left\|\left(\sum_{k_{1}\in\zz}2^{k_{1}p_{1}\alpha_{1}}
g_{k_{1},q_{1}}^{p_{1}}\right)^{q_{1}/p_{1}} \right\|
_{\dot{K}_{q_{2}/q_{1}}^{q_{1}\alpha_{2},
p_{2}/q_{1}}(\rr)}\\
&\quad\quad\times\sup_{\phi}\left\|
\left\{\sum_{k_{1}\in\zz}|M(\phi_{k_{1}})|^{(p_{1}/q_{1})'}
\right\}^{1/(p_{1}/q_{1})'} \right\|_{\dot{K}_
{(q_{2}/q_{1})'}^{-q_{1}\alpha_{2},(p_{2}/q_{1})'}
(\rr)}\\
&\quad\ls
\left\|\left(\sum_{k_{1}\in\zz}2^{k_{1}p_{1}\alpha_{1}}
g_{k_{1},q_{1}}^{p_{1}}\right)^{q_{1}/p_{1}} \right\|
_{\dot{K}_{q_{2}/q_{1}}^{q_{1}\alpha_{2},p_{2}/q_{1}}
(\rr)}\sim
\left\|\|f\|_{\ky(\rr)} \right\|^{q_{1}}_{\ke(\rr)}.
\end{align*}
Therefore, we have, for any given $p_{1},$
$p_{2},$ $q_{2}\in (1,\infty)$ and $\alpha_{2}\in (-\f{1}{q_{2}},
1-\f{1}{q_{2}})$, \eqref{m2-G} holds true for $q_{1}=\infty$ and
$\alpha_{1}\in(0,1)$, or for $q_{1}\in (1,\rho_{2})$ and $\alpha_{1}
\in (-\f{1}{q_{1}},1-\f{1}{q_{1}})$.
Then, we complete the proof of \eqref{m2-G} by an
interpolation procedure. Indeed, for any $g\in \MM(\rn)$, $i\in\zz$, and
$x_{1},$ $x_{2}\in\rr$, let
\begin{equation*}
\Gamma (g) (i,x_{1},x_{2}):=\f{1}{|B(x_{2},2^{i})|}\int_
{B(x_{2},2^{i})}g(x_{1},y_{2})\,dy_{2}.
\end{equation*}
Obviously, $\Gamma$ is a linear operator and, by \cite[p.\,421]{bag},
we find that
\begin{equation*}
\|\{\Gamma(g)(i,\cdot)\}_{i\in\zz}\|_{l^{\infty}}
\leq M_{2}(g),
\end{equation*}
while $M_{2}(g)\ls \|\{\Gamma(|g|)(i,\cdot)\}_{i\in\zz}\|
_{l^{\infty}}$. Thus, \eqref{m2-G} holds true for any
$f\in \dot{E}_{\vec{q}_{2}}^{\vec{\alpha}_{2},\vec{p}_{2}}(\rr^{2})$
if and only if
\begin{equation}\label{bpi-B}
\left\|\left\| \|\{\Gamma(f)(i,\cdot)\}_{i\in\zz}\|_{l^{\infty}}
\right\|_{\ky(\rr)} \right\|_{\ke(\rr)}\ls \left\|\|f\|_
{\ky(\rr)}\right\|_{\ke(\rr)}
\end{equation}
holds true for any $f\in \dot{E}_{\vec{q}_{2}}^{\vec{\alpha}_{2},
\vec{p}_{2}}(\rr^{2})$.
To prove \eqref{bpi-B} holds true for any $f\in H(\rr^{2})$, from
Theorem \ref{threeL}, it follows that we only need to show that,
for any given $q_{1}\in (1,\infty)$ and $\alpha_{1}\in
(-\f{1}{q_{1}},1-\f{1}{q_{1}})$, there exist a $\theta\in[0,1]$,
a $q_{1}^{(0)}=\infty$, an $\alpha_{1}^{(0)}\in(0,1)$,
a $q_{1}^{(1)}\in (1,\rho_{2})$, and an $\alpha_{1}^{(1)}\in
(-\f{1}{q_{1}^{(1)}},1-\f{1}{q_{1}^{(1)}})$ such that
\begin{equation}\label{q1}
\f{1}{q_{1}}=\f{1-\theta}{q_{1}^{(0)}}+\f{\theta}{q_{1}^{(1)}}
\end{equation}
and
\begin{equation}\label{q2}
\alpha_{1}=(1-\theta)\alpha_{1}^{(0)}+\theta \alpha_{1}^{(1)}.
\end{equation}
Indeed, we consider the following two cases on $q_{1}$.
If $q_{1}\in (1,\rho_{2})$, then let $\theta:=1$ and $q_{1}^{(1)}=q_{1}$.
In this case, \eqref{q1} holds true. If
$q_{1}\in [\rho_{2},\infty)$, then let $\theta=q_{1}^{(1)}/q_{1}$ for any
$q_{1}^{(1)}\in (1,\rho_{2})$.
In this case, \eqref{q1} holds true. Thus, for any given $q_{1}\in (1,\infty)$,
there exist a $\theta\in[0,1]$, a $q_{1}^{(0)}=\infty$,
and a $q_{1}^{(1)}\in(1,\rho_{2})$ such that \eqref{q1} holds true.
Moreover, for any given $q_{1}\in (1,\infty)$ and $\alpha_{1}\in
(-\f{1}{q_{1}},1-\f{1}{q_{1}})$, and aforementioned fixed $\theta$ and $q_{1}^{(1)}$,
there exist an $\alpha_{1}^{(0)}:=\alpha_{1}+\f{1}{q_{1}}\in(0,1)$ and an
$\alpha_{1}^{(1)}:=\alpha_{1}+\f{1}{q_{1}}-\f{1}{q_{1}^{(1)}}\in
(-\f{1}{q_{1}^{(1)}},1-\f{1}{q_{1}^{(1)}})$ such that
\begin{equation*}
(1-\theta)\alpha_{1}^{(0)}+\theta\alpha_{1}^{(1)}=(1-\theta)\left(
\alpha_{1}+\f{1}{q_{1}} \right)+\theta\left(\alpha_{1}+\f{1}{q_{1}}-
\f{1}{q_{1}^{(1)}} \right)=\alpha_{1}.
\end{equation*}
Thus, \eqref{q2} holds true. By this and Theorem \ref{threeL},
we conclude that, for any $q_{1}\in (1,\infty)$ and $\alpha_{1}
\in (-\f{1}{q_{1}},1-\f{1}{q_{1}})$, \eqref{bpi-B}
holds true for any $f\in H(\rr^{2})$.
Then, via Proposition \ref{E-dense}, we conclude that, for any
$f\in \dot{E}_{\vec{q}_{2}}^{\vec{\alpha}_{2},\vec
{p}_{2}}(\rr^{2})$, there exists
a sequence $\{f_{k} \}_{k\in\nn}\subset H(\rr^{2})$
such that $\|f-f_{k}\|_{\dot{E}_{\vec{q}_{2}}^{\vec{\alpha}_
{2},\vec{p}_{2}}(\rr^{2})}\to 0$ as
$k\to \infty$.
From this, Proposition \ref{bqb-2}(ii),
and Definition \ref{ballqB}(vi), we deduce that, as $k\to \infty$, for any
$A_{m}\subset \rr^{2}$ with $m\in\nn$,
\begin{equation*}
\int_{A_{m}}|f(x)-f_{k}(x)|\,dx
\leq  C_{(m)}\|f-f_{k}\|_{\dot{E}_{\vec{q}_{2}}^{\vec
{\alpha}_{2},\vec{p}_{2}}(\rr^{2})}\rightarrow 0,
\end{equation*}
where $A_{m}$ is as in Lemma \ref{Am}. Thus, for any $m\in\nn$,
$f_{k}\one_{A_{m}}\rightarrow f\one_{A_{m}}$ almost everywhere
as $k\to\infty$. By this, the
Fatou lemma, Proposition \ref{E-Fatou}, \eqref{bpi-B} with
$|f_{k}|\one_{A_{m}}\in H(\rr^{2})$, and $\|f-f_{k}\|_
{\dot{E}_{\vec{q}_{2}}^{\vec{\alpha}_{2},\vec{p}_{2}}
(\rr^{2})}\rightarrow 0$ as $k\to\infty$, we find that, for any $m\in\nn$,
\begin{align*}
&\|\Gamma(|f|\one_{A_{m}})\|_{(\dot{E}_{\vec{q}_{2}}^
{\vec{\alpha}_{2},\vec{p}_{2}}(\rr^{2}),l^{\infty})}\\
&\quad
\leq \left\|\varliminf_{k\to\infty}\Gamma(|f_{k}|
\one_{A_{m}}) \right\|_{(\dot{E}_{\vec{q}_{2}}^{
\vec{\alpha}_{2},\vec{p}_{2}}(\rr^{2}),
l^{\infty})}
\leq \left\|\varliminf_{k\to\infty}\|\Gamma(|f_{k}|
\one_{A_{m}})\|_{l^{\infty}} \right\|_{\dot{E}_
{\vec{q}_{2}}^{\vec{\alpha}_{2},\vec{p}_{2}}
(\rr^{2})}\\
&\quad\leq \varliminf_{k\to\infty} \left\|\Gamma(
|f_{k}|\one_{A_{m}}) \right\|_{(\dot{E}_{\vec{q}
_{2}}^{\vec{\alpha}_{2},\vec{p}_{2}}(
\rr^{2}),l^{\infty})}\ls
\lim_{k\to\infty} \||f_{k}|\one_{A_{m}}\|_{\dot{E}
_{\vec{q}_{2}}^{\vec{\alpha}_{2},\vec{p}_{2}}
(\rr^{2})}\\
&\quad\sim \||f|\one_{A_{m}}\|_{\dot{E}_{\vec{q}_{2}}^
{\vec{\alpha}_{2},\vec{p}_{2}}(\rr^{2})},
\end{align*}
which, together with the monotone convergence theorem and
Proposition \ref{ballqfs-3}, further implies that
\begin{align*}
&\|\Gamma(f)\|_{(\dot{E}_{\vec{q}_{2}}^{\vec{\alpha}
_{2},\vec{p}_{2}}(\rr^{2}),l^{\infty})}\\
&\quad\leq
\|\Gamma(|f|)\|_{(\dot{E}_{\vec{q}_{2}}^{\vec{\alpha}
_{2},\vec{p}_{2}}(\rr^{2}),l^{\infty})}=\left\|
\lim_{m\to\infty}\Gamma(|f|\one_{A_{m}})\right\|_{(\dot{E}
_{\vec{q}_{2}}^{\vec{\alpha}_{2},\vec{p}_{2}}
(\rr^{2}),l^{\infty})}\\
&\quad=\left\|\lim_{m\to\infty}\|
\Gamma(|f|\one_{A_{m}})\|_{l^{\infty}}\right\|_
{\dot{E}_{\vec{q}
_{2}}^{\vec{\alpha}_{2},\vec{p}_{2}}(\rr^{2})}
=\lim_{m\to\infty}\left\|\Gamma(|f|\one_{A_{m}})\right\|_
{(\dot{E}_{\vec{q}_{2}}^{\vec{\alpha}_{2},\vec{p}
_{2}}(\rr^{2}),l^{\infty})}\\
&\quad\ls \lim_{m\to\infty}\left\|
|f|\one_{A_{m}}\right\|_{\dot{E}_
{\vec{q}_{2}}^{\vec{\alpha}_{2},\vec{p}_{2}}(\rr^{2})}
\sim\|f\|_{\dot{E}_{\vec{q}_{2}}^{\vec{\alpha}_{2},
\vec{p}_{2}}(\rr^{2})}.
\end{align*}
Thus, \eqref{bpi-B} holds true for any $f\in \dot{E}_
{\vec{q}_{2}}^{\vec{\alpha}_{2},\vec{p}_{2}}(\rr^{2})$, and
hence \eqref{m2-G} holds true.

Now, assume that \eqref{ext} holds true for $n:=m,$
$m\in\nn$, and $m\geq 2$, namely, for any $\vec{p}_{m},$
$\vec{q}_{m}\in (1,\infty)^{m}$ and $\alpha_{i}\in
(-\f{1}{q_{i}},1-\f{1}{q_{i}})$ with $i\in\{1,\ldots,m \}$,
\begin{equation}\label{idas}
\|M_{m}(f)\|_{\dot{E}_{\vec{q}_{m}}^{\vec{\alpha}_{m},
\vec{p}_{m}}(\rr^{m})} \ls \|f\|_{\dot{E}_{\vec{q}_{m}}
^{\vec{\alpha}_{m},\vec{p}_{m}}(\rr^{m})}.
\end{equation}
To complete the proof of the present theorem, it still needs
to show that, for any $\vec{p}_{m+1}$, $\vec{q}
_{m+1}\in (1,\infty)^{m+1}$ and $\alpha_{i}\in(-\f{1}{q_{i}},
1-\f{1}{q_{i}})$ with $i\in\{1,\ldots,m+1 \}$,
\begin{equation}\label{m+1-G}
\|M_{m+1}(f)\|_{\dot{E}_{\vec{q}_{m+1}}^{\vec{\alpha}_{m+1},
\vec{p}_{m+1}}(\rr^{m+1})} \ls \|f\|_{\dot{E}_{\vec{q}_
{m+1}}^{\vec{\alpha}_{m+1},\vec{p}_{m+1}}(\rr^{m+1})}.
\end{equation}
To achieve it, for any given $k_{1}\in\zz$ and
$q_{1}\in[1,\infty],$ and for any $x_{2},\ldots,x_{m+1}\in
\rr$, let $$h_{k_{1},q_{1}}(x_{2},\ldots,x_{m+1}):=\|
f(\cdot,x_{2},\ldots,x_{m+1})\one_{R_{k_{1}}}(\cdot)\|_
{L^{q_{1}}(\rr)}$$
and
$$H_{k_{1},q_{1}}(x_{2},\ldots,x_{m+1}):=\|M_{m+1}(f)
(\cdot,x_{2},\ldots,x_{m+1})\one_{R_{k_{1}}}(\cdot)\|_
{L^{q_{1}}(\rr)}.$$
Let $\vec{\beta}_{m}:=(\alpha_{2},\ldots,\alpha_{m+1})$,
$\vec{s}_{m}:=(p_{2},\ldots,p_{m+1})$, and $\vec{t}_{m}:=
(q_{2},\ldots,q_{m+1})$. For any given $\vec{p}_{m+1}\in (1,
\infty)^{m+1}$, and $q_{j}\in (1,\infty)$ and $\alpha_{j}\in
(-\f{1}{q_{j}},1-\f{1}{q_{j}})$ with $j\in \{2,\ldots,m+1 \}$,
we consider the following two cases on both $q_{1}$ and $\alpha_{1}$.
If $q_{1}=\infty$ and $\alpha_{1}\in (0,1)$, then, similarly to
the estimation of \eqref{kl-A}, we have, for any $k_{1}\in\zz$ and
$x_{2},\ldots,x_{m+1}\in\rr$,
\begin{align}\label{mk-if}
H_{k_{1},\infty}(x_{2},\ldots,x_{m+1}):=&\,\|M_{m+1}(f)(\cdot,
x_{2},\ldots,x_{m+1})\one_{R_{k_{1}}}(\cdot)\|_{L^{\infty}
(\rr)}\\
\ls&\, M_{m}(h_{k_{1},q_{1}})(x_{2},\ldots,x_{m+1}).\nonumber
\end{align}
Let
\begin{equation*}
\widetilde{\rho}_{m+1}:=\min\left\{p_{2},\ldots,p_{m+1},q_{2},\ldots,q_{m+1},
\left(\alpha_{2}+\f{1}{q_{2}}\right)^{-1},\ldots,\left(\alpha_
{m+1}+\f{1}{q_{m+1}}\right)^{-1} \right\}.
\end{equation*}
Then, from Definition \ref{chz}, \eqref{mk-if},  \eqref{idas},
Proposition \ref{bqb-2}(ii), and Lemma \ref{ax-Fs} with $r:=p_{1}$,
$p\in(1,\widetilde{\rho}_{m+1})$, and
$X:=\dot{E}_{\vec{t}_{m}}^{\vec{\beta}_{m},\vec{s}_{m}}(\rr^{m})$,
it follows that
\begin{align*}
&\left\|M_{m+1}(f) \right\|_{\dot{E}_{\vec{q}_{m+1}}^{\vec{\alpha}
_{m+1},\vec{p}_{m+1}}(\rr^{m+1})}\\
&\quad=
\left\|\left(\sum_{k_{1}\in\zz}2^{k_{1}p_{1}\alpha_{1}}H_
{k_{1},\infty}^{p_{1}}\right)^{\f{1}{p_{1}}} \right\|_{\dot{E}_
{\vec{t}_{m}}^{\vec{\beta}_{m},\vec{s}_{m}}(\rr^{m})}\\
&\quad\leq
\left\|\left\{\sum_{k_{1}\in\zz} [M_{m}(2^{k_{1}\alpha_{1}}h_
{k_{1},\infty})]^{p_{1}}\right\}^{1/p_{1}} \right\|_{\dot{E}_
{\vec{t}_{m}}^{\vec{\beta}_{m},\vec{s}_{m}}(\rr^{m})}\\
&\quad\ls
\left\|\left(\sum_{k_{1}\in\zz}2^{k_{1}p_{1}\alpha_{1}}h_{k_{1},
\infty}^{p_{1}}\right)^{1/p_{1}} \right\|_{\dot{E}_{\vec{t}_
{m}}^{\vec{\beta}_{m},\vec{s}_{m}}(\rr^{m})}\sim\left\|f
\right\|_{\dot{E}_{\vec{q}_{m+1}}^{\vec{\alpha}_{m+1},\vec{p}
_{m+1}}(\rr^{m+1})}.
\end{align*}
If $q_{1}\in (1,\rho_{m+1})$ and $\alpha_{1}\in(-\f{1}{q_{1}},
1-\f{1}{q_{1}})$, where $\rho_{m+1}:=\min\{p_{1},\widetilde{\rho}_{m+1} \}$,
then, by Lemma \ref{Eass-2}, similarly to the estimations of
both \eqref{M2-A} and \eqref{M3-A}, we have
\begin{align*}
&\left\|M_{m+1}(f) \right\|_{\dot{E}_{\vec{q}_{m+1}}^{\vec{\alpha}_{m+1},
\vec{p}_{m+1}}(\rr^{m+1})}^{q_{1}}\\
&\quad= \sup_{\psi}\left\{
\int_{\rr^{m}}\sum_{k_{1}\in\zz}2^{k_{1}q_{1}\alpha_{1}}\int_{\rr}
\left[M_{m+1}(f)(x_{1},\ldots,x_{m+1})\right]^{q_{1}}\one_{R_{k_{1}}}
(x_{1})\,dx_{1}\right.\\
&\qquad\times |\psi_{k_{1}}(x_{2},\ldots,x_{m+1})|\,dx_{2}\cdots
\,dx_{m+1} \Bigg\},
\end{align*}
where the supremum is taken over all $\psi:=\{\psi_{k_{1}} \}_
{k_{1}\in\zz}\in (\dot{E}_{(\vec{t}_{m}/q_{1})'}^{-q_{1}\vec{
\beta}_{m},(\vec{s}_{m}/q_{1})'}(\rr^{m}),
l^{(p_{1}/q_{1})'})$
satisfying
$\|\psi\|_{(\dot{E}_{(\vec{t}_{m}/q_{1})'}^{-q_{1}\vec{\beta}
_{m},(\vec{s}_{m}/q_{1})'}(\rr^{m}),
l^{(p_{1}/q_{1})'})}=1$.
Let
\begin{equation*}
\eta_{m+1}:=\min\left\{\left(\f{p_{2}}{q_{1}}\right)',
\ldots,\left(\f{p_{m+1}}{q_{1}}\right)',\left(\f{q_{2}}{q_{1}}\right)',
\ldots,\left(\f{q_{m+1}}{q_{1}}\right)', \gamma_{m+1} \right\},
\end{equation*}
where
\begin{equation*}
\gamma_{m+1}:=\min\left\{\f{1}{1-q_{1}(\alpha_{2}+\f{1}{q_{2}})},
\ldots,\f{1}{1-q_{1}(\alpha_{m+1}+\f{1}{q_{m+1}})} \right\}.
\end{equation*}
Then, from \eqref{idas}, Lemma \ref{hzcox-2}, Proposition \ref{bqb-2}(ii),
and Corollary \ref{Eass},
we deduce that, for any $p\in(1,\eta_{m+1})$,
$[\dot{E}
_{(\vec{t}_{m}/q_{1})'}^{-q_{1}\vec{\beta}_{m},(\vec
{s}_{m}/q_{1})'}(\rr^{m})]^{1/p}$ is a ball Banach function space
and $M_{m}$ is bounded on $\{[\dot{E}
_{(\vec{t}_{m}/q_{1})'}^{-q_{1}\vec{\beta}_{m},(\vec
{s}_{m}/q_{1})'}(\rr^{m})]^{1/p}\}'$.
Using this, the Tonelli theorem, Proposition
\ref{bqb-2}(ii),
Lemmas \ref{stn} and \ref{mhr}, Lemma \ref{ax-Fs} with
$r:=(p_{1}/q_{1})'$,
$p\in(1,\eta_{m+1})$, and $X:=\dot{E}
_{(\vec{t}_{m}/q_{1})'}^{-q_{1}\vec{\beta}_{m},(\vec
{s}_{m}/q_{1})'}(\rr^{m})$, and the
assumption
that $q_{1}\in (1,\rho_{m+1})$, we conclude that
\begin{align*}
&\left\|M_{m+1}(f) \right\|_{\dot{E}_{\vec{q}_{m+1}}^{\vec
{\alpha}_{m+1},\vec{p}_{m+1}}(\rr^{m+1})}^{q_{1}}\\
&\quad= \sup_{\psi}\left\{\int_{\rr^{m-1}}
\sum_{k_{1}\in\zz}2^{k_{1}q_{1}\alpha_{1}}\int_{\rr}\int_{\rr}\left[
M_{m+1}(f)(x_{1},\ldots,x_{m+1})\right]^{q_{1}}\right.\\
&\quad\quad\times|\psi_{k_{1}}(x_{2},\ldots,x_{m+1})|\,dx_{m+1}
\one_{R_{k_{1}}}(x_{1})\,dx_{1}\,dx_{2}\cdots\,dx_{m} \Bigg\}\\
&\quad\ls\sup_{\psi}\left\{\int_{\rr^{m-1}}
\sum_{k_{1}\in\zz}2^{k_{1}q_{1}\alpha_{1}}\int_{\rr}\int_{\rr}
|f(x_{1},\ldots,x_{m+1})|^{q_{1}}\right.\\
&\quad\quad\times\left[M_{m}(\psi_{k_{1}})(x_{2},\ldots,x_{m+1})\right]
\,dx_{m+1}\one_{R_{k_{1}}}(x_{1})\,dx_{1}\,dx_{2}\cdots\,
dx_{m} \Bigg\}\\
&\quad\sim\sup_{\psi}\left\{\int_{\rr^{m}}
\sum_{k_{1}\in\zz}2^{k_{1}q_{1}\alpha_{1}}\int_{\rr}|f(x_{1},
\ldots,x_{m+1})|^{q_{1}}\one_{R_{k_{1}}}(x_{1})\,dx_{1}\right.\\
&\quad\quad\times\left[M_{m}(\psi_{k_{1}})(x_{2},\ldots,x_{m+1})\right]
\,dx_{2}\cdots\,dx_{m+1} \Bigg\}\\
&\quad\ls
\left\|\left\|\left\{2^{k_{1}q_{1}\alpha_{1}}h_{k_{1},q_{1}
}^{q_{1}}\right\}_{k_{1}\in\zz} \right\|_{l^{p_{1}/q_{1}}}
\right\|_{\dot{E}_{\vec{t}_{m}/q_{1}}^{q_{1}\vec{\beta}_{m},
\vec{s}_{m}/q_{1}}(\rr^{m})}\\
&\quad\quad\times\sup_{\psi}\left\|
\left[\sum_{k_{1}\in\zz}|M_{m}(\psi_{k_{1}})|^{(p_{1}/
q_{1})'} \right]^{1/(p_{1}/q_{1})'} \right\|_{\dot{E}
_{(\vec{t}_{m}/q_{1})'}^{-q_{1}\vec{\beta}_{m},(\vec
{s}_{m}/q_{1})'}(\rr^{m})}\\
&\quad\ls
\left\|\left\|\left\{2^{k_{1}q_{1}\alpha_{1}}h_{k_{1},q_
{1}}^{q_{1}}\right\}_{k_{1}\in\zz} \right\|_{l^{p_{1}
/q_{1}}} \right\|_{\dot{E}_{\vec{t}_{m}/q_{1}}^{
q_{1}\vec{\beta}_{m},\vec{s}_{m}/q_{1}}(\rr^{m})}
\sim
\left\|f \right\|^{q_{1}}_{\dot{E}_{\vec{q}_{m+1}}^{\vec{
\alpha}_{m+1},\vec{p}_{m+1}}(\rr^{m+1})}.
\end{align*}
Therefore, we have, for any given $\vec{p}_{m+1}
\in (1,\infty)^{m+1}$, $q_{j}\in (1,\infty)$, and $\alpha_
{j}\in (-\f{1}{q_{j}},1-\f{1}{q_{j}})$ with $j\in\{2,\ldots,
m+1 \}$, \eqref{m+1-G}
holds true for $q_{1}=\infty$ and $\alpha_{1}\in (0,1)$, or
for $q_{1}\in (1,\rho_{m+1})$ and $\alpha_{1}\in (-\f{1}{q_{1}},
1-\f{1}{q_{1}})$. Applying this and Theorem \ref{threeL},
similarly to the interpolation procedure of the case $n:=2$,
we conclude that \eqref{m+1-G} holds true for any $q_{1}
\in (1,\infty)$ and $\alpha_{1}\in (-\f{1}{q_{1}},1-\f{1}
{q_{1}})$.
This finishes the proof of Theorem \ref{exine}.
\end{proof}
As a consequence of Theorem \ref{exine}, we obtain the
boundedness of the iterated maximal operators on $\iihz(\rn)$
as follows.
\begin{theorem}\label{bound-2}
Let $\vec{p}
:=(p_{1},\ldots,p_{n})$, $\vec{q}:=
(q_{1},\ldots,q_{n})\in(0,\infty)^{n}$, $\vec{\alpha}
:=(\alpha_{1},\ldots,\alpha_{n})\in\rn$ satisfy that, for
any
$i\in\{1,\ldots,n\}$,
$\alpha_{i}\in(-\f{1}{q_{i}},1-\f{1}{q_{i}})$, and $t\in
(0,\min\{p_{-},q_{-}\})$, where $p_{-}$ and $q_{-}$ are as in
\eqref{p-}. Then there exists a positive constant $C$
such that, for any $f\in\MM(\rn)$,
\begin{equation}\label{bite}
\left\|\cm_{t}(f)\right\|_{\iihz(\rn)}\leq C\|f\|_{\iihz
(\rn)},
\end{equation}
where the positive constant $C$ is independent of $f$.
\end{theorem}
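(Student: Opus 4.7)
The plan is to reduce Theorem \ref{bound-2} to the single-maximal-operator bound in Theorem \ref{exine} via the $r$-convexification identity of Lemma \ref{hzcox-2}, and then to apply that bound iteratively to each $M_{k}$ for $k\in\{1,\ldots,n\}$. First I would rewrite
\begin{equation*}
\|\cm_{t}(f)\|_{\iihz(\rn)}=\left\|\left[M_{n}\cdots M_{1}(|f|^{t})\right]^{1/t}\right\|_{\iihz(\rn)}=\left\|M_{n}\cdots M_{1}(|f|^{t})\right\|_{[\iihz(\rn)]^{1/t}}^{1/t},
\end{equation*}
and invoke Lemma \ref{hzcox-2} to identify the convexified space as $[\iihz(\rn)]^{1/t}=\dot{E}^{t\vec{\alpha},\vec{p}/t}_{\vec{q}/t}(\rn)=:X$. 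The requirement $t<\min\{p_{-},q_{-}\}$ then forces $\vec{p}/t,\vec{q}/t\in(1,\infty)^{n}$, and one checks that the transformed smoothness indices $t\alpha_{i}$ lie in the admissible interval $(-t/q_{i},1-t/q_{i})$ required by Theorem \ref{exine} on $X$: the lower bound is immediate from $\alpha_{i}>-1/q_{i}$ after multiplying by $t>0$, while the upper bound reduces to $t(\alpha_{i}+1/q_{i})<1$, which holds whenever $t\leq 1$ (and in the range $t>1$ would have to be extracted from the assumed strict inequality $\alpha_{i}<1-1/q_{i}$ together with the additional freedom in the hypotheses).

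Next, I would establish that each partial maximal operator $M_{k}$ with $k\in\{1,\ldots,n\}$ is bounded on $X$. For $k=n$ this is exactly Theorem \ref{exine}. For $k<n$, the idea is a Fubini-type argument: writing the norm on $\iihz(\rn)$ as the iterated norm taking the $\dot{E}^{\vec{\alpha}_{k},\vec{p}_{k}}_{\vec{q}_{k}}(\rr^{k})$-norm in $(x_{1},\ldots,x_{k})$ first, and since $M_{k}$ only acts on the last of those first $k$ variables, Theorem \ref{exine} (applied in dimension $k$) gives, for almost every fixed $(x_{k+1},\ldots,x_{n})$,
\begin{equation*}
\|M_{k}(f)(\cdot,x_{k+1},\ldots,x_{n})\|_{\dot{E}^{t\vec{\alpha}_{k},\vec{p}_{k}/t}_{\vec{q}_{k}/t}(\rr^{k})}\lesssim\|f(\cdot,x_{k+1},\ldots,x_{n})\|_{\dot{E}^{t\vec{\alpha}_{k},\vec{p}_{k}/t}_{\vec{q}_{k}/t}(\rr^{k})}.
\end{equation*}
Taking the remaining $\dot{K}^{t\alpha_{k+1},p_{k+1}/t}_{q_{k+1}/t}(\rr),\ldots,\dot{K}^{t\alpha_{n},p_{n}/t}_{q_{n}/t}(\rr)$ norms preserves this pointwise inequality by monotonicity (Proposition \ref{E-lat}), yielding the boundedness of $M_{k}$ on $X$.

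Having established each $M_{k}\colon X\to X$, I would iterate:
\begin{equation*}
\|M_{n}\cdots M_{1}(|f|^{t})\|_{X}\leq\prod_{k=1}^{n}\|M_{k}\|_{X\to X}\cdot\||f|^{t}\|_{X}=C\|f\|_{\iihz(\rn)}^{t},
\end{equation*}
where the last equality comes once more from Lemma \ref{hzcox-2}. Raising to the $1/t$ power delivers \eqref{bite} with $C$ independent of $f$.

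The main technical obstacle is the second step, namely the extension of Theorem \ref{exine} (which treats only the last-variable maximal $M_{n}$) to each intermediate $M_{k}$. The Fubini-type reduction sketched above is the natural route, but it relies crucially on the iterated structure of the norm $\|\cdot\|_{\iihz(\rn)}$ and on the fact that Theorem \ref{exine} is proved by induction on the ambient dimension, so its conclusion at level $k$ is available. A secondary subtlety, alluded to above, is the verification of $t(\alpha_{i}+1/q_{i})<1$ when $t>1$; while this is automatic for $t\leq 1$, a careful treatment of the borderline regime may be needed to justify the admissibility of the transformed indices throughout the range $t\in(0,\min\{p_{-},q_{-}\})$.
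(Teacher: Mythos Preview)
Your proposal is correct and follows essentially the same approach as the paper: reduce via Lemma \ref{hzcox-2} to the case $t=1$ with exponents in $(1,\infty)^{n}$, then control the composition $M_{n}\cdots M_{1}$ by iterating Theorem \ref{exine} through the nested structure of the mixed-Herz norm (the paper packages this as an induction on $n$, peeling off $M_{n}$ first and invoking the inductive hypothesis on the inner $(n-1)$-dimensional norm, whereas you equivalently bound each $M_{k}$ on the full space via Theorem \ref{exine} at level $k$ followed by Proposition \ref{E-lat} for the outer norms). The subtlety you flag concerning $t(\alpha_{i}+1/q_{i})<1$ when $t>1$ is not addressed in the paper's proof either; it simply asserts the reduction to $t=1$.
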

\begin{proof}
Let all the symbols be as in the present theorem.
Since
\begin{equation*}
\left\|\cm_{t}(f)\right\|_{\iihz(\rn)}=\left\|\left[
M_{n}\cdots M_{1}\left(|f|^{t}\right)\right]^{\f{1}{t}}\right\|_
{\iihz(\rn)}=
\left\|\left[ M_{n}\cdots M_{1}\left(|f|^{t}\right)\right]\right\|
^{\f{1}{t}}_{
\dot{E}^{t\vec{\alpha},\vec{p}/t}_{\vec{q}/t}(\rr^
{n})},
\end{equation*}
to complete the proof of the present theorem,
it suffices to show that \eqref{bite} holds true for
$t=1$, any given $\vec{p}$, $\vec{q}\in(1,\infty)^{n}$
and $\alpha_{i}\in(-\f{1}{q_{i}},1-\f{1}{q_{i}})$ with
$i\in \{1,\ldots,n \}$.
To this end, we rewrite \eqref{bite} as
\begin{equation}\label{teo}
\left\|\cm_{1}(f)\right\|_{\iihz(\rn)}=\left\| M_{n}\cdots
M_{1}(|f|)\right\|_{\iihz(\rn)}\ls\|f\|_{\iihz(\rn)}.
\end{equation}
Now, we prove that \eqref{teo} holds true for any given
$\vec{p},$ $\vec{q}\in(1,\infty)^{n}$ and $\alpha_{i}
\in (-\f{1}{q_{i}},1-\f{1}{q_{i}})$ with $i\in\{1,\ldots,n\}$
and we do this by induction on $n$. If $n=1$, then $f\in
\dot{K}^{\alpha_{1},p_{1}
}_{q_{1}}(\rr)$
and the desired inequality is obtained by the
boundedness of the centered
Hardy--Littlewood maximal operator on the classical
Herz space; see, for instance, \cite[p.\,488,\ Corollary 2.1]{ly96}
or \cite[p.131, Theorem
5.1.1 and Remark 5.1.3]{HZ}.

Suppose that \eqref{teo} holds true for $n-1$ with some
fixed $n\in\nn$, namely, for any given $\vec{p}_{n-1}:
=(p_{1}, \ldots, p_{n-1})$,
$\vec{q}_{n-1}:=(q_{1},\ldots,q_{n-1})\in(1,\infty)^
{n-1}$ and $\vec{\alpha}_{n-1}:=(
\alpha_{1},\ldots,\alpha_{n-1})\in\rr^{n-1}$ satisfying that,
for any
$i\in\{1,\ldots,n-1\}$, $\alpha_{i}\in(-\f{1}{q_{i}},
1-\f{1}{q_{i}})$, and for any
$f\in\MM(\rr^{n-1})$, we have
\begin{equation}\label{indc2}
\left\| M_{n-1}\cdots M_{1}(|f|)\right\|_{\nuiihz}\ls
\|f\|_{\nuiihz}.
\end{equation}
By Theorem \ref{exine} and \eqref{indc2}, we conclude that
\begin{align*}
&\left\|M_{n}M_{n-1}\cdots M_{1}(f)\right\|_{\iihz
(\rn)}\\
&\quad=\left\|M_{n}[M_{n-1}\cdots M_{1}(f)]\right
\|_{\iihz(\rn)}
\ls\left\|M_{n-1}\cdots M_{1}(f)\right\|_{\iihz(\rn)}\\
&\quad=\bigg\|\left\|M_{n-1}\cdots M_{1}(f)\right
\|_{\nuiihz}\bigg\|_{\kn}\\
&\quad\ls\left\|\left\|f\right\|_{\nuiihz}\right\|_{\kn}
=\|f\|_{\iihz(\rn)}.
\end{align*}
This finishes the proof of Theorem \ref{bound-2}.
\end{proof}
The following conclusion is a direct consequence of
both Theorem \ref{bound-2} and Remark \ref{max-cp}; the
details are omitted.
\begin{corollary}\label{HL-2}
Let $\vec{p},$ $\vec{q}:=(q_{1},\ldots,q_{n})\in(1,\infty)^{n}$ and
$\vec{\alpha}:=(\alpha_{1},\ldots,\alpha_{n})\in\rn$ with $\alpha_{i}\in(-\f{1}{
q_{i}},1-\f{1}{q_{i}})$ for any $i\in\{1,\ldots
,n\}$. Then there exists
a positive constant $C$ such that, for any
$f\in \MM(\rn)$,
\begin{equation*}
\|M (f)\|_{\iihz(\rn)}\leq C\|f\|_{\iihz(\rn)}.
\end{equation*}
\end{corollary}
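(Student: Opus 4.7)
The plan is to deduce this corollary as a direct consequence of Theorem~\ref{bound-2} (the boundedness of the iterated maximal operator $\cm_{t}$ on $\iihz(\rn)$) together with the pointwise comparison recorded in Remark~\ref{max-cp}, which asserts that $M(f)(x)\ls \cm_{1}(f)(x)$ for every $x\in\rn$, with implicit constant depending only on the dimension.

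First, I would observe that the hypotheses $\vec{p},\vec{q}\in(1,\infty)^{n}$ force $p_{-}>1$ and $q_{-}>1$, where $p_{-}$ and $q_{-}$ are as in \eqref{p-}. Consequently, the exponent $t:=1$ belongs to the admissible range $(0,\min\{p_{-},q_{-}\})$ needed to invoke Theorem~\ref{bound-2}. In addition, the assumption $\alpha_{i}\in(-\f{1}{q_{i}},1-\f{1}{q_{i}})$ for every $i\in\{1,\ldots,n\}$ is exactly what Theorem~\ref{bound-2} requires on the Herz indices, so that theorem yields, for any $f\in\MM(\rn)$,
\begin{equation*}
\left\|\cm_{1}(f)\right\|_{\iihz(\rn)}\ls\|f\|_{\iihz(\rn)}.
\end{equation*}

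Next, combining the pointwise bound $M(f)(x)\ls\cm_{1}(f)(x)$ furnished by Remark~\ref{max-cp} with the lattice property of $\iihz(\rn)$ stated in Proposition~\ref{E-lat}, I would conclude that
\begin{equation*}
\|M(f)\|_{\iihz(\rn)}\ls\left\|\cm_{1}(f)\right\|_{\iihz(\rn)}\ls\|f\|_{\iihz(\rn)},
\end{equation*}
which is the desired estimate.

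There is no real obstacle at this stage: all the substantial technical content (the Riesz--Thorin interpolation theorem of Section~\ref{s3}, the induction on the dimension in Theorem~\ref{exine}, and the passage from $M_{n}$ to $\cm_{t}$ in Theorem~\ref{bound-2}) has already been carried out. The corollary is recorded separately only because the classical centered Hardy--Littlewood maximal operator $M$ is the form of the maximal function most commonly encountered in applications, and extracting it from $\cm_{1}$ is just a one-line pointwise comparison.
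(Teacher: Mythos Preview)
Your proposal is correct and matches the paper's approach exactly: the paper states that the corollary is a direct consequence of Theorem~\ref{bound-2} and Remark~\ref{max-cp}, and omits the details. You have simply spelled out those details, including the appropriate invocation of Proposition~\ref{E-lat} for the lattice property.
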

\begin{remark}
It is worthy to mention that, in Theorem
\ref{bound-2}, if $p_{i}=q_{i}\in(0,\infty)$ for
any $i\in\{1,\ldots,n \}$ and $\vec{\alpha}:=\textbf{0}$,
then $\iihz(\rn)=\lq(\rn)$. In this case, Theorem
\ref{bound-2} is just \cite[Theorem 1.2]{mxM}.
Moreover, if $n:=1$, then $\iihz(\rr):= \dot{K}_{q}^{\alpha,p}(\rr)$.
In this case, from \cite[p.\,488,\ Corollary 2.1]{ly96}
or \cite[p.131, Theorem
5.1.1 and Remark 5.1.3]{HZ}, we deduce that the boundedness of
$M$ on $\dot{K}_{q}^{\alpha,p}(\rr)$ still holds true for $p:=\infty$.
However, if $n\geq 2$, by \cite[p.\,283,\ Remark 3.3]{hy21}, we find that the condition
$\vec{p}\in(1,\infty)^{n}$ is sharp in Corollary \ref{HL-2}.

\end{remark}
Now, we establish the Fefferman--Stein vector-valued
maximal inequality associated with the iterated maximal
operators on $\iihz(\rn)$.
\begin{theorem}\label{FSvv-2a}
Let $\vec{p}:=(p_{1},\ldots,p_{n})$,
$\vec{q}:=(q_{1},\ldots,q_{n})\in(0,\infty)^{n},$
$\vec{\alpha}:=(\alpha_{1},\ldots,\alpha_{n})\in \rn$
satisfy that, for any $i\in \{1,\ldots,n \}$, $\alpha_{i}
\in (-\f{1}{q_{i}},1-\f{1}{q_{i}})$, $r\in(0,\infty)$,
and $t\in (0,\min\{r,p_{-},q_{-}\})$, where $p_{-}$
and $q_{-}$ are as in \eqref{p-}. Then there exists a
positive constant $C$ such that, for any $\{f_{j}\}_
{j\in\zz}\subset  \iihz(\rn)$,
\begin{equation*}
\left\|\left\{\sum_{j\in\zz}\left[\cm_{t}(f_{j})\right]^{r}\right\}
^{\f{1}{r}} \right\|_{\iihz(\rn)}\leq C\left\|\left
(\sum_{j\in\zz}|f_{j}|^{r}\right)^{\f{1}{r}} \right
\|_{\iihz(\rn)}.
\end{equation*}
\end{theorem}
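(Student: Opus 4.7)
The plan is to reduce the Fefferman--Stein inequality for $\cm_t$ to a classical Fefferman--Stein inequality for $M_n\cdots M_1$ with $l^s$-norm ($s := r/t > 1$) on a $t$-convexification of $\iihz(\rn)$, and then proceed by induction on the dimension $n$. First, setting $F_j := |f_j|^t$ for each $j \in \zz$ and applying Lemma \ref{hzcox-2} to both sides (raising to the $1/t$ power), the target inequality is equivalent to
\begin{equation*}
\left\|\left[\sum_{j\in\zz}\left[M_n\cdots M_1(F_j)\right]^s\right]^{1/s}\right\|_{\wt{E}} \leq C\left\|\left[\sum_{j\in\zz}F_j^s\right]^{1/s}\right\|_{\wt{E}},
\end{equation*}
where $\wt{E} := \dot{E}^{t\vec{\alpha},\vec{p}/t}_{\vec{q}/t}(\rn)$. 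From $t\in(0,\min\{p_-,q_-\})$, one has $\vec{p}/t,\vec{q}/t\in(1,\infty)^n$; by shrinking $t$ slightly if necessary (the statement is monotone in $t$ since $\cm_{t_1}\le \cm_{t_2}$ when $t_1\le t_2$), we may also arrange $t(\alpha_i+1/q_i)<1$ for every $i$, so that $\wt{E}$ is a ball Banach function space by Proposition \ref{bqb-2}(ii).

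For the base case $n=1$, where $\wt{E}=\dot{K}^{t\alpha_1,p_1/t}_{q_1/t}(\rr)$, the displayed inequality follows from Lemma \ref{ax-Fs} applied with $X:=\wt{E}$ and some $p\in(1,s)$ chosen close to $1$: Proposition \ref{bqb-2}(ii) furnishes $X^{1/p}$ as a ball Banach function space (via Lemma \ref{hzcox-2}, $X^{1/p}$ is again a Herz space), Corollary \ref{Eass} identifies $(X^{1/p})'$ as another Herz space with dual indices $-pt\alpha_1$, $(p_1/(pt))'$, $(q_1/(pt))'$, and the $n=1$ case of Theorem \ref{exine} yields the required boundedness of $M_1$ there. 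The index constraints $-pt\alpha_1\in(-pt/q_1,1-pt/q_1)$ on the dual side are met by taking $p$ sufficiently close to $1$, which is possible since $\alpha_1\in(-1/q_1,1-1/q_1)$ is a strict inclusion.

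For the inductive step, assume the result in dimension $n-1$. By Definition \ref{mhz}, the $\wt{E}$-norm is iterated: first the $(n-1)$-dimensional norm $\|\cdot\|_{\dot{E}^{t\vec{\alpha}_{n-1},\vec{p}_{n-1}/t}_{\vec{q}_{n-1}/t}(\rr^{n-1})}$ in $(x_1,\ldots,x_{n-1})$, then the classical Herz norm $\|\cdot\|_{\dot{K}^{t\alpha_n,p_n/t}_{q_n/t}(\rr)}$ in $x_n$. First apply Lemma \ref{ax-Fs} with $X:=\wt{E}$ to peel off $M_n$:
\begin{equation*}
\left\|\left[\sum_j\left[M_n(M_{n-1}\cdots M_1 F_j)\right]^s\right]^{1/s}\right\|_{\wt{E}}\lesssim \left\|\left[\sum_j\left[M_{n-1}\cdots M_1 F_j\right]^s\right]^{1/s}\right\|_{\wt{E}},
\end{equation*}
where the hypothesis of Lemma \ref{ax-Fs} is verified exactly as in the base case (identifying $(\wt{E}^{1/p})'$ via Lemma \ref{hzcox-2} and Corollary \ref{Eass}, then invoking Theorem \ref{exine} for $M_n$ on the resulting dual mixed-norm Herz space). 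Since $M_{n-1}\cdots M_1$ acts trivially on $x_n$, the inductive hypothesis in dimension $n-1$, applied to $F_j(\cdot,x_n)$ for almost every fixed $x_n$, gives the Fefferman--Stein inequality for the iterated operator $M_{n-1}\cdots M_1$ with constant uniform in $x_n$; taking the outer Herz norm in $x_n$ closes the induction.

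The main obstacle will be verifying the hypothesis of Lemma \ref{ax-Fs} at each application, namely that the associate space $(\wt{E}^{1/p})'$ supports the boundedness of $M_n$. This requires computing $(\wt{E}^{1/p})'$ explicitly as a mixed-norm Herz space (through the combination of Lemma \ref{hzcox-2} and Corollary \ref{Eass}) and then checking that its indices, in particular the smoothness parameter $-pt\alpha_i$ against the new integrability $(q_i/(pt))'$, fall within the admissible range for Theorem \ref{exine}. These index computations are delicate but handled uniformly by taking $p\in(1,s)$ close enough to $1$, which is permitted by the strict hypothesis $\alpha_i\in(-1/q_i,1-1/q_i)$ together with $t(\alpha_i+1/q_i)<1$ secured in the first step.
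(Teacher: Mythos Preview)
Your approach---reduce via $t$-convexification to exponent $s=r/t>1$ on $\wt E$, then induct on $n$ by peeling off $M_n$ through Lemma~\ref{ax-Fs} with the dual-space boundedness supplied by Theorem~\ref{exine}---is exactly the paper's; the only cosmetic difference is that the paper handles the base case $n=1$ by quoting an external Fefferman--Stein inequality on classical Herz spaces, whereas you derive it internally from Lemma~\ref{ax-Fs}.

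There is one slip in your reduction step. The monotonicity $\cm_{t_1}\le\cm_{t_2}$ for $t_1\le t_2$ is correct, but it runs in the wrong direction for your purpose: it shows that the inequality for a \emph{larger} $t$ implies that for a smaller one, not the reverse. So ``shrinking $t$'' to force $t(\alpha_i+1/q_i)<1$ proves only a weaker statement than the one asserted. In fact this extra constraint (equivalently $t<\nu$ with $\nu$ as in \eqref{nuu}) is exactly what the argument via Lemma~\ref{ax-Fs} needs, and the paper's own reduction to \eqref{FS-G} tacitly assumes it as well; without it the convexified index $t\alpha_i$ can fall outside $(-t/q_i,1-t/q_i)$ and none of Proposition~\ref{bqb-2}(ii), Corollary~\ref{Eass}, or Theorem~\ref{exine} applies on the relevant space. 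All downstream uses in the paper (Corollary~\ref{FSvv-2}, Lemma~\ref{assp-E-1}) satisfy $t<\nu$, so nothing there is affected.
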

\begin{proof}
Let all the symbols be as in the present theorem.
Observe that
\begin{equation*}
\left\|\left\{\sum_{j\in\zz}\left[\cm_{t}(f_{j})\right]^{r}\right
\}^{\f{1}{r}} \right\|_{\iihz(\rn)} =
\left\|\left\{\sum_{j\in\zz}\left[M_{n}\cdots M_{1}(|f_{j}|
^{t})\right]^{\f{r}{t}}\right\}^{\f{t}{r}} \right\|_{\dot{E}
^{t\vec{\alpha},\vec{p}/t}_{\vec{q}/t}(\rr^{n})}
^{\f{1}{t}}.
\end{equation*}
By this, we conclude that it suffices to show that,
for $t=1$, any given $\vec{p}$, $\vec{q}
\in (1,\infty)$, $r\in (1,\infty)$, and $\alpha_{i}\in
(-\f{1}{q_{i}},1-\f{1}{q_{i}})$ with $i\in \{1,\ldots,
n \}$, and for any $\{f_{j} \}_{j\in\zz}\subset \iihz(\rn)$,
\begin{equation}\label{FS-G}
\left\|\left\{\sum_{j\in\zz}\left[M_{n}\cdots M_{1}(f_{j})
\right]^{r} \right\}^{\f{1}{r}}\right\|_{\iihz(\rn)}
\ls \left\|\left(\sum_{j\in\zz}|f_{j}|^{r}\right)^{
\f{1}{r}} \right\|_{\iihz(\rn)}.
\end{equation}
We prove this by induction. If $n:=1$, then the
desired inequality is obtained by
\cite[p.\,483,\ Corollary 4.5]{Ifs}.
Now, we assume that \eqref{FS-G} holds true for $n:=m\in\nn$,
namely, for any given $\vec{p},$
$\vec{q}\in (1,\infty)^{m},\ r\in (1,\infty),$ and
$\alpha_{i}\in (-\f{1}{q_{i}},1-\f{1}{q_{i}})$ with
$i\in \{1,\ldots,m \}$, and for any $\{f_{j} \}_{j\in\zz}
\subset \iihz(\rr^{m})$,
\begin{equation}\label{id-fsa}
\left\|\left\{\sum_{j\in\zz}\left[M_{m}\cdots M_{1}(f_{j})\right]
^{r} \right\}^{\f{1}{r}}\right\|_{\iihz(\rr^{m})}
\ls \left\|\left(\sum_{j\in\zz}|f_{j}|^{r}\right)^
{\f{1}{r}} \right\|_{\iihz(\rr^{m})}.
\end{equation}
To complete the proof of the present theorem, it
suffices to show that \eqref{FS-G} holds true for $n:=m+1$.
Let $$\rho:=\min\left\{p_{-},q_{-},\left(\alpha_{1}+\f{1}{q_{1}}\right)^{-1},
\ldots,\left(\alpha_{m+1}+\f{1}{q_{m+1}}\right)^{-1} \right\},$$
where $p_{-}:=\min\{p_{1},\ldots,p_{m+1} \}$ and $q_{-}
:=\min\{q_{1},\ldots,q_{m+1} \}$.
By Proposition \ref{bqb-2}(ii), Lemma \ref{hzcox-2}, Corollary \ref{Eass},
and Theorem \ref{exine}, we conclude that, for any
$\theta\in (1,\rho)$, $[\iihz(\rr^{m+1})]^{1/\theta}$ is a ball Banach
function space and $M_{m+1}$
is bounded on
$\{[\iihz(\rr^{m+1})]^{1/\theta}\}'$. From this,
Proposition \ref{bqb-2}(ii), Lemma \ref{ax-Fs},
Theorem \ref{exine}, and \eqref{id-fsa}, we deduce
that
\begin{align*}
&\left\|\left\{\sum_{j\in\zz}\left[M_{m+1}\cdots M_{1}(f_{j})\right]
^{r} \right\}^{\f{1}{r}}\right\|_{\iihz(\rr^{m+1})}\\
&\quad\ls\left\|\left\{\sum_{j\in\zz}\left[M_{m}\cdots M_{1}(f_{j})\right]^{r}
\right\}^{\f{1}{r}}\right\|_{\iihz(\rr^{m+1})}
\ls\left\|\left(\sum_{j\in\zz}|f_{j}|^{r}\right)^
{\f{1}{r}} \right\|_{\iihz(\rr^{m+1})}.
\end{align*}
Thus, \eqref{FS-G} holds true for $n:=m+1$, which then completes the proof
of Theorem \ref{FSvv-2a}.
\end{proof}	

As an immediate consequence of Theorem \ref{FSvv-2a}, we
have the following conclusion; we omit the details.

\begin{corollary}\label{FSvv-2}
Let $\vec{p},$ $\vec{q}:=(q_{1},\ldots,q_{n})\in(1,\infty)^{n},$ $
\vec{\alpha}:=(\alpha_{1},\ldots,\alpha_{n})\in\rn$ with
$\alpha_{i}\in(-\f{1}{q_{i}},1-\f{1}{q_{i}})$
for any $i\in\{1,\ldots,
n \}$, and $u\in(1,\infty)$. Then, for any
$\{f_{j} \}_{j\in\zz}\subset \MM(\rn)$,
\begin{equation*}
\left\|\left\{\sum_{j\in\zz}[M(f_{j})]^{u} \right\}
^{\f{1}{u}}\right\|
_{\iihz(\rn)}\ls\left\|\left(\sum_{j\in\zz}|f_{j}|
^{u} \right)^{\f{1}{u}}\right\|
_{\iihz(\rn)},
\end{equation*}
where the implicit positive constant is independent
of $\{f_{j} \}_{j\in\zz}$.
\end{corollary}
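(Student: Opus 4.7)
The plan is to deduce this from Theorem \ref{FSvv-2a} together with Remark \ref{max-cp}, which relates the classical centered Hardy--Littlewood maximal operator $M$ to the iterated maximal operator $\cm_1$. No interpolation, no duality, and no auxiliary construction will be needed; the statement is essentially a direct specialization, which is why the authors omit the details.

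First I would invoke Remark \ref{max-cp}, which provides a positive constant $C_{(n)}$, depending only on $n$, such that, for any $g\in\MM(\rn)$ and any $x\in\rn$, $M(g)(x)\leq C_{(n)}\cm_{1}(g)(x)$. Applying this to each $f_{j}$ with $j\in\zz$, and then forming the pointwise $l^{u}$-sum in $j$, I obtain, for any $x\in\rn$,
\begin{equation*}
\left\{\sum_{j\in\zz}[M(f_{j})(x)]^{u}\right\}^{1/u}\leq C_{(n)}\left\{\sum_{j\in\zz}[\cm_{1}(f_{j})(x)]^{u}\right\}^{1/u}.
\end{equation*}
Taking the $\iihz(\rn)$ quasi-norm of both sides and using the lattice property of $\iihz(\rn)$ recorded in Proposition \ref{E-lat}, I transfer this pointwise majorization to the corresponding norm inequality.

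The final step is to apply Theorem \ref{FSvv-2a} with $r:=u$ and $t:=1$. Its hypotheses require $t\in(0,\min\{r,p_{-},q_{-}\})$, which in the present setting reads $1<\min\{u,p_{-},q_{-}\}$; this is fulfilled because $\vec{p}$, $\vec{q}\in(1,\infty)^{n}$ and $u\in(1,\infty)$ by assumption, so $p_{-}>1$, $q_{-}>1$, and $u>1$. The conditions on $\vec{\alpha}$ match exactly. Consequently,
\begin{equation*}
\left\|\left\{\sum_{j\in\zz}[\cm_{1}(f_{j})]^{u}\right\}^{1/u}\right\|_{\iihz(\rn)}\ls\left\|\left(\sum_{j\in\zz}|f_{j}|^{u}\right)^{1/u}\right\|_{\iihz(\rn)},
\end{equation*}
and chaining this with the pointwise estimate above gives the desired inequality. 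There is no substantive obstacle in this argument: the only small check is that the strict inequalities in the index constraints of Theorem \ref{FSvv-2a} are compatible with the strict inequalities assumed in the corollary, which is why the choice $t=1$ is legitimate.
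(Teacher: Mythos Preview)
Your proposal is correct and matches the paper's intended argument: the authors state the corollary as an immediate consequence of Theorem \ref{FSvv-2a} and omit the details, and you have supplied precisely those details by combining the pointwise domination $M\ls\cm_{1}$ from Remark \ref{max-cp} with the lattice property (Proposition \ref{E-lat}) and then invoking Theorem \ref{FSvv-2a} with $r:=u$, $t:=1$. The only cosmetic discrepancy is that Theorem \ref{FSvv-2a} is stated for $\{f_{j}\}\subset\iihz(\rn)$ while the corollary allows $\{f_{j}\}\subset\MM(\rn)$; this is harmless since the inequality is vacuous when the right-hand side is infinite.
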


\section{Mixed-Norm Herz--Hardy Spaces}\label{s5}

In this section, we apply all the results obtained in
Sections
\ref{s2}, \ref{s3}, and \ref{s4} to the mixed-norm Herz--Hardy spaces $H\iihz(\rn)$
(see Definition \ref{hd-2} below) and establish
various real-variable characterizations of mixed-norm Herz--Hardy
spaces.

Recall that Sawano et\ al. \cite{SHYY} introduce the Hardy space associated
a ball quasi-Banach function space
via the maximal function of Peetre type. The maximal function
of Peetre type is defined as follows.
\begin{definition}
Let $b\in (0,\infty),$ $\Phi \in \SS(\rn)$,
and
$\rr_{+}^{n+1}:= \rn\times (0,\infty)$. For any $f\in \SS'(\rn)$, the \emph{maximal
function} $M^{**}_{b}(f,\Phi)$ \emph{of Peetre
type} is defined by setting, for any $x\in \rn$,
\begin{equation*}
M^{**}_{b}(f,\Phi)(x):=\sup_{(y,t)\in \rr_{+}^{n+1}}
\frac{|(\Phi_{t}\ast f)(x-y)|}{(1+t^{-1}|y|)^{b}}.
\end{equation*}	
\end{definition}
By \cite[p.\,19,\ Definition 2.22]{SHYY}, we introduce the following Hardy
space associated with the mixed-norm Herz space $\iihz(\rn)$.
\begin{definition}\label{hd-2}
Let $\vec{p}$, $\vec{q}:=(q_{1},\ldots,q_{n})\in(0,\infty)^{n}$
and $\vec{\alpha}:=(\alpha_{1},\ldots,\alpha_{n})$ with $\alpha_{i}\in(-\f{1}{q_{i}},\infty)$ for any
$i\in\{1,\ldots,n\}$.
Assume that $\Phi\in\SS(\rn)$ satisfies $\int_{\rn}
\Phi(x)\,dx\neq 0$
and $b\in(0,\infty)$ is sufficiently large. Then the
\emph{Hardy space} $H\iihz(\rn)$ associated with
$\iihz(\rn)$ (for short, the \emph{mixed-norm Herz--Hardy space})
is defined by setting
\begin{equation*}
H\iihz(\rn):=\left\{f\in\SS'(\rn):\ \|f\|_{H\iihz(\rn)}
:=\left\|M_{b}^{**}(f,\Phi)\right\|_{\iihz(\rn)}<\infty \right\}.
\end{equation*}
\end{definition}
For any given $\theta\in(0,\infty)$, the \emph{powered Hardy--Littlewood
maximal operator} $M^{(\theta)}$ is defined by setting, for any
$f\in \MM(\rn)$ and $x\in\rn$,
\begin{equation}\label{pmax}
M^{(\theta)}(f)(x):=\left[M(|f|^{\theta})(x)\right]^{\f{1}{\theta}}.
\end{equation}

The following technical lemma plays an important role in the study
of mixed-norm Herz--Hardy spaces.
\begin{lemma}\label{assp-E-1}
Let $\vec{p}:=(p_{1},\ldots,p_{n})$, $\vec{q}:=(q_{1},\ldots,q_{n})\in(0,\infty)^{n},$ $\vec{\alpha}:=(\alpha_{1},\ldots,\alpha_{n})
\in\rn$ with
$\alpha_{i}\in(-\f{1}{q_{i}},\infty)$ for any $i\in
\{1,\ldots,n\}$, $u\in(1,\infty)$, and
$\kappa:=\min\{1,\nu\},$ where $p_{-},
\ q_{-}$
are as in \eqref{p-} and
\begin{equation}\label{nuu}
\nu:=\min\left\{p_{-},q_{-},(\alpha_{1}+1/q_{1})^{-1},
\ldots,(\alpha_{n}+1/q_{n})^{-1} \right\}.
\end{equation}
Then
\begin{enumerate}
\item[$\mathrm{(i)}$] for any given $\theta\in(0,
\kappa)$, there exists a positive
constant $C$ such that, for any
$\{f_{j} \}_{j\in\nn}\subset \MM{(\rn)}$,
\begin{equation*}
\left\|\left\{\sum_{j\in \nn}[M(f_{j})]^{u} \right\}
^{\f{1}{u}}\right\|_{[\iihz(\rn)]^{1/\theta}}
\leq
C\left\|\left(\sum_{j\in \nn}|f_{j}|^{u} \right)
^{\f{1}{u}}\right\|_{[\iihz(\rn)]^{1/\theta}};
\end{equation*}
\item[$\mathrm{(ii)}$] there exist
an $r\in(0,\nu)$
and a $t\in(0,\infty)$ satisfying
$$t\in\left(\max\left\{p_{+},q_{+},(\alpha_{1}+1/q_{1} )^{-1}
,\ldots,(\alpha_{n}+
1/q_{n})^{-1}\right\},\infty\right),$$
with $p_+:=\max\{p_1,\ldots, p_n\}$ and
$q_+:=\max\{q_1,\ldots, q_n\}$,
such that $[\iihz(\rn)]^{1/r}$ is a ball Banach
function  space
and, for any $f\in([\iihz(\rn)]^{1/r})'$,
\begin{equation*}
\left\|M^{((t/r)')}(f) \right\|_{([\iihz(\rn)]^{1/r} )'}
\ls \|f\|_{([\iihz(\rn)]^{1/r} )'},
\end{equation*}
where the implicit positive constant is independent of $f$.
\end{enumerate}
\begin{proof}
Let all the symbols be as in the present lemma.
From Lemma \ref{hzcox-2} and Corollary \ref{FSvv-2},
it follows that Lemma \ref{assp-E-1}(i) holds true.
Applying Lemma \ref{hzcox-2}, Proposition
\ref{bqb-2}(ii), Corollary \ref{Eass}, \eqref{pmax},
Definition \ref{cvx-2}, and Corollary \ref{HL-2},
we conclude that the desired conclusions of Lemma
\ref{assp-E-1}(ii) are true. This finishes the proof
of Lemma \ref{assp-E-1}.
\end{proof}
\end{lemma}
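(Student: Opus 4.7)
The plan is to prove each part by first using Lemma \ref{hzcox-2} to identify the relevant convexification of $\iihz(\rn)$ with a mixed-norm Herz space with rescaled parameters, and then invoking the appropriate boundedness theorem from Section \ref{s4}.

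For part (i), Lemma \ref{hzcox-2} identifies $[\iihz(\rn)]^{1/\theta}$ (in the sense of equal norms) with $\dot{E}^{\theta\vec{\alpha},\vec{p}/\theta}_{\vec{q}/\theta}(\rn)$. I would then verify the hypotheses of Corollary \ref{FSvv-2} for this rescaled space: the condition $\theta\in(0,\kappa)\subset(0,\nu)$ forces $\theta<p_i$ and $\theta<q_i$ (so $p_i/\theta$ and $q_i/\theta$ lie in $(1,\infty)$), forces $\theta\alpha_i>-\theta/q_i$ trivially from $\alpha_i>-1/q_i$, and forces $\theta\alpha_i<1-\theta/q_i$ because $\theta<(\alpha_i+1/q_i)^{-1}$ is equivalent to $\theta(\alpha_i+1/q_i)<1$. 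Corollary \ref{FSvv-2} then yields the Fefferman--Stein inequality on $\dot{E}^{\theta\vec{\alpha},\vec{p}/\theta}_{\vec{q}/\theta}(\rn)$, which is exactly the desired inequality on $[\iihz(\rn)]^{1/\theta}$.

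For part (ii), I would choose any $r\in(0,\nu)$ and any $t$ with $t>\max\{p_+,q_+,(\alpha_1+1/q_1)^{-1},\ldots,(\alpha_n+1/q_n)^{-1}\}$; in particular $t>r$ and $s:=(t/r)'=t/(t-r)\in(1,\infty)$ is well defined. By Lemma \ref{hzcox-2}, $[\iihz(\rn)]^{1/r}=\dot{E}^{r\vec{\alpha},\vec{p}/r}_{\vec{q}/r}(\rn)$; the hypothesis $r<\nu$ supplies $\vec{p}/r,\vec{q}/r\in[1,\infty]^n$ and $r\alpha_i\in(-1/(q_i/r),1-1/(q_i/r))$, so Proposition \ref{bqb-2}(ii) confirms that this space is a ball Banach function space and Corollary \ref{Eass} identifies its associate space with $\dot{E}^{-r\vec{\alpha},(\vec{p}/r)'}_{(\vec{q}/r)'}(\rn)$. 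Next, the definition \eqref{pmax} of $M^{((t/r)')}$ combined with Definition \ref{cvx-2} gives, for any ball Banach function space $X$ and any $f$,
\begin{equation*}
\left\|M^{(s)}(f)\right\|_{X'}=\left\|M(|f|^s)\right\|^{1/s}_{[X']^{1/s}}\qquad\text{and}\qquad \|f\|_{X'}=\left\||f|^s\right\|^{1/s}_{[X']^{1/s}},
\end{equation*}
so the desired boundedness of $M^{((t/r)')}$ on $([\iihz(\rn)]^{1/r})'$ reduces to boundedness of $M$ on $[([\iihz(\rn)]^{1/r})']^{1/s}$. A third application of Lemma \ref{hzcox-2} identifies this last space with $\dot{E}^{-sr\vec{\alpha},(\vec{p}/r)'/s}_{(\vec{q}/r)'/s}(\rn)$, and Corollary \ref{HL-2} concludes the argument once the parameters are shown to satisfy its hypotheses.

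The only nontrivial step is a short algebraic verification using $s=t/(t-r)$. The requirement $(\vec{p}/r)'/s\in(1,\infty)^n$ simplifies to $t>p_i$ for each $i$, the lower weight bound $-sr\alpha_i>-s/(q_i/r)'$ simplifies to $r<(\alpha_i+1/q_i)^{-1}$, and the upper weight bound $-sr\alpha_i<1-s/(q_i/r)'$ simplifies, after multiplying through by $q_i(t-r)/s$ and rearranging, to $t>(\alpha_i+1/q_i)^{-1}$. All three constraints are guaranteed by our choices of $r\in(0,\nu)$ and $t>\max\{p_+,q_+,(\alpha_1+1/q_1)^{-1},\ldots,(\alpha_n+1/q_n)^{-1}\}$. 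This bookkeeping with the three scale parameters $r$, $s$, and $t$ is the only place where real care is required; everything else is a direct invocation of the results already established in Sections \ref{s2} and \ref{s4}.
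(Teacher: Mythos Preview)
Your proposal is correct and follows exactly the approach indicated (but not detailed) in the paper: for (i) you reduce via Lemma \ref{hzcox-2} to Corollary \ref{FSvv-2}, and for (ii) you combine Lemma \ref{hzcox-2}, Proposition \ref{bqb-2}(ii), Corollary \ref{Eass}, Definition \ref{cvx-2}, \eqref{pmax}, and Corollary \ref{HL-2}, with a careful verification of the parameter constraints that the paper leaves implicit. One small omission: in the final step you also need $(\vec{q}/r)'/s\in(1,\infty)^n$, which by the same algebra reduces to $t>q_i$; this is covered by your choice $t>q_+$.
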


\subsection{Littlewood--Paley Characterizations\label{s5.1}}

In this subsection, we establish various Littlewood--Paley
characterizations of $H\iihz(\rn)$, including its
characterizations by means of the Lusin
area function, the Littlewood--Paley $g$-function, and the
Littlewood--Paley
$g_{\lambda}^{*}$-function.

In what follows, for any $\varphi\in \SS(\rn)$, we define
its \emph{Fourier transform}  $\widehat{\varphi}$ by setting,
for any $\xi\in\rn$,
\begin{equation*}
\widehat{\varphi}(\xi):=\int_{\rn}\varphi(x)
e^{-2\pi ix\cdot \xi}\,dx.
\end{equation*}
Moreover, for any $f\in\SS'(\rn)$, $\widehat{f}$ is defined by
setting, for any $\varphi\in\SS(\rn),$ $\langle\widehat{f},
\varphi \rangle:=\langle f,\widehat{\varphi}\rangle$; also, for
any $f\in\SS'(\rn)$ and $\varphi\in\SS(\rn)$,
$f*\varphi$ is defined by setting, for any $\psi\in\SS(\rn)$,
$$\langle f*\varphi,\psi\rangle:=\langle f,\wt{\varphi}
*\psi\rangle,$$
where $\wt{\varphi}(x):=
\varphi(-x)$. In addition, $f\in\SS'(\rn)$ is said to
\emph{vanish weakly at infinity} if, for any $\phi\in\SS(\rn)$
and $\phi_{t}(x):=t^{-n}\phi(x/t)$ with $t\in(0,\infty)$
and $x\in\rn$, $f*\phi_{t}\rightarrow 0$ in $\SS'(\rn)$
as $t\rightarrow \infty$.

Now, we recall the definitions of various Littlewood--Paley
functions which were introduced by \cite{zywc} under some
weaker conditions (see \cite[Remark 4.3]{zywc}).
\begin{definition}
Let $\varphi\in\SS(\rn)$ satisfy $\widehat{\varphi}
(\textbf{0})=0$ and assume that, for any $\xi\in\rn
\setminus \{\textbf{0}\}$, there exists an $h\in(0,\infty)$
such that $\widehat{\varphi}(h\xi)\neq 0$. For any
distribution $f\in\SS'(\rn)$, the \emph{Lusin area function}
$S(f)$ and
the \emph{Littlewood--Paley} $g^{*}_{\lambda}$
\emph{-function} $g^{*}_{\lambda}(f)$ with any given
$\lambda\in(0,\infty)$ are defined,
respectively, by setting, for any $x\in\rn$,
\begin{equation*}
S(f)(x):=\left\{\int_{\Gamma(x)}|\varphi_{t}*f(y)|^{2}\,
\f{dy\,dt}{t^{n+1}} \right\}^{\f{1}{2}}
\end{equation*}
and
\begin{equation*}
g^{*}_{\lambda}(f)(x):=\left[\int_{0}^{\infty}\int_{\rn}
\left( \f{t}{t+|x-y|}\right)^{\lambda n}|\varphi_{t}*f(y)
|^{2}\,\f{dy\,dt}{t^{n+1}} \right]^{\f{1}{2}},
\end{equation*}
where, for any $x\in\rn$, $\Gamma(x):=\{(y,t)\in\rr_{+}^{n+1}:
\ |x-y|<t \}$ and, $\varphi_{t}(x):=t^{-n}\varphi(x/t)$
with $t\in(0,\infty)$ and $x\in\rn$.
\end{definition}
\begin{definition}
Let $\varphi\in\SS(\rn)$ satisfy $\widehat{\varphi}
(\textbf{0})=0$ and assume
that, for any $x\in\rn\setminus\{\textbf{0}\}$,
there exists a $j\in\zz$ such that $\widehat{\varphi}
(2^{j}x)\neq 0$. For any $f\in\SS'(\rn)$, the
\emph{Littlewood--Paley} $g$\emph{-function}
$g(f)$ is defined by setting, for any $x\in\rn$,
\begin{equation*}
g(f)(x):=\left[\int_{0}^{\infty}|f*\varphi_{t}(x)
|^{2}\,\f{dt}{t} \right]^{\f{1}{2}}.
\end{equation*}
\end{definition}

Now, we characterize mixed-norm Herz--Hardy spaces
$H\iihz(\rn)$, respectively, via the Lusin area function, the
Littlewood--Paley $g$-function, and the
Littlewood--Paley $g_{\lambda}^{*}$-function as follows.
\begin{theorem}\label{LP-2}
Let $\vec{p}:=(p_{1},\ldots,p_{n})$, $\vec{q}:=(q_{1},\ldots,q_{n})\in(0,
\infty)^{n},$ $\vec{\alpha}:=(\alpha_{1},\ldots,\alpha_{n})\in\rn$ with $\alpha_{i}
\in(-\f{1}{q_{i}},\infty)$
for any $i\in\{1,\ldots,n\}$, and $\lambda\in
(\max\{1,2/\nu \},\infty)$, where $\nu$ is as in \eqref{nuu}. Then the
following assertions are mutually equivalent.
\begin{enumerate}
\item[$\mathrm{(i)}$] $f\in H\iihz(\rn)$.
\item[$\mathrm{(ii)}$] $f\in\SS'(\rn)$, $f$
vanishes weakly at infinity,
and $\|S(f)\|_{\iihz(\rn)}<\infty$. In this
case, for any $f\in H\iihz(\rn)$,
\begin{equation*}
\|f\|_{H\iihz(\rn)}\sim \|S(f)\|_{\iihz(\rn)}.
\end{equation*}
\item[$\mathrm{(iii)}$] $f\in \SS'(\rn)$, $f$
vanishes weakly at infinity, and
$\|g(f)\|_{\iihz(\rn)}<\infty$. In this case,
for any $f\in H\iihz(\rn)$,
\begin{equation*}
\|f\|_{H\iihz(\rn)}\sim \|g(f)\|_{\iihz(\rn)}.
\end{equation*}
\item[$\mathrm{(iv)}$]  $f\in\SS'(\rn)$, $f$
vanishes weakly at infinity,
and $\|g_{\lambda}^{*}(f)\|_{\iihz(\rn)}<\infty$.
Furthermore, for
any $f\in H\iihz(\rn)$,
\begin{equation*}
\|f\|_{H\iihz(\rn)}\sim \|g_{\lambda}^{*}(f)
\|_{\iihz(\rn)}.
\end{equation*}
\end{enumerate}
Here all of the positive equivalence constants
are independent of $f$.
\end{theorem}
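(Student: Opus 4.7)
The plan is to reduce the four Littlewood--Paley characterizations of $H\iihz(\rn)$ to the corresponding general characterizations for Hardy spaces $H_{X}(\rn)$ associated with ball quasi-Banach function spaces established by Sawano et al. \cite{SHYY} and refined in \cite{zywc}. Those abstract theorems require: (a) $X$ is a ball quasi-Banach function space; (b) there exist $\theta\in(0,1]$ and $u\in(1,\infty)$ such that the Fefferman--Stein vector-valued maximal inequality of order $u$ holds on the convexification $X^{1/\theta}$; and (c) there exist $r\in(0,\infty)$ and $t\in(r,\infty)$ such that $X^{1/r}$ is a ball Banach function space and the powered Hardy--Littlewood maximal operator $M^{((t/r)')}$ is bounded on $(X^{1/r})'$. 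Condition (a) is supplied by Proposition \ref{bqb-2}(i), while Lemma \ref{assp-E-1} supplies precisely (b) and (c) with the sharp parameter ranges dictated by $\nu$ in \eqref{nuu}.

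First, I would record that $\iihz(\rn)$ is a ball quasi-Banach function space under the hypotheses of the theorem, so the definition of $H\iihz(\rn)$ via $M_{b}^{**}$ falls inside the Sawano--Shi--Yang--Yuan framework. Next I would verify the hypotheses of the abstract Lusin area function characterization by invoking Lemma \ref{assp-E-1}: part (i) of that lemma produces the required Fefferman--Stein inequality on $[\iihz(\rn)]^{1/\theta}$ for $\theta\in(0,\kappa)$ and any $u\in(1,\infty)$, and part (ii) yields $r\in(0,\nu)$ and $t$ in the required range such that $M^{((t/r)')}$ is bounded on $([\iihz(\rn)]^{1/r})'$. The restriction $\lambda\in(\max\{1,2/\nu\},\infty)$ in (iv) is exactly what forces the analogous abstract hypothesis for the $g_{\lambda}^{*}$-function characterization to hold, since that theorem demands $\lambda n/2>n/r$ for some admissible $r$, which here amounts to $\lambda>2/r$ for some $r<\nu$, equivalent to $\lambda>2/\nu$.

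With these verifications in hand, the equivalence (i)$\Leftrightarrow$(ii) follows from the Lusin area function characterization of $H_{X}(\rn)$ in \cite{SHYY,zywc}; (i)$\Leftrightarrow$(iii) follows from the analogous $g$-function characterization; and (i)$\Leftrightarrow$(iv) follows from the $g_{\lambda}^{*}$-function characterization. In each case the fact that $f$ vanishes weakly at infinity is built into the abstract statements, and the equivalence constants are independent of $f$ because the constants in the abstract theorems depend only on $\theta$, $r$, $t$, $u$, $\lambda$, and the operator norms furnished by Lemma \ref{assp-E-1}, all of which are determined by $\vec{\alpha}$, $\vec{p}$, $\vec{q}$.

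The main obstacle is bookkeeping: confirming that the parameter restriction $\lambda>\max\{1,2/\nu\}$ in (iv) is exactly the range in which a suitable $r\in(0,\nu)$ can be chosen so that the abstract hypothesis $\lambda>2/r$ holds, and that the range of $t$ produced by Lemma \ref{assp-E-1}(ii) is compatible with the admissible range of the powered maximal operator required by the Littlewood--Paley characterizations. Once these parameter matchings are made explicit, no further analysis is needed; the proof is a direct specialization of the general real-variable theory of $H_{X}(\rn)$ to $X=\iihz(\rn)$.
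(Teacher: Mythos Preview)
Your proposal is correct and follows essentially the same route as the paper: both reduce each of the three Littlewood--Paley characterizations to the corresponding abstract results in \cite{zywc} (Theorems 4.9, 4.11, 4.13) by invoking Proposition \ref{bqb-2}(i) for the ball quasi-Banach structure and Lemma \ref{assp-E-1} for the required Fefferman--Stein and powered maximal operator bounds, with the range $\lambda\in(\max\{1,2/\nu\},\infty)$ matched to the parameter $r\in(0,\nu)$ exactly as you describe.
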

\begin{proof}
Let all the symbols be as in the present theorem.
By Proposition \ref{bqb-2}(i) and Lemma \ref{assp-E-1},
we conclude that all the assumptions
of \cite[Theorem 4.9]{zywc} with $X:=\iihz(\rn)$ are satisfied. Thus,
by \cite[Theorem 4.9]{zywc} with $X:=\iihz(\rn)$, we find that
the equivalence of both (i) and (ii) holds true.

From \eqref{pmax}, Definition
\ref{cvx-2}, Corollary \ref{FSvv-2}, Lemmas
\ref{assp-E-1}(ii) and
\ref{hzcox-2}, we deduce that all the assumptions
of \cite[Theorem 4.13]{zywc} with $X:=\iihz(\rn)$,
$s\in(0,\kappa)$ and $\theta\in(0,\min\{s,
\f{s^{2}}{2}\})$, where $\kappa$ is as in Lemma
\ref{assp-E-1}, are satisfied. Thus,
from this and \cite[Theorem 4.13]{zywc} with $X:=\iihz(\rn)$,
we infer that (i) is equivalent
to (iii).

Finally, by Lemmas \ref{assp-E-1} and \ref{hzcox-2}, and
Proposition \ref{bqb-2}(ii), it follows that all the assumptions of
\cite[Theorem 4.11]{zywc} with $X:=\iihz(\rn)$ and $\lambda\in
(\max\{1,2/\nu \},\infty)$ are satisfied, which implies that
(i) is equivalent to (iv). This finishes the proof
of Theorem \ref{LP-2}.
\end{proof}
\begin{remark}
In Theorem \ref{LP-2}, if $p_{i}=q_{i}\in (0,\infty)$
for any $i\in\{1,\ldots,n \}$, and $\vec{\alpha}:=
\textbf{0}$, then $\iihz(\rn)=\lq(\rn)$ and, in this case,
Theorem \ref{LP-2} is just \cite[p.\,25,\ Theorems
5.8 and 5.10]{zywc}; if $n:=1$,
then $\iihz(\rr)=\dot{K}_{q}^{\alpha,p}(\rr)$ and, in this case, to the best of our knowledge,
the conclusions of Theorem \ref{LP-2} are also new.
\end{remark}

\subsection{Boundedness of Calder\'{o}n--Zygmund Operators\label{s5.2}}

In this subsection, we obtain the boundedness of
Calder\'{o}n--Zygmund
operators on the mixed-norm Herz--Hardy spaces. Let us recall
the definition of convolutional
Calder\'{o}n--Zygmund operators (see, for instance,
\cite[Section 5.3.2]{LK}).

\begin{definition}
For any given $\delta\in(0,1)$, a \emph{convolutional
$\delta$-type
Calder\'{o}n--Zygmund operator} $T$ is a
linear bounded operator on
$L^{2}(\rn)$ with the kernel $K\in\SS'(\rn)$
coinciding with a locally
integrable function on $\rn\setminus \{\textbf{0}\}$
and satisfying:
\begin{enumerate}
\item[$\mathrm{(i)}$] there exists a positive constant $C$
such that, for any
$x,\ y\in\rn$ with $|x|>2|y|$,
\begin{equation*}
|K(x-y)-K(x)|\leq C\f{|y|^{\delta}}{|x|^{n+\delta}};
\end{equation*}
\item[$\mathrm{(ii)}$] for any $f\in L^{2}(\rn)$ and $x\in\rn,$
$T(f)(x)=\text{p.\,v.\,}K*f(x)$, where
p.\,v. denotes the \emph{Cauchy principal value}.
\end{enumerate}
\end{definition}
Now, we investigate the boundedness of convolutional
$\delta$-type Calder\'{o}n--Zygmund operators on
$H\iihz(\rn)$.
\begin{theorem}\label{CZ-bd-2}
Let $\delta\in(0,1),$ $\vec{p}:=(p_{1},\ldots,p_{n})$,
$\vec{q}:=(q_{1},\ldots,q_{n})\in(\f{n}{n+\delta},\infty)^{n},\
\vec{\alpha}:=(\alpha_{1},\ldots,\alpha_{n})\in\rn$ with
$\alpha_{i}\in(-\f{1}{q_{i}},1-\f{1}{q_{i}}+\f{\delta}{n})$,
and
$T$ be a convolutional $\delta$-type
Calder\'{o}n--Zygmund
operator. Then $T$ has a unique extension on
$H\iihz(\rn)$.
Moreover, there exists a positive constant $C$
such that,
for any $f\in H\iihz(\rn)$,
\begin{equation*}
\|T(f)\|_{H\iihz(\rn)}\leq C\|f\|_{H\iihz(\rn)}.
\end{equation*}
\end{theorem}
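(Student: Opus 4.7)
The plan is to reduce the assertion to a known boundedness theorem for convolutional $\delta$-type Calder\'on--Zygmund operators on the Hardy space $H_{X}(\rn)$ associated with a ball quasi-Banach function space $X$, and then to verify, under the index restrictions of the theorem, that $X:=\iihz(\rn)$ satisfies all of the abstract hypotheses. Such a generic theorem (of the type developed in \cite{SHYY} and related work cited in the introduction) typically requires three ingredients: (a) $X$ is a ball quasi-Banach function space; (b) there exists some $\theta \in (\frac{n}{n+\delta}, 1]$ such that the Fefferman--Stein vector-valued maximal inequality holds on the convexification $X^{1/\theta}$; and (c) for some compatible exponent $t$, the powered Hardy--Littlewood maximal operator $M^{((t/\theta)')}$ is bounded on the associate space $(X^{1/\theta})'$.

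First I would note that, under the stated restriction $\alpha_i \in (-1/q_i, 1 - 1/q_i + \delta/n) \subset (-1/q_i, \infty)$, Proposition \ref{bqb-2}(i) guarantees (a), so that $H\iihz(\rn)$ fits into the framework of $H_X(\rn)$. The key observation driving the choice of the $\delta$-range is that the parameter $\nu$ of \eqref{nuu} satisfies $\nu > \frac{n}{n+\delta}$ precisely because the hypotheses give $p_-, q_- > \frac{n}{n+\delta}$ and $\alpha_i + 1/q_i < 1 + \delta/n$, so $(\alpha_i + 1/q_i)^{-1} > \frac{n}{n+\delta}$. This margin allows us to pick an admissible $\theta \in (\frac{n}{n+\delta}, \kappa)$ (with $\kappa := \min\{1,\nu\}$ as in Lemma \ref{assp-E-1}) to play the role demanded in (b), whereupon Lemma \ref{assp-E-1}(i) provides the Fefferman--Stein inequality on $[\iihz(\rn)]^{1/\theta}$; similarly Lemma \ref{assp-E-1}(ii) (with $r$ taken close enough to $\frac{n}{n+\delta}$) yields the powered maximal bound on the associate space, giving (c).

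With (a), (b), (c) in hand, the abstract theorem applies with $X := \iihz(\rn)$ and produces, for every $f$ in a dense subspace (e.g., finite linear combinations of atoms supplied by the atomic characterization that underlies Theorem \ref{LP-2}), the inequality $\|T(f)\|_{H\iihz(\rn)} \leq C\|f\|_{H\iihz(\rn)}$. A standard density-and-extension argument, relying on completeness of $H\iihz(\rn)$ (itself a consequence of the ball quasi-Banach function space structure), then gives a unique bounded extension of $T$ to all of $H\iihz(\rn)$.

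The main technical obstacle will be the careful bookkeeping of the interlocking indices: one must pick a single $\theta$ (or pair $(r,t)$) that simultaneously satisfies $\theta < \kappa$, $\theta > n/(n+\delta)$, $(t/r)' > 1$, and the compatibility required by the abstract theorem (typically a molecular or atomic reproducing-formula convergence condition that depends on $\delta$). The constraints $\vec{p},\vec{q} \in (\tfrac{n}{n+\delta},\infty)^n$ and $\alpha_i < 1 - 1/q_i + \delta/n$ are sharp in the sense that they are exactly what is needed to force $\nu > \frac{n}{n+\delta}$, leaving a non-empty open interval of admissible $\theta$; verifying this non-emptiness and checking that Lemma \ref{assp-E-1} can be invoked at such $\theta$ is the crux of the argument, while the remainder is a direct appeal to the abstract machinery.
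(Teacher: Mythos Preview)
Your proposal is correct and follows essentially the same route as the paper: verify that $\iihz(\rn)$ satisfies the hypotheses of the abstract boundedness theorem for convolutional $\delta$-type Calder\'on--Zygmund operators on $H_X(\rn)$ (specifically \cite[Theorem~3.5]{rm-wyy}) via Proposition~\ref{bqb-2} and Lemma~\ref{assp-E-1}, exploiting exactly the index arithmetic you describe to ensure $\nu>n/(n+\delta)$. The only ingredient you leave implicit is Proposition~\ref{ab-2} (the absolute-continuity-of-norm property), which the paper cites and which is what underlies the density of the finite atomic space needed for the unique extension; your mention of a ``standard density-and-extension argument'' covers this, but you should point to Proposition~\ref{ab-2} explicitly.
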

\begin{proof}
Let all the symbols be as in Theorem \ref{CZ-bd-2}.
From Proposition \ref{bqb-2}, Proposition \ref{ab-2}, and
Lemma \ref{assp-E-1}, it follows that all the assumptions of
\cite[p.\,26,\ Theorem 3.5]{rm-wyy} with $X:=\iihz(\rn)$ are satisfied,
which, together with \cite[p.\,26,\ Theorem 3.5]{rm-wyy} with $X:=\iihz(\rn)$,
further implies the desired conclusion. This finishes
the proof of Theorem \ref{CZ-bd-2}.
\end{proof}
\begin{remark}
In Theorem \ref{CZ-bd-2}, if $p_{i}=q_{i}\in (\f{n}
{n+\delta},\infty)$ for any $i\in\{1,\ldots,n \}$, and
$\vec{\alpha}:=\textbf{0}$, then $\iihz(\rn)=\lq(\rn)$ and,
in this case, Theorem \ref{CZ-bd-2} is just
\cite[p.\,3071,\ Theorem 8.4]{ahd} with $A:=d I_{n\times n}$
for some $d\in\rr$ with $d\in(1,\infty)$;
if $n:=1,\ p\in(\f{1}{1+\delta},\infty),$ $q\in(1,\infty),$
and $\alpha\in[1-\f{1}{q},1-\f{1}{q}+\delta)$, then
$\iihz(\rr)= \dot{K}_{q}^{\alpha,p}(\rr)$ and, in this case,
Theorem \ref{CZ-bd-2} coincides with
\cite[p.\,169,\ Theorem 6.2.3]{HZ} with $n:=1$ and $p\in (\f{1}{1+\delta},\infty)$ there.
\end{remark}
In order to establish the boundedness of
$\gamma$-order Calder\'{o}n--Zygmund operators on the
mixed-norm Herz--Hardy space, we first recall some concepts.
\begin{definition}\label{d5.11}
For any given $\gamma\in(0,\infty)$, a
\emph{$\gamma$-order Calder\'{o}n--Zygmund  operator} $T$
is a linear bounded operator on $L^{2}(\rn)$ and
there exists a kernel $K$ on $(\rn\times\rn)\setminus
\{(x,x):\ x\in\rn \}$ satisfying
\begin{enumerate}
\item[$\mathrm{(i)}$]  for any $\beta:=(\beta_{1},\ldots,\beta_{n})
\in\zz_{+}^{n}$ with $|\beta|\leq \lceil\gamma
\rceil-1$, there exists a positive
constant $C$ such that, for any $x,$ $y,$ $z\in\rn$
with
$|x-y|>2|y-z|$,
\begin{equation*}
|\pa_{x}^{\beta}K(x,y)-\pa_{x}^{\beta}K(x,z)|\leq C
\f{|y-z|^{\gamma-\lceil\gamma \rceil+1}}{|x-y|^{n+\gamma}},
\end{equation*}
here and thereafter, $\pa_{x}^{\beta}:=(\pa/\pa x_{1})^
{\beta_{1}}\ldots(\pa/\pa x_{n})^{\beta_{n}}$;
\item[$\mathrm{(ii)}$] for any $f\in L^{2}(\rn)$ with compact support and
$x\notin \supp(f)$,
\begin{equation*}
T f(x)=\int_{\supp(f)}K(x,y)f(y)\,dy.
\end{equation*}
\end{enumerate}	
\end{definition}
For any $m\in\nn$, an operator $T$ is said to have the
\emph{vanishing moments up to order} $m$ if, for any $f\in L^{2}(\rn)$
with compact support satisfying that, for any $\xi:=(\xi_{1},
\ldots,\xi_{n})\in\zz_{+}^{n}$ with $|\xi|\leq m,\
\int_{\rn}x^{\xi}f(x)\,dx=0$, it holds true that
$$\int_{\rn} x^{\xi}Tf(x)\,dx=0.$$

Now, we establish the boundedness of $\gamma$-order
Calde\'{o}n--Zygmund operators on $H\iihz(\rn)$ as follows.
\begin{theorem}\label{CZo-2}
Let $\gamma\in(0,\infty),$ $ \vec{p}:=(p_{1},\ldots,p_{n}),$
$\vec{q}:=(q_{1},\ldots,q_{n})\in(\f{n}{n+\gamma},2)^{n}$,
and $\vec{\alpha}:=(\alpha_{1},\ldots,\alpha_{n})\in\rn$
with $\alpha_{i}\in(\f{1}{2}-\f{1}{q_{i}},1-\f{1}{q_{i}}+\f{\gamma}{n})$
for any $i\in\{1,\ldots,n\}$. Assume that $T$ is a
$\gamma$-order Calder\'{o}n--Zygmund operator and has
the vanishing moments up to order $\lceil \gamma \rceil-1$.
Then $T$ has a unique extension on $H\iihz(\rn)$.
Moreover, there exists a positive
constant $C$ such that, for any $f\in H\iihz(\rn)$,
\begin{equation*}
\|T(f)\|_{H\iihz(\rn)}\leq C\|f\|_{H\iihz(\rn)}.
\end{equation*}
\end{theorem}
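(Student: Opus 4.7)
The plan is to reduce Theorem \ref{CZo-2} to an abstract theorem on the boundedness of $\gamma$-order Calder\'on--Zygmund operators on Hardy spaces associated with ball quasi-Banach function spaces, developed in the framework of Sawano et al.\ \cite{SHYY} and refined in subsequent works (e.g., the analogue for $\gamma$-order operators corresponding to \cite[p.\,26,\ Theorem 3.5]{rm-wyy} used in the proof of Theorem \ref{CZ-bd-2}). Since $H\iihz(\rn)$ falls into this framework once $\iihz(\rn)$ is recognized as a ball quasi-Banach function space, the task is to verify a list of standard hypotheses and then quote the abstract result. The stronger assumptions on $\vec p,\vec q,\vec\alpha$ here (compared with Theorem \ref{CZ-bd-2}) are precisely what is needed to make the molecular decomposition of $H\iihz(\rn)$ converge against an operator of smoothness order $\gamma$ with vanishing moments up to $\lceil\gamma\rceil-1$.

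First, I would note that the hypothesis $\alpha_{i}\in(\tfrac12-\tfrac1{q_{i}},1-\tfrac1{q_{i}}+\tfrac{\gamma}{n})$ together with $q_{i}>\tfrac{n}{n+\gamma}$ implies in particular $\alpha_{i}>-\tfrac1{q_{i}}$, so Proposition \ref{bqb-2}(i) shows that $\iihz(\rn)$ is a ball quasi-Banach function space; the absolute-continuity-type property needed in atomic/molecular arguments is exactly Proposition \ref{ab-2}. Next, using Lemma \ref{hzcox-2}, for any $r\in(0,\infty)$ the $r$-convexification $[\iihz(\rn)]^{1/r}$ is itself a mixed-norm Herz space with indices $(r\vec\alpha,\vec p/r,\vec q/r)$, so Proposition \ref{bqb-2}(ii) applied to a carefully chosen $r\in(0,\min\{1,\nu\})$ (with $\nu$ as in \eqref{nuu}) shows that this convexification is a ball Banach function space. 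This $r$ plays the role of the atomic exponent and, thanks to the lower bound $p_{i},q_{i}>\tfrac{n}{n+\gamma}$, can be picked so that $1/r>n/(n+\gamma)$, which is the critical threshold for $\gamma$-order atoms.

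With $r$ fixed, I would verify the remaining assumptions of the abstract theorem in two blocks. On the one hand, Lemma \ref{assp-E-1}(i) gives the Fefferman--Stein vector-valued inequality on $[\iihz(\rn)]^{1/\theta}$ for all $\theta\in(0,\min\{1,\nu\})$, which subsumes the vector-valued maximal inequality needed for the Lusin-area/molecular estimates. On the other hand, Lemma \ref{assp-E-1}(ii) supplies a $t>\max\{p_{+},q_{+},(\alpha_{i}+1/q_{i})^{-1}\}$ such that the powered Hardy--Littlewood maximal operator $M^{((t/r)')}$ is bounded on the associate space $([\iihz(\rn)]^{1/r})'$. The condition $\alpha_{i}<1-\tfrac1{q_{i}}+\tfrac{\gamma}{n}$ in the hypothesis is exactly what is needed so that, for an admissible atomic exponent $r$ strictly below $n/(n+\gamma)$, the associate-space maximal boundedness holds with the integrability exponent demanded by the molecular decay of order $\gamma$. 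The condition $\alpha_{i}>\tfrac12-\tfrac1{q_{i}}$ is what aligns the natural $L^{2}$-theory of $T$ with the embedding of $H\iihz(\rn)$-molecules: it guarantees $[\iihz(\rn)]^{2}$ lies in a space in which $T$ already acts boundedly.

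Having collected these ingredients, the proof is finished by invoking the $\gamma$-order analogue of the abstract atomic/molecular boundedness theorem used in Theorem \ref{CZ-bd-2}: for any linear operator $T$ which is an $L^{2}(\rn)$-bounded $\gamma$-order Calder\'on--Zygmund operator in the sense of Definition \ref{d5.11} with vanishing moments up to order $\lceil\gamma\rceil-1$, $T$ extends uniquely and boundedly to $H_{X}(\rn)$ whenever $X$ satisfies the structural hypotheses just verified. Applied to $X=\iihz(\rn)$ this yields the desired estimate $\|T(f)\|_{H\iihz(\rn)}\le C\|f\|_{H\iihz(\rn)}$. The main obstacle, and the only step that requires genuine care beyond quotation, is the simultaneous choice of the convexification exponent $r$ and the integrability exponent $t$ in Lemma \ref{assp-E-1}(ii) so that all four conditions---$[\iihz(\rn)]^{1/r}$ being a ball Banach function space, $1/r<n/(n+\gamma)$, $M^{((t/r)')}$ being bounded on $([\iihz(\rn)]^{1/r})'$, and the Fefferman--Stein inequality on $[\iihz(\rn)]^{1/\theta}$ for a comparable $\theta$---are met together; this is exactly the arithmetic that dictates the precise index ranges in the statement of the theorem.
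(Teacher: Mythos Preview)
Your approach is essentially the paper's: verify that $\iihz(\rn)$ satisfies the structural hypotheses of the abstract $\gamma$-order Calder\'on--Zygmund boundedness theorem on $H_X(\rn)$ (\cite[Theorem~3.11]{rm-wyy} in the paper, together with \cite[Remark~6.5]{wk-1}) via Propositions~\ref{bqb-2}(i) and~\ref{ab-2} and Lemma~\ref{assp-E-1}, then invoke that theorem with $X:=\iihz(\rn)$. One point to clean up: your index arithmetic for the convexification exponent $r$ is inconsistent (you write ``$1/r>n/(n+\gamma)$'' in one paragraph and ``$1/r<n/(n+\gamma)$'' in the next); the correct requirement is $r\in(n/(n+\gamma),\nu)$, which is possible precisely because $\alpha_i<1-1/q_i+\gamma/n$ and $p_i,q_i>n/(n+\gamma)$ force $\nu>n/(n+\gamma)$.
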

\begin{proof}
Let all the symbols be as in the present theorem.
By Propositions \ref{bqb-2}(i) and
\ref{ab-2}, $\vec{p},$
$\vec{q}\in(\f{n}{n+\gamma},2)^{n},$ $\alpha_{i}\in(\f{1}{2}-\f{1}
{q_{i}},1-\f{1}{q_{i}}+\f{\gamma}{n})$ for any $i\in\{1,\ldots,n\}$,
Lemma \ref{assp-E-1}, and
\cite[p.\,2040,\ Remark 6.5]{wk-1}, we find that
all the assumptions of \cite[Theorem 3.11]{rm-wyy} with $X:=\iihz(\rn)$
are satisfied, which, combined with \cite[Theorem 3.11]{rm-wyy} with
$X:=\iihz(\rn)$, implies
the desired conclusion. This finishes the
proof of Theorem \ref{CZo-2}.
\end{proof}
\begin{remark}
In Theorem \ref{CZo-2}, if $p_{i}=q_{i}\in (\f{n}{n+\delta},
\infty)$ for any $i\in\{1,\ldots,n \}$, and $\vec{\alpha}:=
\textbf{0}$, then $\iihz(\rn)=\lq(\rn)$ and, in this case,
Theorem \ref{CZo-2} is just \cite[p.\,2056,\ Theorem 6.8]{alhl}
with $\vec{a}:=(1,\ldots,1)$ and $\nu:=n$ there; if $n:=1,$
$\gamma\in(0,1],$ $p\in(\f{1}{1+\gamma},2),$ $q\in(1,2)$, and
$\alpha\in[1-\f{1}{q},1-\f{1}{q}+\gamma)$, then $\iihz(\rr)
=\dot{K}_{q}^{\alpha,p}(\rr)$ and, in this case, Theorem \ref{CZo-2}
coincides with \cite[Theorem 1]{cz-ga} with $p\in (\f{1}{1+\gamma},2)$,
$q\in (1,2)$, and $\oz_{1}=\oz_{2}:=1$.
\end{remark}
\begin{remark}
In Theorem \ref{CZo-2}, if the kernel $K$ on the second variable also satisfies
Definition \ref{d5.11}(i), then the conclusions in Theorem \ref{CZo-2} hold true for any
$\vec{p},$ $\vec{q}:=(q_{1},\ldots,q_{n})\in(\f{n}{n+\gamma},\infty)^{n}$
and $\vec{\alpha}:=(\alpha_{1},\ldots,\alpha_{n})\in\rn$
with $\alpha_{i}\in(-\f{1}{q_{i}},1-\f{1}{q_{i}}+\f{\gamma}{n})$
for any $i\in\{1,\ldots,n\}$.
\end{remark}
\subsection{Further Remarks}\label{s5.3}

Besides the Littlewood--Paley characterizations and the
boundedness of Calder\'{o}n--Zygmund operators, we can also
have other real-variable characterizations on
mixed-norm Herz--Hardy spaces; to limit the length of this
article, we omit the details by only mentioning them as in
the following remark.
\begin{remark}
\begin{enumerate}
\item[$\mathrm{(i)}$] From Proposition \ref{bqb-2}(i),
Lemma \ref{hzcox-2}, and Corollary \ref{HL-2}, it follows that the corresponding
conclusions of \cite[Theorem 3.1]{SHYY} with $X:=\iihz(\rn)$ hold true,
namely, we can obtain other maximal function characterizations of
mixed-norm Herz--Hardy spaces.
\item[$\mathrm{(ii)}$] From Proposition \ref{bqb-2}(i),
Lemma \ref{hzcox-2}, and Corollary \ref{HL-2},  we deduce that all the
assumptions
of \cite[Theorem 3.4]{SHYY} with $X:=\iihz(\rn)$ are
satisfied, and hence all the corresponding conclusions
of \cite[Theorem 3.4]{SHYY} with $X:=\iihz(\rn)$ also
hold true.
\item[$\mathrm{(iii)}$] Applying Proposition
\ref{bqb-2}(i) and
Lemma \ref{assp-E-1},
we conclude that all the assumptions of
\cite[Lemma 3.1]{ft-bqb}
(see also \cite[Theorems 3.6 and 3.7]{SHYY})
with $X:=\iihz(\rn)$ are satisfied,
and hence we have the atomic characterization
of $H\iihz(\rn)$; moreover, in this case, all the
assumptions of \cite[Theorem 1.10]{YYY} and
\cite[Theorem 3.9]{SHYY} also hold true for
$H\iihz(\rn)$ and hence
obtain the finite atomic and the molecular
characterizations of $H\iihz(\rn)$.
\item[$\mathrm{(iv)}$] By
Propositions \ref{bqb-2}(i) and \ref{ab-2}, and
Lemma \ref{assp-E-1}, we conclude that all
the assumptions of
\cite[Theorem 3.14]{Abs} with $X:=\iihz(\rn)$ are satisfied, and hence
all the corresponding conclusions of \cite[Theorem 3.14]{Abs}
with $X:=\iihz(\rn)$ also hold true.
\end{enumerate}
\end{remark}
\noindent\textbf{Acknowledgements}\quad
Yirui Zhao would
like to express
his deep thanks to Professor Long Huang for some helpful
suggestions
on this article,  Dr. Yinqing Li for several beneficial
conversations on some lemmas, and Professor Suqing Wu
for some useful advice on Proposition \ref{E-lat}.

\bigskip

\noindent  Yirui Zhao, Dachun Yang (Corresponding author) and
Yangyang Zhang

\medskip

\noindent  Laboratory of Mathematics and Complex Systems
(Ministry of Education of China),
School of Mathematical Sciences, Beijing Normal University,
Beijing 100875, People's Republic of China

\smallskip

\noindent {\it E-mails}:
\texttt{yiruizhao@mail.bnu.edu.cn} (Y. Zhao)

\noindent\phantom{{\it E-mails:}}
\texttt{dcyang@bnu.edu.cn} (D. Yang)

\noindent\phantom{{\it E-mails:}}
\texttt{yangyzhang@mail.bnu.edu.cn} (Y. Zhang)
\end{document}